\documentclass[11pt, reqno]{amsart}
\usepackage[utf8]{inputenc}
\usepackage{amsfonts}
\usepackage{hyperref}

\hypersetup{}
\usepackage{amsmath}
\usepackage{setspace}
\usepackage{amsthm}
\usepackage{pdflscape}
\usepackage{pgfplots}
\usepackage{mathrsfs}
\usepackage{mathtools}
\usepackage[top=2.5cm, bottom=2.5cm, left=2.2cm, right=2.2cm]{geometry}
\usepackage{amssymb,bbm}
\usepackage{shuffle}
\usepackage{enumerate}
\usepackage{enumitem}
\usepackage[initials]{amsrefs}
\usepackage{stmaryrd}
\usepackage{appendix}
\usepackage{tikz}
\usepackage{tikz-cd}

\pgfplotsset{compat = 1.18}

\usepackage[nodayofweek]{datetime}

\usepackage{verbatim}

\usepackage{graphicx}
\usepackage{matlab-prettifier}

\numberwithin{equation}{section}
\numberwithin{figure}{section}

\theoremstyle{plain}
\newtheorem{theorem}{Theorem}[section]
\newtheorem{lemma}[theorem]{Lemma}
\newtheorem{proposition}[theorem]{Proposition}
\newtheorem{corollary}[theorem]{Corollary}

\theoremstyle{definition}

\newtheorem{remark}[theorem]{Remark}

\newtheorem{assumption}{Assumption}

\allowdisplaybreaks

\renewcommand{\baselinestretch}{1.01}
\newcommand{\bbD}{\mathbb{D}}
\newcommand{\bbE}{\mathbb{E}}

\newcommand{\bbL}{\mathbb{L}}

\newcommand{\bbN}{\mathbb{N}}

\newcommand{\bbP}{\mathbb{P}}
\newcommand{\bbQ}{\mathbb{Q}}
\newcommand{\R}{\mathbb{R}} 

\newcommand{\bbW}{\mathbb{W}}

\newcommand{\mA}{\mathcal{A}}

\newcommand{\mC}{\mathcal{C}}

\newcommand{\mF}{\mathcal{F}}

\newcommand{\mH}{\mathcal{H}}

\newcommand{\mN}{\mathcal{N}}

\newcommand{\mP}{\mathcal{P}}

\newcommand{\mS}{\mathcal{S}}

\newcommand{\mX}{\mathcal{X}}

\newcommand{\bfG}{\mathbf{G}}

\newcommand{\bfJ}{\mathbf{J}}

\newcommand{\sfE}{\mathsf{E}}

\newcommand{\sfN}{\mathsf{N}}

\newcommand{\brq}{\bar{q}}

\newcommand{\frF}{\mathfrak{F}}

\newcommand{\frT}{\mathfrak{T}}

\newcommand{\fra}{\mathfrak{a}}
\newcommand{\frb}{\mathfrak{b}}

\newcommand{\frf}{\mathfrak{f}}

\newcommand{\fro}{\mathfrak{o}}

\newcommand{\frr}{\mathfrak{r}}
\newcommand{\frs}{\mathfrak{s}}

\newcommand{\fru}{\mathfrak{u}}

\newcommand{\tlQ}{{\tilde{Q}}}

\newcommand{\tlX}{{\tilde{X}}}

\newcommand{\tlZ}{{\tilde{Z}}}

\newcommand{\scrP}{\mathscr{P}}

\newcommand{\eps}{\varepsilon}

\newcommand{\op}{{\text{\normalfont op}}}

\title[Quantitative propagation of chaos and universality for Langevin spin glass]{Quantitative propagation of chaos and universality for 
asymmetric Langevin spin glass dynamics}

\date{\today}
\thanks{ M.A. and K.H. would like to thank Dan Lacker for helpful suggestions on the presentation. K.H. was supported by the Simons Foundation as a Junior
 Fellow of the Simons Society of Fellows.}

\author{Manuel Arnese}
\address{Manuel Arnese: Department of Industrial Engineering and Operations Research, Columbia University}
\email{\href{mailto:ma4339@columbia.edu}{ma4339@columbia.edu}}

\author{Kevin Hu}
\address{Kevin Hu: Department of Industrial Engineering and Operations Research, Columbia University}
\email{\href{mailto:kjh2189@columbia.edu}{kjh2189@columbia.edu}}

\begin{document}
\begin{abstract}
We obtain quantitative estimates on quenched propagation of chaos for Langevin spin glass dynamics with i.i.d. disorder. Prior work in the case of Gaussian disorder established the qualitative convergence of the law of a single spin to a deterministic McKean--Vlasov limit. We prove convergence rates in expected Wasserstein distance and quantitative concentration rates for Lipschitz observables under the assumption that the disorder satisfies the T$_2$ inequality. The proof uses a coupling argument, together with techniques from concentration of measure, filtering theory, and Malliavin calculus.
\end{abstract}

\maketitle

 \noindent \textbf{Key words:} Quantitative propagation of chaos, universality, Langevin spin glass, interacting diffusions, McKean-Vlasov, concentration of measure, Malliavin calculus, filtering theory.\\
 \noindent \textbf{MSC 2020 subject classifications:}  Primary 82C44, 60J60, 60K35; Secondary 82C22.

 {
\setcounter{tocdepth}{1}
\renewcommand{\baselinestretch}{0.5}\selectfont
\tableofcontents 
}

\section{Introduction}

 We study the asymmetric Langevin spin glass model, which is given by the following stochastic differential equation (SDE) with random interactions:
\begin{align*}
\begin{split}
    dX_t^{i} &= - U'(X_t^i) dt + \frac{\beta}{\sqrt{N}} \sum_{j = 1}^N J_{ij} X_t^j dt + dW_t^i, \quad i = 1, \ldots, N.
\end{split}
\end{align*}
Here, $U: (-A,A) \rightarrow \R$ is an even potential satisfying $U(x) \rightarrow +\infty$ as $|x| \rightarrow A$ for some $A>0$, $\beta > 0$ is the inverse temperature,  $W^1, \ldots, W^N$ are independent 1-dimensional Brownian motions. An important parameter is the \emph{disorder} $\bfJ=(J_{ij})_{i,j=1}^N$, which is a random matrix whose entries are independent and identically distributed (i.i.d.) with mean $0$ and unit variance. We discuss the history and motivation of the model in Section \ref{ss: intro spin glas dynamics}.

The first set of rigorous mathematical results for this model were obtained in \cite{Arous1995-zj} in the setting where $\bfJ = \bfG$ is a matrix of independent standard Gaussian random variables and the initial condition $(X_0^1, \ldots, X^N_0)$ is i.i.d. and symmetric. By means of a large deviations principle, they establish convergence of the empirical measure; that is, there exists a deterministic probability measure $\mu$ on the path space $\mC_T = C([0, T];(-A, A))$ such that
\begin{equation}\hat{\mu}^N_{[T]} := \frac{1}{N}\sum_{i = 1}^N \delta_{X^i_{[T]}} \rightarrow \mu_{[T]},
\label{eq: intro-poc-1}    
\end{equation}
where $X^i_{[t]} = (X^i_s: s \in [0, t])$. This convergence is known as a \textit{mean field limit}. Since $\text{Law}(X^1, \ldots, X^N)$ is exchangeable, it is well known (see e.g., \cite[Proposition 2.2]{snitzmann}), that \eqref{eq: intro-poc-1} is equivalent to the asymptotic independence of fixed marginal distributions, which is known as \textit{propagation of chaos}: for any $k \in \bbN$, we have
\begin{equation}
\lim_{N \rightarrow \infty} \text{Law}(X^1_{[T]}, \ldots, X^k_{[T]}) = \mu^{\otimes k}_{[T]},
\label{eq: intro-poc-2}
\end{equation}
where the convergence occurs in the weak topology. The properties \eqref{eq: intro-poc-1}-\eqref{eq: intro-poc-2} were first studied in the context of mean-field interacting diffusions by McKean in \cite{McKean1966}. We refer the reader to Section \ref{ss: chaos review} for more context on mean-field limits and propagation of chaos.

{The limit law $\mu$ may be realized as the law of an It\^{o} process $Y$, which satisfies
\begin{align*}
    dY_t&=-U'(Y_t)\,dt+\beta G_t\,dt+dW_t,\quad
    Y_0\sim \mu_0,
\end{align*}
where $G=(G_t)_{t\geq 0}$ is a Gaussian process independent of the Brownian motion $W$ such that $\bbE[G_tG_s]=\bbE[Y_tY_s]$.} In turn it can be shown that $Y$ solves a non-Markovian SDE. More precisely, there exists a family of kernels $(H^t_\mu)_{t \in [0, T]}$ such that $\mu = \text{Law}(Y)$ is the weak solution to the following stochastic differential equation (SDE):
\begin{equation*}
\begin{aligned}
    dY_t &= - U'(Y_t) dt + dB_t,\\
    dB_t &= \beta^2 \bigg[\int_0^t H^t_\mu(t, s) dB_s \bigg] dt + dW_t,
    \\ Y_0 &\sim \mu_0,
\end{aligned}
\end{equation*}
where $W$ is a standard Brownian motion.
See Section \ref{ss: dynamics} for a precise definition of the limit process. Subsequent work has extended this convergence to a class of non-Gaussian disorder \cite{Dembo-Lubetzky-Zeitouni-2021-Universality}. 

In \cite[Corollary 2.11]{Guionnet-1997-PoC}, a refinement of \eqref{eq: intro-poc-2} known as \emph{quenched propagation of chaos} was established: for all $k \in \bbN$ and bounded continuous functions $(f_1, \ldots, f_k)$ on $\mC_T$, we have
\begin{equation}
    \label{eq: intro-quenched-poc}
    \bbE\bigg[\prod_{i = 1}^kf_i\big(X^i_{[T]}\big) \,\Big|\,\bfG\bigg] \rightarrow \prod_{i = 1}^k \int_{\mC_T} f_i(x)\,d\mu(x) \quad \text{ in probability as $N \rightarrow \infty$.}
\end{equation}
It is important to highlight that when conditioned on $\bfG$, the random variables $(X^1, \ldots, X^n)$ are no longer exchangeable, and thus \eqref{eq: intro-quenched-poc} is no longer an immediate consequence of the mean-field limit \eqref{eq: intro-poc-1}.

The purpose of this paper is to establish a quantitative version of \eqref{eq: intro-quenched-poc} in Theorem \ref{th: main theorem} for more general disorder $\bfJ$. We prove that for all $k, N \in \bbN$ and functions $f:\mC_T^k \rightarrow \R$ that are $1$-Lipschitz with respect to the $L^2$ norm, we have
    \begin{equation}
\bbP\bigg(\,\bigg| \bbE\big[f\big(X^1_{[t]}, \ldots, X^k_{[t]}\big)\mid\bfJ\big] - \int_{\mC_t^k} f(x^1, \ldots, x^k)\,d\mu^{\otimes k}_{[t]}(x) \bigg|\geq r \Big) \leq c_0e^{-c_1 r^2\frac{N}{k}},
\label{eq: intro-main-result-1}
\end{equation}
where $c_0, c_1 \in (0, \infty)$ are constants independent of $k, N$. Our results hold under a relatively large class of non-Gaussian disorder, as we only require the entries to satisfy a T$_2$ inequality. Similar assumptions have appeared elsewhere in random matrix theory, see for instance \cite{guionnet2000concentration}. To the best of our knowledge, this is the first time quenched propagation of chaos  has been established for any non-Gaussian disorder, as the lack of exchangeability prevents one from deducing \eqref{eq: intro-quenched-poc} from the results of \cite{Dembo-Lubetzky-Zeitouni-2021-Universality}. Furthermore, we relax the assumptions on $X_0$ from i.i.d. to \emph{chaotic} initial conditions, which is canonical in the study of propagation of chaos. We state our precise framework in Assumption \ref{assumption: main}. 

As a consequence of \eqref{eq: intro-main-result-1}, we obtain a quantitative rate of convergence in Wasserstein distance of the time-marginal law:
\begin{equation}\label{eq: intro k=1 quenched}
        \bbE\big[\bbW_1\big( \text{Law}(X^1_t\mid\bfJ), \mu_t\big)\big] = O\big(N^{-1/2}\big).
\end{equation}
We provide evidence in Section \ref{sec: optimality} that this is the optimal rate of convergence by studying a simplified model with $U=0$, where we prove lower bounds in total variation. This stands in contrast to the classic mean-field setting, where the sharp rate of convergence is $N^{-1}$, see \cite[Section 3]{HierarchiesPaper}. The presence of disorder appears to have a meaningful influence on propagation of chaos, and we refer the reader to Section \ref{ss: non-exch} for further discussion.

More generally, we obtain a rate of convergence for any set of $k$-spins for $k \geq 2$:
\begin{equation}
\label{eq: intro-k-quenched-poc}
    \bbE\Big[\bbW_1\big( \text{Law}(X^1_t, \ldots, X^k_t\mid\bfJ), \mu^{\otimes k}_t\big)\Big] = O_{\log}\big(N^{-1/k}\big).
\end{equation}
where the $O_{\log}$ notation hides $\log N$ factors. We expect this result to be sharp for $k = 2$ (up to log factors) and $k = 1$ in \eqref{eq: intro k=1 quenched}. However, the bound deteriorates dramatically when $k$ becomes large and we believe it to be suboptimal. On the other hand, we establish in Theorem \ref{th: averaged propagation of chaos} a significantly improved rate of convergence for averaged propagation of chaos \eqref{eq: intro-poc-2}:
\begin{equation}
\label{eq: intro-k-avg-poc}
    \bbW_1\big( \text{Law}(X^1_t, \ldots, X^k_t), \mu^{\otimes k}_t\big) = O\big(\sqrt{k/N}\big).
\end{equation}

The rest of the introduction is structured as follows. In Section \ref{ss: model}, we state precisely our model and assumptions (Assumption \ref{assumption: main}). Section \ref{sec: main results} contains our main result (Theorem \ref{th: main theorem}), as well as a sketch of the proof. In Section \ref{ss: chaos review} and Section \ref{ss: intro spin glas dynamics}, we review related works on propagation of chaos and spin glass dynamics, respectively. In Section \ref{ss: open problems} we discuss some open problems. Finally, Section \ref{ss: outline} provides an outline of the remainder of the paper.

\subsection{Model}
\label{ss: model}
\subsubsection{Notation} For a Polish space $\mX$, let $\mP(\mX)$ denote the space of Borel probability measures on $\mX$. Sometimes we will write $X \sim \nu$ for $\nu = \text{Law}(X)$. We use $\| \cdot\|_{\text{TV}}$ to denote the usual total variation distance.

We fix $A>0$ and an integer $m\in \bbN$. We define the space of continuous paths from $[0,T]$ taking values in $[-A,A]^m$ by
\[\mC_T^{m}:=\left\{f:[0,T]\to [-A,A]^m \mid f \text{ is continuous}\right\}.\]
For a probability measure $\nu \in \mP(\mC_T^m)$, we let $\nu_{[t]} \in \mP(\mC_t^m)$ denote the restriction of $\nu$ to $\mC_t^m$. Similarly, we write $\nu_t$ for the time-marginal law of $\nu$ at $t \in [0, T]$. We use $L^2([0, T];\R^d)$ to denote the space of square integrable functions from $[0, T] \rightarrow \R^d$. If $d = 1$ we simply write $L^2([0, T])$. 

For $p \in [1, \infty)$ and a Polish space $(\mX,\text d)$, the \emph{Wasserstein}-$p$ distance is given by
\begin{equation}
    \bbW_p^p(\alpha, \beta) = \inf_\pi\int_{\mX \times \mX}\text d(x,y)^p\,d\pi(x, y), \quad \alpha, \beta \in \mP(\mX),
    \label{eq: Wasserstein-def}
\end{equation}
where the infimum is taken over the set of couplings of $\alpha, \beta$. We also recall the useful dual characterization of $\bbW_1$,
\[ \bbW_1(\alpha, \beta) = \sup_{\substack{f:\mX \rightarrow \R \\ \text{Lip}(f) \leq 1}}\bigg[ \int f(x) \,d\alpha(x) - \int f(x)\,d\beta(x) \bigg],\quad \alpha, \beta \in \mP(\mX).\]
When we apply the Wasserstein distance on measures on path space $\alpha,\beta\in \mP(\mC_T^k)$, we will simply denote by $\bbW_p$ the above notation where we use, for $x,y \in \mC_T^k$, $d(x,y)=\sup_{t\leq T}|x_t-y_t|$. We will also need to use the Wasserstein distance on $L^2([0,T];[-A,A]^k)$:
\[\bbW_{p,L}^p(\alpha,\beta)= \inf_{\pi} \int \bigg|\int_0^T|x_t-y_t|^2\,dt\bigg|^{p/2}\,d\pi(x,y),\]
where once again the infimum is taken over the set of all couplings. 

We use $\| \cdot\|_{\text{Fr}}$ and $\| \cdot \|_{\op}$ to denote the Frobenius norm and (Euclidean) operator norm, respectively.
\subsubsection{Assumptions} Throughout the rest of the paper, we will assume the following holds.
\begin{assumption} Fix $N \in \bbN$, $\beta \in \R$, and $A, T \in (0, \infty)$. Suppose there exist constants $C_{\textup{P}}, C_{\textup{Lip}}, C_0, \sigma \in (0, \infty)$ such that $(U, P_0^N, \mu_0, \bfJ)$ satisfy the following.
\label{assumption: main}
    \begin{enumerate}
        \item The function $U:(-A,A)\to \R$ is twice differentiable and even. Moreover we have 
        \[U = U_c + U_\ell,\] where $U_c:(-A,A)\to \R$ is a convex function and $U_\ell:\R\to \R$ is a differentiable function such that $\nabla U_\ell$ is $C_{\textup{Lip}}$-Lipschitz. Furthermore $U$ satisfies
        \begin{equation*}
            \lim_{z \uparrow A }\int_0^z e^{2U(x)} \int_0^x e^{-2U(y)}\,dy\,dx=\infty.
        \end{equation*}
        \item The initial distribution $P^N_0$ is exchangeable, symmetric around the origin, supported on $(-A,A)^N$ and satisfies a Poincaré inequality with constant $C_{\textup{P}}<\infty$. That is, for every differentiable $f:\R^N\to \R$, we have
        \begin{align*}
            \int_{\R^N} |f(x)|^2\,dP_0^N(x)-\bigg|\int_{\R^N} f(x)\,dP_0^N(x)\bigg|^2\leq C_\textup{P} \int_{\R^n} |\nabla f(x)|^2\,dP_0^N(x).
        \end{align*}
        Moreover, $\mu_0$ is supported on $(-A, A)$ and symmetric around the origin. We also assume that $P_0^N$ is \emph{$\mu_0$-chaotic} in the sense that
        \[\bbW_2(P_0^N,\mu_0^{\otimes N})\leq C_0.\]
        \item The entries of $\bfJ \in \R^{N \times N}$  are i.i.d. with mean $0$ and variance $1$. Moreover $\eta:=\textup{Law}(J_{11})$ satisfies a T$_2$ inequality with constant $\sigma^2$, which implies for every $m \in \bbN$ and every $1$-Lipschitz function $f:\R^m\to \R$, we have
\[\log \int_{\R^m} e^{\lambda f(x)}\,d\eta^{\otimes m}(x)\leq \lambda \int_{\R^m}  f(x)\,d\eta^{\otimes m}(x)+\frac{\lambda^2\sigma^2 }{2}.\]

    \end{enumerate}
\end{assumption}

\begin{remark}[About the assumptions] We comment briefly on our assumptions.
\begin{enumerate}
\item
Assumption \ref{assumption: main} is in one sense slightly stronger than those appearing in \cite{Arous1995-zj} or Section 2 of \cite{Guionnet-1997-PoC}, as we impose the additional condition that $P_0^N$ satisfies a Poincaré inequality. However, our setting allows for chaotic rather than independent initial conditions as well as non-Gaussian disorder.
    \item 
The usual example of a confining potential $U$ that satisfies Assumption \ref{assumption: main} is \[U(x)=-\log(A^2-x^2),\] which is strongly convex. Assumption \ref{assumption: main} implies that the SDE
\[dX_t^i=-U'(X_t^i)\,dt+dW_t^i, \quad i = 1, \ldots, N,\]
has a unique strong solution which satisfies $ \sup_{t \geq 0} |X_t^i|<A$ almost surely, see \cite[Lemma 2.2]{Arous1995-zj}.
\item {The standard formulation of the T$_2$ inequality states that for every $\rho\in\mP(\R)$,
\[\bbW_2^2(\rho,\eta)\leq 2\sigma^2 \mH(\rho\,\|\,\eta)\]
where $\mH$ is relative entropy. Nevertheless these two formulations are equivalent by Gozlan's theorem, see e.g., \cite[Theorem 4.31]{VanHandelHDP}.}
\item  The condition \ref{assumption: main}.3 holds for a large class of distributions. In particular this holds for centered laws $\eta$ with unit variance and with density $\eta(dx)\propto e^{-\phi(x)}\,dx$ where $\phi:\R\to \R$ is strongly convex, see \cite[Section 2.1]{chewi2023log}. Another class of examples are measures that satisfy a log-Sobolev inequality, such as the uniform or centered beta distributions. {See \cite{GozlanCharacterizationT2} for a complete characterization of the T$_2$ inequality for non-atomic measures over $\R$.}

\end{enumerate}
    
\end{remark}

\subsubsection{The dynamics}\label{ss: dynamics} Suppose $(U, P_0^N, \mu_0, \bfJ)$ satisfies Assumption \ref{assumption: main}. We study the following dynamics:
\begin{align}
\begin{split}
    dX_t^{i} &= - U'(X_t^i) dt + \frac{\beta}{\sqrt{N}} \sum_{j = 1}^N J_{ij} X_t^j dt + dW_t^i, \quad i = 1, \ldots, N, \quad \text{Law}(X_0) = P_0^N.
\end{split}
\label{eq: intro quenched dynamics}
\end{align}
By \cite[Proposition 2.1]{Arous1995-zj}, this system admits a weak solution and thus there exists a probability space $(\Omega, \mF, \bbP)$ supporting random variables $(X, W, \bfJ)$ which satisfy the above equation.

For $v \subset \{1, \ldots, N\}$, we use the following to denote the law of $X$ conditioned on $\bfJ$,
\begin{equation*}
    \mP(\mC_T^{|v|})\ni P^{N,v}(\bfJ) := \text{Law}\big(X^v_{[T]}\mid\bfJ\big),
\end{equation*}
In words, $P^{N,v}(\bfJ)$ is the distribution of the particles in the subset $v$, quenched on the disorder $\bfJ$. As such, we refer to $P^{N, v}(\bfJ)$ as the \emph{quenched law}. The tower property implies that
\[ \text{Law}\big(X^v_{[T]}\big) = \bbE\big[ P^{N,v}_{[T]}(\bfJ) \big].\]

In the following, we use $\mu \in \mP(\mC_T)$ to denote the probability measure identified in \cite[Theorem 2.4]{Arous1995-zj}, which was referred to as $Q$ therein. We recall these limit dynamics as presented there, and we supply a more detailed discussion on its form in Section \ref{ss: form of limit dynamics}. We provide another formulation which will be more useful for our purposes in Section \ref{sect: representation}.

Fix $\nu \in \mP(\mC_T)$ with covariance $K_\nu(t, s) := \int_{\mC_T}x_t x_s d\nu(x)$. For each $t \in [0, T]$, we define a corresponding integral operator $K_\nu^t: L^2([0, t]) \rightarrow L^2([0, t])$ by
\[K_\nu^t f(s) = \int_0^t K_\nu(s, u) f(u) du, \quad s \in [0, t], \]
for $f \in L^2([0, t])$. The operator $K^t_\mu$ is positive and compact. Since $K$ is positive definite, the following operator $H^\mu_t: L^2([0, t]) \rightarrow L^2([0, t])$ is well-defined:
\begin{equation}
  H_\nu^t = K_\nu^t\big[\beta^2 K_\nu^t + I\big]^{-1}  .
  \label{eq: H_mu^t}
\end{equation}
We can see that $H_\nu^t$ is a Hilbert-Schmidt operator, and therefore there exists a kernel $H_\nu^t \in L^2([0, t]^2)$ such that
\[ H_\nu^t f(s) = \int_0^t H_\nu^t(s, u) f(u) du, \quad s \in [0, t],\]
for all $f \in L^2([0, t])$. 
\begin{lemma}[{\cite[Theorem 2.5]{Arous1995-zj}}] There exists a unique weak solution $(Y,B)$ with $\mu=\textup{Law}(Y)$ to the following McKean-Vlasov equation,
\begin{equation}
    \label{eq: limit dynamics 0}
    \begin{aligned}
        dY_t &= - U'(Y_s) ds + dB_t, \\
        dB_t &=  \beta^2  \bigg[ \int_0^t H^t_\mu(t, s) dB_s\bigg] dt + dW_t,   \\
        Y_0 &\sim \mu_0.
    \end{aligned}
\end{equation}
where $W$ is a standard Brownian motion.
\end{lemma}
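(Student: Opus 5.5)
This statement is cited from \cite[Theorem 2.5]{Arous1995-zj}. The plan is to recover it by identifying the law $\mu$ of Ben Arous--Guionnet with a fixed point of a map on $\mP(\mC_T)$, and then reading off the SDE representation through Gaussian filtering.

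\textbf{Step 1 (Girsanov representation and the map $\Phi$).} For $\nu\in\mP(\mC_T)$, let $G^\nu$ be a centered Gaussian process with covariance $K_\nu$, independent of a Brownian motion $W$ and of $Y_0\sim\mu_0$. Define $\Phi(\nu)$ as the law of the solution to
\[dY_t=-U'(Y_t)\,dt+\beta G^\nu_t\,dt+dW_t.\]
Because paths take values in $[-A,A]$, we have $\sup_t K_\nu(t,t)\le A^2$. Hence $G^\nu$ has an $L^2$ version and Novikov's condition applies after conditioning on $G^\nu$. Girsanov's theorem against the reference law $P$ of $dX=-U'(X)dt+dW$ then gives an explicit density. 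Integrating out the Gaussian $G^\nu$ is a Gaussian integral with quadratic form $\beta^2K^T_\nu$, and this is where the resolvent $[\beta^2K^t_\nu+I]^{-1}$ appears. This identifies $\mu$ as the unique fixed point $\Phi(\mu)=\mu$, which is the minimizer of the rate function in \cite{Arous1995-zj}.

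\textbf{Step 2 (Existence and uniqueness of the fixed point).} I would show that $\Phi$ is a contraction on $\mP(\mC_t)$ for small $t$, and iterate in time, using a distance built from the covariance kernels. The key estimate is
\[\|\Phi(\nu)-\Phi(\nu')\|_{\mathrm{TV},[t]}\le C\Big(\int_0^t\!\!\int_0^s |K_\nu-K_{\nu'}|^2\Big)^{1/2}.\]
This follows from Pinsker's inequality together with an entropy bound between the two Girsanov densities. Since $|K_\nu-K_{\nu'}|\le 2A^2\|\nu-\nu'\|_{\mathrm{TV}}$, a Gronwall-type argument closes the contraction.

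\textbf{Step 3 (From the fixed point to the SDE (\ref{eq: limit dynamics 0})).} Set $B_t=Y_t-Y_0+\int_0^tU'(Y_s)ds=\beta\int_0^tG_s\,ds+W_t$, where $G:=G^\mu$. Condition on $\mF^B_t$. By the innovations theorem of filtering theory,
\[dB_t=\beta\,\bbE[G_t\mid\mF^B_t]\,dt+d\tilde W_t\]
for a new Brownian motion $\tilde W$. Since $(G,B)$ is jointly Gaussian and independent of $Y_0$, the conditional mean is linear in $B$:
\[\bbE[G_t\mid\mF^B_t]=\beta\int_0^tH^t_\mu(t,s)\,dB_s.\]
The Kalman--Bucy/Wiener--Hopf normal equations then identify the kernel as $K^t(\beta^2K^t+I)^{-1}$, which is $H^t_\mu$ from (\ref{eq: H_mu^t}). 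This gives weak existence. For uniqueness in law, observe that (\ref{eq: limit dynamics 0}) specifies the Gaussian law of $B$ given $\mu$. Any solution therefore has a law equal to $\Phi(\text{its law})$, and uniqueness reduces to Step 2.

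\textbf{Main obstacle.} The hardest part will be Step 3. We must verify rigorously that $t\mapsto\int_0^tH^t_\mu(t,s)dB_s$ is a well-defined adapted process, despite the kernel depending on the time horizon $t$ and being only $L^2$. We must also match the Hilbert--Schmidt kernel of $H^t_\mu$ at the diagonal point $(t,s)$. I would handle both by proving continuity of $K_\mu$, which follows from the path continuity of $Y$ and the boundedness $|Y|\le A$, and then using the Volterra resolvent structure of $(\beta^2K^t+I)^{-1}$.
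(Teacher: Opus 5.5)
Your outline is correct, but it takes a genuinely different route from the paper, because the paper does not prove this lemma. It imports the result from \cite{Arous1995-zj}, where $\mu$ is the minimizer $Q$ of the annealed rate function. The only argument the paper gives is the formal computation of Section~\ref{ss: form of limit dynamics}, and that computation is your Step~3 done heuristically. It diagonalizes $K_\mu$, takes the conditional mean of the Gaussian coefficients under the tilt $d\bbP_\beta/d\bbP_0$ to get $\bbE[G_t\mid X_{[t]}]=\beta\int_0^tH^t_\mu(t,s)\,dB_s$, and then invokes the mimicking theorem (Theorem~\ref{t:mimicking}, which is your innovations theorem).

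Your normal equations $(I+\beta^2K^t_\mu)h_t=\beta K_\mu(t,\cdot)$ are the rigorous version of that tilt. They need the continuity of the kernel from Appendix~\ref{sec: Volterra} to evaluate $H^t_\mu$ at $u=t$.

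Your Step~2 gives existence and uniqueness of the fixed point with no large-deviation input. So \cite{Arous1995-zj} is needed only to identify that fixed point with their $Q$, via the integrated Girsanov density of Step~1.

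The trade-off is as follows. The citation gives the paper brevity and the LDP characterization of $\mu$. Your route is self-contained and exhibits the Lipschitz dependence of the limit law on $K_\nu$, which is the same mechanism the paper later uses through \eqref{eq: R contiuity}.

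Your uniqueness step also needs pathwise uniqueness of the linear equation for $B$ given $\mu$. This is exactly the resolvent argument of Lemma~\ref{lem: limit dymamics}: it gives $\int_0^tH^t_\mu(t,s)\,dB_s=\int_0^tR_\mu(t,s)\,dW_s$. Hence $B$ is a Gaussian functional of $W$, independent of $Y_0$.

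Be careful in Step~2 about which relative entropy you bound. It must be the entropy between the marginal laws of $Y$ (equivalently of $B$), that is, after integrating out $G^\nu$. You can compute it in either of two ways:
\begin{enumerate}
\item From the innovation form of the density, $\exp\big(\beta^2\int_0^tZ^\nu_s\,dB_s-\frac{\beta^4}{2}\int_0^t|Z^\nu_s|^2\,ds\big)$ with $Z^\nu_s=\int_0^sH^s_\nu(s,r)\,dB_r$, combined with the $L^2$-stability of $H^s_\nu(s,\cdot)$ in $K_\nu$ proved inside Lemma~\ref{lem: resolvent}.
\item From the explicit Gaussian law of $B$.
\end{enumerate}

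A cheaper bound obtained by coupling the inputs $G^\nu,G^{\nu'}$ and using joint convexity only gives $\mH\lesssim\bbW_2^2\big(\textup{Law}(G^\nu),\textup{Law}(G^{\nu'})\big)$ on $L^2([0,t])$. This can be of order $\|\nu-\nu'\|_{\text{TV}}$ rather than its square; for example, take $\nu=\delta_0$ and $\nu'=(1-\epsilon)\delta_0+\epsilon\delta_x$. With that bound the Gronwall argument does not close.

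Finally, use the causal bound $|K_\nu(u,s)-K_{\nu'}(u,s)|\le 2A^2\|\nu_{[u\vee s]}-\nu'_{[u\vee s]}\|_{\text{TV}}$. Then the estimate takes the form $\|\Phi(\nu)_{[t]}-\Phi(\nu')_{[t]}\|_{\text{TV}}^2\le C\int_0^t\|\nu_{[s]}-\nu'_{[s]}\|_{\text{TV}}^2\,ds$, and Picard iteration converges on all of $[0,T]$.
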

\subsection{Main results}

Throughout the rest of the paper, assume that Assumption \ref{assumption: main} holds. Moreover, unless explicitly stated, all constants may depend on \[(A, \beta, T, C_{\textup{P}}, C_{\textup{Lip}}, C_0,\sigma^2),\] but are crucially independent of $N$. In particular, $C$ will be used to denote a finite positive constant which depends on $(A, \beta, T, C_{\textup{P}}, C_{\textup{Lip}}, C_0,\sigma^2)$ and may change from line to line. Let us note that the constants will depend on the time horizon $T$, thus precluding uniform in time results.

Our main result is the following. 
\begin{theorem}[Quantitative propagation of chaos] \label{sec: main results}

\label{th: main theorem} Fix $k, N \in \bbN$. Let $v \subset \{1, \ldots, N\}$ satisfy $|v| = k$. There exist constants $c_0, c_1 \in (0, \infty)$ such that for every $t\leq T, r\geq 0$ and  every $f:\mC_t^k\to \R$ that satisfies one of the following:
\begin{enumerate}
    \item $f$ is 1-Lipschitz in the $L^2([0, t]; \R^k)$ norm, that is
    \[|f(x)-f(y)|^2\leq \int_0^t |x_s-y_s|^2\,ds, \quad x, y \in \mC^k_t.\] 
    \item There exists $s \in [0, t]$ such that $f(x) = f_s(x_s)$ for a 1-Lipschitz function $f_s:\R^k \rightarrow \R$.
\end{enumerate}
Then we have
\begin{equation}
\label{eq: concentration equation thm}
\bbP\bigg(\,\bigg| \int f(x) \,d\Big( P^{N, v}_{[t]}(\bfJ) - \mu^{\otimes k}_{[t]}\bigg)(x) \bigg|\geq r \bigg) \leq c_0e^{-c_1 r^2\frac{N}{k}}.
\end{equation}
Moreover there exists a constant $C \in (0, \infty)$ such that for all $t \in [0, T]$, we have
\[\bbE\Big[\bbW_1\big(P_{t}^{N,v}(\bfJ), \mu^{\otimes k}_t\big)\Big] \leq \left \{ \begin{aligned}
    & \frac{C}{\sqrt N}, &\quad & k =1\\
    & \frac{C\log N}{\sqrt N}, &\quad & k =2 \\
    &  \frac{Ck}{N^{1/k}}
    , &\quad &k \geq 3.
\end{aligned} \right. \]
\end{theorem}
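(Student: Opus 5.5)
The natural decomposition is bias plus fluctuation. Write
\[
F(\bfJ) := \int f \, dP^{N,v}_{[t]}(\bfJ) = \bbE\big[f(X^v_{[t]})\mid \bfJ\big].
\]
We will (i) prove that $F$ is a Lipschitz function of $\bfJ$ (in Frobenius norm) with constant of order $\sqrt{k/N}$, at least on a high-probability event; and (ii) prove that the annealed mean satisfies
\[
\big|\bbE F(\bfJ) - \int f\, d\mu^{\otimes k}_{[t]}\big| \le C\sqrt{k/N}.
\]
This last bound is essentially the averaged propagation of chaos statement of Theorem \ref{th: averaged propagation of chaos}. Combining (i) with the T$_2$ concentration of Assumption \ref{assumption: main}.3, applied with $m=N^2$, gives a sub-Gaussian tail
\[
\bbP\big(|F-\bbE F|\ge r\big)\le 2e^{-c r^2 N/k}.
\]
Since $f$ is Lipschitz and the state space is bounded, $|f|$ is bounded. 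So for $r\le C\sqrt{k/N}$ the bound \eqref{eq: concentration equation thm} is trivial after enlarging $c_0$, and for larger $r$ the bias is absorbed into $r/2$.

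For step (i) we use a synchronous coupling. Take $\bfJ,\bfJ'$ and drive both systems with the same $X_0$ and $W$. Write $\Delta_s = X_s-X'_s$. Convexity of $U_c$ and the Lipschitz property of $U_\ell'$ give
\[
\tfrac{d}{ds}|\Delta_s|^2 \le 2C_{\rm Lip}|\Delta_s|^2 + \tfrac{2\beta}{\sqrt N}\langle \Delta_s, \bfJ\Delta_s\rangle + \tfrac{2\beta}{\sqrt N}\langle\Delta_s,(\bfJ-\bfJ')X'_s\rangle .
\]
Here $|X'_s|\le A\sqrt N$. On the event $\|\bfJ\|_{\op}\le C\sqrt N$, which has probability $1-e^{-cN}$ under T$_2$, Grönwall yields
\[
\sup_{s\le t}|\Delta_s| \le C\|\bfJ-\bfJ'\|_{\op}\le C\|\bfJ-\bfJ'\|_{\rm Fr}.
\]
This Lipschitz bound for the full vector is of order $1$, not $\sqrt{k/N}$, so it is not enough by itself. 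To gain the factor $\sqrt{k/N}$ we use exchangeability of the conditional means over the choice of $v$: the quantity that concentrates well is the average over disjoint blocks. That averaging does not apply directly to a fixed $v$, however. I would instead compute the Malliavin/Gâteaux derivative of $F$ in $J_{ij}$:
\[
\partial_{J_{ij}}F = \tfrac{\beta}{\sqrt N}\, \bbE\Big[\int_0^t \langle \nabla f, \mathcal{D}^{ij}_s\rangle \,ds \,\Big|\, \bfJ\Big],
\]
where $\mathcal D^{ij}$ is the linearized flow started from the perturbation $e_i X^j_s$. Then $\sum_{ij}|\partial_{J_{ij}}F|^2$ is controlled by the $k$ rows of the resolvent-like flow indexed by $v$, which gives $\|\nabla F\|^2\le Ck/N$. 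Because this gradient bound only holds on the good event, we pass to a globally Lipschitz extension; the bad event costs only $e^{-cN}$. The $L^2$-Lipschitz and time-marginal cases of $f$ are treated identically.

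For step (ii) the route is a filtering/Gaussian-conditioning representation. Condition on the trajectories of the spins outside $v$. The fields $N^{-1/2}\sum_j J_{ij}X^j$, for $i\in v$, are then approximately Gaussian with covariance given by the empirical covariance. The hard part is the feedback, since $X^j$ depends on $J_{ij}$; this is exactly what produces the $H^t_\mu$ drift. I would follow the representation of Section \ref{sect: representation} and compare with the limit via a Girsanov/relative-entropy bound between the $k$-spin law and $\mu^{\otimes k}$ of order $k/N$, then apply T$_2$/Pinsker to obtain $\sqrt{k/N}$. \textbf{This is the main obstacle.}

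Finally, the Wasserstein bounds follow from the concentration estimate. Consider the random signed measure $P^{N,v}_t(\bfJ)-\mu_t^{\otimes k}$ on $[-A,A]^k$. For $k=1$, use the dual formula with a chaining argument over $1$-Lipschitz functions on an interval; the entropy integral is finite, so each fixed function fluctuates at scale $N^{-1/2}$ and the supremum does too, giving $C/\sqrt N$. For $k\ge2$, discretize into Lipschitz functions on a grid of mesh $\varepsilon$. The covering number is $\exp(C\varepsilon^{-k})$, and the union bound costs $\sqrt{k\varepsilon^{-k}/N}$. Optimizing this against $\varepsilon$ gives $\log N/\sqrt N$ for $k=2$ and of order $k N^{-1/k}$ for $k\ge3$, using the standard Dudley bound.
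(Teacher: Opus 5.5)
The skeleton is the same as the paper's: fluctuation around the annealed law, bias of the annealed law, T$_2$ concentration, then Dudley. However, neither load-bearing step is actually supplied.

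\textbf{Step (i) and the chaining.} The mechanism you describe does not produce the factor $N^{-1/2}$. Write $\partial_{J_{ij}}F=\frac{\beta}{\sqrt N}\int_0^t\sfE_\bfJ[w_i(s)X^j_s]\,ds$ with $w(s)=\Phi(t,s)^\top\nabla f$. Bounding $\sum_j|\sfE_\bfJ[w_i(s)X^j_s]|^2$ by Cauchy--Schwarz and $|X_s|^2\le A^2N$ gives only $\sum_{i,j}|\partial_{J_{ij}}F|^2=O(1)$. That is no better than your synchronous coupling, and restricting to the $k$ rows of the linearized flow indexed by $v$ does not change it.

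The gain needs two things. First, the estimate $\sum_j|\sfE_\bfJ[aX^j_s]|^2\le\sfE_\bfJ[a^2]\,\|\sfE_\bfJ[X_sX_s^\top]\|_{\op}$. Second, a bound on the quenched covariance operator norm that is uniform in $N$ when $\|\bfJ\|_{\op}\le C\sqrt N$. This is the missing idea; note that your proposal never uses the Poincar\'e hypothesis on $P_0^N$. In the paper it is Lemma \ref{lm: Poincare inequality}: the Poincar\'e inequality is propagated along the one-sided Lipschitz dynamics, and $\sfE_\bfJ[X_t]=0$ by symmetry.

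The paper then avoids differentiating the flow. Pinsker plus the Girsanov entropy formula give $|F_f(\bfJ)-F_f(\bfJ')|\le C\|f\|_\infty N^{-1/2}\|\bfJ-\bfJ'\|_{\mathrm{Fr}}$ on $S$ (Lemma \ref{lm: Lipschitzianity of integral map}), and $\|f\|_\infty\le A\sqrt k$ after centering. This $\|f\|_\infty$ form is also what your final step requires. Dudley's bound over $1$-Lipschitz functions covered in sup norm needs $F_f-F_g$ to be sub-Gaussian at scale $\|f-g\|_\infty/\sqrt N$. A gradient bound through $\nabla f$ only gives scale $\mathrm{Lip}(f-g)/\sqrt N$, which does not shrink as $\|f-g\|_\infty\to 0$, so the chaining does not close as stated. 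Once the covariance bound is in hand, your derivative route would actually give a $k$-free constant for each fixed $f$. You would still need the sup-norm version for the Wasserstein rates.

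\textbf{Step (ii).} You claim step (ii) for general $\bfJ$, but Theorem \ref{th: averaged propagation of chaos} covers only Gaussian disorder. For T$_2$ disorder the paper also needs the annealed universality estimate of Theorem \ref{th: universality}, which your proposal never mentions.

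Your alternative is a heuristic, not an argument. You would condition on the other spins, treat the local fields as approximately Gaussian, and bound the relative entropy of the $k$-spin law to $\mu^{\otimes k}$ by $O(k/N)$. But quantifying that approximate Gaussianity in the presence of feedback is exactly the difficulty. The mimicked drift of a $k$-spin marginal has no closed form. No T$_2$ inequality for $\mu$ on path space is available. Pinsker with $\|f\|_\infty\lesssim\sqrt k$ would give only $k/\sqrt N$.

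The paper's route has four pieces:
\begin{enumerate}
\item the mimicking theorem plus the explicit Gaussian conditional mean, which give the averaged $N$-particle SDE;
\item Malliavin calculus to move $X_t$ inside the stochastic integral, at cost $\frac{\beta^2}{N}\Lambda$ with $\bbE|\Lambda^i|^2=O(N)$;
\item a synchronous coupling with the $R_\mu$-representation of the limit;
\item for non-Gaussian disorder, a Stein-type bound on the universality error $\fru$ between $\bbE[\bfJ_i\mid\mF^X_t]$ and its Gaussian counterpart, followed by another coupling.
\end{enumerate}
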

\begin{remark}[Generalizations]
    The above theorem can be strengthened by slightly changing the assumptions. 
    \begin{enumerate}
        \item If $\bfJ$ has Gaussian entries, then the concentration estimate \eqref{eq: concentration equation thm} holds also for functions $f$ that are 1-Lipschitz in the sup norm $|f(x)-f(y)|\leq \sup_{t}|x_t-y_t|$. The obstruction for non-Gaussian disorder is purely technical and is caused by the fact that the path space $\bbW_2^2$ distance is not sub-additive, as opposed to its time marginal and $L^2$ versions.
        \item Note that assuming the disorder $\bfJ$ is sub-Gaussian does not imply a T$_2$ inequality. Nonetheless, in this setting our arguments can still establish averaged universality and propagation of chaos, namely
        \begin{align*}
    \bbW_2\Big(\bbE\big[P_{t}^{N,v}(\bfJ)\big],\mu_t^{\otimes k}\Big)+\bbW_{2,L}\Big(\bbE\big[P_{[t]}^{N,v}(\bfJ)\big],\mu_{[t]}^{\otimes k}\Big)=O\big(\sqrt{k/N}\big).
        \end{align*}
        This is the case for many discrete distributions: for example if the entries $J_{ij}$ are Rademacher with $\bbP(J_{ij}=1)=\bbP(J_{ij}=-1)=1/2$.
    \end{enumerate}
\end{remark}

\begin{remark}[Mean-field limits]As a consequence of Theorem \ref{th: main theorem}, we also obtain a quantitative results on the quenched mean-field limit \eqref{eq: intro-poc-1}. In particular, for all $t \in [0, T]$ and for any $g: \mC_T \rightarrow \R$ which satisfies the conditions of Theorem \ref{th: main theorem}, we may apply \eqref{eq: concentration equation thm} with $k = N$, $r = \delta \sqrt{N}$, and \[f(x)= N^{-1/2}\big(g(x^1) + \cdots g(x^N)\big), \quad x \in \mC_T^N,\] to obtain
\[ \bbP\Big(\big| \bbE[\langle g, \hat{\mu}^N_{[t]} \rangle\mid\bfJ] - \bbE[g(Y_{[t]})] \big| \geq \delta \Big) \leq c_0 e^{-c\delta^2 N}. \]
 where $\langle g, \hat{\mu}^N_{[t]} \rangle$ denotes integration of $g$ against the empirical measure. 
\end{remark} 

The proof of Theorem \ref{th: main theorem} is divided into three steps: concentration of measure, universality, and propagation of chaos for the averaged measure $\bbE[P^N(\bfJ)]$. In the remainder of this subsection, we present these results and also sketches of the proof.

\subsubsection{Concentration of measure}
First, we establish that the quenched law concentrates on its mean, which we refer to as the \emph{averaged law}.
\begin{theorem}[Concentration]
\label{th: concentration}
Let $v\subseteq \{1,\dots,N\}$ with $|v|=k$. The following hold:
\begin{enumerate}
    \item There exist constants $C,\theta \in (0, \infty)$ such that for every function $f:\mC_T^k\to \R$ that is $1$-Lipschitz with respect to the supremum norm,
\[\bbE\bigg[\exp\bigg(\frac{\theta N}{k}\bigg|\int_{\mC^k_T}f(x)\,d\big(P^{N,v}_{[T]}(\bfJ)-\bbE\big[P^{N,v}_{[T]}(\bfJ)\big]\big)(x)
\bigg|^2\bigg)\bigg]\leq C.\]
\item There exists a constant $C \in (0, \infty)$ such that 
\[\bbE\Big[\bbW_1\Big(P_{t}^{N,v}(\bfJ), \bbE\big[P_{t}^{N,v}(\bfJ)\big]\Big)\Big] \leq \left \{ \begin{aligned}
    & \frac{C}{\sqrt N}, &\quad & k =1,\\
    & \frac{C\log N}{\sqrt N}, &\quad & k =2, \\
    &  \frac{Ck}{N^{1/k}}
    , &\quad &k \geq 3.
\end{aligned} \right. \]
\end{enumerate}
\label{th: quenched propagation of chaos}

\end{theorem}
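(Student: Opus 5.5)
The plan is to view the quenched expectation $F(\bfJ):=\int f\,dP^{N,v}_{[T]}(\bfJ)=\bbE[f(X^v_{[T]})\mid\bfJ]$ as a function of the $N^2$ i.i.d. entries of $\bfJ$. By Assumption \ref{assumption: main}.3 and the tensorization of T$_2$ (Gozlan), a Lipschitz bound on $F$ gives Gaussian concentration. The difficulty is that $F$ is not globally Lipschitz in $\bfJ$ with a good constant, because the drift $\beta N^{-1/2}\bfJ X$ depends on $\|\bfJ\|_{\op}$. We therefore work on the good event $\mathcal{G}=\{\|\bfJ\|_{\op}\le L\sqrt N\}$. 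Its complement has probability at most $e^{-cN}$ for $L$ large; this follows from T$_2$ concentration of the $1$-Lipschitz map $\bfJ\mapsto\|\bfJ\|_{\op}$ together with $\bbE\|\bfJ\|_{\op}\le C\sqrt N$. On $\mathcal{G}$ we bound the Lipschitz constant of $F$, and then use a Lipschitz extension (McShane/Kirszbraun) of $F|_{\mathcal G}$ to all of $\R^{N\times N}$. Since $|F|\le \|f\|_\infty$ is bounded, and after centering $f$ is bounded by $C\sqrt k$ because paths lie in $[-A,A]^k$, the bad event contributes only $e^{-cN}$ terms. These are harmless for an exponential moment at scale $N/k$.

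\textbf{Step 1 (sensitivity of the quenched law).} Fix $\bfJ,\bfJ'\in\mathcal G$ and couple the two systems synchronously, using the same $W$ and the same $X_0$. Write $\Delta=X-X'$. The convex part $U_c$ yields a monotone term $-(U_c'(X^i)-U_c'(X'^i))\Delta^i\le0$, and $U_\ell'$ is Lipschitz. Together with $\|\bfJ\|_{\op}\le L\sqrt N$, Gronwall gives
\[\sup_{t\le T}|\Delta_t|^2\le C\,\frac{\beta^2}{N}\int_0^T|(\bfJ-\bfJ')X'_s|^2\,ds\le C\,\frac{T A^2 N}{N}\,\|\bfJ-\bfJ'\|^2_{\mathrm{Fr}}\,\cdot\frac{1}{N}\cdot N .\]
This crude bound is too weak, so we need to exploit the fact that only the $k$ coordinates in $v$ enter $f$. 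The key refinement is this: under the coupling, $|F(\bfJ)-F(\bfJ')|\le \bbE[\sup_t|\Delta^v_t|\mid\bfJ,\bfJ']$. By exchangeability-type averaging we want $\bbE|\Delta^v|^2\lesssim \frac{k}{N}\bbE|\Delta|^2\lesssim \frac kN\|\bfJ-\bfJ'\|^2_{\mathrm{Fr}}$. Here we use $|X'_s|^2\le NA^2$, hence $\frac1N|(\bfJ-\bfJ')X'|^2\le A^2\|\bfJ-\bfJ'\|_{\op}^2\le A^2\|\bfJ-\bfJ'\|^2_{\mathrm{Fr}}$, so that $\bbE|\Delta|^2\le C\|\bfJ-\bfJ'\|^2_{\mathrm{Fr}}$. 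The $k/N$ share for the coordinates in $v$ is not automatic, because conditionally on $\bfJ$ there is no exchangeability. Instead we bound the Malliavin/variational derivative $\partial F/\partial J_{ij}$ directly. Differentiating the SDE in $J_{ij}$ gives a linear equation whose solution is controlled by the resolvent of the linearized flow, applied to $\beta N^{-1/2}X^j e_i$. We then show $\sum_{i,j}|\partial_{J_{ij}}F|^2\le Ck/N$ on $\mathcal G$. The entries with $i\in v$ contribute $O(k\cdot N\cdot N^{-1})/N$-type terms directly, and the propagation to $v$ from $i\notin v$ goes through $N^{-1/2}\bfJ$ and is of order $k/N$ after summation, using operator-norm bounds on the propagator. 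This gradient bound is the main obstacle. I expect it to require the linearized (tangent) flow estimates $\|D_t\|\le e^{CT}$ on $\mathcal G$, together with an $\ell^2$ duality argument that pairs the $f$-gradient (norm $\le1$, supported on $k$ coordinates) against the propagator.

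\textbf{Step 2 (part 1).} With $\mathrm{Lip}(F|_{\mathcal G})\le C\sqrt{k/N}$ in Frobenius norm, the T$_2$ tensorized sub-Gaussian bound gives $\log\bbE e^{\lambda(\tilde F-\bbE\tilde F)}\le C\lambda^2\sigma^2 k/N$ for the extension $\tilde F$. Hence $\bbE\exp(\theta\frac Nk|\tilde F-\bbE\tilde F|^2)\le C$ for small $\theta$. Replacing $\tilde F$ by $F$ and $\bbE\tilde F$ by $\bbE F$ costs $O(\sqrt k e^{-cN})$ on $\mathcal G^c$, which is absorbed.

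\textbf{Step 3 (part 2).} For time marginals, the same argument applies to $1$-Lipschitz $f_t:[-A,A]^k\to\R$, giving sub-Gaussian fluctuations at scale $\sqrt{k/N}$ uniformly in $f$. For $k=1$, we use $\bbW_1(\alpha,\beta)=\int|F_\alpha-F_\beta|$ on $[-A,A]$ with $f=(x-a)_+$-type test functions and get $C/\sqrt N$ by integrating over $a$. For $k\ge2$, we apply a chaining/covering argument. Discretize $[-A,A]^k$ at mesh $\delta$; the $1$-Lipschitz class has $\delta$-entropy $\exp(C(A/\delta)^k)$. A Dudley bound with sub-Gaussian increments of size $\sqrt{k/N}$ then yields $\bbE\sup_f\le C\inf_\delta\big(\delta+\sqrt{k/N}\int_\delta^{A}(A/\epsilon)^{k/2}d\epsilon\big)$. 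This gives $C\log N/\sqrt N$ for $k=2$ and $Ck N^{-1/k}$ for $k\ge3$ after optimizing $\delta$.
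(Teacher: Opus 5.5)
Your Step 1 carries the whole content of the theorem, and the mechanism you sketch cannot produce the needed $N^{-1/2}$ gain. For a time-marginal observable, $\partial_{J_{ij}}F=\frac{\beta}{\sqrt N}\int_0^t\sfE_\bfJ[a_i(s)X^j_s]\,ds$. Here $a(s)=\Phi(t,s)^\top\nabla f(X^v_t)$, with $\nabla f$ extended by zero off $v$, $\Phi$ the tangent propagator, and $|a(s)|\le e^{CT}$ on $\mathcal G$. Every tool you list is pathwise: operator-norm bounds on $\Phi$, $|X_s|^2\le NA^2$, and $\ell^2$ duality against $\nabla f$. Pathwise bounds only give $\sum_j|\sfE_\bfJ[a_i(s)X^j_s]|^2\le NA^2\,\sfE_\bfJ[a_i(s)^2]$, hence $\sum_{i,j}|\partial_{J_{ij}}F|^2=O(1)$. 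That is no better than your crude coupling. Nothing in those constraints excludes $X_s\equiv(A,\dots,A)$, $a\equiv e_1$, for which the sum is of order one. The extra $1/N$ in your count ``$O(k\cdot N\cdot N^{-1})/N$'' is not produced by anything in the argument.

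The gain has to come from averaging over the Brownian noise, through the quenched decorrelation bound
\[\big\|\sfE_\bfJ[X_tX_t^\top]\big\|_{\op}\le C\exp\big(2t\beta N^{-1/2}\|\bfJ\|_{\op}\big).\]
The paper proves this by propagating the Poincar\'e inequality of $P_0^N$ along the flow; the drift is one-sided Lipschitz with constant $C_{\textup{Lip}}+\beta N^{-1/2}\|\bfJ\|_{\op}$. It also uses $\sfE_\bfJ[X_t]=0$, which follows from evenness of $U$ and symmetry of $P_0^N$. With this bound, $\sum_j|\sfE_\bfJ[a_iX^j_s]|^2\le\sfE_\bfJ[a_i^2]\,\|\sfE_\bfJ[X_sX_s^\top]\|_{\op}$, and your gradient computation would close (for time marginals even with constant $C/\sqrt N$). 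Without it, it does not.

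There is a second, independent gap in Step 3. Suppose you grant $\mathrm{Lip}(F_f)\le C\sqrt{k/N}\,\mathrm{Lip}(f)$. The Dudley integral you write, with sup-norm covering numbers, needs increments $F_f-F_g$ that are sub-Gaussian with parameter proportional to $\|f-g\|_\infty$. For $f,g$ in the $1$-Lipschitz class, $\mathrm{Lip}(f-g)$ can stay of order one however small $\|f-g\|_\infty$ is. So a Lipschitz-type sensitivity bound gives increments of size $N^{-1/2}$ at every scale, and the chaining then only yields worse rates, e.g.\ about $N^{-1/3}$ for $k=1$. Your $k=1$ shortcut via $\int|F_\alpha-F_\beta|$ has the same problem, since it needs indicator test functions.

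The paper's route gives a bound in terms of $\|f\|_\infty$ for every bounded $f$. It starts from $|F_f(\bfJ)-F_f(\bfJ')|\le\|f\|_\infty\|P_{[T]}(\bfJ)-P_{[T]}(\bfJ')\|_{\mathrm{TV}}$ and then applies Pinsker and Girsanov to the full $N$-particle path laws:
\[\mH\big(P_{[T]}(\bfJ)\,\big\|\,P_{[T]}(\bfJ')\big)=\frac{\beta^2}{2N}\int_0^T\sfE_\bfJ\big[|(\bfJ-\bfJ')X_t|^2\big]\,dt\le\frac{\beta^2}{2N}\|\bfJ-\bfJ'\|_{\mathrm{Fr}}^2\int_0^T\big\|\sfE_\bfJ[X_tX_t^\top]\big\|_{\op}\,dt.\]
Here the quenched expectation sits outside the quadratic form, so the covariance bound applies directly. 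This gives $|F_f(\bfJ)-F_f(\bfJ')|\le C\|f\|_\infty N^{-1/2}\|\bfJ-\bfJ'\|_{\mathrm{Fr}}$ for $\bfJ\in S$, which does three things:
\begin{enumerate}
\item it handles sup-norm Lipschitz path functionals without differentiating $f$;
\item it yields the $\sqrt{k/N}$ scale through $\|f\|_\infty\le A\sqrt k$ after centering;
\item it supplies the $\|f-g\|_\infty/\sqrt N$ increments that chaining requires.
\end{enumerate}
Once this estimate is available, your Step 2 is a valid alternative to the paper's Bobkov--Nayar--Tetali conditioning on $S$. It uses the good event together with a bounded Lipschitz extension of $F$.
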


Theorem \ref{th: quenched propagation of chaos} is proved in Section \ref{sect: concentration}. The main idea is to establish a suitable regularity property of the map $\bfJ \mapsto P^{N, v}_{[t]}(\bfJ)$ via an entropy-Girsanov argument. We establish that for every bounded function $f:\mC_t^{k}\to \R$,
\begin{align*}
    \int_{\mC_t^k} f(x)d\big(P^{N,v}_{[t]}(\bfJ)-P^{N,v}_{[t]}(\bfJ')\big)(x)\leq \frac{C\|f\|_\infty}{\sqrt N}\exp\big(CN^{-1/2}\|\bfJ\|_{\text{op}}\big)\|\bfJ-\bfJ'\|_{\text{Fr}}.
\end{align*}
In this sense, the map $\bfJ\mapsto P^N(\bfJ)$ is $N^{-1/2}$-Lipschitz on the event $\|\bfJ\|_{\text{op}}\leq C\sqrt N$, which occurs with high probability. We combine a theorem of \cite{Bobkov-Nayar-Tetali-2017-nonLip} about concentration of Lipschitz maps for restricted measures with classical concentration of measure results to obtain the first claim. The second point follows by a chaining argument.

\subsubsection{Averaged propagation of chaos}
Next, we specialize to Gaussian disorder and show quantitatively that the averaged law converges to $\mu^{\otimes k}$. More precisely, consider a random matrix $\bfG=(G_{ij})_{i,j=1}^N$ with i.i.d. standard Gaussian entries and the associated SDE
\begin{align}
\label{eq: quenched equation gaussian}
    dX_t^i=- U'(X_t^i)\,dt+\frac\beta{\sqrt N} X_t\cdot \bfG_{i}\,dt+dW_t^i,
\end{align}
where $\bfG_i$ is the $i$-th row of the random matrix $\bfG$.
\begin{theorem}[Averaged propagation of chaos] Let $v\subseteq \{1,\dots,N\}$ with $|v|=k$. There exists a constant $C \in (0, \infty)$ such that
    \label{th: averaged propagation of chaos}
    \[\bbW^2_2\Big(\bbE\big[P^{N, v}_{[t]}(\bfG)\big], \mu^{\otimes k}_{[t]}\Big) \leq \frac{Ck}{N}. \]
\end{theorem}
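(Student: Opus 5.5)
We prove Theorem \ref{th: averaged propagation of chaos} with a coupling argument in which the Gaussian disorder is integrated out: under the annealed law the interaction fields become Gaussian processes whose covariance is the empirical covariance of the system. The key observation concerns the field $Z^i_t := N^{-1/2}\sum_j G_{ij}X^j_t$. Conditionally on the whole path $X$, and before any feedback, it is a centered Gaussian process with covariance $\hat K_N(t,s) = N^{-1}\sum_j X^j_tX^j_s$, independent across $i$. Feedback ($X$ depends on $\bfG$) breaks this. Filtering theory restores it: we compute the conditional law of the drift $\beta Z^i$ given the observation filtration of the particles $\sigma(X_{[t]})$. By Gaussian conditioning this gives the annealed representation
\[
dX^i_t = -U'(X^i_t)\,dt + dB^i_t,\qquad dB^i_t = \beta^2\Big[\int_0^t H^t_{\hat\mu^N}(t,s)\,dB^i_s\Big]dt + d\tilde W^i_t,
\]
with $\tilde W$ an $N$-dimensional Brownian motion in the observation filtration. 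This is the finite-$N$ analogue of \eqref{eq: limit dynamics 0}, with $\mu$ replaced by the empirical measure $\hat\mu^N$. It holds up to an error from the self-interaction terms ($G_{ii}$, and the dependence of $X^j$ on row $i$), which should contribute $O(1/N)$ in squared norm. To make this rigorous I would follow Ben Arous--Guionnet: write the Girsanov density with respect to the free dynamics and integrate the Gaussian $\bfG$ explicitly. The result is an exponential of a quadratic functional, determined by $\hat K_N$, which is exactly the innovation form above.

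Next we couple. Let $Y^i$, $i\le N$, be i.i.d. copies of the limit \eqref{eq: limit dynamics 0}, driven by the same $\tilde W^i$ and with initial data $W_2$-optimally coupled to $P_0^N$. Assumption \ref{assumption: main}.2 gives $\sum_i \bbE|X^i_0-Y^i_0|^2\le C_0^2$. Both $B$-equations are linear Volterra equations, so we can solve for $B^i$ as a resolvent $R_{\hat\mu^N}$ applied to $\tilde W^i$. We then need a stability estimate for the map $\nu\mapsto H_\nu$ (and its resolvent) in terms of covariances. Since $\|H_\nu\|\le\beta^{-2}$ and $H=K(\beta^2K+I)^{-1}$, the resolvent identity gives
\[
\|H_\nu-H_{\nu'}\|_{\rm HS}\le \|K_\nu-K_{\nu'}\|_{L^2([0,T]^2)}.
\]
The same kind of bound holds pathwise for the resolvent. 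We need a pathwise version because $H$ acts on stochastic integrals: for deterministic smooth kernels we would use It\^o isometry, and handle the randomness of $\hat\mu^N$ by a freezing argument or Malliavin-calculus bounds, since $H_{\hat\mu^N}$ is not adapted as a whole-interval kernel. Writing $D_t:=N^{-1}\sum_i\bbE\sup_{s\le t}|X^i_s-Y^i_s|^2$, a Gr\"onwall argument then gives
\[
D_t\le \frac CN + C\int_0^t D_s\,ds + C\,\bbE\|K_{\hat\mu^N}-K_\mu\|_{L^2}^2 .
\]
Here we use the convex-plus-Lipschitz splitting of $U$: the convex part is monotone and only helps, so $U'$ need not be Lipschitz. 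Finally,
\[
\|K_{\hat\mu^N}-K_\mu\|\le \|K_{\hat\mu^N}-K_{\hat\nu^N}\|+\|K_{\hat\nu^N}-K_\mu\|,
\]
where $\hat\nu^N$ is the empirical measure of the $Y^i$. Boundedness by $A$ bounds the first term by $C\,(N^{-1}\sum_i\sup|X^i-Y^i|^2)^{1/2}$, whose square has expectation at most $C D_t$. The second term is an i.i.d. law-of-large-numbers term, bounded by $C/N$ in expectation of the square. Gr\"onwall then gives $D_T\le C/N$. By exchangeability each $\bbE\sup|X^i-Y^i|^2\le C/N$, so summing over $i\in v$ gives $\bbW_2^2\le Ck/N$ for the product $\mu^{\otimes k}$, since the $Y^i$ are independent.

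The main obstacle is the second step. We must justify rigorously that the annealed dynamics have the innovation form with the empirical kernel, controlling the error from the correlation between $\bfG$ and the trajectories. We also need Lipschitz control of the non-adapted Volterra operator $H^t_{\hat\mu^N}$ acting on semimartingales, where the kernel itself depends on $t$ through the horizon. I would first try the Girsanov/Gaussian-integration route of \cite{Arous1995-zj} to get an exact identity. If that is too rigid, I would fall back on Malliavin integration by parts with respect to $\bfG$, via the Gaussian identity $\bbE[G_{ij}F]=\bbE[\partial_{G_{ij}}F]$, to quantify the feedback terms at order $N^{-1/2}$ per particle.
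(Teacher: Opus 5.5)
Your overall architecture matches the paper's. It uses Gaussian conditioning plus the mimicking theorem to obtain annealed dynamics with the empirical kernel, then synchronous coupling with i.i.d. copies of the limit in resolvent form $\beta^2\int_0^tR_\mu(t,s)\,dW_s$. From there it uses stability of $\nu\mapsto R_\nu$ in terms of $K_\nu$ (Lemma \ref{lem: resolvent}), the split of $K_{\hat\mu^N}-K_\mu$ through the empirical measure of the $Y^i$, the convex/Lipschitz splitting of $U$, Gr\"onwall, and exchangeability. However, your first step is exact, not approximate. Under the reference measure the Girsanov density factorizes over the rows $\bfG_i$ and is exponential-quadratic in each, so $\bbE[\bfG_i\mid\mF^X_t]=\frac{\beta}{\sqrt N}Q_t\int_0^tX_s\,dV^i_s$ with no self-interaction error. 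The drift of $V^i$ (your $B^i$) is then exactly $\frac{\beta^2}{N}X_t^\top Q_t\int_0^tX_s\,dV^i_s$. This is your $\beta^2\int_0^tH^t_{\hat\mu^N}(t,s)\,dB^i_s$, read with the $\mF_t$-measurable factor $X_t^\top Q_t$ pulled out of the It\^o integral. It\^o's formula for $Q_t\int_0^tX_s\,dV^i_s$ turns this into $\frac{\beta^2}{N}X_t^\top\int_0^tQ_sX_s\,dW^i_s$, with $W$ the innovation Brownian motion. The real difficulty lies elsewhere.

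The gap is the step you flag as the main obstacle and then assume in your inequality for $D_t$. That inequality needs
\[
\bbE\Big|\frac{\beta^2}{N}X_t^\top\int_0^tQ_sX_s\,dW^1_s-\beta^2\int_0^tR_\mu(t,s)\,dW^1_s\Big|^2\lesssim\bbE\int_0^t|R_{\hat\mu^N}(t,s)-R_\mu(t,s)|^2\,ds+\frac CN,
\]
where $R_{\hat\mu^N}(t,s)=\frac1NX_t^\top Q_sX_s$ depends on $X_t$, hence on $W^1$ over $[s,t]$.
\begin{itemize}
\item Solving the Volterra equation by the resolvent is legitimate only in the pull-out sense above, and in that sense there is no isometry.
\item A freezing argument is not available on the $X$ side, because every coordinate $X^j$ depends on $W^1$ through the interaction.
\item Integration by parts in $\bfG$ targets the wrong noise: after mimicking, the anticipation is with respect to $W$, not $\bfG$.
\end{itemize}
The paper closes this with Malliavin calculus in $W$. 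The commutation formula \eqref{eq: Malliavin commute} rewrites the drift as a Skorokhod integral $\beta^2\int_0^tR_{\hat\mu^N}(t,s)\,dW^1_s$ plus $\frac{\beta^2}{N}\Lambda^1(t)$, with $\Lambda^1(t)=\int_0^tD^1_sX_t^\top Q_sX_s\,ds$. The Skorokhod isometry then gives your $L^2$ term plus the trace term $\bbE\int_0^t\int_0^tD^1_uR_{\hat\mu^N}(t,s)\,D^1_sR_{\hat\mu^N}(t,u)\,ds\,du$. Both corrections are $O(1/N)$ only because of two $N$-uniform bounds from Proposition \ref{pr: Malliavin calculus estimates}: $\bbE|D^i_uX_t|^2\le C$ for the whole vector $X_t$, and $\bbE\|D^i_sQ_t\|_{\op}^2\le C/N$.

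Proving these bounds is the technical core, because the coefficients are not Lipschitz (explosive $U$, quadratic interaction $X_t\cdot Z^i_t$). The paper regularizes, closes the estimates with the stochastic Gr\"onwall inequality using exponential moments of $N^{-1/2}\|Z_t\|_{\text{Fr}}$, and passes to the limit by weak compactness in $\bbL^{1,2}$. Without this ingredient, or a genuine substitute, your bound $D_T\le C/N$ is unproved.

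A smaller point: a Hilbert--Schmidt bound for $H^t_\nu-H^t_{\nu'}$ on $[0,t]^2$ does not control the slice $H^t(t,\cdot)$ that defines the Volterra kernel. The paper gets the slice estimate by inverting $I+\beta^2K^t_\mu$ in the slice equation of Appendix \ref{sec: Volterra}.
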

The proof of Theorem \ref{th: averaged propagation of chaos} can be found in Section \ref{sect: averaged propagation of chaos}. We begin by obtaining a slightly different representation for the limit law $\mu$ from \eqref{eq: limit dynamics 0}:
\begin{align*}
    dY_t=-U'(Y_t)\,dt+\beta^2\bigg[\int_0^t R_\mu(t,s)\,dW_s\bigg]\,dt+dW_t,
\end{align*}
where $W$ is a standard Brownian motion and $R_\mu(t,s)$ is a kernel that depends on $\mu$, see Lemma \ref{lem: limit dymamics}. Let us note that a similar representation appeared recently in \cite{Faugeras2025-bp}. 

A key difficulty is to describe the averaged dynamics of the $N$-particle system. To do this, we average over the disorder using the \emph{mimicking theorem}, a powerful tool from filtering theory which allows one to characterize any It\^{o} process as the solution of a SDE (see e.g., Theorem \ref{t:mimicking} or \cite{Brunick-Shreve-2013-Mimicking}). In the present context, this yields a useful SDE representation for $\bbE[P^N(\bfG)]$:
\begin{align}
\label{eq: intro averaged equation CE}
    dX_t^i=-U'(X_t^i)\,dt+\frac{\beta}{\sqrt N}X_t\cdot \bbE[\bfG_{i}\mid X_{[t]}]\,dt+dW_t^i,
\end{align}
where $(W^1, \ldots, W^N)$ is a standard $N$-dimensional Brownian motion. Since the disorder is Gaussian, it is possible to compute the conditional expectation in the SDE, yielding a final SDE representation for the averaged law:
\[dX_t^i=-U'(X_t^i)\,dt+\frac{\beta^2}{N}\bigg[ X_t\cdot \int_0^t Q_sX_s \,dW_s^i \bigg]dt+dW_t^i,\]
where $Q_t$ is an $N\times N$ matrix that depends on the path of $X_{[t]}$. 

This representation turns out to be quite insightful and highlights some of the key difficulties of the model. On the one hand it possesses the more classical $1/N$ McKean--Vlasov scaling instead of the $1/\sqrt N$ present in the original \eqref{eq: intro quenched dynamics}. On the other, it shows that the averaged dynamics evolve according to non-Markovian dynamics. In particular the interaction between particles happens through a stochastic integral, which is unusual and challenging to manipulate. Indeed, one would like to pass $X_t$ under the stochastic integral, since in Lemma \ref{lem: N kernels} below we show that
\[\frac{1}{N} X_t^\top Q_s X_s = R_{\hat{\mu}^N}(t, s), \quad \hat{\mu}^N = \frac{1}{N}\sum_{i = 1}^N \delta_{X^i}. \]
This would put the averaged dynamics in the same form as the limit dynamics and Theorem \ref{th: averaged propagation of chaos} would follow from a standard coupling argument.
Unfortunately swapping of $X_t$ and the stochastic integral is impossible since the integrand of an It\^{o} integral needs to be adapted. 

To address this problem, we use the theory of Malliavin calculus and Skorokhod integrals. A standard fact of Malliavin calculus allows us to pass $X_t$ under the stochastic integral, at the cost of an error term. In particular,
\begin{align*}
    \frac{\beta^2}{N}X_t\cdot \int_0^t Q_sX_s \,dW_s^i= \frac{\beta^2}{N}\int_0^t X_t\cdot Q_sX_s \,d W_s^i+\frac{\beta^2}{N}\int_0^t D_s^i X_t \cdot Q_s X_s\,ds,
\end{align*}
where the stochastic integral on the right-hand side is interpreted as a Skorokhod integral, and $D_s^i X_t$ denotes the $i$-th coordinate of the Malliavin gradient of $X_t$. A key part of the proof is showing that the extra Malliavin term goes to zero, which we establish in Proposition \ref{pr: Malliavin calculus estimates} and Corollary \ref{cor: Lambda-estimate}. The proof of Proposition \ref{pr: Malliavin calculus estimates} is complicated by the fact that the averaged SDE has non-Lipschitz (and non-Markovian) coefficients. Overcoming this difficulty is a key technical contribution of this work, the details of which may be found in Section \ref{sect: malliavin}.
\subsubsection{Universality}
The final ingredient is the following quantitative universality result, which controls the Wasserstein distance between general and Gaussian disorder, denoted $\bfJ$ and $\bfG$ respectively.
\begin{theorem}[Quantitative universality]
    \label{th: universality} 
Let $v\subseteq \{1,\dots,N\}$ with $|v|=k$. Assume that $\bfG=(G_{ij})_{ij}$ is a matrix with i.i.d. standard Gaussian entries, while $\bfJ=(J_{ij})_{ij}$ is a matrix with i.i.d. sub-Gaussian entries with mean $0$ and variance $1$. There exists a constant $C \in (0, \infty)$ such that 
    \[\bbW^2_{2,L}\Big(\bbE\big[P^{N, v}_{[t]}(\bfG)\big],\bbE\big[P^{N, v}_{[t]}(\bfJ)\big]\Big)+\bbW^2_{2}\Big(\bbE\big[P^{N, v}_{t}(\bfG)\big],\bbE\big[P^{N, v}_{t}(\bfJ)\big]\Big) \leq \frac{Ck}{N}. \]
\end{theorem}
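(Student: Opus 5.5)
We plan to prove Theorem \ref{th: universality} by comparing the two averaged laws through their mimicking (filtering) representations, in the same spirit as the proof of Theorem \ref{th: averaged propagation of chaos}. By the mimicking theorem, $\bbE[P^N(\bfJ)]$ is the law of the solution to
\[dX_t^i=-U'(X_t^i)\,dt+\frac{\beta}{\sqrt N}X_t\cdot \bbE[\bfJ_i\mid X_{[t]}]\,dt+dW_t^i,\]
and similarly for $\bfG$. However, for non-Gaussian $\bfJ$ the conditional expectation has no closed form. We therefore intend to avoid a direct comparison of the two filters and instead use a Lindeberg-type interpolation. We replace the entries (or rows) of $\bfJ$ by those of $\bfG$ one at a time, or use the smart path $\bfJ^\theta=\sqrt{\theta}\,\bfG+\sqrt{1-\theta}\,\bfJ$. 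For each intermediate matrix we control the change in the averaged law.

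For the path-space $\bbW_{2,L}$ and time-marginal $\bbW_2$ distances, we will use a synchronous coupling. We drive the quenched system \eqref{eq: intro quenched dynamics} with disorder $\bfJ$ and with disorder $\bfG$ by the same Brownian motions and the same initial condition. A direct coupling of $\bfJ$ and $\bfG$ gives $O(1)$ per-particle discrepancies, so it is only useful when combined with the averaging. Our concrete plan is therefore the following.

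\textbf{Step 1.} Write $\bbE[f(X^v)]$ for smooth test functions via the Girsanov density relative to the noninteracting diffusions $dZ^i=-U'(Z^i)dt+dW^i$:
\[\mathcal{E}(\bfJ)=\exp\Big(\frac{\beta}{\sqrt N}\sum_{i,j}J_{ij}\int_0^t Z^j_s\,dW^i_s-\frac{\beta^2}{2N}\sum_i\int_0^t\Big(\sum_j J_{ij}Z^j_s\Big)^2ds\Big).\]

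\textbf{Step 2.} Regard $\bbE_{\bfJ}[\mathcal E(\bfJ)f(Z^v)]$ as a smooth function $F$ of the $N^2$ entries and Taylor expand in each entry to third order. The first two moments of $J_{ij}$ and $G_{ij}$ match, so each swap costs
\[\bbE|J|^3\,\sup|\partial^3_{ij}F|.\]
Each derivative in $J_{ij}$ brings a factor $N^{-1/2}$, since $Z$ is bounded by $A$ and the stochastic integrals have Gaussian moments. Summing $N^2$ swaps at $N^{-3/2}$ each gives only $O(\sqrt N)$, so a naive bound fails. We must exploit that $f$ depends only on coordinates in $v$. For $i\notin v$, the derivatives in $J_{ij}$ of the row-$i$ factor, after integrating out $Z^i$ against $f$, produce cancellations (centered martingale terms). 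This should yield the sharper bound $O(k/N)$ overall, where only the $kN$ entries in rows indexed by $v$, plus correlated effects, contribute at leading order.

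\textbf{Step 3.} Convert the resulting weak-distance bound into the $\bbW_2^2$ and $\bbW_{2,L}^2$ bounds. Here we use boundedness of the state space $[-A,A]^k$ and the Lipschitz structure of the averaged SDE. Alternatively, we could run the comparison at the level of the averaged SDEs: write both as perturbations of the Gaussian-averaged dynamics with drift $\frac{\beta^2}{N}X_t\cdot\int_0^tQ_sX_s\,dW_s^i$, and bound the difference of the conditional-mean drifts in $L^2$ by $C/N$ per coordinate, summed over $k$ coordinates.

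The main obstacle is Step 2: obtaining $k/N$ rather than a bound growing with $N$. This requires showing that swapping an entry in a row $i\notin v$ affects the $v$-marginal only at order $N^{-2}$. To do so, we would use the sub-Gaussian moment bounds on $\|\bfJ\|_{\op}/\sqrt N$ to control the exponential Girsanov weights, restricting to the high-probability event $\|\bfJ\|_{\op}\le C\sqrt N$ as in the concentration argument. We would also use a cumulant expansion showing that the second-order terms for off-$v$ rows are themselves $O(N^{-1})$ uniformly.
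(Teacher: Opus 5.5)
Your Lindeberg route has a genuine gap at exactly the step you flag, and even if that step worked, the surrounding bookkeeping would not give the claimed rate.

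\emph{Step 2 is asserted, not proved, and the heuristic does not add up.} About $(N-k)N$ off-$v$ entries at cost $O(N^{-2})$ each sum to $O(1)$. The $kN$ entries in the rows of $v$, at the naive cost $N^{-3/2}$, sum to $O(kN^{-1/2})$. Neither is $O(k/N)$. In addition, restricting to $\{\|\bfJ\|_{\op}\le C\sqrt N\}$ destroys the entrywise independence that Lindeberg moment matching relies on.

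\emph{Step 3 fails as stated.} In your representation the $J_{ij}$-derivatives only hit $\mathcal E(\bfJ)$, never $f$. So Lindeberg produces a total-variation bound $\epsilon$. On $[-A,A]^k$, which has diameter $2A\sqrt k$, this yields only $\bbW_2^2\le 4A^2k\,\epsilon$, i.e.\ $k^2/N$ if $\epsilon=k/N$. Nothing in the averaged dynamics lets you upgrade this: the drift $\frac{\beta}{\sqrt N}X_t\cdot Z^i_t$ is non-Markovian in $X$ and not Lipschitz. Note also that $\bbW_2^2\le Ck/N$ only asks for per-particle errors of order $N^{-1/2}$, which is what a coupling naturally delivers. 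Aiming at $O(k/N)$ in a weak metric is stronger than needed and, by the previous point, still insufficient.

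The missing idea is to do the moment matching on the \emph{drift} rather than on the law. Your closing ``alternative'' gestures at this but lacks the key computation.

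By Bayes and Girsanov,
$\bbE[\bfJ_i\mid\mF^X_t]=\bbE_\bbQ[\bfJ_i f_t(\bfJ_i)\mid\mF^X_t]/\bbE_\bbQ[f_t(\bfJ_i)\mid\mF^X_t]$, where $f_t$ is Gaussian in its argument. Since
$\nabla f_t(a)=f_t(a)\big[\frac{\beta}{\sqrt N}\int_0^tX_s\,dV_s^i-\frac{\beta^2}{N}\int_0^tX_sX_s^\top ds\,a\big]$,
the deviation from the Gaussian closed form is a Stein discrepancy:
$\fru_t^i=Q_t\,\bbE_\bbQ[\bfJ_if_t(\bfJ_i)-\nabla f_t(\bfJ_i)\mid\mF^X_t]/\bbE_\bbQ[f_t(\bfJ_i)\mid\mF^X_t]$.

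A second-order Taylor expansion in each $J_{ij}$, using only mean $0$ and variance $1$, leaves a remainder involving $\partial_j^2 f_t$, which carries a factor $\beta^2/N$. You then substitute $dV^i=\frac{\beta}{\sqrt N}\bfJ_i\cdot X_t\,dt+dB^i$, which removes the $dV^i$-integral, and control the density ratios $f_t(rJ_{ij},J_{i,-j})/f_t(\bfJ_i)$. Because the error enters through the pairing $\fru^i_t\cdot X_t$, the per-entry $1/N$ errors add in $\ell^2$ over $j$ rather than in $\ell^1$ over all $N^2$ entries. This gives $\bbE|\fru^i_t\cdot X_t|^4\le C$.

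Hence the $\bfJ$-averaged SDE is the Gaussian-averaged SDE plus drift errors of size $N^{-1/2}$ per coordinate. These include a feedback term that arises when $Q_t\int_0^tX_s\,dV_s^i$ is rewritten as a $dW^i$-integral.

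You must then couple \emph{all} $N$ coordinates synchronously with the Gaussian-averaged system, not just those in $v$, because each drift depends on every coordinate through $Z^i$ and $Q$. The stochastic Gr\"onwall inequality handles the coefficient $N^{-1/2}\|\tlZ_t\|_{\mathrm{Fr}}$, which is only exponentially integrable. This gives the global bound $\bbE\sup_t|X_t-\tlX_t|^2\le C$, and exchangeability converts it into $Ck/N$ for any $k$ coordinates.
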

To prove universality we exploit the SDE representation of the averaged dynamics. Indeed if we denote by $X_t$ the SDE with disorder $\bfJ$, we obtain the equivalent of  \eqref{eq: intro averaged equation CE}:
\begin{equation}
  dX_t^i=-U'(X_t^i)\,dt+\frac{\beta}{\sqrt N} X_t \cdot \bbE[\bfJ_i\mid X_{[t]}]\,dt+dW_t^i.
  \label{eq: intro universal mimicked SDE}
\end{equation}
\[\]
The main difficulty is that no closed form for the conditional expectations is available in the case of non-Gaussian disorder $\bfJ$. However using Bayes' theorem, we have
\[\bbE[\bfJ_i \mid X_{[t]}]= \frac{\bbE_\bbQ[\bfJ_i f(\bfJ_i)\mid X_{[t]}]}{\bbE_\bbQ[f(\bfJ_i)\mid X_{[t]}]}.\]
where $\bbQ$ is the law of the solution of \eqref{eq: intro quenched dynamics} at $\beta = 0$ and $f:\R^{N} \rightarrow \R_+$ is a function given by Girsanov's theorem related to $d\bbP/d\bbQ$.

The approach we take is inspired by Stein's method, since in the case of Gaussian disorder we have $\bbE_\bbQ[\bfG_i f(\bfG_i) \mid X_{[t]}]=\bbE_\bbQ[\nabla f(\bfG_i)\mid X_{[t]}]$. 
By slight abuse of notation, let $\bbE[\bfG_i \mid X_{[t]}]$ denote the drift function in the second term of \eqref{eq: intro averaged equation CE}, evaluated along the path $X$ which solves \eqref{eq: intro universal mimicked SDE}. Since $\bbE[\bfG_i \mid X_{[t]}]$ has a closed form solution, we may compute
\[\bbE[\bfJ_i \mid X_{[t]}]-\bbE[\bfG_i\mid X_{[t]}]= \bigg(I+\frac{\beta^2}{N}\int_0^t X_sX_s^\top\,ds\bigg)^{-1}\, \frac{\bbE_\bbQ[\bfJ_i f(\bfJ_i)-\nabla f(\bfJ_i)]}{\bbE_\bbQ[f(\bfJ_i)]}.\]
We then check that the right hand side is small using a calculation reminiscent of those appearing in Stein's method arguments. Since the error is small, a coupling argument shows that solutions to \eqref{eq: intro averaged equation CE} and \eqref{eq: intro universal mimicked SDE} must be close to each other. This then yields Theorem \ref{th: universality}.
\subsubsection{Conclusion}
Given Theorems  \ref{th: concentration}, \ref{th: averaged propagation of chaos}, \ref{th: universality}, we easily conclude Theorem \ref{th: main theorem}.
\begin{proof}[Proof of Theorem \ref{th: main theorem}] 
For the first item, notice that by Markov's inequality and the triangle inequality of Wasserstein distance for arbitrary $\alpha \in (0, \infty)$ we have
    \begin{align*}
        &\log \bbP\bigg(\,\bigg|\int_{\mC^k_T}f(x)\,d\big(P^{N, v}_{[T]}(\bfJ)-\mu_{[T]}^{\otimes k}\big)(x)\bigg|\geq r\bigg) \\ \leq &{ -\frac{\alpha r^2N}{k}} + \log\bbE\bigg[\exp\bigg(\frac{N\alpha}{k}\bigg|\int_{\mC^k_T}f(x)\,d\big(P^{N, v}_{[T]}(\bfJ)-\mu_{[T]}^{\otimes k}\big)(x)\bigg|^2\bigg)\bigg]
         \\ \leq &{-\frac{\alpha r^2 N}{k}} +\frac{3\alpha N}{k}\bbW_{1,L}^2\Big(\bbE\big[P_{[T]}^{N, v}(\bfG)\big],\mu^{\otimes k}_{[T]}\Big)+\frac{3\alpha N}{k}\bbW_{1,L}^2\Big(\bbE\big[P_{[T]}^{N, v}(\bfJ)\big],\bbE\big[P_{[T]}^{N, v}(\bfG)\big]\Big) \\
         &\hspace{3.5 em}+ \log\bbE\bigg[\exp\bigg(\frac{3N\alpha}{k}\bigg|\int_{\mC^k_T}f(x)\,d\big(P^{N, v}_{[T]}(\bfJ)-\bbE\big[P^{N, v}_{[T]}(\bfJ)\big]\big)(x) \bigg|^2\bigg)\bigg].
    \end{align*}
    By Theorems \ref{th: averaged propagation of chaos} and \ref{th: universality} and the fact that $\bbW_{1,L}\leq \bbW_{2,L}$, the two $\bbW_{1,L}$ terms are bounded by a constant. If $\alpha < \theta/(2T)$, Theorem \ref{th: concentration} yields that the last term is bounded independently of $k$ and $N$.
     
     The second item follows immediately by the triangle inequality of Wasserstein distance and Theorems \ref{th: concentration}(1), \ref{th: averaged propagation of chaos} and \ref{th: universality}. 
\end{proof}

\subsection{Related work on propagation of chaos} \label{ss: chaos review} The key motivation for the present work is to study the propagation of chaos phenomenon in disordered systems. Historically, propagation of chaos has been studied in the context of mean-field interacting diffusions of the form
\[dX_t^i= \frac1N \sum_{j=1}^N b(X_t^i,X_t^j) \,dt+dW_t^i.\]
When $N$ is large
it was established in the seminal work of McKean \cite{McKean1966} that each $X^i$ behaves approximately like a solution of the \emph{McKean-Vlasov equation}
\begin{equation}
    dY_t = \int_{\R} b(Y_t, z) d\rho_t(z) + dW_t, \quad \rho_t = \text{Law}(Y_t).
    \label{eq: intro-MV}
\end{equation}
In particular, if $b$ is suitably regular and the initial conditions which are \emph{chaotic} in the sense that
\[\lim_{N \rightarrow \infty} \text{Law}\big(X^1_0, \ldots, X^k_0\big) = \rho_0^{\otimes k}, \quad k \in \bbN,  \]then the following equivalent formulations of propagation of chaos hold: for all $t \in [0, T]$, we have
\[\lim_{N \rightarrow \infty}\frac{1}{N}\sum_{i = 1}^N \delta_{X^i_{t} }=\rho_t; \quad \quad\lim_{N \rightarrow \infty} \text{Law}(X^1_t, \ldots, X^k_t)  = \rho_t^{\otimes k}, \quad k \in \bbN. \]
This equivalence depends crucially on the exchangeability of the particles $(X^1, \ldots, X^N)$, see e.g. \cite[Chapter 2]{snitzmann}. The situation is more delicate for non-exchangeable systems, such as the asymmetric Langevin spin glass \eqref{eq: intro quenched dynamics} when quenched on the random matrix $\bfJ$, and the convergence of the empirical measure no longer implies asymptotic independence. We discuss further the problem of propagation of chaos for non-exchangeable systems  in Section \ref{ss: non-exch}.

\subsubsection{Quantitative propagation of chaos in mean-field systems} For a reference on propagation of chaos in mean-field systems, we refer the reader to the classic monograph \cite{snitzmann} and the reviews \cite{ChaintronChaos1, ChaintronChaos2} for modern developments. We will focus on a recent strand of the literature dedicated to proving quantitative rates of propagation of chaos, which is most relevant to the context of our results. 

A diverse set of tools have been developed to study quantitative propagation of chaos. An influential approach based on synchronous coupling was introduced by Sznitzman \cite{snitzmann}, which yields quantitative rates in Wasserstein distance in the setting of Lipschitz interaction functions $b$.:
\begin{align*}
    \bbW_2(\text{Law}(X_t^1,\dots,X_t^k), \rho_t^{\otimes k})=O\big(\sqrt{k/N}\big).
\end{align*}
This was refined further in the works \cite{eberleReflection, eberle2019couplings, reflection} into the \textit{reflection coupling method}, which can be used  to prove propagation of chaos estimates that are uniform-in-time. Another method, which exploits the subadditivity of relative entropy, was introduced in \cite{BenArous-Zeitouni-1999-increasing}. The works \cite{JabinWangBounded, JWEntropicChaos} improved upon the relative entropy method, and in particular their results hold even in settings where $b$ has very low regularity. Such models are important for physical applications and are an area of current research, see \cite{JabinWangReview}.

Both the coupling and the relative entropy methods above work globally by studying the joint law of the full ensemble $(X^1,\dots,X^N)$. The recent works \cite{HierarchiesPaper, HierarchiesUiT} introduce a local perspective which analyzes directly the law of a sub-collection of particles $(X^1,\dots,X^k)$ with $k<N$. This local approach exploits the dependence of the law of the first $k$ particles on the $k+1$-th particle, leading to a sharp estimate in the case of suitably regular drifts:
\begin{align}
    \bbW_2(\text{Law}(X_t^1,\dots,X_t^k),\rho_t^{\otimes k})=O(k/N). 
    \label{eq: sharp MF rate}
\end{align} Subsequent work has used this local methodology to obtain convergence rates in stronger distances \cite{grass2025propagationchaosfisherinformation},\cite{Hess-Childs2025-ux} and study a variety of other models, for instance those with interaction in the diffusion coefficient \cite{sharpChaosDiffusion}, singular drifts \cite{wang2024sharplocalpropagationchaos}, and non-linear dependence in the measure \cite{arnese2026sharppropagationchaosmean}. 

\subsubsection{Non-exchangeable interacting diffusions}\label{ss: non-exch} A class of models closely related to \eqref{eq: intro quenched dynamics} are systems of interacting diffusions with heterogeneous interactions, which take the form
\begin{equation*}
  dX_t^i = \sum_{j = 1}^N \xi_{ij} b(X_t^i, X_t^j) dt + dW_t^i.   
\end{equation*}
Here, $\boldsymbol{\xi} = (\xi_{ij})_{i, j = 1}^N$ is known as the \emph{interaction matrix}, and plays a crucial role in determining the large-$N$ behavior of these systems. Early work focused on examples of $\boldsymbol{\xi}$ which still converge to the McKean-Vlasov equation \eqref{eq: intro-MV}, for instance when $\boldsymbol{\xi}$ is the normalized adjacency matrix of a dense Erd\"{o}s-Reny\'{i} graph with $np_n \rightarrow \infty$, see e.g., \cite{bhamidi2019weakly, coppini2020law, delattre2016note, oliveira2019interacting}. However, for many choices of $\boldsymbol{\xi}$ the dynamics are not well-approximated by the McKean-Vlasov equation and a different limit must be identified, for instance using the theory of graphons \cite{bayraktar2022stationarity, bayraktar2023graphon} and graphops \cite{gkogkas2022graphop, kuehn2022vlasov}. A recent work \cite{jabin2025mean} introduces the notion of extended graphons and obtains a limit for an extremely large class of interaction matrices. {See also the review \cite{ayi2024large}.}

In an entirely different direction, the works \cite{lacker2023localweakconvergence, lacker2023marginal} study the case where $\boldsymbol{\xi}$ is the adjacency matrix of a sparse random graph, for instance the $d$-regular graph or Erd\"{o}s-Reny\'{i} graph with $np_n = O(1)$. In this extremely sparse regime, a different kind of self-averaging mechanism takes over which gives rise to an alternative version of propagation of chaos. The limiting equation in this case is known as the \emph{local-field equation}, which in fact shares many features with the limiting equation \eqref{eq: limit dynamics 0}. In particular, both models are non-Markovian equations whose non-linearity may be interpreted as a suitable conditional expectation. Few results on the local-field equation are available, though see \cite{hu2025case} for an analysis of the long-time behavior in a tractable example and \cite{hu2024h}, which studies a related Markovian version.

To the best of our knowledge, little is known about the rate of propagation of chaos when $\boldsymbol \xi$ is a random matrix. The questions considered in our work are most similar to those investigated in \cite{lacker2024quantitativepropagationchaosnonexchangeable}, which studies quantitative propagation of chaos for non-exchangeable systems. The authors build upon the entropic techniques of \cite{HierarchiesPaper} and identify a remarkable connection with first-passage percolation. While the results of \cite{lacker2024quantitativepropagationchaosnonexchangeable} cover a substantial family of dense matrices $\boldsymbol{\xi}$, 
there are two obstructions which prevent their application in the present setting.
The first is mild and is due to the following row sum condition imposed in \cite[(1.6)]{lacker2024quantitativepropagationchaosnonexchangeable}, see also  \cite[Remark 2.1]{lacker2024quantitativepropagationchaosnonexchangeable}:
\[\max_{1 \leq i \leq N} \sum_{j = 1}^N |\xi_{ij}| = O(1).\]
For the asymmetric Langevin spin glass \eqref{eq: intro quenched dynamics}, the random interaction matrix $\boldsymbol \xi = N^{-1/2} \bfJ$ causes the left hand side of the last display to be of size $O(N)$ with high probability. 
The key obstruction can be seen from \cite[Theorem 2.17]{lacker2024quantitativepropagationchaosnonexchangeable}, which deals with the case of linear $b$ and obtains the sharpest rate in the paper. For $k \ll N$, this result states that
\[\frac{1}{{n\choose k}} \sum_{|S| = k} \bbW_2^2\big(\text{Law}((X^i)_{i \in S}), \rho^{\otimes k} \big) = O\Bigg(\frac{k^2}{N^2}\sum_{i, j =1}^N \xi_{ij}^2 + \frac{k}{N} \sum_{i = 1}^N \bigg[\bigg( \sum_{j = 1}^N \xi_{ij}^2 \bigg)^2 + \bigg( \sum_{j = 1}^N \xi_{ji}^2 \bigg)^2\bigg]\Bigg).\]
From this bound, one recovers the optimal mean-field rate \eqref{eq: sharp MF rate} in weakly-interacting settings where a macroscopic number of $\xi_{ij}$ are of order $N^{-1}$.
This however is not satisfied by our model \eqref{eq: intro quenched dynamics}, as $\xi_{ij} = N^{-1/2} J_{ij} \sim \mN(0, N^{-1})$. Indeed,  in this case the expectation of the second sum on the right-hand side is fatally of size $O(k)$. See \cite[Section 1.3.2]{basak2017universality} for discussion of a related phenomenon in the static setting.

From these two points, it is clear that the fluctuations of the disorder $\bfJ$ are not easily handled by the techniques of \cite{lacker2024quantitativepropagationchaosnonexchangeable}. In contrast, we exploit the cancellations due to those fluctuations to establish concentration of measure and subsequently quenched propagation of chaos. Furthermore in Theorem \ref{t: optimal} we obtain a lower bound in the case $U =0$, which shows that the mean-field rate \eqref{eq: sharp MF rate} cannot hold in our setting.

Finally, we note that our methods are quite different. Due to the complexity of the limiting equation \eqref{eq: limit dynamics 0}, we resort to classical coupling techniques rather than the more sophisticated relative entropy approach of \cite{lacker2024quantitativepropagationchaosnonexchangeable}. As such we expect some of our bounds to be suboptimal, see e.g., Section \ref{ss: open problems}. It would certainly be interesting to see if the entropic techniques of \cite{HierarchiesPaper, JWEntropicChaos, lacker2024quantitativepropagationchaosnonexchangeable} could be used to study diffusions with disordered interaction.

\subsection{Spin glass dynamics}
\label{ss: intro spin glas dynamics}
Spin glasses are disordered materials which display interesting magnetic properties. In their seminal paper, Sherrington and Kirkpatrick introduced a simplified mean-field model of a spin-glass, known eponymously as the \emph{SK model}, which is the Gibbs measure $e^{-\beta H_N}$ on the state space $\{\pm 1\}^{N}$ with Hamiltonian
\[H_N(x) := \frac{1}{\sqrt{N}}\sum_{i, j = 1}^N J_{ij} x^ix^j, \quad x \in \{\pm 1\}^N, \]
where $\bfJ = (J_{ij})$ is a symmetric Gaussian random matrix and $\beta > 0$ is the inverse temperature. Due to the presence of disorder, the Hamiltonian $H_N$ is highly non-convex and induces an intricate energy landscape. As such, the SK model and its asymmetric variants have been used in the modeling of complex systems in neuroscience, biology, statistics, and computer science. See the introduction of \cite{fan2025dynamicalmeanfieldanalysisadaptive} for pointers to applications. We do not attempt to survey the massive literature on the SK model here, instead we refer to the excellent books \cite{talagrand2010mean, panchenko2013sherrington, mezard2009information}. 

The asymmetric dynamic model \eqref{eq: intro quenched dynamics} was introduced in \cite{HGS1986, CS1987} in the study of neural networks, while the symmetric version was proposed as a tool to study the equilibrium SK model in \cite[Chapter V]{mezard1988spin}. Indeed for symmetric disorder, the dynamics \eqref{eq: intro quenched dynamics} is the Langevin equation for a soft-spin approximation of the SK model on $\R^N$,
\[ e^{ - \beta H_N(x) }\prod_{i = 1}^N e^{- U(x_i)} dx_i,\]
where $U$ is a double-well potential. 

As discussed above, the first mathematical results on \eqref{eq: intro quenched dynamics} in both the symmetric and asymmetric setting were obtained by \cite{Arous1995-zj, symmetriclangevinspinglass} in the case of Gaussian disorder. These works obtained large-deviations principles for the empirical measures, from which quenched propagation of chaos at high temperature may be deduced via a replica trick. A follow up work \cite{Guionnet-1997-PoC} extended these results to all temperatures.

Significantly more progress has been made in the simpler setting of the \emph{spherical SK model}, where the single-site product measure is replaced with the uniform distribution on the $N$-sphere. In particular, the work \cite{AgingSpinGlass} studies the large-time behavior of the associated Langevin dynamics and the phenomenon of \emph{aging}, whereby the system's decorrelation properties are time-dependent. Other work studies the Langevin dynamics for the $p$-spin version of the spherical SK model in the large $N$ limit \cite{Dembo2007SphericalHighTemp} and the behavior of this limit in the high temperature regime\cite{Ben_Arous2006-cq} . The more recent work \cite{Subagdynamicssphericalspinglasses} studies the same Langevin equation for the spherical SK model, but under the trickier assumption that the dynamics are initialized from the disordered Gibbs measure. In another direction, the work \cite{BoundingFlowsBenArousGheisari} develops a new method for understanding the dynamics of the spherical spin glass, which is then applied to various problems in high dimensional statistics  \cite{AlgorithmicThresholdsBenArousGheissari, multispikedTensorPCABenArous}.

\subsubsection{Universality}
{Two important and related papers \cite{Dembo-Lubetzky-Zeitouni-2021-Universality, dembo-gheissari-2021-diffusions}  study diffusions with random matrix interactions beyond the Gaussian setting, and in this section we discuss differences between these results and those of the present paper. Our motivations are quite different from those of \cite{Dembo-Lubetzky-Zeitouni-2021-Universality, dembo-gheissari-2021-diffusions}: the main objective of our work is to establish rates of convergence on quenched propagation of chaos, which is not addressed in \cite{Dembo-Lubetzky-Zeitouni-2021-Universality, dembo-gheissari-2021-diffusions}. However, we obtain a universality result in Theorem \ref{th: universality} for a more restrictive class of disorder $\bfJ$.}

In \cite{Dembo-Lubetzky-Zeitouni-2021-Universality}, the authors study universality for the same asymmetric Langevin spin glass model that we consider. They modify the original large deviations method of \cite{Arous1995-zj} together with a Lindeberg-type argument to obtain a qualitative quenched universality result for the empirical measure. However, quenched propagation of chaos does not follow from such a result due to the lack of exchangeability. Moreover, their results are non-quantitative but allow for the more general case of sub-exponential disorder.

The work \cite{dembo-gheissari-2021-diffusions} studies universality for the following stochastic differential equation:
\begin{align}
\label{eq: Dembo-Gheisari equation}
    dX_t=h_t\,dt+\frac{1}{\sqrt N}\bfJ X_t\,dt+\Sigma(X_t)\,dW_t,
\end{align}
where $\bfJ$ is a random matrix with sub-exponential entries (either i.i.d. or symmetric), $\Sigma$ is an affine function while $h_t$ is a bounded random field. The results of \cite{dembo-gheissari-2021-diffusions} are in spirit closer to ours than those of \cite{Dembo-Lubetzky-Zeitouni-2021-Universality}, but the model has several important differences which we now highlight. The main point is that \eqref{eq: Dembo-Gheisari equation} is an affine equation, while the asymmetric Langevin spin glass \eqref{eq: intro quenched dynamics} possesses a non-linear confining potential $U$. In particular, the conditions on $U$ imposed in Assumption \ref{assumption: main}(1) causes the solution to \eqref{eq: intro quenched dynamics} to be bounded, a property which we exploit extensively. On the other hand, the solution to \eqref{eq: Dembo-Gheisari equation} is not confined to a compact set, though the approach of \cite{dembo-gheissari-2021-diffusions} cannot handle the case of an explosive $U$, see e.g., \cite[Remark 1.5]{dembo-gheissari-2021-diffusions}. Another difference is that \eqref{eq: Dembo-Gheisari equation} allows for a degenerate or state dependent diffusion coefficient, which is not addressed by our technique. Finally, the results of \cite{dembo-gheissari-2021-diffusions} include sub-exponential disorder, whereas our results are restricted to sub-Gaussian.
There are also several differences between the types of universality results of the present paper and those of \cite{dembo-gheissari-2021-diffusions}, which are established via stochastic Taylor expansion. The first result \cite[Theorem 1]{dembo-gheissari-2021-diffusions} establishes that for normalized polynomials $F_\bfJ$ of the time-marginals of $(X^1, \ldots, X^N)$, which includes, for instance $\frac{1}{N}\sum_{i = 1}^N X^i_t$ and $\frac{1}{N^2}\sum_{i, j = 1}^NX^i_{t_1}X^j_{t_2}$, the following quantitative universality estimate:

\begin{align}
\label{eq: DG universality}
    \sup_{t\leq T}\Big|\bbE\big[F_\bfJ\big]-\bbE\big[F_\bfG\big]\Big|=O(N^{-1/2}),
\end{align} 
where $\bfG$ as usual is the matrix of i.i.d. standard Gaussian random variables. See \cite[(1.4)]{dembo-gheissari-2021-diffusions} for the precise class of polynomials. 
The second result \cite[Theorem 2]{dembo-gheissari-2021-diffusions} is a sub-exponential concentration property for the aforementioned class of polynomials up to degree 2, for which they prove:
\begin{align}
\label{eq: DG concentration}
    \bbP\Big(\sup_{t_1,t_2}\big|F_\bfJ-\bbE[F_\bfJ]\big|\geq \lambda\Big)\leq \left \{\begin{aligned}
        &N^C e^{-\lambda \sqrt N/C} &\quad &\text{ if }\lambda \leq C,\\
         &e^{-\sqrt N\log \lambda /C} &\quad &\text{ otherwise }.
    \end{aligned} \right.
\end{align}
 These results are comparable to Theorem \ref{th: universality}, but hold only for single observables instead of uniformly over Lipschitz functions. However, despite the similarity to Theorem \ref{th: universality}, these results are quite different from our quenched propagation of chaos result, Theorem \ref{th: main theorem}. In particular, neither \eqref{eq: DG universality} nor \eqref{eq: DG concentration} alone establishes propagation of chaos.  Indeed the SDE \eqref{eq: Dembo-Gheisari equation} is highly flexible and it is unlikely that propagation of chaos should hold for such a general model.

\subsubsection{Dynamical mean-field theory} Finally, we discuss recent work on some interesting related models arising in the study of high dimensional gradient flows. Dynamics of this type arise in Bayesian regression problems, where one is tasked with minimizing the cost function:
\begin{align}
\label{eq: intro stochastic opt}
    \R^d\ni x\mapsto \frac12 |y-\bfJ x|^2,
\end{align}
 for a random matrix $\bfJ \in \R^{N\times d}$ and deterministic vector $y\in \R^N$. The authors of \cite{celentano2025highdimensionalasymptoticsordermethods} study the high dimensional asymptotics in the challenging $d\approx N$ regime of a noiseless gradient flow
\[dX_t=(y\bfJ-\bfJ^\top \bfJ X_t)\,dt.\]
Their techniques are based on time discretization, which allow the authors to reduce the problem to the \emph{approximate message passing algorithm}. With a similar motivation, \cite{GerbelotSGD} establishes high dimensional limits for discrete time first order methods applied to minimizing \eqref{eq: intro stochastic opt} such as gradient descent and Langevin algorithms; while this analysis is qualitative, \cite{GeneralFirstOrderMethodsQuant} provides a quantitative universality rate. The recent \cite{dandi2026rigorousasymptoticsfirstorderalgorithms} gives a rigorous proof of DMFT using Gaussian conditioning for general first order methods.

A related problem is studied in \cite{fan2025dynamicalmeanfieldanalysisadaptive,fan2025dynamical}, who study a Langevin dynamics process for empirical Bayes. The motivation is to sample the posterior for the Bayesian regression 
\[y= \bfJ X+\varepsilon,\quad \varepsilon \sim \mN(0,I),\]
with an i.i.d. prior $g(\cdot, \alpha^*)$ on the regression coefficients $X$, where $\alpha^*$ is some parameter. In this equation, $\bfJ$ represents the random data.
The process studied in \cite{fan2025dynamicalmeanfieldanalysisadaptive,fan2025dynamical} incorporates a joint sampling and learning procedure and displays both random interactions and more traditional McKean--Vlasov components:
\begin{align*}
    dX_t&=-\frac12\nabla_x  \|y-\bfJ X_t\|^2\,dt+\sum_{i=1}^N \nabla_x \log g(X_t^j,\alpha_t)\,dt+\sqrt 2dW_t,\\
    d\alpha_t&=\frac 1N \sum_{j=1}^N \nabla_\alpha \log g(X_t^j,\alpha_t)\,dt.
\end{align*}
The associated McKean-Vlasov equation is also non-Markovian and is referred to as a \emph{dynamical mean-field equation}. The first contribution of \cite{fan2025dynamical} is to prove well-posedness of the candidate limit; moreover, building on the techniques of \cite{celentano2025highdimensionalasymptoticsordermethods}, \cite{fan2025dynamical} proves quenched propagation of chaos to the high dimensional limit, as well as almost sure convergence of the associated empirical measures. The long time behavior of the limit is studied in \cite{fan2025dynamicalmeanfieldanalysisadaptive}. This model is more intricate than the asymmetric Langevin spin glass we consider here, and it certainly would be interesting but challenging to study it quantitatively with our techniques.

The very recent work \cite{chaintron2026resnetsshapessizesconvergence} is one of the few examples of quantitative concentration rates around the DMFT limit. They study gradient descent dynamics used to train residual neural networks, obtaining quantitative rates for convergence with high probability of the training dynamics to the \textit{mean field ODE} as a function of the embedding dimension, width and depth of the network. Their analysis is based on the cavity method, a Lindeberg-type central limit theorem and propagation of chaos techniques. With the exception of the shared propagation of chaos perspective, our techniques and settings are thus quite different.

\subsection{Open problems} \label{ss: open problems} We now discuss a few open directions. A natural question is whether our techniques transfer to the case of symmetric disorder, which were studied in \cite{symmetriclangevinspinglass}. The symmetric case is of great physical interest as it corresponds to the Langevin dynamics of the original SK model, see Section \ref{ss: intro spin glas dynamics}. However, the limit dynamics identified in \cite{symmetriclangevinspinglass} are significantly more complicated than those of the asymmetric case, see Remark \ref{rk: symmetric}. Similarly, it would be interesting to study the model with an additional magnetic field, as considered in \cite{symmetriclangevinspinglass, Arous1995-zj, Guionnet-1997-PoC}, or $p$-spin models as in \cite{AgingSpinGlass, Ben_Arous2006-cq, Subagdynamicssphericalspinglasses}.

In another direction, it would be interesting to understand better the optimal rate. As we discuss in Section \ref{sec: optimality}, we believe that Theorem \ref{th: main theorem} is optimal for $k = 1$. However, due to the poor estimates from the concentration of measure argument, we expect our bounds to be suboptimal for large $k$. We are far from making an informed conjecture, but we speculate that the optimal rate should be \[\bbE\big[\bbW_2\big(P^{k}_{[t]}(\bfJ ), \mu^{\otimes k}_{[t]}\big)\big] = O\big( k/\sqrt{N}\big).\] 
This is partially informed by forthcoming joint work of the second author \cite{hu2026unimodular} in the setting where $N^{-1/2} \bfJ$ is replaced by the adjacency matrix of a sparse random graph. In this work, a version of the previous estimate is established, though with suboptimal rate $\sqrt{k^3/N}.$ 

It is also possible that the averaged rate of Theorem \ref{th: averaged propagation of chaos} could be improved. In the averaged regime the McKean-Vlasov scaling becomes more apparent, and our proofs use a synchronous coupling argument that is known to be suboptimal in simpler cases. We would expect (say in the case of Gaussian disorder),
\[\bbW_2(\bbE[P_{[t]}^{N,k}(\bfG)],\mu^{\otimes k}_{[t]})=O(k/N).\]
Such a rate is well known to be optimal in  classic mean-field models \cite{HierarchiesPaper}, even in non-Markovian and certain singular settings \cite{wang2024sharplocalpropagationchaos}. Nevertheless, the averaged equation \eqref{eq: intro averaged equation CE} of the present setting is quite unusual. In particular, the presence of the noise in the drift makes it difficult to use the modern entropy-based methods typically employed to obtain sharp rates.

Finally we mention that any results on uniform-in-time propagation of chaos or long-time behavior of the limit equation would be of great interest.

\subsection{Outline} \label{ss: outline}  The rest of the paper is structured as follows. In Section \ref{sect: concentration} we prove concentration of measure for the quenched law (Theorem \ref{th: concentration}). Next in Section \ref{sect: averaged propagation of chaos} we prove Theorem \ref{th: averaged propagation of chaos}, that is quantitative averaged propagation of chaos \eqref{eq: intro-k-avg-poc} in the case of Gaussian disorder. The proof of Theorem \ref{th: averaged propagation of chaos} makes use of several representations of the averaged and limit dynamics, which we collect in Section \ref{sect: representation}. One such representation requires a rather technical argument involving Malliavin calculus, which we defer to Section \ref{sect: malliavin}. In Section \ref{sect: universality}, we prove Theorem \ref{th: universality}, which provides the quantitative universality estimate. Finally, in Section \ref{sec: optimality} we argue that Theorem \ref{th: main theorem} is optimal in the case of $k = 1$ by studying \eqref{eq: intro quenched dynamics} in the case $U \equiv 0$. Due to the absence of confinement, this simple setting is not covered by our results or those of \cite{Arous1995-zj, Guionnet-1997-PoC} and it is not immediate that there is a limiting process. We address this minor point in Appendix \ref{sec: wick} and show that the law of a single spin converges to a deterministic limit. In Appendix \ref{sec: Volterra}, we recall a few results about Volterra processes which will be used in Section \ref{sect: representation}.

\section{Concentration of measure}
\label{sect: concentration}
\subsection{Preliminaries}
In this section we prove Theorem \ref{th: quenched propagation of chaos}. Recall that for a fixed matrix $\bfJ$ we denote by $P^N_{[T]}(\bfJ)=\text{Law}(X^1_{[T]},\dots,X^N_{[T]}\mid \bfJ)$ the path-space law of the unique strong solution of \eqref{eq: intro quenched dynamics}.
Indeed, the strong well-posedness of \eqref{eq: intro quenched dynamics} follows from \cite[Proposition 2.1]{Arous1995-zj}, together with a standard strong-weak uniqueness argument: since the coefficients are locally Lipschitz, the solutions are pathwise unique by \cite[Theorem 5.2.5]{Karatzas_and_Shreve_BMSC}, which implies that they are strong by Yamada-Watanabe theory \cite[Corollary 5.3.25]{Karatzas_and_Shreve_BMSC}. 

For convenience, we will omit the dependence on $N$ in the superscript and write $P^v = P^{v,N}$ for $v \subset\{1, \ldots, N\}$. In the following, we use $\sfE_{\bfJ}$ to denote the quenched expectation, that is for all $t \in [0, T]$ and bounded continuous $f: \mC_t^N \rightarrow \R$,
\[\sfE_{\bfJ}[f(X_{[t]})] := \int_{\mC_t^N} f(x) dP_{[t]}(\bfJ)(x).\]

\subsection{A local Lipschitz property of the solution map}

The purpose of this section is to show that the map $\bfJ \mapsto P_{[t]}(\bfJ)$ is $N^{-1/2}$-Lipschitz on a large subset of $\R^{N \times N}$. This property, which is formulated precisely in Lemma \ref{lm: Lipschitzianity of integral map} below, allows us to access classical concentration of measure results from which Theorem \ref{th: quenched propagation of chaos} will follow. The set where the mapping is regular is exactly the set where the operator norm of $\bfJ$ is not too large, and a classical result in random matrix theory implies that this happens with high probability, see for instance \cite[Theorem 4.4.3]{Vershynin2018HDP}.
\begin{proposition}
\label{pr: rnd matrix operator bound}
There exists a constant $C_S \in (0, \infty)$ depending on $\sigma^2$ only such that if
\begin{align*}
    S:= \Big\{\bfJ \in \R^{N\times N}: \|\bfJ\|_{\textup{op}}\leq C_{S}\sqrt N\Big\},
\end{align*}
then
\begin{equation}
\label{eq: concentration of operator norm}
    \bbP(S^c)\leq 2e^{-N}.
\end{equation}
In particular $C_S$ does not depend on $N$.
\end{proposition}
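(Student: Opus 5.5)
The plan is the standard $\varepsilon$-net argument for the operator norm of a matrix with independent sub-Gaussian entries. The main job is to check that the T$_2$ assumption in Assumption \ref{assumption: main}(3) gives the needed uniform sub-Gaussian tail.

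\emph{Reduction to a bilinear form.} Write $\|\bfJ\|_{\op}=\sup_{x,y\in S^{N-1}} x^\top \bfJ y$. Fix a $1/4$-net $\mN$ of $S^{N-1}$ with $|\mN|\le 9^N$. The usual approximation argument gives
\[
\|\bfJ\|_{\op}\le 2\max_{x,y\in\mN} x^\top\bfJ y .
\]

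\emph{Tail bound for a fixed pair.} Fix $x,y\in S^{N-1}$ and consider the linear map $\bfJ\mapsto x^\top\bfJ y=\sum_{i,j}x_iy_jJ_{ij}$ on $\R^{N^2}$. It is $1$-Lipschitz for the Euclidean (Frobenius) norm, because $\sum_{i,j}x_i^2y_j^2=1$. Its mean is $0$ since the entries are centered. Applying the Laplace-transform bound in Assumption \ref{assumption: main}(3) with $m=N^2$ gives
\[
\bbE\big[e^{\lambda x^\top\bfJ y}\big]\le e^{\lambda^2\sigma^2/2}.
\]
By Chernoff, $\bbP(x^\top\bfJ y\ge s)\le e^{-s^2/(2\sigma^2)}$. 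The key point is that the T$_2$ inequality tensorizes with a dimension-free constant, so this bound does not depend on $N$.

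\emph{Union bound.} Taking $s=C_S\sqrt N/2$ and summing over the net gives
\[
\bbP(\|\bfJ\|_{\op}> C_S\sqrt N)\le 81^N e^{-C_S^2N/(8\sigma^2)} .
\]
Choose $C_S$ so that $C_S^2/(8\sigma^2)\ge \log 81+1$, for instance $C_S^2=8\sigma^2(\log 81+1)$. Then the right-hand side is at most $e^{-N}\le 2e^{-N}$, which is \eqref{eq: concentration of operator norm}, and $C_S$ depends only on $\sigma^2$.

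There is no real obstacle here. The only point needing care is that the constant in the concentration bound is dimension-free, and the tensorized form of Assumption \ref{assumption: main}(3) gives exactly that. Alternatively, one can cite \cite[Theorem 4.4.3]{Vershynin2018HDP} after noting that each $J_{ij}$ is sub-Gaussian with $\psi_2$-norm at most $C\sigma$.
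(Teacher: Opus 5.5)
Your proof is correct and is essentially the paper's route. The paper gives no separate argument and simply cites \cite[Theorem 4.4.3]{Vershynin2018HDP}, whose proof is exactly this $1/4$-net plus union-bound argument (taking $t=\sqrt N$ there produces the factor $2e^{-N}$). You make the constant explicit by drawing the sub-Gaussian tail of $x^\top\bfJ y$ from the Laplace-transform form of Assumption \ref{assumption: main}(3); since that functional is linear, independence of the entries together with the one-dimensional bound would already be enough.
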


\begin{lemma}
\label{lm: Lipschitzianity of integral map} Let $f:C([0,T],[-A,A]^{N})\to \R$ be a bounded function. Define
\begin{equation}
    F_f(\bfJ):=\sfE_\bfJ[f(X_t)].
    \label{eq: F_f}
\end{equation}
    Fix $\bfJ,\bfJ' \in \R^{N \times N}$ with $\bfJ \in S$. Then there exists a constant $C \in (0, \infty)$ independent of $N$ such that
    \begin{align*}
        |F_f(\bfJ)-F_f(\bfJ')|\leq \frac{C\|f\|_\infty}{\sqrt N}\|\bfJ-\bfJ'\|_{\textup{Fr}}.
    \end{align*}
\end{lemma}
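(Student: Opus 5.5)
We bound $|F_f(\bfJ)-F_f(\bfJ')|\le \|f\|_\infty\,\|P_{[t]}(\bfJ)-P_{[t]}(\bfJ')\|_{\textup{TV}}$ and control the total variation by relative entropy via Pinsker's inequality. Both $P_{[t]}(\bfJ)$ and $P_{[t]}(\bfJ')$ are laws of diffusions with the same initial law $P_0^N$, the same identity diffusion coefficient and the same confining drift $-U'$. They differ only in the bounded interaction drift $b^{\bfJ}(x)=\frac{\beta}{\sqrt N}\bfJ x$ versus $b^{\bfJ'}(x)$. By Girsanov's theorem, together with the chain rule for relative entropy on path space,
\begin{equation*}
\mH\big(P_{[t]}(\bfJ')\,\|\,P_{[t]}(\bfJ)\big)=\frac12\,\sfE_{\bfJ'}\bigg[\int_0^t \Big|\frac{\beta}{\sqrt N}(\bfJ-\bfJ')X_s\Big|^2\,ds\bigg].
\end{equation*}
Here the expectation is taken under the law driven by $\bfJ'$. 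This is the direction that avoids exponential moments of $\bfJ$. Since the solutions stay in $(-A,A)^N$, the crude bound $|(\bfJ-\bfJ')x|^2\le \|\bfJ-\bfJ'\|_{\op}^2 A^2N$ only gives an $O(1)$ entropy. We therefore need the finer estimate
\begin{equation*}
\sfE\big[|(\bfJ-\bfJ')X_s|^2\big]=\sum_{i,j,l}(J-J')_{ij}(J-J')_{il}\,\sfE[X^j_sX^l_s]\le \|\bfJ-\bfJ'\|_{\textup{Fr}}^2\,\big\|\sfE[X_sX_s^\top]\big\|_{\op}.
\end{equation*}
It suffices to show that the second-moment matrix $M_s=\sfE_{\cdot}[X_sX_s^\top]$ has operator norm bounded by a constant independent of $N$. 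Then $\mH\le C\beta^2 t N^{-1}\|\bfJ-\bfJ'\|_{\textup{Fr}}^2$, and Pinsker gives the claimed bound.

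\textbf{Choice of direction.} The covariance bound requires control of the operator norm of the coupling matrix, which is available only for $\bfJ\in S$. So I will write the Girsanov formula with expectation under $P(\bfJ)$, i.e.\ bound $\mH(P(\bfJ)\|P(\bfJ'))$. The entropy is not symmetric, but Pinsker applies in either order, so either direction suffices.

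\textbf{Main obstacle: the bound on $\|M_s\|_{\op}$ under $\bfJ\in S$.} For a unit vector $u$ we need $\sfE[(u\cdot X_s)^2]\le C$. Set $Z_s=u\cdot X_s$, so that
\begin{equation*}
dZ_s=-u\cdot U'(X_s)\,ds+\frac{\beta}{\sqrt N}u\cdot \bfJ X_s\,ds+u\cdot dW_s,
\end{equation*}
where $u\cdot dW$ is a standard Brownian motion. The interaction term is not small in general: $|\bfJ X|$ can be of order $\sqrt N\cdot\sqrt N$. The plan is to control the full matrix $M_s$ through a Grönwall argument on $\sup_{|u|=1}\sfE[Z_s^2]$. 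The key steps are as follows.
\begin{enumerate}
\item Split $U'=U_c'+U_\ell'$. Convexity and evenness of $U_c$ give $U_c'(x)x\ge0$. The coordinatewise term $\sfE[(u\cdot U'(X))Z]$ is the delicate one, since it is not a sign-definite quantity for general $u$.
\item Use instead the Poincaré inequality of $P_0^N$, propagated along the flow. The flow map of the SDE for fixed noise is Lipschitz in the initial condition: its Jacobian has the form $-\nabla^2 U+\beta N^{-1/2}\bfJ$, whose operator norm is bounded on $S$ up to the convex part, which only contracts. This yields a Poincaré-type variance bound $\mathrm{Var}(u\cdot X_s)\le C$.
\item Bound the mean: by the symmetry of $P_0^N$ and the oddness of the drift, $\sfE[X_s]=0$.
\end{enumerate}
Hence $\|M_s\|_{\op}\le C e^{C(C_{\textup{Lip}}+\beta C_S)T}$.

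I expect the main difficulty to be step 2. This is the Gaussian-Poincaré-for-paths step: the noise contributes $\int_0^s|\text{Jacobian}|^2\le Ce^{Cs}$ via the Clark–Ocone or Malliavin representation, and the initial condition contributes $C_{\textup{P}}e^{Cs}$. Combining the two gives the uniform covariance bound that closes the argument.
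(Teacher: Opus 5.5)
Your proposal is correct and follows essentially the same route as the paper. The paper's steps are Pinsker, then the Girsanov formula for $\mH\big(P_{[T]}(\bfJ)\,\|\,P_{[T]}(\bfJ')\big)$ with expectation under $P(\bfJ)$, then the trace bound by $\|\bfJ-\bfJ'\|_{\textup{Fr}}^2\,\|\sfE_\bfJ[X_tX_t^\top]\|_{\op}$, and finally an $N$-independent covariance bound from a propagated Poincaré inequality together with $\sfE_\bfJ[X_t]=0$ by symmetry. The one difference is that you sketch the Poincaré propagation by hand (flow Jacobian plus Clark--Ocone), whereas the paper cites Cattiaux--Guillin for drifts with one-sided Lipschitz constant $C_{\textup{Lip}}+\beta N^{-1/2}\|\bfJ\|_{\op}$, which is exactly the quantity your Jacobian computation produces.
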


The proof of Lemma \ref{lm: Lipschitzianity of integral map} proceeds in several steps. First, we obtain uniform-in-$N$ bounds on the operator norm of the covariance $\sfE_\bfJ[X_tX_t^\top]$ via a Poincaré inequality.
\begin{lemma}
\label{lm: Poincare inequality}
    For all $\bfJ \in \R^{N\times N}$, the measure $P_t(\bfJ)$ satisfies the covariance bound 
    \begin{align*}
        \|\sfE_\bfJ[X_tX^\top_t]\|_{\textup{op}}\leq C_{\textup{Var}}\exp\Big(2t\beta N^{-1/2}\|\bfJ\|_{\textup{op}}\Big),
    \end{align*}
    where
    \begin{align*}
        C_{\textup{Var}}:=(C_\textup{P}(0)+T) e^{2tC_{\textup{Lip}}}.
    \end{align*}
\end{lemma}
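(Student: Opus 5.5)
The plan is to prove a Poincaré inequality for $P_t(\bfJ)$ with constant $C_{\textup{P}}(t) \le C_{\textup{Var}}\exp(2t\beta N^{-1/2}\|\bfJ\|_{\op})$, and then apply it to linear test functions. Indeed, for a unit vector $u\in\R^N$ take $f(x)=u\cdot x$. Since $P_0^N$ and the dynamics are symmetric under $x\mapsto -x$ ($U$ is even and the interaction is linear), the mean $\sfE_\bfJ[X_t]$ vanishes. Hence
\[ u^\top \sfE_\bfJ[X_tX_t^\top]u=\textup{Var}_\bfJ(u\cdot X_t)\le C_{\textup{P}}(t)|u|^2=C_{\textup{P}}(t). \]
Taking the supremum over $u$ gives the operator-norm bound. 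Here $C_{\textup{P}}(0)$ is read as the Poincaré constant of $P_0^N$.

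To get the Poincaré inequality at time $t$ I would use a synchronous coupling plus Gaussian Poincaré argument. Write $X_t=\Phi_t(X_0,W_{[t]})$, where $\Phi_t$ is the strong solution map. Let $X,\tilde X$ be two solutions driven by the same Brownian motion and the same $\bfJ$. The difference $\Delta_s=X_s-\tilde X_s$ satisfies
\[ \frac{d}{ds}|\Delta_s|^2=-2\Delta_s\cdot(\nabla U(X_s)-\nabla U(\tilde X_s))+\frac{2\beta}{\sqrt N}\Delta_s^\top\bfJ\Delta_s. \]
Convexity of $U_c$ makes its contribution nonpositive, and the Lipschitz bound on $U_\ell'$ gives
\[ \frac{d}{ds}|\Delta_s|^2\le 2(C_{\textup{Lip}}+\beta N^{-1/2}\|\bfJ\|_{\op})|\Delta_s|^2. \]
So the map $X_0\mapsto X_t$ (for fixed noise) is Lipschitz with squared constant $e^{2t(C_{\textup{Lip}}+\beta N^{-1/2}\|\bfJ\|_{\op})}$. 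The same Grönwall argument with a perturbation of the noise, $W\mapsto W+\int_0^\cdot h$, bounds the Malliavin derivative. We get $|D_r X_t|_{\op}\le e^{(t-r)(C_{\textup{Lip}}+\beta N^{-1/2}\|\bfJ\|_{\op})}$, since $D_rX_t$ solves the linearized equation with identity initial data at time $r$ and the Jacobian of the drift has symmetric part bounded above by $C_{\textup{Lip}}+\beta N^{-1/2}\|\bfJ\|_{\op}$.

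Next I decompose the variance by conditioning on $X_0$. Write
\[ \textup{Var}(g(X_t))=\bbE\big[\textup{Var}(g(X_t)\mid X_0)\big]+\textup{Var}\big(\bbE[g(X_t)\mid X_0]\big). \]
For the first term, use the Gaussian Poincaré inequality on Wiener space (the Clark–Ocone bound $\textup{Var}(F)\le\bbE\int_0^t|D_rF|^2dr$). This gives
\[ \int_0^t e^{2(t-r)(C_{\textup{Lip}}+\beta N^{-1/2}\|\bfJ\|_{\op})}dr\;\|\nabla g\|_\infty^2\le T e^{2t(\cdots)}\|\nabla g\|_\infty^2. \]
For the second term, $x_0\mapsto\bbE[g(X_t)\mid X_0=x_0]$ has gradient bounded by the flow Lipschitz constant. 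The Poincaré inequality of $P_0^N$ then gives $C_{\textup{P}}(0)e^{2t(\cdots)}$. Summing the two terms gives $(C_{\textup{P}}(0)+T)e^{2tC_{\textup{Lip}}}e^{2t\beta N^{-1/2}\|\bfJ\|_{\op}}$, which is exactly the claimed constant. This holds for Lipschitz $g$, and in particular for linear $g$, where the bound is $|u|^2$ times the constant.

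The main obstacle is justifying the Malliavin differentiability and the derivative bound, because $U'$ is not globally Lipschitz: it explodes at $\pm A$. I would handle this by the one-sided structure. Only the upper bound $-U''\le C_{\textup{Lip}}$ on the symmetric part of the drift Jacobian is needed, together with the fact that solutions stay in $(-A,A)^N$ almost surely. To make this rigorous, I would approximate $U_c$ by convex functions with Lipschitz derivative, whose solutions converge to $X$. For these approximations the bounds are uniform, and they pass to the limit via closability of $D$ (or directly via the pathwise coupling bound for $h$-shifts, which avoids $D$ altogether).
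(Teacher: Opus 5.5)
Your proposal is correct; the difference from the paper is that you prove, rather than cite, the Poincaré inequality for $P_t(\bfJ)$. The paper records the one-sided Lipschitz bound on the drift with constant $\kappa=C_{\textup{Lip}}+\beta N^{-1/2}\|\bfJ\|_{\op}$. It then invokes \cite[Proposition 9]{Cattiaux2014LogCONCAVEMARKOV} as a black box, and finishes with linear test functions and the symmetry (zero-mean) argument exactly as you do.

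You instead rebuild that proposition probabilistically. You use the law of total variance over $X_0$, the Clark--Ocone/Gaussian Poincaré bound on Wiener space with $\|D_rX_t\|_{\op}\le e^{\kappa(t-r)}$, and the synchronous-coupling bound $\textup{Lip}\big(x_0\mapsto \bbE[g(X_t)\mid X_0=x_0]\big)\le e^{t\kappa}\|\nabla g\|_\infty$ fed into the Poincaré inequality of $P_0^N$. This is the stochastic counterpart of the semigroup interpolation and gradient-commutation argument behind the cited result.

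The citation buys brevity and a genuine Poincaré inequality, with $\int|\nabla f|^2\,dP_t(\bfJ)$ on the right. Your version, with $\|\nabla g\|_\infty^2$, is weaker in general but exactly sufficient for linear $g$. It also makes explicit how the explosive $U'$ is handled, a point the citation leaves implicit. Either route works: convex approximations of $U_c$ with Lipschitz derivative as in Section~\ref{sect: malliavin}, or pathwise Cameron--Martin shifts, which are clean here because the noise is additive.

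Your computation lands exactly on the stated constant, since $\int_0^t e^{2\kappa(t-r)}\,dr=(e^{2t\kappa}-1)/(2\kappa)\le te^{2t\kappa}$. The constant displayed in the paper's proof, $(e^{2t\kappa}-1)/\kappa$, is presumably taken from a $\sqrt2\,dW$ normalization in the reference. It is only bounded by $2te^{2t\kappa}$, which would give $C_{\textup{P}}(0)+2T$ instead of $C_{\textup{P}}(0)+T$. This factor is harmless, and your route avoids it.
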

    \begin{proof} 
    By slight abuse of notation, we  write $\nabla U(x)=(U'(x^1),\dots,U'(x^N))$. Let $b_\bfJ(x):=-\nabla U(x)+\beta N^{-1/2}\bfJ x$ denote the drift of $X$ under $P_{\bfJ}$ and observe that $b_{\bfJ}$ satisfies a one sided Lipschitz bound with constant $C_{\textup{Lip}}+\beta N^{-1/2}\|\bfJ\|_{\text{op}}$:
        \begin{align*}
            \big(b_\bfJ(x)-b_\bfJ(y)\big)\cdot (x-y)\leq (C_{\textup{Lip}}+\beta N^{-1/2}\|\bfJ\|_{\text{op}})|x-y|^2.
        \end{align*}
        By \cite[Proposition 9]{Cattiaux2014LogCONCAVEMARKOV}, the measure 
        $P_\bfJ(t)$ satisfies a Poincaré inequality with constant 
    \[C_\text{P}(0)e^{2t(C_{\textup{Lip}}+\beta N^{-1/2}\|\bfJ\|_{\text{op}})}+\frac{e^{2t(C_{\textup{Lip}}+\beta N^{-1/2}\|\bfJ\|_{\text{op}})}-1}{(C_{\textup{Lip}}+\beta N^{-1/2}\|\bfJ\|_{\text{op}})}\leq C_{\textup{Var}}\exp\Big(2t\beta N^{-1/2}\|\bfJ\|_{\textup{op}}\Big).\]
    In particular, for each differentiable and 1-Lipschitz function $f:\R^N\to \R$,
    \begin{align*}
        \sfE_{\bfJ}\Big[|f(X_t)-\sfE_\bfJ[f(X_t)]|^2\Big]&\leq C_{\textup{Var}}\exp\Big(2t\beta N^{-1/2}\|\bfJ\|_{\textup{op}}\Big).
    \end{align*}
    Since $U$ is even and $P_0^N$ is symmetric by Assumption \ref{assumption: main}, $\sfE_\bfJ[X_t]=0$ for every $\bfJ$ and $t\geq0$. This is because $\overline X:=-X$ solves the same SDE as $X$, whose solution is unique in law. We then obtain
    \begin{align*}
\|\sfE_\bfJ[X_tX^\top_t]\|_{\textup{op}}&=\sup_{|u|=1} \sfE_\bfJ[|u^\top X_t-\sfE_\bfJ[u^\top X_t]|^2]\leq C_{\textup{Var}}\exp\Big(2t\beta N^{-1/2}\|\bfJ\|_{\textup{op}}\Big),
    \end{align*}
    which is the desired result.
    \end{proof}
\begin{proof}[Proof of Lemma \ref{lm: Lipschitzianity of integral map}] We begin by controlling the difference $|F_f(\bfJ) - F_f(\bfJ')|$ by relative entropy. For two probability measures $\alpha,\beta$ recall the definition of the relative entropy of $\alpha$ with respect to $\beta$: 
\begin{align*}
    \mH(\alpha\|\beta):=\left \{ \begin{aligned} &\int \bigg(\frac{d\alpha}{d\beta}\log  \frac{d\alpha}{d\beta}\bigg) d\beta &\quad &\text{ if } \alpha \ll \beta,\\
    &+\infty &\quad & \text{ otherwise.}
    \end{aligned} \right.
\end{align*}
By \eqref{eq: F_f} and Pinsker's inequality \cite[Lemma 2.5(1)]{Tsybakov2009}, we have
\begin{equation}
\label{e: F_f pinsker}
    |F_f(\bfJ)-F_f(\bfJ')|^2\leq \|f\|^2_{\infty}\|P_{[T]}(\bfJ)-P_{[T]}^N(\bfJ')\|_{\text{TV}}^2 \leq \frac {\|f\|^2_{\infty}}{2}{\mH\big(P_{[T]}(\bfJ)\,\big\|\,P_{[T]}(\bfJ')\big)}.
\end{equation}

Next, we estimate the relative entropy by a standard Girsanov argument. By \cite[Lemma 2.1]{Arous1995-zj} the SDE \eqref{eq: intro quenched dynamics} has a unique weak solution for any choice of disorder. Then by \cite[Lemma 4.4]{HierarchiesPaper} we have
\begin{align*}
    \mathcal \mH\big(P_{[T]}(\bfJ)\big\|P_{[T]}(\bfJ')\big)&=\frac12 \sum_{i=1}^N\int_0^T \sfE_\bfJ\bigg[\Big|\frac{\beta}{\sqrt{N}}\sum_{j=1}^NX_t^j (J_{ij}-J_{ij}')\Big|^2\bigg]dt\\
    &=\frac{\beta^2}{2N} \int_0^T\text{Tr}\big((\bfJ-\bfJ')^\top \sfE_\bfJ\big[X_tX_t^\top\big](\bfJ-\bfJ')\big)\,dt \\ &\leq \frac{\beta^2}{2N} \big\|\bfJ-\bfJ'\big\|_{\text{\textup{Fr}}}^2\int_0^T\big\|\sfE_\bfJ\big[X_tX_t^\top\big]\big\|_{\textup{op}}\,dt.
\end{align*}
Since $\bfJ \in S$, the last display together with Lemma \ref{lm: Poincare inequality} implies
\begin{align*}
     \mathcal \mH\big(P_{[T]}(\bfJ)\big\|P_{\bfJ'}[T]\big)\leq \frac{\beta^2}{2N} \big\|\bfJ-\bfJ'\big\|_{\text{Fr}}^2\int_0^TC_{\text{var}}e^{\frac{2t\beta}{\sqrt N}\|\bfJ\|_\textup{op}}\,dt\leq \frac{\beta^2}{2N} \big\|\bfJ-\bfJ'\big\|_{\text{Fr}}^2C_{\text{var}}e^{
     2T\beta C_S}T.
\end{align*}
The claim follows on combining the last display with \eqref{e: F_f pinsker}.

\end{proof}

Finally, we recall the following Theorem from \cite{Bobkov-Nayar-Tetali-2017-nonLip}, which provides estimates on the sub-Gaussian norm of conditional measures.

\begin{theorem}{\cite[Theorem 1.1]{Bobkov-Nayar-Tetali-2017-nonLip}}
\label{th: bobkov restricted measure}
Suppose $\nu \in \mP(\R^d)$ satisfies a T$_2$ inequality with constant $\sigma^2$. For a Borel set $A\subseteq \R^d$ with $\nu(A)>0$, define  $\nu_A(S):=\nu(A\cap S)/\nu(A)$. Then $\nu_A$ satisfies a T$_2$ inequality with constant $\sigma^2_A$, and
\begin{align*}
    \sigma_A^2\leq c\log\Big(\frac{e}{\nu(A)}\Big)\sigma^2,
\end{align*}
where $c$ is a universal constant. In particular for every $L \in (0, \infty)$ and each $L$-Lipschitz function $f:\R^d\to \R$ with $\int f\,d\nu_A=0,$ the following hold:
\begin{enumerate}
    \item For each $\lambda\in \R$ we have
\begin{align}
    \int_{\R^d} e^{\lambda f(x)}d\nu_A(x)\leq e^{L^2\lambda^2\sigma^2_A/2}.
    \label{eq: bobkov eq}
    \end{align}
    \item For each $\lambda < \frac{1}{2\sigma_AL}$, we have
    \begin{equation}
    \int_{\R^d} e^{\lambda^2 f(x)^2/2} d\nu_A(x)\leq 2.
        \label{eq: bobkov eq 2}
    \end{equation}
\end{enumerate}
\end{theorem}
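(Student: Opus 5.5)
Since this statement is quoted from \cite{Bobkov-Nayar-Tetali-2017-nonLip}, within the paper I would rely on the citation for the bound on $\sigma_A^2$ and check directly only the two consequences \eqref{eq: bobkov eq} and \eqref{eq: bobkov eq 2}, which are what is used later. I do \emph{not} have a complete self-contained argument for the bound on $\sigma_A^2$; the outline below is a first attempt whose key step is open.

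For the bound on $\sigma_A^2$, I would start from the relative entropy identity. For $\rho\ll\nu_A$ we have $d\rho/d\nu=(d\rho/d\nu_A)\,\mathbbm 1_A/\nu(A)$, so
\[\mH(\rho\,\|\,\nu)=\mH(\rho\,\|\,\nu_A)+\log\big(1/\nu(A)\big).\]
Applying the T$_2$ inequality for $\nu$ to both $\rho$ and $\nu_A$, and using the triangle inequality, gives
\[\bbW_2(\rho,\nu_A)\le \sqrt{2\sigma^2\big(\mH(\rho\,\|\,\nu_A)+\log(1/\nu(A))\big)}+\sqrt{2\sigma^2\log(1/\nu(A))}.\]
This is only a T$_2$ inequality with an additive defect of order $\sigma^2\log(e/\nu(A))$. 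It does \emph{not} give the claimed bound: as $\mH(\rho\,\|\,\nu_A)\to0$ the right-hand side stays of order $\sigma\sqrt{\log(1/\nu(A))}$, so one cannot simply absorb the defect into the multiplicative constant.

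Removing this defect is the main obstacle. The natural next idea is to pass through Gozlan's characterization of T$_2$ as dimension-free Gaussian concentration of $\nu_A^{\otimes n}$. Naive tensorization fails: $\nu^{\otimes n}(A^n)=\nu(A)^n$, so the defect grows linearly in $n$. Comparing $\nu_A^{\otimes n}$ with $\nu^{\otimes n}$ on sets of measure at least $\nu(A)^n/2$ has the same problem. I do not see how to close this step myself, and would defer to the argument of \cite{Bobkov-Nayar-Tetali-2017-nonLip} for it.

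Granting that $\nu_A$ satisfies T$_2$ with constant $\sigma_A^2$, both consequences follow. For \eqref{eq: bobkov eq}, apply the Bobkov--G\"otze dual form of T$_2$ (the form recalled in Assumption \ref{assumption: main}.3, via Gozlan's theorem) to the $1$-Lipschitz function $f/L$ with parameter $\lambda L$. Since $\int f\,d\nu_A=0$, this gives exactly $\int e^{\lambda f}\,d\nu_A\le e^{L^2\lambda^2\sigma_A^2/2}$.

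For \eqref{eq: bobkov eq 2}, I would use a Gaussian linearization. Let $g\sim\mN(0,1)$ be independent, so that $e^{\lambda^2 f^2/2}=\bbE_g\big[e^{\lambda g f}\big]$. By Fubini and \eqref{eq: bobkov eq},
\[\int e^{\lambda^2f^2/2}\,d\nu_A\le \bbE_g\Big[e^{\lambda^2L^2\sigma_A^2 g^2/2}\Big]=\big(1-\lambda^2L^2\sigma_A^2\big)^{-1/2}.\]
For $\lambda<1/(2\sigma_AL)$ we have $\lambda^2L^2\sigma_A^2<1/4$, so the right-hand side is at most $(3/4)^{-1/2}<2$.
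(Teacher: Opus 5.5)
Your derivations of \eqref{eq: bobkov eq} and \eqref{eq: bobkov eq 2} from a sub-Gaussian bound for $\nu_A$ are correct. In \eqref{eq: bobkov eq 2} the condition must be read as $|\lambda|<1/(2\sigma_AL)$, since only $\lambda^2$ enters. Deferring the bound on $\sigma_A^2$ to the citation is also what the paper does, since it gives no proof.

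The step you could not close is not a technical obstacle, however: the T$_2$ claim is false for general Borel $A$. Let $\nu=\mN(0,1)$ on $\R$, which satisfies T$_2$ with $\sigma^2=1$. Let $A=[0,1]\cup[2,3]$, $p:=\nu_A([0,1])$, and $h:=\mathbf 1_{[0,1]}/p-\mathbf 1_{[2,3]}/(1-p)$. For small $\eps>0$, $\rho_\eps:=(1+\eps h)\nu_A$ is a probability measure with $\rho_\eps([0,1])=p+\eps$ and $\mH(\rho_\eps\,\|\,\nu_A)=\frac{\eps^2}{2}\int h^2\,d\nu_A+O(\eps^3)$. Every coupling of $\rho_\eps$ and $\nu_A$ must move mass at least $\eps$ across the gap $(1,2)$, so $\bbW_2^2(\rho_\eps,\nu_A)\ge\eps$. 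Hence $\nu_A$ satisfies no T$_2$ inequality with any finite constant. Alternatively, T$_2$ implies a Poincar\'e inequality, which fails for a measure with disconnected support. The additive defect you found, which does not vanish as $\mH(\rho\,\|\,\nu_A)\to0$, is therefore a genuine phenomenon and cannot be removed.

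What \cite{Bobkov-Nayar-Tetali-2017-nonLip} actually bound is the subgaussian constant of $\nu_A$: the smallest $\sigma_A^2$ for which \eqref{eq: bobkov eq} holds, equivalently (Bobkov--G\"otze) a T$_1$ inequality. This is all the paper uses later. With that reading:
\begin{enumerate}
\item \eqref{eq: bobkov eq} holds by definition.
\item Your Gaussian linearization gives \eqref{eq: bobkov eq 2} verbatim.
\item The bound on $\sigma_A^2$ follows from the idea you already had, because no tensorization is required.
\end{enumerate}
For the third point, take a $1$-Lipschitz $f$ and set $m:=\int f\,d\nu$. The sub-Gaussian tail of $f-m$ under $\nu$ gives $\nu_A(|f-m|\ge r)\le\min\{1,\,2e^{-r^2/(2\sigma^2)}/\nu(A)\}$. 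Hence $\|f-m\|_{\psi_2(\nu_A)}\lesssim\sigma\sqrt{\log(e/\nu(A))}$. Recentring at $\int f\,d\nu_A$ costs at most a constant multiple of this. A centred random variable with $\psi_2$ norm $K$ has moment generating function at most $e^{cK^2\lambda^2}$, which gives $\sigma_A^2\le c\log(e/\nu(A))\sigma^2$.

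In the paper's application, the dimension-free input is that $\eta^{\otimes N^2}$ has subgaussian constant $\sigma^2$ on $\R^{N\times N}$ with the Frobenius metric (Assumption~\ref{assumption: main}.3). This is used before restricting to $S$, so the growth of the defect under $\nu_A^{\otimes n}$ that worried you never arises.
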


The above, together with the estimate \ref{pr: rnd matrix operator bound} $\bbP(S)\geq 1-2/e$ shows that the law of $\bfJ$ conditioned on $S$ satisfies sub-Gaussian concentration of Lipschitz functions with constant $\sigma^2_S\leq 10\sigma^2$.
\subsection{Proof of Theorem \ref{th: quenched propagation of chaos}}

Fix $k\leq N$ and a set $v\subseteq \{1, \ldots, N\}$ of size $k$. Let $f:\mC_T^k\to \R$ be a $1$-Lipschitz function, which we may assume without loss of generality satisfies $f(0)=0$. By \eqref{eq: F_f} we have \[\|F_f(\bfJ)\|_\infty\leq \|f\|_\infty\leq A\sqrt k.\] Recall the definition of the set $S$ from Proposition \ref{pr: rnd matrix operator bound}. By the concentration bound $\eqref{eq: concentration of operator norm}$, we have
\begin{align}
\label{eq: simple bound on bad event}
    \bbE[F_f(\bfJ)]-\bbE[F_f(\bfJ)\mid S]= \bbE[F_f(\bfJ)(1-\mathbf{1}_{S}\bbP(S)^{-1})]\leq 2\|f\|_\infty \bbP(S^c)\leq 2A\sqrt k e^{-N}.
\end{align}
    Fix $0 < \theta < \min\{1/(8A^{2}),1/(32\sigma_S^2A^2 C^2))\}$ where $C$ is the constant from Lemma \ref{lm: Lipschitzianity of integral map}. Then  \eqref{eq: simple bound on bad event} implies,
    \begin{align*}
        \bbE\bigg[\exp\bigg(\frac{ N\theta}{k}\big|F_f(\bfJ)-\bbE[F_f(\bfJ)]|^2\bigg)\bigg]
        \leq &\bbE\bigg[\exp\bigg(\frac{2N\theta}{k}\big|F_f(\bfJ)-\bbE[F_f(\bfJ) \mid S]|^2\bigg)\bigg] \exp\big(4N\theta A^2 e^{-2N}\big)\\
        \leq &C\bigg(\bbE\bigg[\exp\bigg(\frac{4N\theta}{k}\big|F_f(\bfJ)-\bbE[F_f(\bfJ) \mid S]\big|^2\bigg)\bigg| S\bigg]+2 e^{-N(1-8\theta A^2)}\Big).
    \end{align*}
    Since $\theta<1/(8A^2)$, the second term on the right-hand side decays exponentially in $N$. By Lemma \ref{lm: Lipschitzianity of integral map}, the map $\bfJ\mapsto F_f(\bfJ)$ is $C A\sqrt{k/N}$-Lipschitz on the set $S$ for some constant $C \in (0, \infty)$ which does not depend on $N$. By \eqref{eq: bobkov eq 2}, the following bound holds, which together with the last display implies the first item:
    \[\bbE\bigg[\exp\bigg(\frac{4N\theta}{k}\big|F_f(\bfJ)-\bbE[F_f(\bfJ) \mid S]\big|^2\bigg)\bigg| S\bigg] \leq 2. \]
      
We now prove the second item by combining the first item with a chaining argument. Similar arguments have appeared elsewhere in the literature, for example in the proof of the convergence rate of an empirical measure to its limit in Wasserstein distance, see  e.g., \cite[Proposition 2.5]{Chewi-Niles-Weed-Rigollet-2025-SOT}. For each $\bfJ \in \R^{N\times N}$, the measure $P_t^v(\bfJ)$ is supported on $[-A,A]^k$. In particular, we have the following trivial bound:
    \[\bbW_1(P_\bfJ^v(t),\bbE[P_\bfJ^v(t)])\leq 2\sqrt kA.\] Thus by \eqref{eq: simple bound on bad event}, we have
\begin{equation}
\label{eq: first step chaining}
\bbE\Big[\bbW_1(P^v_{t}(\bfJ),\bbE[P^v_{t}(\bfJ)])\Big]\leq \bbE\Big[\bbW_1(P^v_{t}(\bfJ),\bbE[P^v_{t}(\bfJ)])\mid S\Big]+4\sqrt k A e^{-N}.
\end{equation}
Define $\frF:=\{f:[-A,A]^k \to \R\mid f \text{ is }1\text{-Lipschitz}, f(0)=0\}$. By Kantorovich duality, the triangle inequality, and \eqref{eq: simple bound on bad event}, we have
\[\begin{aligned}\bbW_1(P^v_{t}(\bfJ),\bbE[P^v_{t}(\bfJ)])&=\sup_{f\in \frF}\,(F_f(\bfJ)-\bbE[F_f(\bfJ)]) \\ & \leq \sup_{f\in \frF}\,(F_f(\bfJ)-\bbE[F_f(\bfJ)|S]) + 2A\sqrt k e^{-N}.  \end{aligned}\]
This, together with the last display, implies
\begin{align*}
\bbE\Big[\bbW_1(P^v_{t}(\bfJ),\bbE[P^v_{t}(\bfJ)])\Big]\leq \bbE\Big[\bbW_1(P^v_{t}(\bfJ),\bbE[P^v_{t}(\bfJ)\mid S])\mid S\Big]+8\sqrt k A e^{-N}.    
\end{align*}
For $f,g\in \frF$, by \eqref{eq: F_f} we have
         \[
         \begin{aligned}
         |F_f(\bfJ)-\bbE[F_f(\bfJ)\mid S]-(F_g(\bfJ)-\bbE[F_g(\bfJ)\mid S])| &\leq  |\sfE_\bfJ [(f - g)(X_{[T]})] + \big|\bbE\big[ (f - g) (X_{[T]})\mid  S] \big|
         \\   
         & \leq 2\|f-g\|_\infty.             
         \end{aligned}
\] Moreover by Theorem \eqref{th: bobkov restricted measure} and Lemma \ref{lm: Lipschitzianity of integral map} for $r\in \R$ 
        \begin{align*}
            &\bbE\Big[\exp\Big(r(F_f(\bfJ)-\bbE[F_f(\bfJ)\mid S]-(F_g(\bfJ)-\bbE[F_g(\bfJ)\mid S]))\Big)\mid S\Big] \\
            =&\bbE\Big[\exp\Big(r(F_{f-g}(\bfJ)-\bbE[F_{f-g}(\bfJ)\mid S])\Big)\mid S\Big]\\
            \leq &\exp\bigg( \frac{Cr^2 \|f-g\|_\infty^2}{N}\bigg).
        \end{align*}
    we can now apply Dudley's bound (\cite[Theorem 5.31]{VanHandelHDP}) to obtain
    \begin{align*}
\bbE\Big[\bbW_1(P^v_{t}(\bfJ),\bbE[P^v_{t}(\bfJ)\mid S])\mid S\Big]&\leq \bbE\Big[\sup_{f\in \frF}(F_f(\bfJ)-\bbE[F_f(\bfJ)\mid S])\mid S\Big]\\
        &\leq \inf_{\delta>0} \bigg[4\delta+\frac{12C}{\sqrt N}\int_\delta^{\infty} \sqrt{\log \sfN (\frF,r)}\,dr\bigg].
    \end{align*}
    where $\sfN(\frF,r)$ is the covering number of $\frF$ in sup-norm with balls of radius $r$. Observe that for $r>\sqrt k A$, $\sfN(\frF,r)=1$. On the other hand, by \cite[Lemma 2.7]{Chewi-Niles-Weed-Rigollet-2025-SOT} we have $\log \sfN (\frF,r)\leq (A4\sqrt k /r)^k$. Therefore,
     \begin{align*}
        \bbE\Big[\bbW_1(P^v_{t}(\bfJ),\bbE[P^v_{t}(\bfJ)\mid S])\mid S\Big]&\leq \inf_{\delta>0} \bigg[4\delta+\frac{C}{\sqrt N}\int_\delta^{A\sqrt k} \frac{(A\sqrt k)^{k/2}}{r^{k/2}}\,dr\bigg].
    \end{align*}
    Now if $k=1$, we may choose $\delta=0$ to find $\bbE[\bbW_1(P^v_{t}(\bfJ),\bbE[P^v_{t}(\bfJ)\mid S])\mid S]\leq C/\sqrt N$. Otherwise we conclude by picking $\delta=A\sqrt k N^{-1/k}$ and computing the integral. \hfill $\square$

\section{Representation of limit and averaged dynamics}
\label{sect: representation}
\subsection{Representation of the limit dynamics} 
\label{ss: limit-rep}
First we improve the representation \eqref{eq: limit dynamics 0} by removing the dependence on the auxiliary process $B$. This in particular will be useful in the proof of Theorem \ref{th: averaged propagation of chaos}. A similar representation has previously appeared in \cite{Faugeras2025-bp}. 

We introduce in the following lemma a function $R_\mu:[0, T]^2 \rightarrow \R$  which acts as the \emph{Volterra resolvent} of $H_t^\mu$.

\begin{lemma} Let $\nu \in \mP(\mC_T)$. Then there exists a function $R_\nu\in L^2([0, T]^2)$ such that
\begin{equation}
  R_\nu(t, s) - H_\nu^t(t, s) = \beta^2 \int_s^t H^t_\nu(t, u) R_\nu(u, s) du = \beta^2 \int_s^t R_\nu(t, u) H^u_\nu(u, s) du,  
  \label{eq: H resolvent}
\end{equation}
and
\begin{equation}
    R_\nu(t, s) \leq Ae^{A(t - s)},
    \label{eq: R bound}
\end{equation}
for all $0 \leq s \leq t \leq T$. Moreover there exists a constant $C \in (0, \infty)$ depending only on $(\beta, A, T)$ such that for all $\xi, \nu \in \mP(\mC_T)$, we have
\begin{equation}
\label{eq: R contiuity}
    \int_0^T\int_0^t |R_\xi(t, s) - R_\nu(t, s)|^2 ds\, dt \leq C \int_0^T\int_0^T |K_\xi(t, s) - K_\nu(t, s)|^2 ds\, dt.
\end{equation}
\label{lem: resolvent}
\end{lemma}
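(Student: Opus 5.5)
We construct $R_\nu$ as the Volterra resolvent of the kernel $k(t,s):=\beta^2 H^t_\nu(t,s)\mathbf 1_{s\le t}$, using the Neumann series. The first step is to obtain uniform bounds on the kernel.

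Since $\nu$ is supported on paths with values in $[-A,A]$, we have $|K_\nu(t,s)|\le A^2$. Moreover $K^t_\nu$ is positive, so $H^t_\nu=K^t_\nu(\beta^2K^t_\nu+I)^{-1}$ has operator norm at most $\|K^t_\nu\|_{\op}\le A^2 t$. For a pointwise bound we write $H^t_\nu=K^t_\nu-\beta^2K^t_\nu(\beta^2K^t_\nu+I)^{-1}K^t_\nu$. The kernel of the second term at $(t,s)$ is $\langle K_\nu(t,\cdot),(\beta^2K^t_\nu+I)^{-1}K_\nu(\cdot,s)\rangle_{L^2[0,t]}$, which is bounded by $tA^4$. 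This gives $|H^t_\nu(t,s)|\le C(A,\beta,T)$ uniformly in $0\le s\le t\le T$. There is a measurability subtlety, because the kernel is defined only almost everywhere for each $t$. I would address it by noting that $K_\nu$ is continuous, so the above formula defines $H^t_\nu(t,s)$ pointwise and continuously.

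Next we define iterated kernels $k_1=k$ and $k_{n+1}(t,s)=\int_s^t k(t,u)k_n(u,s)\,du$. By induction, $|k_n(t,s)|\le M^n(t-s)^{n-1}/(n-1)!$ with $M=\beta^2C$. Setting $R_\nu:=\beta^{-2}\sum_n k_n$, the series converges uniformly and yields $|R_\nu(t,s)|\le \beta^{-2}M e^{M(t-s)}$. This is the bound \eqref{eq: R bound}, with the constant renamed as the paper's $A$ (presumably depending on $A,\beta$). Both resolvent identities in \eqref{eq: H resolvent} then follow from the associativity of Volterra convolution, $k_{n+1}=k*k_n=k_n*k$, by summing the series. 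Uniqueness of the resolvent follows from a Gronwall argument.

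For the continuity estimate \eqref{eq: R contiuity}, I would first bound $H$ in terms of $K$ via the resolvent identity for the map $K\mapsto K(\beta^2K+I)^{-1}$:
\[
H^t_\xi-H^t_\nu=(\beta^2K^t_\xi+I)^{-1}(K^t_\xi-K^t_\nu)(\beta^2K^t_\nu+I)^{-1},
\]
where both inverse factors have operator norm at most $1$. The main obstacle is that \eqref{eq: R contiuity} requires control of the diagonal slice $s\mapsto H^t(t,s)$ in $L^2$, integrated over $t$, whereas the operator bound only controls Hilbert–Schmidt norms on $[0,t]^2$. To get the pointwise-in-$t$ slice, expand
\[
(\beta^2K+I)^{-1}=I-\beta^2K(\beta^2K+I)^{-1}.
\]
Then $H^t_\xi(t,\cdot)-H^t_\nu(t,\cdot)$ equals $\Delta(t,\cdot)$ plus terms of the form $K(t,\cdot)$ composed with bounded operators applied to $\Delta$, where $\Delta=K_\xi-K_\nu$. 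Each such term has $L^2[0,t]$ norm at most $\|\Delta(t,\cdot)\|_{L^2}+C\|\Delta\|_{L^2([0,t]^2)}$. Squaring and integrating in $t$ gives
\[
\int_0^T\!\int_0^t|H^t_\xi(t,s)-H^t_\nu(t,s)|^2\,ds\,dt\le C\|\Delta\|^2_{L^2([0,T]^2)}.
\]

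Finally, subtracting the resolvent equations gives
\[
R_\xi-R_\nu=(H_\xi-H_\nu)+\beta^2\big[(H_\xi-H_\nu)*R_\xi+H_\nu*(R_\xi-R_\nu)\big].
\]
Using the uniform bounds on $R_\xi$ and $H_\nu$ together with Cauchy–Schwarz, the function $\phi(t)=\int_0^t|R_\xi-R_\nu|^2(t,s)\,ds$ satisfies
\[
\phi(t)\le C\Big(h(t)+\int_0^t h\Big)+C\int_0^t\phi(u)\,du,
\]
where $h(t)$ is the corresponding diagonal-slice quantity for $H$. Gronwall's inequality, followed by integration over $[0,T]$, yields \eqref{eq: R contiuity}.
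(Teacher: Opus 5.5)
Your proof is correct, and the stability part is the paper's argument. You subtract the two resolvent identities, use uniform bounds on $R_\xi$ and $H_\nu$ together with Gr\"onwall to reduce to $\int_0^T\int_0^t|H^t_\xi(t,s)-H^t_\nu(t,s)|^2\,ds\,dt$, and then control that diagonal slice row by row through the contraction $(I+\beta^2K^t)^{-1}$. Your factorization $H^t_\xi-H^t_\nu=(\beta^2K^t_\xi+I)^{-1}(K^t_\xi-K^t_\nu)(\beta^2K^t_\nu+I)^{-1}$ is equivalent to the paper's use of the Fredholm identity $H^t+\beta^2H^tK^t=K^t$. That identity writes the row $H^t_\xi(t,\cdot)-H^t_\nu(t,\cdot)$ as $(I+\beta^2K^t_\xi)^{-1}$ applied to a function built from $K_\xi-K_\nu$ and $H_\nu$.

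The genuine difference is in the existence step. The paper quotes the bound $H^t_\nu\le A^2$ from the earlier large-deviations work and splits $[0,T]$ into intervals where the kernel has small $L^2$ norm. It then invokes an abstract Volterra resolvent theorem to get $R_\nu\in L^2$, and only afterwards obtains the exponential bound by Gr\"onwall. You instead derive $|H^t_\nu(t,s)|\le A^2+\beta^2A^4t$ directly from $H=K-\beta^2K(\beta^2K+I)^{-1}K$. You then build $R_\nu$ as a Neumann series with factorial bounds, which gives existence, both identities (via associativity of Volterra convolution) and the exponential bound at once. Your route is self-contained and yields the identities pointwise. It also makes explicit the measurability and continuity of $(t,s)\mapsto H^t_\nu(t,s)$, which the paper leaves implicit. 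The price is only a worse constant.

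Your reading of the stated bound as $Ce^{C(t-s)}$ with $C=C(A,\beta)$ is the right one. The paper's own Gr\"onwall step with $H\le A^2$ actually produces $A^2e^{\beta^2A^2(t-s)}$, not $Ae^{A(t-s)}$.
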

We defer the proof of Lemma \ref{lem: resolvent} to Appendix \ref{sec: Volterra}. With Lemma \ref{lem: resolvent} in hand, we obtain a simpler representation of the limit dynamics \eqref{eq: limit dynamics 0}. 

\begin{lemma} Let $\mu \in \mP(\mC_T)$ be the solution to \eqref{eq: limit dynamics 0} and $Y \sim \mu$. Then we have
\begin{equation}
\begin{aligned}
    dY_t &= - U'(Y_t) dt + \beta^2 \bigg[\int_0^t R_\mu(t, s) dW_s \bigg] dt + dW_t. 
\end{aligned} 
\label{eq: limit dynamics 1}
\end{equation}
with $Y_0  \sim \mu_0$. Moreover this SDE has a unique strong solution.
\label{lem: limit dymamics}
\end{lemma}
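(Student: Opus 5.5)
We start from the weak solution $(Y,B,W)$ of \eqref{eq: limit dynamics 0} and eliminate $B$ by solving the second equation as a linear Volterra equation driven by $W$. Write $dB_t = \beta^2 \big[\int_0^t H^t_\mu(t,s)\,dB_s\big]dt + dW_t$ and set $Z_t := \int_0^t H^t_\mu(t,s)\,dB_s$, so that $dB_t = \beta^2 Z_t\,dt + dW_t$. Substituting this into the definition of $Z$ gives, for a.e. $t$,
\[
Z_t = \int_0^t H^t_\mu(t,s)\,dW_s + \beta^2\int_0^t H^t_\mu(t,s) Z_s\,ds .
\]
This is a linear Volterra equation of the second kind for $Z$, with kernel $\beta^2 H^t_\mu(t,s)$ and forcing term $g_t := \int_0^t H^t_\mu(t,s)\,dW_s$.

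We claim its solution is $Z_t = \int_0^t R_\mu(t,s)\,dW_s$, where $R_\mu$ is the resolvent from Lemma \ref{lem: resolvent}. To verify this, plug the candidate into the right-hand side and apply the stochastic Fubini theorem:
\[
\beta^2\int_0^t H^t_\mu(t,u)\int_0^u R_\mu(u,s)\,dW_s\,du = \int_0^t \Big[\beta^2 \int_s^t H^t_\mu(t,u)R_\mu(u,s)\,du\Big] dW_s .
\]
By the first identity in \eqref{eq: H resolvent}, the bracket equals $R_\mu(t,s)-H^t_\mu(t,s)$. So the right-hand side becomes $\int_0^t H^t_\mu(t,s)\,dW_s + \int_0^t (R_\mu(t,s)-H^t_\mu(t,s))\,dW_s = \int_0^t R_\mu(t,s)\,dW_s$, which is the candidate. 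Stochastic Fubini is justified because $H_\mu^t \in L^2$ and $R_\mu$ is bounded by \eqref{eq: R bound}.

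Uniqueness of $Z$ follows from uniqueness for deterministic Volterra equations with square-integrable kernels, applied pathwise to the difference of two solutions. We need the bound $\sup_t \int_0^t |H^t_\mu(t,s)|^2 ds<\infty$, which we get from the representation $H^t = K^t(\beta^2K^t+I)^{-1}$ with $K_\mu$ bounded by $A^2$; a Gronwall argument on $\bbE|Z_t-Z'_t|^2$ also works. The difference satisfies the homogeneous equation and hence vanishes. We also need $Z$ to be well defined in the first place, i.e. that $\int_0^t H^t_\mu(t,s)\,dB_s$ makes sense. We treat this as part of the meaning of the weak solution in \cite[Theorem 2.5]{Arous1995-zj}; there $B$ is a Gaussian semimartingale. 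Substituting back gives $dB_t = \beta^2\big[\int_0^t R_\mu(t,s)\,dW_s\big]dt + dW_t$, and hence \eqref{eq: limit dynamics 1}.

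For strong well-posedness of \eqref{eq: limit dynamics 1}, note that $R_\mu$ is a fixed deterministic kernel once $\mu$ is given. So $\Gamma_t := \beta^2\int_0^t R_\mu(t,s)\,dW_s$ is a Gaussian process adapted to the filtration of $W$, and the equation reads $dY_t = -U'(Y_t)\,dt + \Gamma_t\,dt + dW_t$.

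\emph{Pathwise uniqueness.} Take two solutions driven by the same $W$ and $Y_0$. Their difference satisfies $d(Y-Y')_t = -(U'(Y_t)-U'(Y'_t))\,dt$. Since $U'=U_c'+U_\ell'$ with $U_c$ convex, we get the one-sided bound $-(U'(y)-U'(y'))(y-y')\le C_{\textup{Lip}}|y-y'|^2$, and Gronwall gives $Y=Y'$.

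\emph{Existence of a strong solution.} We would obtain this via Yamada--Watanabe, using the weak solution constructed above. Alternatively, we could build it directly: localize $U'$ on $[-A+\epsilon,A-\epsilon]$, solve with Lipschitz coefficients plus the continuous adapted forcing $\Gamma$, and then show non-explosion to $\pm A$. The latter would use the Feller-type condition in Assumption \ref{assumption: main}(1), after a Girsanov change of measure that removes $\Gamma$ (valid since $\Gamma$ is Gaussian with bounded variance, so Novikov holds on short intervals). This mirrors \cite[Lemma 2.2]{Arous1995-zj}.

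The main obstacle is the technical justification of the manipulations involving $B$. This means making sense of $\int_0^t H_\mu^t(t,s)\,dB_s$ when the kernel depends on the upper limit $t$, and justifying stochastic Fubini with only $L^2$ control of $H$. The plan is to first use that $\int_0^t |B_s|\,ds$ has finite moments, then substitute $dB = \beta^2 Z\,dt + dW$ and rely on the square-integrability of $H^t_\mu$ uniformly in $t$.
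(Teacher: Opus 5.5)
Your proposal is correct and follows essentially the same route as the paper. You set $Z_t=\int_0^t H^t_\mu(t,s)\,dB_s$, derive the Volterra equation, and check via stochastic Fubini and \eqref{eq: H resolvent} that $\int_0^t R_\mu(t,s)\,dW_s$ solves it; you then identify the two by a Gr\"onwall argument, and obtain strong well-posedness from pathwise uniqueness together with the weak solution of \cite[Theorem 2.5]{Arous1995-zj}. Your one-sided-Lipschitz argument for pathwise uniqueness is slightly more explicit than the paper's local-Lipschitz-up-to-explosion argument, but the structure is the same.
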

\begin{proof}
    Recall the process $B$ from \eqref{eq: limit dynamics 0}. Define:
    \[Z_t := \int_0^t H_\mu^t(t, s) dB_s, \quad t \in [0, T]. \]
    In particular \eqref{eq: limit dynamics 0} may be written as $dB_t = \beta^2 Z_t dt + dW_t$ and thus
    \[Z_t = \beta^2 \int_0^t H_\mu^t(t, s) Z_s ds + \int_0^t H_\mu^t(t, s) dW_s. \]
    Now, consider the following process, which is well-defined by \eqref{eq: R bound}.
    \begin{equation}
    \label{eq: tlZ}
        \tlZ_t = \int_0^t R_\mu(t, s) dW_s. 
    \end{equation}
    We may apply \eqref{eq: H resolvent} and the stochastic Fubini theorem (see e.g., Theorem 65 of \cite{Protter2004}) to obtain
    \[\begin{aligned}\tlZ_t &= \int_0^t H_\mu^t(t, s) dW_s + \beta^2  \int_0^t \bigg[\int_s^t H_\mu^t(t, u) R_\mu(u, s) du\bigg] dW_s \\
    &= \int_0^t H_\mu^t(t, s) dW_s + \beta^2  \int_0^t H_\mu^t(t, u)  \bigg[\int_0^uR_\mu(u, s)dW_s \bigg] du \\
    &= \int_0^t H_\mu^t(t, s) dW_s + \beta^2  \int_0^t H_\mu^t(t, u)  \tlZ_u du.
    \end{aligned}\]
    In particular by the last 3 displays and \eqref{eq: H bound} we have
    \[|\tlZ_t - Z_t|^2 \leq C\int_0^t |H^t_\mu(t, s)|^2|\tlZ_s - Z_s|^2 ds \leq C \int_0^t |\tlZ_s - Z_s|^2 ds,\]
    and by Gr\"onwall's inequality we have $\tlZ = Z$. Substituting \eqref{eq: tlZ} into \eqref{eq: limit dynamics 0} yields the claim. To prove the existence of a strong solution, consider a filtered probability space $(\Omega,\mA,\mF=(\mF)_{t\geq 0}, \bbP)$ supporting a $\mF$-Wiener process $W$ and define the random variable $Z_t:=\int_0^t R_\mu(t,s)\,dW_s$. Consider the SDE with random coefficients
    \begin{align}
    \label{eq: random coefficients SDE}
        dY_t=-U'(Y_t)\,dt+Z_t\,dt+dW_t,
    \end{align} 
    with $Y_0\sim \mu_0$. This SDE has locally Lipschitz coefficients, and thus it has a unique strong solution adapted to the filtration of $(Y_0,Z,W)$ (which is just the filtration of $(Y_0,W)$) up to some explosion time. By \cite[Theorem 2.5]{Arous1995-zj} and the above analysis there exists a unique weak solution on arbitrary time horizons.
\end{proof}
This representation is better suited for our analysis than the one in \eqref{eq: limit dynamics 0} because, as observed in \cite{Faugeras2025-bp}, it makes the role of the Wiener process clear. It moreover allows us to exploit the stability properties of $R_\mu$ given in \eqref{eq: R contiuity}.

\subsubsection{On the form of the limit dynamics} \label{ss: form of limit dynamics}
It may not be obvious why the limit dynamics should take the form \eqref{eq: limit dynamics 0} and \eqref{eq: limit dynamics 1} and how the kernels $K_\mu, H_\mu$, and $R_\mu$ arise. In this section, we provide a formal interpretation of $H_\mu$ as a conditional expectation operator. This connection is clarified further in the next section, where we derive a form of the averaged dynamics $\bbE[P_{[T]}^N(\bfJ)]$. 

Fix $\beta  \in (0, \infty)$ and $t \in [0, T]$. Let $(\Omega, \mF)$ be a probability space supporting a triple of continuous process $(X, B, G)$. Let $\bbP_0$ denote a probability measure under which $B$ is a Brownian motion, $X$ satisfies
\[dX_t = -U'(X_t)\, dt + dB_t, \]
and $G$ be a centered Gaussian process independent of $(X, B)$ such that the covariance of $G$ is $K_{\mu}$. Since $K^t_{\mu}$ is a positive semi-definite Hilbert-Schmidt operator, we may diagonalize it to obtain
\begin{equation}
K_\mu(u, s) = \sum_{i \in \bbN} \lambda_i g_i(u) g_i(s), \quad s, u \in [0, t],
    \label{eq: diaogonalize K}
\end{equation}
where $(\lambda_i) \subset \R_+$ and $(g_i)$ forms an orthonormal family of $L^2([0, t])$. In particular by \eqref{eq: H_mu^t}, we have
\begin{equation}
H_\mu^t f(s)  = \sum_{i \in \bbN} \frac{\lambda_i}{1 + \beta^2\lambda_i} \bigg[\int_0^t g_i(u) f(u) du\bigg] g_i(s) , \quad s, u \in [0, t],
    \label{eq: diaogonalize H}
\end{equation}
The process $G$ may then be represented as
\begin{equation}
  G_s = \sum_{i \in \bbN} Z_i \sqrt{\lambda_i} g_i(s), \quad s \in [0, t], \label{eq: G} 
\end{equation}
for a family of i.i.d. standard normal random variables $(Z_i)$ which are independent of $(X, B).$

Define the measure $\bbP_\beta$ given by
\[\frac{d\bbP_\beta}{d\bbP_0} = \exp\bigg( \beta\int_0^t G_s dB_s - \frac{\beta^2}{2}\int_0^t |G_s|^2 ds\bigg).  \]
By Girsanov's theorem this implies that under $\bbP_\beta$, the processes $(X, B)$ satisfies
\[dX_s = - U'(X_s) ds +dB_s, \quad dB_s = \beta G_s dt + dW_s, \quad s \in [0, t],\]
where $W_t$ is a standard Brownian motion. Formally, we may write $\frac{d\bbP_\beta}{d\bbP_0}$ as a tilt of the family of standard normals $(Z_i)$:
\[\frac{d\bbP_\beta}{d\bbP_0} = \exp\bigg( \beta \sum_{i \in \bbN} Z_i \bigg( \sqrt{\lambda_i}\int_0^t g_i(s) dB_s\bigg) - \frac{\beta^2}{2}\sum_{i \in \bbN}\lambda_i Z_i^2  \bigg).  \]
Recall that the law of a finite-dimensional Gaussian random variable $Z \sim \mN(0, \Sigma)$ under the tilt $e^{m \cdot Z - \frac{1}{2}\langle Z, AZ \rangle }$ has mean $(\Sigma + A)^{-1} m$. Applying this to the above discussion, we see that the conditional law of $Z$ given $X$ (and hence $B$) takes the form
\begin{equation*}
    \bbE\big[Z_i|X_{[t]} \big] = \frac{\beta \sqrt\lambda_i}{1 + \beta^2 \lambda_i} \int_0^t g_i(s) dB_s.
\end{equation*}
In particular by \eqref{eq: G} and \eqref{eq: diaogonalize H}, this implies
\[ \bbE\big[G_t|X_{[t]}\big]  = \sum_{i \in \bbN} \frac{\beta\lambda_i}{1 + \beta^2 \lambda_i}\bigg[\int_0^t g_i(s) dB_s\bigg]  g_i(t) = \beta\int_0^t H^t_\mu(t, s) dB_s. \]
In light of the mimicking theorem, see Theorem \ref{t:mimicking} below, we may interpret \eqref{eq: limit dynamics 0} as the marginal dynamics of $X$ under $\bbP_\beta$, and the drift of $B$ is simply the mean of the process $G$ conditioned on $X$.

\subsection{Representation of the averaged dynamics} Let $\bfG$ be a $N\times N$ random matrix with i.i.d. standard Gaussian entries. We now present a characterization of the averaged dynamics $\bbE[P_{{T}}^N(\bfG)]$, which is the marginal distribution of $X$ in \eqref{eq: intro quenched dynamics} if the disorder is Gaussian. Our key tool for this is the \emph{mimicking theorem}, which allows one to represent the marginal distribution of an It\^{o} process in terms of a solution to a SDE. This result has appeared in various guises in more general settings, for example {\cite[Corollary 3.11]{Brunick-Shreve-2013-Mimicking}}. We provide a short proof here for convenience.

\begin{theorem}[Mimicking theorem]
    \label{t:mimicking} Let $d \in \bbN$ and $(\Omega, \mA, \mF, \bbP)$ be a filtered probability space supporting a $d$-dimensional $\mF$-Brownian motion $B$ and a $d$-dimensional $\mF$-adapted continuous process $X$ satisfying
\begin{equation}
\label{eq: mimicking equation 1}
    X_t = X_0 + \int_0^t a_sds + B_t,
\end{equation}
where $a$ is a $d$-dimensional progressively measurable $\mF$-adapted process satisfying
\begin{equation*}
    \bbE \bigg[\int_0^t|a_s| ds  \bigg] < \infty, \quad t \in [0, T].
\end{equation*}
Let $(\mF_t^X)_{t\geq 0}$ be the filtration generated by $X$ and define  $\tilde{a}:[0, T] \times \mC^d \rightarrow \R^d$ to be a progressively measurable function satisfying
\begin{equation*}
    \tilde{a}_t(X) = \bbE\big[a_t \mid \mF_t^X\big], \quad \text{a.e. } t \in [0, T], \quad \bbP\text{-a.s.}
\end{equation*}
 There exists a $d$-dimensional $\mF^X$-adapted Brownian motion ${W}$ such that
\begin{equation*}
    X_t = X_0 + \int_0^t \tilde{a}_s(X) ds + {W}_t, \quad t \in [0, T], \quad \bbP\text{-a.s.}
\end{equation*}
\end{theorem}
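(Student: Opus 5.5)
The plan is the standard innovation-process argument from filtering theory. Define
\[
W_t := X_t - X_0 - \int_0^t \tilde a_s(X)\,ds, \qquad t\in[0,T].
\]
By the integrability assumption on $a$ and Jensen's inequality, $\bbE\int_0^t|\tilde a_s(X)|\,ds\le \bbE\int_0^t |a_s|\,ds<\infty$, so $W$ is a well-defined, continuous, $\mF^X$-adapted process. The identity $X_t = X_0 + \int_0^t \tilde a_s(X)\,ds + W_t$ then holds by construction, and the whole task is to show that $W$ is an $\mF^X$-Brownian motion. For that we use Lévy's characterization: it suffices to show that $W$ is a continuous $\mF^X$-local martingale with quadratic covariation $\langle W^i,W^j\rangle_t=\delta_{ij}t$.

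\textbf{Quadratic variation.} Substituting \eqref{eq: mimicking equation 1}, we can write
\[
W_t = B_t + \int_0^t \big(a_s - \tilde a_s(X)\big)\,ds .
\]
This differs from $B$ by a continuous process of finite variation, so pathwise $[W^i,W^j]_t=[B^i,B^j]_t=\delta_{ij}t$. The quadratic variation computed as a limit of sums along partitions is a pathwise object, so it does not depend on the filtration. Hence the covariation is correct once we know $W$ is an $\mF^X$-semimartingale, and that follows from the martingale property below.

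\textbf{Martingale property.} Fix $s\le t$ and a bounded $\mF^X_s$-measurable random variable $\xi$. We need
\[
\bbE[\xi(W_t-W_s)]=0 .
\]
Since $\mF^X_s\subseteq\mF_s$ and $B$ is an $\mF$-Brownian motion, $\bbE[\xi(B_t-B_s)]=0$. For the drift part, Fubini gives
\[
\bbE\Big[\xi\int_s^t (a_u-\tilde a_u(X))\,du\Big]=\int_s^t \bbE\big[\xi\,(a_u - \bbE[a_u\mid\mF^X_u])\big]\,du .
\]
For each $u\ge s$, $\xi$ is $\mF^X_u$-measurable, so the integrand vanishes by the tower property. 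Fubini is justified because $\xi$ is bounded and both $a$ and $\tilde a(X)$ are in $L^1(du\otimes d\bbP)$. Integrability of $W_t$ itself needs $\bbE|B_t|<\infty$ together with the $L^1$ bound on the drift above. So $W$ is a true $\mF^X$-martingale, and Lévy's characterization concludes the proof.

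\textbf{Main obstacle.} The main obstacle is the measurability of $\tilde a$. We need a progressively measurable functional $\tilde a_t(x)$ on $[0,T]\times\mC^d$ representing the conditional expectation for a.e.\ $t$, so that the time integral and the Fubini interchange make sense. The statement takes such an $\tilde a$ as given. If it had to be constructed, I would take the optional projection of $a$ onto $\mF^X$, or equivalently a jointly measurable version of $(t,\omega)\mapsto\bbE[a_t\mid\mF^X_t]$. By Doob's functional representation, that version can be written as a progressively measurable function of the path $X_{[t]}$.
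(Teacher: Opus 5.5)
Your proposal is correct and follows the same innovation-process argument as the paper. You define $W$ as $X$ minus the projected drift and check the $\mF^X$-martingale property using $\mF^X_s\subseteq\mF_s$, Fubini and the tower property. Since $W-B$ has finite variation, the quadratic variation is that of $B$, and Lévy's characterization concludes; your version is only slightly more explicit about the integrability of $\tilde a$ (via conditional Jensen) and about the filtration-independence of the pathwise bracket.
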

\begin{proof}
    Define
    \[W_t=X_t-X_0-\int_0^t \bbE\big[a_s \mid \mF_s^X\big]\,ds= B_t+\int_0^t a_s-\bbE\big[a_s \mid \mF_s^X\big]\,ds.\]
The first equality shows that $W$ is adapted to the filtration generated by $X$. We check that $W$ is also a martingale in this filtration. Indeed for $s\leq t$,
\begin{align*}
    \bbE[W_t-W_s \mid \mF_s^X]&=\bbE\Big[X_t-X_s-\int_s^t \bbE[a_u \mid \mF_u^X]\,du\,\big\vert\, \mF_s^X\Big]\\
    &=\bbE\big[B_t-B_s\mid \mF_s^X\big]+\bbE\Big[\int_s^t a_u-\bbE[a_u \mid \mF_u^X]\,du\,\big\vert\, \mF_s^X\Big].
\end{align*}
The second term of the sum is zero by Fubini (which we can apply by the integrability assumption on $a$) and the tower property. For the first term, the tower property, the assumption that $X$ is $\mF$-adapted and the definition of a $\mF$-Brownian motion yield $\bbE\big[B_t-B_s\mid \mF_s^X\big]=\bbE\big[\bbE[B_t-B_s\mid \mF_s]\mid \mF_s^X\big]=0$. Hence $W$ is a martingale with respect to the sigma algebra generated by $X$. On the other hand its quadratic variation is the same as the quadratic variation of $B$, which allows us to conclude that $W$ is a $\mF^X$-Brownian motion. To conclude, simply substitute the definition of $W$ into  \eqref{eq: mimicking equation 1}.
\end{proof}
On applying the Theorem \ref{t:mimicking} to \eqref{eq: intro quenched dynamics}, we obtain a representation of $\bbE[P_{[T]}(\bfJ)]$ as the solution of a non-Markovian SDE.
\begin{lemma} 
\label{lm: gaussian disorder averaged equation}
 Suppose $X$ is a weak solution to \eqref{eq: intro quenched dynamics} with Gaussian disorder $\bfJ=\bfG$. Then $X$ satisfies the following SDE, which admits a unique strong solution:
\begin{equation}
\begin{aligned}
dX_t^i &= - U'(X_t^i) dt + \frac{\beta^2}{N} \bigg[X_t^{\top} \int_0^t Q_s X_s dW^i_s \bigg] dt + dW^i_t, \quad i =1,\ldots, N, \quad t \in (0, T), \\
Q_t&=\Big(I+\frac{\beta^2}N \int_0^t X_sX_s^\top \,ds\Big)^{-1},\\
    X_0 &\sim P_0^N,
\end{aligned}
    \label{eq: averaged dynamics 1}
\end{equation}
where $W = (W^i)_{i = 1}^N$ is a standard $N$-dimensional Brownian motion with respect to the filtration generated by $X$.
\label{lem: averaged dynamics 0}
\end{lemma}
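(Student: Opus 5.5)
We apply the mimicking theorem (Theorem \ref{t:mimicking}) to \eqref{eq: intro quenched dynamics} with $\bfJ=\bfG$. The drift is $a_t^i=-U'(X^i_t)+\beta N^{-1/2}\bfG_i\cdot X_t$. Since $X$ is bounded by $A$ and $\bfG$ is Gaussian, $\bbE\int_0^T|a_s|\,ds<\infty$. Because $U'(X_t^i)$ and $X_t$ are $\mF^X_t$-measurable, the mimicked drift is $-U'(X_t^i)+\beta N^{-1/2}X_t\cdot\bbE[\bfG_i\mid\mF^X_t]$. The task therefore reduces to computing this conditional mean, and by the resulting formula the innovation process $W$ supplied by the mimicking theorem will also drive the stochastic integral.

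\textbf{Computing the conditional law.} Let $\bbQ$ be the law of the system with $\beta=0$, independent of $\bfG$. Under $\bbQ$ the process $B^i:=X^i-X^i_0+\int_0^\cdot U'(X^i_s)\,ds$ is a Brownian motion and $\bfG$ is independent of $X$. Girsanov (valid by boundedness of $X$ conditional on $\bfG$) gives
\[\frac{d\bbP}{d\bbQ}\Big|_{\mF_t}=\prod_{i=1}^N\exp\Big(\tfrac{\beta}{\sqrt N}\bfG_i\cdot\int_0^t X_s\,dB^i_s-\tfrac{\beta^2}{2N}\bfG_i^\top\Big(\int_0^tX_sX_s^\top ds\Big)\bfG_i\Big).\]
By Bayes' formula and the independence of $\bfG$ and $X$ under $\bbQ$, the conditional law of $\bfG$ given $\mF^X_t$ under $\bbP$ is the standard Gaussian tilted by this density. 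Completing the square (the Gaussian tilt computation in Section \ref{ss: form of limit dynamics}), the rows are independent Gaussians with covariance $Q_t$ and mean
\[\bbE[\bfG_i\mid\mF^X_t]=\frac{\beta}{\sqrt N}\,Q_t\int_0^tX_s\,dB^i_s.\]

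\textbf{Converting $dB$ to the innovation $dW$.} We have $dB^i_s=dW^i_s+\beta N^{-1/2}X_s\cdot\bbE[\bfG_i\mid\mF^X_s]\,ds$, and $Q_t$ appears outside the integral, whereas the target formula has $Q_s$ inside. I would verify directly that $m_t:=\bbE[\bfG_i\mid\mF^X_t]=\beta N^{-1/2}\int_0^tQ_sX_s\,dW^i_s$. Using $dQ_t=-\tfrac{\beta^2}{N}Q_tX_tX_t^\top Q_t\,dt$ and Itô's product rule on $m_t=\beta N^{-1/2}Q_tM_t$ with $M_t=\int_0^tX\,dB^i$, we get
\[dm_t=-\tfrac{\beta^2}{N}Q_tX_tX_t^\top m_t\,dt+\tfrac{\beta}{\sqrt N}Q_tX_t\,(dW^i_t+\tfrac{\beta}{\sqrt N}X_t^\top m_t\,dt)=\tfrac{\beta}{\sqrt N}Q_tX_t\,dW^i_t.\]
The drift terms cancel, and since $m_0=0$ the claimed formula follows. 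Substituting it into the mimicked drift gives \eqref{eq: averaged dynamics 1}.

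\textbf{Strong existence and uniqueness.} Existence in law is shown above. For uniqueness I would proceed as in Lemma \ref{lem: limit dymamics}. Given $(X_0,W)$, the coefficients are functionals of the path. Before the exit time from $(-A+\epsilon,A-\epsilon)^N$ they are locally Lipschitz: $Q$ is Lipschitz in $X$ because $\|Q\|_{\op}\le1$, and the stochastic integral term, written by Itô integration by parts as $X_t^\top Q_tX_t\,W^i_t-\int_0^t(\cdots)W^i_s\,ds$, can be handled pathwise. This gives pathwise uniqueness up to exit times, and non-explosion follows because every weak solution has the law of the original system, which stays in $(-A,A)^N$. Yamada--Watanabe then yields a unique strong solution.

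I expect this last step to be the main obstacle, because the stochastic integral in the drift is non-Markovian and only locally Lipschitz. If the pathwise rewriting fails, I would instead establish uniqueness in law through the Girsanov correspondence with \eqref{eq: intro quenched dynamics}, together with pathwise uniqueness via a localized Gronwall estimate.
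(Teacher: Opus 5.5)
Your derivation of the equation is correct and is the same as the paper's proof. It uses Girsanov to the $\beta=0$ measure, then Bayes' formula with completion of the square to get $\bbE[\bfG_i\mid\mF^X_t]=\beta N^{-1/2}Q_t\int_0^tX_s\,dB^i_s$ (your $B^i$ is the paper's $V^i$), then the mimicking theorem, and finally the It\^o product computation in which the two drifts cancel. One minor point: boundedness of $X$ does not make $U'(X^i_t)$ integrable, since $U'$ blows up at $\pm A$. However, that term is $\mF^X$-adapted and cancels in $a_t-\bbE[a_t\mid\mF^X_t]$, so only the interaction drift needs integrability.

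The gap is in the well-posedness claim. Your primary mechanism fails because $Q_sX_s$ is not of finite variation: $d(Q_sX_s)$ has martingale part $Q_s\,dW_s$. Integration by parts therefore gives
\[\int_0^tQ_sX_s\,dW^i_s=Q_tX_tW^i_t-\int_0^tW^i_s\,d(Q_sX_s)-\int_0^tQ_se_i\,ds.\]
The term $\int_0^tW^i_s\,d(Q_sX_s)$ contains the genuine It\^o integral $\int_0^tW^i_sQ_s\,dW_s$. Through the drift of $X$, it also contains stochastic integrals $\int_0^sQ_rX_r\,dW^j_r$ of exactly the type you wanted to remove. So no pathwise, locally Lipschitz functional of $(X,W)$ comes out. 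Your non-explosion step is also circular: ``every weak solution has the law of the original system'' is uniqueness in law, which is part of what must be proved. Your fallback argument is left unspecified.

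The missing idea is the Markovian lift, and you already have every ingredient for it. Put $Z^i_t:=\beta N^{-1/2}\int_0^tQ_sX_s\,dW^i_s$. Your computation and the Riccati equation $dQ_t=-\frac{\beta^2}{N}Q_tX_tX_t^\top Q_t\,dt$ show that $(X,Z,Q)$ solves
\[dX^i_t=-U'(X^i_t)\,dt+\tfrac{\beta}{\sqrt N}X_t\cdot Z^i_t\,dt+dW^i_t,\qquad dZ^i_t=\tfrac{\beta}{\sqrt N}Q_tX_t\,dW^i_t.\]
This is a finite-dimensional SDE whose coefficients are locally Lipschitz on $(-A,A)^N\times\R^{N\times N}\times\R^{N\times N}$. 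Two solutions of \eqref{eq: averaged dynamics 1} driven by the same $(X_0,W)$ give two solutions of this system with continuous paths in that open set. They therefore coincide up to localizing times that increase to $T$. No separate non-explosion argument is needed, because solutions are by definition continuous on $[0,T]$ with values in $(-A,A)^N$. This pathwise uniqueness, combined with the weak solution you built, gives the unique strong solution by Yamada--Watanabe. That is exactly how the paper concludes (Remark~\ref{rk: markovian}). A localized Gronwall argument that localizes on $|Z|$ and controls the difference of stochastic integrals by BDG would amount to the same thing.
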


\begin{remark}\label{rk: markovian}
 By keeping track of an additional variable, one may write \eqref{eq: averaged dynamics 1} in a Markovian form. In particular, define the $N$-dimensional stochastic processes $Z^i$ for $i=1,\dots,N$ by
\[Z_t^i:= \frac{\beta}{\sqrt N}\int_0^t Q_s X_s\,dW_s^i.\]
Then it is easy to see from \eqref{eq: averaged dynamics 1} and It\^{o}'s formula that the triple $(X,Z,Q)$ solves the following Markovian SDE: 
    \begin{align}
    \label{eq: averaged dynamics Markovian}
    \begin{split}
        dX_t^i&=-U'(X_t^i)\,dt+\frac{\beta}{\sqrt N}X_t \cdot Z_t^i\,dt+dW_t^i\\
        dZ_t^i&=\frac{\beta}{\sqrt N}Q_tX_tdW_t^i,\\
        dQ_t&=-\frac{\beta^2}{N}Q_t X_tX_t^\top Q_t\,dt,
    \end{split}
    \end{align}    
    with initial conditions $Q_0=I$, $Z_0=0,$ and $ X_0\sim P_0^N$. We point out that $Z_t^i=\bbE[\bfG_i\mid \mF_t^X]$. 
\end{remark}

\begin{proof}[Proof of Lemma \ref{lem: averaged dynamics 0}] First, we use Girsanov's theorem to obtain a tractable form of the conditional expectation $\bbE[G_{ij}\mid\mF_t^X]$. Let $(\Omega, \mA,\mF=(\mF_t)_{t \in [0, T]}, \bbP)$ be a probability space supporting a weak solution $X$ to \eqref{eq: intro quenched dynamics} and $\bbQ$ be the probability measure under which $X$ solves the SDE:
\begin{align*}
    dX_t^i=-U'(X_t^i)\,dt+dV_t^i.
\end{align*}
Here $V = (V^i)_{i = 1}^N$ is a $\mF$-Brownian motion independent of $\bfG$ under $\bbQ$. In particular, letting $\bfG_i$ denote the $i$-th row of $\bfG$, we have 
\[  V_t^i= B_t^i  +\int_0^t\frac\beta{\sqrt{N}} X_s \cdot \bfG_i\,ds. \]
It is easy to verify that in this setting, Novikov's condition is satisfied and thus by Girsanov's theorem and \eqref{eq: intro quenched dynamics}, we have
\begin{align*}
    \frac{d\bbP}{d\bbQ}[t]:= \bbE\bigg[\frac{d\bbP}{d\bbQ}\,\bigg|\,\mF_t\bigg] =\exp\bigg(\frac{\beta}{\sqrt N}\sum_{i=1}^N \int_0^t  \bfG_{i} \cdot X_s\,dV_s^i-\frac{\beta^2}{2N} \sum_{i=1}^N\int_0^t |\bfG_i \cdot X_s|^2\,ds \bigg).
\end{align*}
In particular, under $\bbP$ the process $B$ is a Brownian motion independent of $\bfG$, and we have 
\begin{align}
\label{eq: quenched under P with V}
\begin{split}
    dX_t^i&=-U'(X_t^i)\,dt+\frac\beta{\sqrt N} \bfG_i \cdot X_t\,dt+dB_t^i.
\end{split}
\end{align}

Since under $\bbQ$, the disorder $\bfG$ and the process $V$ are independent, we may factor out $\bfG$ from the stochastic integral to obtain that $\bbQ$-almost surely
\begin{align*}
    \frac{d\bbP}{d\bbQ}[t]&=\prod_{i=1}^N \exp\bigg(\frac{\beta}{\sqrt N}\bfG_i \cdot \int_0^t   X_s\,dV_s^i-\frac{\beta^2}{2N}\bfG_i^\top\bigg(\int_0^t   X_s X_s^\top \,ds\bigg)\bfG_i  \bigg).
\end{align*}
Now by Bayes' formula,
\begin{align*}
    \bbE\Big[\bfG_i \mid \mF_t^X\Big]= \frac{\bbE_\bbQ\Big[\bfG_i \frac{d\bbP}{d\bbQ}[t]\mid \mF_t^X\Big]}{\bbE_\bbQ\Big[\frac{d\bbP}{d\bbQ}[t]\mid \mF_t^X\Big]}= \frac{\bbE_\bbQ\Big[\bfG_i e^{\frac{\beta}{\sqrt N}\bfG_i \cdot \int_0^t   X_s\,dV_s^i-\frac{\beta^2}{2N} \bfG_i^\top\int_0^t   X_s X_s^\top \,ds\bfG_i}\mid \mF_t^X\Big]}{\bbE_\bbQ\Big[e^{\frac{\beta}{\sqrt N}\bfG_i \cdot \int_0^t   X_s\,dV_s^i-\frac{\beta^2}{2N} \bfG_i^\top\int_0^t   X_s X_s^\top \,ds\bfG_i}\mid \mF_t^X\Big]}. 
\end{align*}
In the last display, we used the fact that under $\bbQ$, the columns $\bfG_i$ and $(\bfG_j)_{j\neq i}$ are conditionally independent given $X$ to cancel out the terms not involving $\bfG_i$ (indeed, $\bfG_i, \bfG_j$, and $X$ are independent). Since $V_t$ is $\mF_t^X$ measurable, we may treat the stochastic integral as a fixed parameter to compute the conditional distribution of $\bfG$. 
 A standard Gaussian calculation then yields:
\begin{align}
\label{eq: conditional expectation under P}
    \bbE\Big[\bfG_i \mid \mF_t^X\Big]=\frac{\beta}{\sqrt N}\bigg(I+\frac{\beta^2}{N}\int_0^t X_sX_s^\top\bigg)^{-1} \int_0^t X_s\,dV_s^i=\frac{\beta}{\sqrt N}Q_t \int_0^t X_s\,dV_s^i.
\end{align}
Applying Theorem \ref{t:mimicking} to  \eqref{eq: quenched under P with V} with the $\mF_t^X$-Brownian motion \[W_t:= X_t-X_0-\frac\beta{\sqrt N}\int_0^t \bbE[\bfG X_s \mid \mF_s^X]\,ds,\] and applying \eqref{eq: conditional expectation under P} we find that under $\bbP$, the processes $X$ and $V$ satisfy
\begin{align}
\begin{split}
\label{eq: mimicking incomplete}
    dX_t^i&=-U'(X_t^i)\,dt+\Big[\frac{\beta^2}{N}  X_t^\top Q_t \int_0^t X_s  \,dV^i_s\,\Big]dt+dW_t^i,\\
    dV_t^i&=\Big[\frac{\beta^2}{N}  X_t^\top Q_t \int_0^t X_s  \,dV^i_s\,\Big]dt+dW_t^i.
\end{split}
\end{align}
The second line of the last display, together with an application of It\^{o}'s formula to $Q_t\int_0^t X_s\,dV_s^i$, implies
\begin{align*}
    dQ_t\int_0^t X_s\,dV_s^i&=-\frac{\beta^2}N Q_t X_tX_t^\top Q_t \int_0^t X_s\,dV_s^i\,dt+Q_tX_t\,dV_t^i =Q_t X_t\,dW_t^i.
\end{align*}
Substituting this into the first line of \eqref{eq: mimicking incomplete} yields the desired form of the dynamics:
\begin{align*}
    dX_t^i=-U'(X_t^i)\,dt+\bigg[\frac{\beta^2}N X_t^\top \int_0^t Q_s X_s\,dW_s^i\bigg]\,dt+dW_t^i.
\end{align*}

To conclude, we check that the SDE has a unique strong solution. We have just proved that it does have a weak solution. As discussed in Remark \ref{rk: markovian},  \eqref{eq: averaged dynamics 1} may be rewritten a Markovian SDE \eqref{eq: averaged dynamics Markovian} with locally Lipschitz coefficients. By a standard argument using local Lipschitz property, pathwise uniqueness holds for \eqref{eq: averaged dynamics Markovian}, see \cite[theorem 5.2.5]{Karatzas_and_Shreve_BMSC}. Thus by the Yamada-Watanabe theory (\cite[Corollary 5.3.25]{Karatzas_and_Shreve_BMSC}), \eqref{eq: averaged dynamics Markovian} and hence \eqref{eq: averaged dynamics 1} has a unique in law strong solution $X$, which is adapted to the filtration of $(W,X_0)$. 
\end{proof}

\subsection{A second representation of the averaged dynamics} In this section, we obtain yet another form of the averaged dynamics similar to that of the limit dynamics presented in \eqref{eq: limit dynamics 1}. Such a representation will facilitate the coupling argument at the heart of the proof of Theorem \ref{th: averaged propagation of chaos}. The following lemma, which is proved in Appendix \ref{sec: Volterra}, shows that the kernels of Section \ref{ss: limit-rep} appear naturally in the SDE for the averaged dynamics derived in Lemma \ref{lem: averaged dynamics 0}. 
\begin{lemma} Define $\hat{\mu}^N = N^{-1} \sum_i \delta_{X^i}$. With $Q$ as in \eqref{eq: averaged dynamics 1}, for $u, s, t \in [0, T]$ we have
\[H^t_{\hat{\mu}^N}(u, s) = \frac{1}{N}X_u^\top Q_t X_s, \quad R_{\hat{\mu}^N}(t, s)=  \frac{1}{N}X_t^\top Q_s X_s. \]
\label{lem: N kernels}
\end{lemma}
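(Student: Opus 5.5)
We verify both identities by direct computation in the eigen-free, finite-rank setting that the empirical measure provides. For $\nu = \hat\mu^N$ we have $K_{\hat\mu^N}(u,s) = \frac1N \sum_i X^i_u X^i_s = \frac1N X_u^\top X_s$. Define the operator $\Phi_t : \R^N \to L^2([0,t])$ by $(\Phi_t a)(u) = N^{-1/2} X_u^\top a$. Its adjoint is $\Phi_t^* f = N^{-1/2}\int_0^t X_s f(s)\,ds$. Then $K^t_{\hat\mu^N} = \Phi_t\Phi_t^*$, and also $\Phi_t^*\Phi_t = \frac1N\int_0^t X_sX_s^\top\,ds =: M_t$. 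Note that $Q_t = (I+\beta^2 M_t)^{-1}$.

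\textbf{First identity.} We use the push-through identity $\Phi_t\Phi_t^*(\beta^2\Phi_t\Phi_t^*+I)^{-1} = \Phi_t(\beta^2\Phi_t^*\Phi_t + I)^{-1}\Phi_t^*$. This follows from $(\beta^2\Phi\Phi^*+I)\Phi = \Phi(\beta^2\Phi^*\Phi+I)$, and the inverse on the left exists since $\Phi\Phi^*\ge 0$. Hence
\[ H^t_{\hat\mu^N} = \Phi_t Q_t \Phi_t^*, \]
whose kernel is $\frac1N X_u^\top Q_t X_s$ for $u,s\in[0,t]$. This proves the first claim.

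\textbf{Second identity.} By the uniqueness of the Volterra resolvent in Lemma \ref{lem: resolvent}, it suffices to check that $\tilde R(t,s) := \frac1N X_t^\top Q_s X_s$ satisfies \eqref{eq: H resolvent}. Uniqueness holds because the resolvent equation is a linear Volterra equation with bounded kernel, so Gr\"onwall applies; I would note this briefly if the appendix does not state it. We verify the second form of \eqref{eq: H resolvent}:
\[ \tilde R(t,s) - \tfrac1N X_t^\top Q_t X_s = \beta^2 \int_s^t \tfrac1N X_t^\top Q_u X_u\, \tfrac1N X_u^\top Q_u X_s\, du. \]
The left side equals $\frac1N X_t^\top (Q_s - Q_t) X_s$. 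Since $M$ is absolutely continuous with $\dot M_u = \frac1N X_uX_u^\top$, we get $\frac{d}{du}Q_u = -\beta^2 Q_u \dot M_u Q_u = -\frac{\beta^2}{N} Q_uX_uX_u^\top Q_u$, as in Remark \ref{rk: markovian}. Therefore
\[ Q_s - Q_t = \frac{\beta^2}{N}\int_s^t Q_uX_uX_u^\top Q_u\,du, \]
and sandwiching between $\frac1N X_t^\top$ and $X_s$ gives exactly the right side. The first form of \eqref{eq: H resolvent} follows in the same way, or from the equality of left and right resolvents.

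\textbf{Bookkeeping.} The main care needed is matching the paper's convention for $R_\nu(t,s)$, namely which time index goes into $H^u$ in \eqref{eq: H resolvent}, to ensure the $Q$-index lands on $s$ rather than $t$. I would also confirm that the resolvent is defined pathwise, with $X$ a fixed continuous path. Everything else is algebra; no step is a real obstacle.
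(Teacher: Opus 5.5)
Your proof is correct and follows the paper's own argument: the second identity is checked exactly as in the paper, using $\frac{d}{du}Q_u = -\frac{\beta^2}{N}Q_uX_uX_u^\top Q_u$ to verify the Volterra resolvent equation. Your push-through derivation of $H^t_{\hat\mu^N}=\Phi_tQ_t\Phi_t^*$ is the operator form of the paper's check that the candidate kernel satisfies $H(I+\beta^2K)=K$. Your Gr\"onwall uniqueness remark also supplies a step the paper leaves implicit.
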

It is tempting now to apply Lemma \ref{lem: N kernels} to \eqref{eq: limit dynamics 1}, which suggests for instance that we have $dX^i_t = -U'(X_t^i) dt + \beta^2 \int_0^t R^t_{\hat{\mu}^N}(t, s) dW_s^i dt + dW_t^i$. However, this is incorrect since
\[X_t^\top  \int_0^t Q_s X_s dW^i_s \neq \int_0^t X_t^\top Q_s X_s dW^i_s. \]
In fact, the right hand side is not well-defined as an It\^{o} integral and must be understood in terms of a Skorokhod integral since the processes $s\mapsto X_t Q_s X_s$ and $s\mapsto W_s$ cannot be adapted to the same filtration. Fortunately, one may recover a precise statement through Malliavin calculus, and we will show in the next section that
\[
\begin{aligned}
dX_t^i &= - U'(X_t^i) dt + \beta^2 \bigg[ \int _0^t R_{\hat{\mu}^N}(t, s) dW^i_s \bigg] dt + \frac{\beta^2}{N}\Lambda^i(t)dt +  dW^i_t,
\end{aligned}
\]
where the stochastic integral is understood in the Skorokhod sense and $\Lambda = (\Lambda^i)_{i =1}^N$ is a progressively measurable function of $X$ and its Malliavin derivatives satisfying \[\bbE\big[|\Lambda_N^i|^2\big] = O(N).\]
We review the necessary tools of Malliavin calculus and provide a precise statement of this claim in Proposition \ref{pr: representation of averaged dynamics} below.
\begin{remark}[On the case of symmetric disorder] \label{rk: symmetric}
    In the symmetric case and without the use of Malliavin calculus, \cite[Theorem 3.3]{Guionnet-1997-PoC} derived a similar representation for the averaged particle law:
    \[dX_t^i=-U'(X_t^i)\,dt+F_t(\hat\mu^N)\,dt+\frac1N G_t(\hat\mu^N)\,dt+dW_t^i,\]
    where $F,G$ are progressively measurable functions and $G$ is bounded. It is not easy to see how $G_t$ and $\Lambda_t$ are related, if at all. Moreover, the function $\mu\mapsto F_t(\mu)$ appears to have poor stability properties. For instance, it does not appear to be uniformly Lipschitz  in $\mu$ (see \cite[Lemma 3.9]{Guionnet-1997-PoC}), which makes it hard to control the behavior of $F$ near the random measure $\hat \mu^N$. 
\end{remark}

\subsubsection{Malliavin calculus} 
The theory of Malliavin calculus is rich and we make no attempt to fully summarize it here. We will sketch only the relevant facts and point out the classic reference \cite{Nualart2005Malliavin}. Let $(\Omega,\mF,\bbP)$ be a probability space supporting an $N$-dimensional Wiener process $W$ under the assumption that $\mF$ is generated by $W$. We denote by $(\mF_t)_{t\geq 0}$ the augmented Wiener filtration. The goal is to analyze the variation of a random variable $X:\Omega\to \R$ seen as a function on Wiener space as the input $\omega\in \Omega$ is perturbed. This is first done for smooth random variables of the form 
\[X=F\Big(\int_0^T h^1_t\cdot dW_t,\dots,\int_0^T h_t^k\cdot dW_t\Big),\]
where $F:\R^{k} \to \R$ is a smooth function with polynomial growth, $h_1,\dots,h_k \in L^2([0,T];\R^N)$ and $k\in \bbN$ is an arbitrary natural number. For such random variables we can define the Malliavin derivative to be the stochastic process $(D_t X)_{t\geq 0} \in (\R^N)^{[0,T]}$
\begin{equation*}
    D_t X=\sum_{i=1}^k \partial_i F\Big(\int_0^T h^1_t\cdot dW_t,\dots,\int_0^T h_t^k\cdot dW_t\Big) h^i_t.
\end{equation*}
This defines a linear operator on $L^2(\Omega;\R)$ on the class of smooth random variables. We will denote by $D_t^i X$ the $i$-th coordinate of $D_t X$. This linear operator is closeable with respect to the norm 
\[\|X\|_{\bbD^{1,2}}^2:= \bbE\big[|X|^2\big]+\int_0^T\bbE\Big[|D_t X|^2\Big]\,dt.\]
We will denote by $\bbD^{1,2}$ the domain of its closure. Of course the same procedure can be carried out for a vector valued stochastic processes $(X_t)_{t\geq 0}:\Omega\times[0,T]\to \R^N$. We will denote by $\bbL^{1,2}$ the set of stochastic processes in $L^2(\Omega,[0,T];\R^N)$ such that for each coordinate $i, \, X_t^i \in \bbD^{1,2}$ for almost every $t\leq T$, endowed with the norm
\[\|X\|_{\bbL^{1,2}}^2= \int_0^T \bbE\big[|X_t|^2\big]\,dt+\int_0^T \int_0^T \bbE\big[\|D_s X_t\|^2_{\text{Fr}}\big]\,ds\,dt.\]
We recall that both $\bbD^{1,2}$ and $\bbL^{1,2}$ are Hilbert spaces. An important feature of the Malliavin derivative is that it respects the time dependence of stochastic processes in the following way: if $X\in L^2(\Omega)$ is a random variable that is measurable with respect to $\mF_t$, then for every $s>t$, 
\begin{equation}
\label{eq: malliavin derivative not anticipative}
    D_s X=0 \text{ a.s.}
\end{equation}
In particular, for an adapted stochastic process $(X_t)_{t\geq 0}$, $D_s X_t=0$ almost surely for $s>t$. In other words, if $X_t$ is an adapted process perturbing the driving Brownian motion at a future time $t+h$ does not influence the value of $X_t$.
\par Another important object is the adjoint of the Malliavin derivative. Given a process $X\in L^2(\Omega,[0,T];\R^N)$, if there exists a random variable $\delta(X)\in L^2(\Omega;\R)$ such that for every $Y\in \bbD^{1,2}$ we have 
\begin{align*}
    \bbE\Big[ \int_0^T D_t Y \cdot X_t \,dt\Big]= \bbE\big[Y \cdot \delta(X)\big],
\end{align*}
the random variable $\delta(X)$ the \emph{Skorokhod integral} of the process $X$. Skorkohod integration is an extension of It\^{o} integration (see \cite[Proposition 1.3.18]{Nualart2005Malliavin}): if $X \in L^2(\Omega,[0,T])$ is an adapted $\R^N$ valued stochastic process, then $X$ is in the domain of $\delta$ and 
\[\delta(X)=\int_0^T X_t \cdot \,dW_t.\]
In fact, we will abuse of notation in the sequel and write  $\delta(X)=\int_0^T X_t\cdot\,dW_t$ whenever $X$ is a stochastic process in the domain of $\delta$.

Two key facts about the Skorokhod integral will be utilized in the proof of Theorem \ref{th: averaged propagation of chaos}. The first is the following extension of the It\^{o} isometry to Skorokhod integrals, see \cite[Proposition 1.3.1]{Nualart2005Malliavin}: if $X$ is in $\bbL^{1,2}$, we have
\[\bbE\bigg[\bigg|\int_0^T X_t\,\cdot dW_t\bigg|^2\bigg]=\bbE\bigg[\int_0^T |X_t|^2dt\bigg]+\bbE\bigg[\int_0^T\int_0^T \text{Tr}(D_s X_t D_t X_s) dt\,ds\bigg].\]
The Skorokhod integral allows us to bring a (non-adapted) random variable inside the stochastic integral.
Consider a scalar random variable $Y\in \bbD^{1,2}$ and a process $X$ in the domain of $\delta$ such that $YX$ is also in the domain of $\delta$. Then by \cite[Proposition 1.3.3]{Nualart2005Malliavin}, we have
\begin{equation}
    Y  \int_0^T X_t \cdot \,dW_t=\int_0^T Y X_t\cdot \,dW_t+\int_0^T D_t Y \cdot X_t\,dt.
    \label{eq: Malliavin commute}
\end{equation}

\subsubsection{Malliavin derivative estimates}
Malliavin differentiability of the solution to an SDE with regular coefficients is by now classical, see for instance \cite[Chapter 2.2]{Nualart2005Malliavin}. The issue in the present context is that the coefficients are smooth but not Lipschitz, and thus the Malliavin derivatives may not be integrable. Notice that there are multiple terms of \eqref{eq: averaged dynamics Markovian} that may cause trouble. In particular, the potentially explosive confining term $U$ and the interaction term $X\cdot Z^i$ are not globally Lipschitz. The problem of Malliavin differentiability of locally Lipschitz SDEs has been attacked in \cite{MalliavinDifferentiabilitySuperlinear}, but the key one sided bound that they require does not hold in our context. Nevertheless, we obtain the following crucial estimate. 

\begin{proposition}[Malliavin derivative estimates] 
Let $X$ be the unique strong solution to \eqref{eq: averaged dynamics 1}. Then $X \in \bbL^{1, 2}_T$ and there exists $C \in (0, \infty)$ depending only on $(A, \beta, T, C_{\textup{Lip}})$ such that for arbitrary $i=1,\dots,N$, 
\begin{equation*}
     \bbE\big[|D_u^i X_t|^2\big]+\int_0^t \int_0^t \bbE\big[|D_r^i X_s|^2\big]\,dr\,ds\leq C, \quad 0 \leq u \leq t \leq T.
\end{equation*}
Moreover with $Q$ as in \eqref{eq: averaged dynamics 1}, we have
\begin{equation*}
    \bbE\big[\|D^i_s Q_t\|^2_{\op}\big] \leq CN^{-1}, \quad i = 1, \ldots, N, \quad 0 \leq s \leq t \leq T.
\end{equation*}
\label{pr: Malliavin calculus estimates}
\end{proposition}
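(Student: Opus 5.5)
The plan is to work with the Markovian form \eqref{eq: averaged dynamics Markovian} of the averaged dynamics and to differentiate it formally in the Malliavin sense. This produces a linear system of random ODEs (no new noise except at the perturbation time) for the derivatives $(D^i_uX_t, D^i_uZ^j_t, D^i_uQ_t)$, $t\ge u$. To justify the formal computation, we localize. Let $\tau_M$ be the first time that $|X^j_t|>A-1/M$ for some $j$, or that $\sum_j|Z^j_t|^2>MN$. Before $\tau_M$ the coefficients are Lipschitz, so \cite[Chapter 2.2]{Nualart2005Malliavin} gives Malliavin differentiability of the stopped (equivalently, truncated-coefficient) processes. We then prove the bounds uniformly in $M$ and pass to the limit using the closability of $D$ \cite[Lemma 1.2.3]{Nualart2005Malliavin}: uniform $\bbD^{1,2}$ bounds together with $L^2$ convergence yield membership in $\bbD^{1,2}$ and the same bounds.

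The differentiated system, for $t\ge u$, has $D^i_uX^j_u=\delta_{ij}$ and
\[
\partial_t D^i_uX^j_t=-U''(X^j_t)D^i_uX^j_t+\frac{\beta}{\sqrt N}\Big(D^i_uX_t\cdot Z^j_t+X_t\cdot D^i_uZ^j_t\Big),
\]
\[
D^i_uZ^j_t=\delta_{ij}\frac{\beta}{\sqrt N}Q_uX_u+\frac{\beta}{\sqrt N}\int_u^t D^i_u(Q_sX_s)\,dW^j_s ,
\]
together with the linear ODE for $D^i_uQ_t$ obtained from $dQ=-\frac{\beta^2}{N}QXX^\top Q\,dt$.

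\textbf{Convexity of $U$.} Write $U''=U_c''+U_\ell''$ with $U_c''\ge0$ and $|U_\ell''|\le C_{\textup{Lip}}$. In an energy estimate for $|D^i_uX_t|^2$, the term $-U''$ then contributes at most $2C_{\textup{Lip}}|D^i_uX_t|^2$. This is the only point where the one-sided structure is needed, and it avoids any use of the explosive part of $U$.

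\textbf{Interaction term.} Write $\mathbf Z_t$ for the $N\times N$ matrix with rows $Z^j_t$. The term $\frac{\beta}{\sqrt N}\mathbf Z_t D^i_uX_t$ contributes $\frac{2\beta}{\sqrt N}\|\mathbf Z_t\|_{\op}|D^i_uX_t|^2$, so we need $\frac{1}{\sqrt N}\|\mathbf Z_t\|_{\op}=O(1)$ with exponential moments. Since $Z^j_t=\bbE[\bfG_j\mid\mF^X_t]$, conditional Jensen gives
\[
\bbE\exp\Big(\lambda\sup_t N^{-1/2}\|\mathbf Z_t\|_{\op}\Big)\le \bbE\exp\big(\lambda N^{-1/2}\|\bfG\|_{\op}\big)\le C_\lambda,
\]
at least for fixed $t$. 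The supremum over $t$ is handled by Doob's inequality applied to the submartingale $\|\mathbf Z_t\|_{\op}$ (a convex function of a martingale), combined with Proposition \ref{pr: rnd matrix operator bound}. This step, namely obtaining exponential integrability of $\int_0^T N^{-1/2}\|\mathbf Z_s\|_{\op}\,ds$ strong enough to absorb a Gronwall factor $\exp\big(C\int_u^t N^{-1/2}\|\mathbf Z_s\|_{\op}ds\big)$, is what I expect to be the main obstacle. I would first try the submartingale/Doob route above, with Cauchy--Schwarz to separate the exponential factor from the remaining terms.

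\textbf{Source terms.} The remaining source is $\frac{\beta}{\sqrt N}\sum_j X_t\cdot D^i_uZ^j_t$, and $|X_t|\le A\sqrt N$. The diagonal contribution $\frac{\beta^2}{N}X_t^\top Q_uX_u$ is $O(1)$, since $\|Q\|_{\op}\le1$. The stochastic integrals are controlled by the Itô/BDG isometry in terms of $\bbE|D^i_u(Q_sX_s)|^2$. For $D^i_uQ_t$, differentiating $Q^{-1}_t=I+\frac{\beta^2}{N}\int_0^tX_sX_s^\top ds$ gives
\[
D^i_uQ_t=-\frac{\beta^2}{N}Q_t\int_u^t\big(D^i_uX_sX_s^\top+X_sD^i_uX_s^\top\big)ds\,Q_t .
\]
Moreover, $\|Q_t^{1/2}X_s\|\lesssim\sqrt N$, with the Gram structure giving $\big\|Q_t\frac1N\int X_sX_s^\top ds\big\|_{\op}\le\beta^{-2}$. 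Hence $\|D^i_uQ_t\|_{\op}\lesssim N^{-1/2}\big(\int_u^t|D^i_uX_s|^2ds\big)^{1/2}$, which yields the second claim once the first is established.

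\textbf{Closing the argument.} Combining these bounds, $\phi(t)=\bbE|D^i_uX_t|^2$ satisfies a closed Gronwall-type inequality whose constants are uniform in $N$ and $M$. This gives $\phi\le C$, and integrating in $(r,s)$ gives the bound on $\int_0^t\int_0^t\bbE|D^i_rX_s|^2\,dr\,ds$. Finally, $D^i_rX_s=0$ for $r>s$ by \eqref{eq: malliavin derivative not anticipative}, so the bound covers the full square $[0,t]^2$.
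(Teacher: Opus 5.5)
Your overall plan matches the paper's: truncate to Lipschitz coefficients, use $U_c''\ge 0$ so that only $C_{\textup{Lip}}$ enters, control exponential moments of $N^{-1/2}\|\mathbf Z\|$, bound $D^i_sQ_t$ by the chain rule, and remove the truncation by closability in $\bbL^{1,2}$. However, the step that closes the estimate fails as written.

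The coefficient $a_t=C(1+N^{-1/2}\|\mathbf Z_t\|_{\op})$ multiplying $|D^i_uX_t|^2$ is random and unbounded. So $\phi(t)=\bbE|D^i_uX_t|^2$ satisfies no closed Gr\"onwall inequality with deterministic constants: $\bbE[a_t|D^i_uX_t|^2]$ is not bounded by $C\phi(t)$. Pathwise Gr\"onwall followed by Cauchy--Schwarz does not close either. The source term $N^{-1/2}|D^i_u\mathbf Z_t\,X_t|$ is a stochastic integral of $D^i_u(Q_sX_s)$, i.e.\ of $D^i_uX$ itself. Bounding $\bbE|D^i_uX_t|^2$ therefore needs $\bbE\|D^i_u\mathbf Z\|_{\textup{Fr}}^4$, which by BDG needs $\bbE|D^i_uX|^4$, which after another Cauchy--Schwarz needs $\bbE\|D^i_u\mathbf Z\|_{\textup{Fr}}^8$, and so on.

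The missing idea is to treat $(D^i_uX,D^i_u\mathbf Z)$ jointly and to use a Gr\"onwall lemma that tolerates a local martingale. The paper derives
$|D^j_rX_t|^4+\|D^j_r\mathbf Z_t\|_{\textup{Fr}}^4\le C+\int_0^t a_s\big(|D^j_rX_s|^4+\|D^j_r\mathbf Z_s\|_{\textup{Fr}}^4\big)\,ds+M_t$
and applies the stochastic Gr\"onwall inequality (Theorem \ref{th: stochastic gronwall}). That result needs only $\bbE\exp(\tfrac52\int_0^T a_s\,ds)<\infty$ and nothing about $M$. The fourth powers are what turn its $\bbE[\sup\sqrt{\,\cdot\,}]$ conclusion into second moments. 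An integrating-factor argument on $e^{-\int_u^t a_s\,ds}$ times the same joint fourth-power functional, followed by Cauchy--Schwarz, would also work; some such device is indispensable.

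The exponential-moment step has a related problem. Your conditional-Jensen bound via $\mathbf Z_t=\bbE[\bfG\mid\mF^X_t]$ is correct, but only for the untruncated system. The uniform bounds must be proved for the truncated, globally Lipschitz system on all of $[0,T]$, including after $\tau_M$. This is because closability requires $\sup_M\|X^M\|_{\bbL^{1,2}}<\infty$ for processes converging in $L^2$; locality on $\{\tau_M>T\}$ alone does not give membership in $\bbD^{1,2}$. Once $Z$ is truncated in the drift, the truncated martingale is no longer a conditional expectation of a Gaussian matrix, so your argument is unavailable.

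The paper instead bounds $N^{-1/2}\|\widehat{\mathbf Z}_t\|_{\textup{Fr}}$ directly. The entries of $\widehat Q_tf_A(\widehat X_t)$ are bounded uniformly in $N$ (Lemma \ref{lm: entries of mollified matrix}). Then BDG with constants of order $\sqrt p$ gives sub-Gaussian moments uniformly in $N$ and in the regularization parameters, see \eqref{eq: sub-Gaussian estimate on Z}. This is why the truncation must keep $X$ bounded inside the interaction and inside $Q$ (via $f_A$), and keep $U_c$ convex (via the quadratic extension $U^{\frs}_c$). Your localization leaves these points unspecified, but the uniformity in $M$ depends on them.
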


The proof of Proposition \ref{pr: Malliavin calculus estimates} is important but quite technical, and thus we defer its proof to Section \ref{sect: malliavin}. As a consequence of Proposition \ref{pr: Malliavin calculus estimates}, we may apply \eqref{eq: Malliavin commute}  Lemma \ref{lem: N kernels} to \eqref{eq: averaged dynamics 1} to obtain the following result, which plays an essential role in the proof of Theorem \ref{th: averaged propagation of chaos}.

\begin{proposition}[Representation of averaged dynamics]
    \label{pr: representation of averaged dynamics}
    Suppose $\textup{Law}(X)=\bbE[P_{[T]}^N(\bfJ)]$. With $Q$ as in \eqref{eq: averaged dynamics 1}, define
    \[\Lambda^i(t) := \int_0^t D^i_s X_t^\top  Q_s X_s\, ds, \quad i = 1, \ldots, N, \quad t \in [0, T]. \]
    Then $X$ satisfies
\begin{equation}
\begin{aligned}
dX_t^i &= - U'(X_t^i) dt + \beta^2 \bigg[ \int _0^t R_{\hat{\mu}^N}(t, s) dW^i_s \bigg] dt + \frac{\beta^2}{N}\Lambda^i(t)dt +  dW^i_t, \quad i = 1, \ldots, N, \quad t \in (0, T).\\
\end{aligned}
    \label{eq: averaged dynamics}
\end{equation}
Moreover, for $0 \leq s \leq t \leq T$ we have $R_{\hat{\mu}^N}(t, s) = \frac{1}{N} X_t^{\top} Q_s X_s.$
\end{proposition}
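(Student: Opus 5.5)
We start from Lemma \ref{lem: averaged dynamics 0}, which gives \eqref{eq: averaged dynamics 1} with the drift term $\frac{\beta^2}{N}X_t^\top \int_0^t Q_s X_s\,dW^i_s$. The identity $R_{\hat\mu^N}(t,s)=\frac1N X_t^\top Q_s X_s$ is exactly Lemma \ref{lem: N kernels}, so what remains is to move the non-adapted random vector $X_t$ inside the It\^o integral using \eqref{eq: Malliavin commute}. Since the It\^o integral here is a vector, we work coordinatewise. Fix $i$ and $t$, and write
\[X_t^\top \int_0^t Q_sX_s\,dW^i_s=\sum_{j=1}^N X_t^j \int_0^t (Q_sX_s)^j\,dW^i_s .\]
For each $j$ we apply \eqref{eq: Malliavin commute} with $Y=X_t^j$ and the one-dimensional integrand process $u_s=(Q_sX_s)^j\mathbf 1_{s\le t}$ placed in the $i$-th Brownian coordinate. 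This yields
\[X_t^j\int_0^t (Q_sX_s)^j\,dW_s^i=\int_0^t X_t^j(Q_sX_s)^j\,dW^i_s+\int_0^t D^i_sX_t^j\,(Q_sX_s)^j\,ds.\]
Summing over $j$ gives the Skorokhod integral $\int_0^t X_t^\top Q_sX_s\,dW^i_s = N\int_0^t R_{\hat\mu^N}(t,s)\,dW^i_s$ plus $\Lambda^i(t)$. Multiplying by $\beta^2/N$ then produces \eqref{eq: averaged dynamics}, provided the hypotheses of \eqref{eq: Malliavin commute} hold.

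Checking those hypotheses is the substance of the proof. First, we need $X_t^j\in\bbD^{1,2}$. This follows from Proposition \ref{pr: Malliavin calculus estimates}, since $|X_t^j|\le A$ and $\int_0^t\bbE|D_uX_t^j|^2du<\infty$ (summing the coordinate bounds over $i$ is finite for fixed $N$). Second, $u$ must lie in the domain of $\delta$. Because $u$ is adapted and bounded ($\|Q_s\|_{\op}\le1$ as $Q_s\preceq I$, and $|X_s|\le A\sqrt N$), it is in the domain and $\delta$ coincides with the It\^o integral. Third, the product $X_t^j u$ must lie in the domain of $\delta$. For this we show $X_t^ju\in\bbL^{1,2}$, which suffices by \cite[Prop.~1.3.1]{Nualart2005Malliavin}. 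By the product rule,
\[D_r\big(X_t^j (Q_sX_s)^j\big)=D_rX_t^j\,(Q_sX_s)^j+X_t^j\big((D_rQ_s)X_s+Q_sD_rX_s\big)^j.\]
Each term is square integrable by boundedness of $X$ and $Q$ together with the two estimates of Proposition \ref{pr: Malliavin calculus estimates}. The bound on $D_rQ_s$ can also be obtained by differentiating $Q_s=(I+\frac{\beta^2}N\int_0^sX_uX_u^\top du)^{-1}$. Since $N$ is fixed, crude $N$-dependent constants are harmless here.

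Finally, we must justify that the product rule and chain rule are legitimate, given that the coefficients are only locally Lipschitz. I expect this to be the main technical obstacle, but it is absorbed into Proposition \ref{pr: Malliavin calculus estimates}. If needed, we would argue by localization, stopping before $|X|$ approaches $A$ and truncating the coefficients, then pass to the limit using closedness of $D$ and the uniform bounds. Integrability of $\Lambda^i(t)$, needed for the drift to make sense, follows from Cauchy--Schwarz and the first estimate. Its variance bound $\bbE|\Lambda^i|^2=O(N)$ is deferred to Corollary \ref{cor: Lambda-estimate} and is not needed for the representation itself.
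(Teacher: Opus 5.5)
Your proposal is correct and follows the same route as the paper, which justifies this proposition in one line: apply \eqref{eq: Malliavin commute} to the drift in \eqref{eq: averaged dynamics 1}, with Proposition \ref{pr: Malliavin calculus estimates} supplying the Malliavin regularity, and then identify the kernel through Lemma \ref{lem: N kernels}. Your coordinatewise application (with $Y=X_t^j$ and the bounded adapted integrand $(Q_sX_s)^j\mathbf{1}_{s\le t}$ in the $i$-th Brownian direction) and your checks that $X_t^j\in\bbD^{1,2}$ and $X_t^ju\in\bbL^{1,2}$ make explicit what the paper leaves implicit; like the paper, you are implicitly working with Gaussian disorder, which is the setting required by Lemma \ref{lem: averaged dynamics 0}.
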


Moreover, Proposition \ref{pr: Malliavin calculus estimates} together with the trivial bounds $\|Q_t\|_{\op} \leq 1$ and $|X_t| \leq \sqrt{NA}$ for $t \in [0, T]$, immediately yields the following corollary. 

\begin{corollary}\label{cor: Lambda-estimate} Let $\Lambda = (\Lambda^i)_{i = 1}^N$ be as in Proposition \ref{pr: representation of averaged dynamics}. Then there exists $C \in (0, \infty)$ depending only on $(A, \beta, T, C_{\textup{Lip}})$ such that 
\[    \bbE\big[|\Lambda^i(t)|^2\big] \leq CN, \quad i = \{1, \ldots, N\}, \quad t \in [0, T]. \]
\end{corollary}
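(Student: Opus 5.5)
We have to bound $\bbE[|\Lambda^i(t)|^2]$ where
\[\Lambda^i(t)=\int_0^t D^i_s X_t^\top Q_s X_s\,ds.\]
The plan is a direct Cauchy--Schwarz estimate, combined with Proposition \ref{pr: Malliavin calculus estimates} and the deterministic bounds $\|Q_s\|_{\op}\le 1$ and $|X_s|\le \sqrt{N}A$.

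First I would make the bound on $Q_s$ precise. The matrix $I+\frac{\beta^2}{N}\int_0^s X_rX_r^\top\,dr$ is symmetric and dominates $I$, so its inverse $Q_s$ is positive semidefinite with $\|Q_s\|_{\op}\le 1$. Also, each coordinate of $X$ stays in $(-A,A)$, which gives $|X_s|^2\le NA^2$. Hence, pathwise,
\[|Q_sX_s|\le \sqrt N A.\]

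Second, I would estimate the integrand. Here $D^i_sX_t=(D^i_sX^1_t,\dots,D^i_sX^N_t)$ is the vector of $i$-th Malliavin derivatives of all coordinates. Cauchy--Schwarz in $\R^N$ gives
\[|D^i_sX_t^\top Q_sX_s|\le \sqrt N A\,|D^i_sX_t|.\]
Cauchy--Schwarz in time then gives
\[|\Lambda^i(t)|^2\le t\int_0^t NA^2|D^i_sX_t|^2\,ds.\]
Taking expectations and applying Fubini,
\[\bbE|\Lambda^i(t)|^2\le TNA^2\int_0^t \bbE|D^i_sX_t|^2\,ds.\]

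Finally, Proposition \ref{pr: Malliavin calculus estimates} supplies $\bbE|D^i_sX_t|^2\le C$ uniformly in $0\le s\le t\le T$. The integral is therefore at most $CT$, and we obtain $\bbE|\Lambda^i(t)|^2\le CN$ with $C$ depending only on $(A,\beta,T,C_{\textup{Lip}})$.

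The one point requiring care is how to read $D^i_sX_t$ in the proposition's bound. If it means the full vector over all coordinates, the argument above goes through as written. If instead it means a single coordinate $D^i_sX^j_t$, a factor of $N$ is lost, and one would need the bound on the full vector $\sum_j|D^i_sX^j_t|^2$. I expect the proposition is stated for the vector $D^i_uX_t\in\R^N$, consistent with the notation $|D^i_uX_t|$ used there. There is no other genuine obstacle: everything else is Cauchy--Schwarz and the pathwise bounds.
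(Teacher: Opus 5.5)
Your proof is correct and matches the paper's justification. The paper states the corollary as an immediate consequence of Proposition \ref{pr: Malliavin calculus estimates} together with $\|Q_s\|_{\op}\le 1$ and $|X_s|\le \sqrt{N}A$, combined by Cauchy--Schwarz exactly as you do; your reading of $D^i_sX_t$ as the full vector $(D^i_sX^j_t)_{j=1}^N\in\R^N$ is also the intended one, since the proof of that proposition controls $|D^j_r\widehat X_t|^2=\sum_{i}|D^j_r\widehat X^i_t|^2$.
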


\section{Quantitative averaged propagation of chaos}
\label{sect: averaged propagation of chaos}

\subsection{Proof of Theorem \ref{th: averaged propagation of chaos} for Gaussian disorder}
In this section, we prove Theorem \ref{th: averaged propagation of chaos} in the case $\bfG$ is a matrix of i.i.d standard Gaussians. Recall that $C \in (0, \infty)$ denotes a finite constant depending on the global parameters $(A, \beta, T, C_{\textup{P}}, C_{\textup{Lip}}, C_{0})$ which may change from line to line.

We begin by coupling the limit dynamics \eqref{eq: limit dynamics 1} to the averaged dynamics \eqref{eq: averaged dynamics}. By Lemma \ref{lem: limit dymamics}, there exists a filtered probability space $(\Omega,\mA, \bbP)$ supporting an $N$ dimensional Wiener process $(W^i)_{i = 1}^N$ adapted to its augmented natural filtration. After potentially enlarging the probability space, we may assume the space also supports random variables $(Y_0^i)_{i=1}^N \sim \mu_0^{\otimes N}$ and $(X_0^i)_{i=1}^n \sim P_0^N$ independent of $W$. Furthermore, we assume $(X^i_0)_{i =1}^N$ and $(Y_0^i)_{i = 1}^N$ are coupled according to the Wasserstein coupling of $P_0^N$ and $\mu_0^{\otimes N}$. In particular, this implies for all $i \in \{1, \ldots, n\}$, we have
\begin{equation}
    \bbE\big[|X_0^i - Y_0^i|^2 \big] = \bbW_2^2(P_0^{N, i}, \mu_0)  \leq \frac{C_0}{N}.
    \label{eq: X_0 - Y_0}
\end{equation}
By Lemma \ref{lem: limit dymamics} and Proposition \ref{pr: representation of averaged dynamics}, \eqref{eq: limit dynamics 1}, \eqref{eq: averaged dynamics} have unique strong solution with the given Brownian motion $W$.

By definition of Wasserstein distance on path space and exchangeability, we have
\begin{equation}
    \bbW_2^2(\bbE[P_{[T]}^k(\bfG)], \mu^{\otimes k}_{[T]}) \leq  \bbE\Big[ \sup_{t \in [0, T]}\sum_{i = 1}^k|X_t^i - Y_t^i|^2 \Big] \leq k\bbE\Big[ \sup_{t \in [0, T]}|X_t^1 - Y_t^1|^2 \Big].
    \label{eq: averaged 1st W bound}
\end{equation}

It remains to bound the right hand side. By \eqref{eq: averaged dynamics}, \eqref{eq: limit dynamics 1}, and the chain rule, we have
\begin{equation*}
    \begin{aligned}
    |X_t^1 - Y_t^1 |^2 = |X_0^1 - Y_0^1|^2 &- 2\int_0^t (X_s^1 - Y_s^1)\big( U'(X_s^1) - U'(Y_s^1) \big) ds \\
    &  + 2\beta^2 \int_0^t(X_s^1 - Y_s^1) \bigg[\int_0^s(R_{\hat{\mu}^N} - R_{\mu})(s, r)\, dW_r^1\bigg] ds \\
    & + \frac{2\beta^2}{N} \int_0^t(X_s^1 - Y_s^1) \Lambda^1(s) ds.
\end{aligned} 
\end{equation*} 
Applying Young's inequality yields
\[    \begin{aligned}
    |X_t^1 - Y_t^1 |^2 \leq |X_0^1 - Y_0^1|^2 &- 2\int_0^t (X_s^1 - Y_s^1)\big( U'(X_s^1) - U'(Y_s^1) \big) ds + 2\beta^2 \int_0^t |X_s^1 -Y_s^1|^2 ds \\
    &  + \beta^2 \int_0^t\bigg|\int_0^s(R_{\hat{\mu}^N} - R_{\mu})(s, r)\, dW_r^1\bigg|^2 ds + \frac{\beta^2}{N^2} \int_0^t|\Lambda^1(s)|^2 ds .
\end{aligned}  \]
By Corollary \ref{cor: Lambda-estimate}, we have that $\int_0^T \bbE[|\Lambda^1(t)|^2] dt \leq CN$. This, together with \eqref{eq: X_0 - Y_0} implies 
\begin{equation}
\bbE\Big[\sup_{t \in [0, T]}|X_t^1 - Y_t^1|^2\Big] \leq \frac{C}{N} + \Theta_1 + \Theta_2,
\label{eq: X - Y estimate 1}
\end{equation}
where
\begin{align*}
    \Theta_1 &:= \bbE\bigg[\sup_{t \in [0, T]}\bigg(- 2\int_0^t (X_s^1 - Y_s^1)\big( U'(X_s^1) - U'(Y_s^1) \big) ds \bigg) + 2 \beta^2 \int_0^T|X_t^1 - Y_t^1|^2 dt\bigg], \\
    \Theta_2 &:= \beta^2 \bbE\bigg[\int_0^T \bigg|\int_0^t(R_{\hat{\mu}^N} - R_{\mu})(t, s) dW_s^1\bigg|^2 dt\bigg].
\end{align*}
We proceed by bounding each $\Theta_i$ by terms which either have the correct decay in $N$ or are controlled by $\bbE\int_0^T\sup_{t \in [0, T]}|X_t^1 - Y_t^1|^2  dt$.

By Assumption \ref{assumption: main}, $U = U_c + U_l$ where $U_c$ is convex and $U'_\ell$ is $C_{\textup{Lip}}$-Lipschitz. Thus we have
\[\begin{aligned}
    &\sup_{t\leq T}\Big(-2\int_0^t (X_s^1-Y_s^1)(U'(X_s^1)-U'(Y_s^1))\,ds\Big)\\
    \leq &\sup_{t\leq T}\Big(-2\int_0^t (X_s^1-Y_s^1)(U'_\ell(X_s^1)-U'_\ell(Y_s^1))+(X_s^1-Y_s^1)(U'_c(X_s^1)-U'_c(Y_s^1))\,ds\Big)\\
 \leq &C_{\textup{Lip}}\int_0^T \sup_{s \in [0, t]}|X_s^1-Y_s^1|^2\,dt    
\end{aligned} \]
The term $\Theta_1$ is therefore easy to estimate:
\begin{equation}
    \Theta_1 \leq C \int_0^T\bbE\Big[\sup_{s\in[0, t]}|X_s^1 - Y_s^1|^2\Big] dt.
    \label{eq: Xi_1 final estimate}
\end{equation}

To estimate $\Theta_2$, we use the $\bbL^{1, 2}$-isometry \cite[Proposition 1.3.1]{Nualart2005Malliavin} to obtain
\begin{equation}
\begin{aligned}
        \Theta_2 =\beta^2\bbE \bigg[\int_0^T\int_0^t&|R_{\hat{\mu}^N}(t,s)-R_\mu(t,s)|^2\,ds\,dt\bigg] \\
        &+\beta^2 \int_0^T \bbE\bigg[\int_0^t\int_0^t D_u^1 R_{\hat{\mu}^N}(t,s)D_s^1 R_{\hat{\mu}^N}(t,u)\,ds\,du\bigg]\,dt
\end{aligned}
\label{eq: Xi_2 decomposition}
\end{equation}
By Lemma \ref{lem: resolvent} and the definition of $K$, we have
\[ \begin{aligned}
    \bbE\bigg[ \int_0^T\int_0^t&|R_{\hat{\mu}^N}(t,s)-R_\mu(t,s)|^2\,ds\,dt\bigg] \leq  C \int_0^T \int_0^T \bbE\big[  \big|K_{\hat{\mu}^N}(t, s)-K_\mu(t,s)\big|^2\big] ds\,dt\\
    &\leq C \int_0^T \int_0^T \bbE\bigg[  \bigg|\frac 1N\sum_{i=1}^N X_t^i X_s^i-Y_t^iY_s^i\bigg|^2\bigg] +\bbE\bigg[  \bigg|\frac 1N\sum_{i=1}^NY_t^i Y_s^i-K_\mu(t,s)\bigg|^2\bigg]ds\,dt
\end{aligned}\]
By exchangeability and the fact that $\|Y^i\|_\infty$ and $\|X^i\|_\infty$ are bounded by $A$, we have
\[ \int_0^T \int_0^T \bbE\bigg[  \bigg|\frac 1N\sum_{i=1}^N X_t^i X_s^i-Y_t^iY_s^i\bigg|^2\bigg] \leq 2A^2T\int_0^T\bbE[\sup_{s\leq t}|X_s^1-Y_s^1|^2]\,dt\]
Moreover, the processes $(Y^i)_{i = 1}^N$ are independent, and thus
\[\bbE\bigg[  \bigg|\frac 1N\sum_{i=1}^NY_t^i Y_s^i-K_\mu(t,s)\bigg|^2\bigg] = \frac{1}{N^2}\sum_{i = 1}^N \bbE\big[|Y_t^iY_s^i -K_\mu(t, s)|^2 \big] \leq \frac{C}{N}. \]
Combining the last 3 displays together yields
\begin{equation}
    \bbE\bigg[ \int_0^T\int_0^t|R_{\hat{\mu}^N}(t,s)-R_\mu(t,s)|^2\,ds\,dt\bigg]\leq C\bigg( \int_0^T \bbE\Big[\sup_{s \in [0, t]}|X_s^1 - Y_s^1|^2\Big] dt  + \frac{1}{N}\bigg).
    \label{eq: Xi_2 term 1}
\end{equation}

Next we estimate the second term of \eqref{eq: Xi_2 decomposition}. By Cauchy-Schwarz, we have
\[\begin{aligned}
    \bbE\bigg[\int_0^t\int_0^t D_u^1 R_{\hat{\mu}^N}(t,s)D_s^1 R_{\hat{\mu}^N}(t,u)\,ds\,du\bigg] 
    \leq  \bbE\bigg[\int_0^t \int_0^t |D_u^1 R_{\hat{\mu}^N}(t,s)|^2\, ds\, du\bigg ].
\end{aligned}  \]
Recall from Proposition \ref{pr: representation of averaged dynamics} that $R_{\hat{\mu}^N}(t, s) = \frac{1}{N}X_t^\top Q_s X_s$. By the chain rule for Malliavin derivatives, for $u, s  \in [0, t]$ we have
\[ \begin{aligned}
    |D^1_u R_{\hat{\mu}^N}(t, s) | &= \frac{1}{N} |D^1_u (X_t^\top Q_s X_s)| \\
    & \leq \frac{1}{N}\bigg( \|Q_s\|_{\op} \big(|D^1_u X_t| |X_s| + |X_t| |D^1_u X_s|\big) + |X_t| \|D^1_u Q_s\|_{\op}|X_s| \bigg).
\end{aligned}  \]
Applying Proposition \ref{pr: Malliavin calculus estimates} and the trivial bounds $\|Q_t\|_{\op} \leq 1$ and $|X_t| \leq \sqrt{NA}$ to the last display yields
\[\bbE[|D^1_u R_{\hat{\mu}^N}(t, s) |^2] \leq CN^{-1}, \quad 0 \leq u, s \leq t \leq T. \]
The last three displays together imply that the second term of \eqref{eq: Xi_2 decomposition} is bounded by $CN^{-1}$, and thus by \eqref{eq: Xi_2 decomposition} and \eqref{eq: Xi_2 term 1} we have that
\begin{equation}
    \Theta_2 \leq C\bigg( \int_0^T \bbE\Big[\sup_{s \in [0, t]}|X_s^1 - Y_s^1|^2\Big] dt  + \frac{1}{N}\bigg).
    \label{eq: Xi_2 final estimate}
\end{equation}
Thus by \eqref{eq: X - Y estimate 1}, \eqref{eq: Xi_1 final estimate}, and \eqref{eq: Xi_2 final estimate}, we conclude that
 \[ \bbE\Big[\sup_{t \in [0, T]}|X_t^1 - Y_t^1|^2\Big]  \leq C\bigg( \int_0^T \bbE\Big[\sup_{s \in [0, t]}|X_s^1 - Y_s^1|^2\Big] dt  + \frac{1}{N}\bigg).\]
 The proof concludes on applying Gr\"{o}nwall's inequality and \eqref{eq: averaged 1st W bound}. \hfill $\square$

\section{Proof of Proposition \ref{pr: Malliavin calculus estimates}}
\label{sect: malliavin}
\subsection{Preliminaries and the stochastic Gr\"{o}nwall inequality} We break up the proof of Proposition \ref{pr: Malliavin calculus estimates} in various steps. A priori, it is not obvious that $X\in \bbL^{1,2}$ since the coefficients of \eqref{eq: averaged dynamics Markovian} are not Lipschitz. The goal then is to regularize the SDE, obtain the desired bounds and pass them to the limit. The caveat is that the boundedness of $X_t$ is important to close the estimates, and thus we need to take some care to balance the regularization of the confining potential $U$ and the interaction $X_t\cdot Z_t^i$.

Before presenting the proof of Proposition \ref{pr: Malliavin calculus estimates}, we recall the stochastic Gr\"{o}nwall inequality of \cite{StochasticGronwallScheutzow2013} (for the particular choice of parameters $p=1/2,\mu=5,\nu=5/4$):
\begin{theorem}
\label{th: stochastic gronwall}{\cite[Theorem 4]{StochasticGronwallScheutzow2013}}
    Let $X$ be an adapted, non-negative process with continuous paths and let $h_t\geq0$ be adapted with continuous paths. Assume that $(a_t)_{t\geq 0}$ is a non-negative progressively measurable process and let $M$ be a continuous local martingale with $M_0=0$. If
    \begin{align*}
        X_t\leq \int_0^t a_s X_s\,ds+h_t+M_t
    \end{align*}
    holds for all $t\geq 0$, then
    \begin{align*}
        \bbE\Big[\sup_{s\leq t}\sqrt{X_s}\Big]\leq C\bbE\bigg[\exp\Big(\frac 52 \int_0^t a_s\,ds\Big)\bigg]^{1/5}\bbE\Big[\sup_{s\leq t}|h_s|^{5/8}\Big]^{4/5}
    \end{align*}
\end{theorem}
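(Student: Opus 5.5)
This is the stochastic Gr\"onwall inequality of Scheutzow, quoted for the parameters $p=1/2$, $\mu=5$, $\nu=5/4$. I would reprove it by the standard two-step scheme: first treat the case $a\equiv 0$, then remove the drift $a$ through an exponential integrating factor, and finally split the expectation with H\"older's inequality.

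\textbf{Step 1: the case $a\equiv 0$.} Here $X_t\le h_t+M_t$ with $h$ increasing; by replacing $h$ with $h^*_t:=\sup_{s\le t}h_s$ we may assume $h$ is nondecreasing. For $p\in(0,1)$ the aim is
\[
\bbE\Big[\sup_{s\le t}X_s^p\Big]\le c_p\,\bbE\big[(h^*_t)^p\big].
\]
Fix $x,y>0$ and localize $M$. Let $\tau=\inf\{s: h_s>y\}$ and $\sigma=\inf\{s: X_s>x\}$. On the event $\{\sup_{s\le t}X_s>x,\ h^*_t\le y\}$ we have $x\le X_{\sigma}\le y+M_{\sigma\wedge\tau}$. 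Since $X\ge 0$, the stopped process $M_{\cdot\wedge\tau}\ge -y$, and Fatou's lemma shows it is a supermartingale bounded below. This gives the good-$\lambda$ estimate
\[
\bbP\Big(\sup_{s\le t}X_s>x\Big)\le \frac{y}{x}+\bbP\big(h^*_t>y\big),
\]
obtained by taking expectations of $X_{\sigma\wedge\tau\wedge t}\le y+M_{\sigma\wedge\tau\wedge t}$. Choosing $y=x$ times a constant and integrating against $p\,x^{p-1}\,dx$ yields the moment bound, with $c_p$ finite for $p<1$.

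\textbf{Step 2: removing $a$.} Set $A_t=\int_0^t a_s\,ds$ and $\tilde X_t=e^{-A_t}X_t$. By It\^o's product rule applied to the inequality, it is cleaner to work with $R_t:=\int_0^t a_sX_s\,ds+h_t+M_t\ge X_t$. One then checks that
\[
e^{-A_t}X_t\le \int_0^t e^{-A_s}\,dh_s+h_0+\int_0^t e^{-A_s}\,dM_s .
\]
The $-a_s e^{-A_s}R_s$ term from the integrating factor absorbs the $a_sX_s$ term, because $X_s\le R_s$. If $h$ is not of bounded variation, first replace it by $h^*$, whose increments are nonnegative. The right-hand side has the form $\tilde h_t+\tilde M_t$ with $\tilde h_t\le h^*_t$ and $\tilde M$ a continuous local martingale. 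Step 1 with exponent $q$ then gives
\[
\bbE\Big[\sup_{s\le t}\big(e^{-A_s}X_s\big)^q\Big]\le c_q\,\bbE\big[(h^*_t)^q\big].
\]

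\textbf{Step 3: H\"older.} Write $\sup_{s\le t}\sqrt{X_s}\le e^{A_t/2}\sup_{s\le t}\big(e^{-A_s}X_s\big)^{1/2}$. Apply H\"older's inequality with exponents $5$ and $5/4$ to get
\[
\bbE\Big[e^{5A_t/2}\Big]^{1/5}\,\bbE\Big[\sup_{s\le t}\big(e^{-A_s}X_s\big)^{5/8}\Big]^{4/5}.
\]
Then use Step 2 with $q=5/8<1$, which gives the stated bound.

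The main obstacle is Step 1, the drift-free case. It needs care because $M$ is only a local martingale, and because $\sup X$ must be controlled without integrability of $M$. The argument therefore relies on the lower bound $M\ge -h$, which comes from $X\ge0$, together with the restriction $p<1$. This restriction is exactly why only fractional moments such as $\sqrt{X}$ and $5/8$ appear.
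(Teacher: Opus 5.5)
Your Steps 2 and 3 are correct. They are how the cited result (which the paper only quotes from Scheutzow, without proof) is obtained from its drift-free case: replace $h$ by $h^*$, use the integrating factor on $R_t$ to get $e^{-A_t}X_t\le h^*_t+\int_0^t e^{-A_s}\,dM_s$, then apply H\"older with exponents $5$ and $5/4$ and the drift-free bound at exponent $5/8$.

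The gap is at the end of Step 1. The inequality you derive,
\[
\bbP\Big(\sup_{s\le t}X_s>x\Big)\le \frac{y}{x}+\bbP\big(h^*_t>y\big),
\]
is too weak to integrate. With $y=cx$ the first term is the constant $c$, and $\int_0^\infty p x^{p-1}c\,dx=\infty$. No other choice of $y=y(x)$ rescues it in general. For example, if $\bbP(h^*_t>y)=1\wedge y^{-1}$, then $\bbE[(h^*_t)^{5/8}]<\infty$, but $\inf_{y>0}\{y/x+\bbP(h^*_t>y)\}=2x^{-1/2}$ for $x\ge 4$, which is not integrable against $x^{p-1}\,dx=x^{-3/8}\,dx$ at infinity. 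The information is lost when you bound $h_{\sigma\wedge\tau\wedge t}$ by $y$: the estimate then retains only the level $y$, not how much mass $h^*_t$ has below it.

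The repair uses your own ingredients but keeps that term. Take $h=h^*$ nondecreasing, as in your Step 1, and set $E:=\{\sup_{s\le t}X_s>x,\ h^*_t\le y\}$.
\begin{enumerate}
\item $E\subseteq\{h_0\le y\}$, and on $\{h_0\le y\}$ one has $h_{\sigma\wedge\tau\wedge t}\le h^*_t\wedge y$.
\item $M_{\cdot\wedge\tau}\mathbf{1}_{\{h_0\le y\}}$ is a continuous local martingale bounded below by $-y$, hence a supermartingale.
\item On $E$ we have $\sigma\le t\le\tau$ and $X_\sigma\ge x$, while $X\ge 0$ everywhere.
\end{enumerate}
Optional stopping then gives
\[
x\,\bbP(E)\le \bbE\big[X_{\sigma\wedge\tau\wedge t}\mathbf{1}_{\{h_0\le y\}}\big]\le \bbE\big[h^*_t\wedge y\big],
\]
which is Lenglart's domination inequality. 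Take $y=x$ and integrate against $p x^{p-1}\,dx$, using $\int_0^\infty p x^{p-2}(u\wedge x)\,dx=u^p/(1-p)$. This yields
\[
\bbE\Big[\sup_{s\le t}X_s^p\Big]\le \frac{2-p}{1-p}\,\bbE\big[(h^*_t)^p\big],
\]
and this is exactly where $p<1$ enters.

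With this correction your proof is complete, and it differs from Scheutzow's in the drift-free step. He proves a Burkholder-type inequality $\bbE[(\sup_{s\le t}M_s)^p]\le c_p\,\bbE[(-\inf_{s\le t}M_s)^p]$ for continuous local martingales. He then combines it with $-\inf_{s\le t}M_s\le h^*_t$, $\sup_{s\le t}X_s\le h^*_t+\sup_{s\le t}M_s$, and subadditivity of $u\mapsto u^p$. Your corrected optional-stopping route is more elementary and self-contained. His isolates a martingale inequality of independent interest.
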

The strength of this theorem is that the estimate does not depend on the specific local martingale $M$, which is heavily constrained by the requirement that $X$ remains non-negative. This is especially well suited for our purposes since the dynamics \eqref{eq: averaged dynamics Markovian} feature an interaction term $X_t\cdot Z_t^i$ between our process of interest and a martingale $Z_t^i$ that is integrable but not bounded, on top of the usual Brownian motion. As a technical point, the process $X$ lives in a probability space $(\Omega_0\times \widetilde \Omega, \mA_0\times \widetilde \mA,\bbP_0\otimes \widetilde\bbP)$ where $X_0$ is supported on $\Omega_0$ and the driving Brownian motion $W$ is supported on $\widetilde \Omega$. Without loss of generality, $X$ is adapted to the filtration generated by $(X_0,W)$. This allows us to differentiate only in the Wiener direction while ignoring the initial condition.

 \subsection{Step 1} We obtain the estimate for an SDE with Lipschitz coefficients. Take $\zeta:\R \to \R_+$ to be a smooth cutoff function supported on the $2$-ball that is equal to $1$ only the $1$-ball. Define $f_h:\R\to \R$ as $f_h(x)=x \zeta(h^{-1}x)$. Observe that $f_h$ has the following properties:
    \begin{enumerate}
        \item $|f_h(x)|\leq \min\{h,|x|\},$
        \item $| \partial_xf_h(x)|\leq C$ uniformly in $h$.
    \end{enumerate}Pick $\frs>1/A$, let $A_\frs:=A-1/\frs$ and define \begin{align*}
    U^\frs_c(x):=\begin{cases}
        U_c(x) \text{ if } |x|\leq A_\frs,\\
        U_c(A_\frs)+U_c'(A_\frs)(x-A_\frs)+U_c''(A_\frs)\frac{(x-A_\frs)^2}{2} \text{ if } x>A_\frs,\\
        U_c(-A_\frs)+U_c'(-A_\frs)(x+A_\frs)+U_c''(-A_\frs)\frac{(x+A_\frs)^2}{2} \text{ otherwise.}
    \end{cases}
\end{align*}
It is easily checked that for each $r>1/A$, $U_c^\frs$ is convex and $\mathcal C^2$. Moreover, $U_c^\frs$ converges to $U_c$ uniformly on compact subsets of $(-A,A)$ and $ \nabla U^\frs_c$ is Lipschitz. Denote $U^\frs:=U_c^\frs+U_\ell$. We abuse notation by denoting by $\nabla U^\frs(X)$ and $f_h(X)$ the vector where the functions $\nabla U^\frs, f_h$ are applied to every entry:
\[f_h(X):=(f_h(X^i))_{i=1}^N,\quad \nabla U^\frs(X):=(\nabla U^\frs(X^i))_{i=1}^N.\]
 Similarly, if $M$ is an $N\times N$ matrix, we write $f_A(M)$ for the matrix with entries $f_A(M^{ij})$.

 Consider the SDE \begin{align}
\begin{split}
    \label{eq: fully regularized SDE}
         d\widehat X_t^{i}&=-\nabla U^\frs(\widehat X_t^i)\,dt+\frac{\beta}{\sqrt N}\sum_{\ell=1}^N f_A(\widehat X^\ell_t) f_h(\widehat Z_t^{i,\ell})\,dt+dW_t^i\\
        d\widehat Z_t^i&=\frac{\beta}{\sqrt N}\widehat Q_t f_A(\widehat X_t)dW_t^i\\
        d\widehat Q_t&=-F\Big(\frac{\beta^2}N \widehat Q_t f_A(\widehat X_s)f_A(\widehat X_s)^\top \widehat Q_t\Big)\,dt\\
        \widehat Q_0&=I, \widehat Z_0=0, \widehat X_0^i=X_0^i\,\,,\,\, i=1,\dots,N.
\end{split}
\end{align}
where again we take $\widehat X_t^i$ to be one dimensional stochastic processes, while the $\widehat Z^i$s are $N\times 1$ random vectors and $\widehat Q$ is a random $N\times N$ matrix. Of course $\widehat X$ will depend on the values of $r,h$; we do not keep track of this dependency to make notation lighter. This SDE has locally Lipschitz coefficients and thus has a unique strong solution, up to an explosion time. 
For an arbitrary continuous vector field $x:[0,T]\to \R^N$, the matrix ODE
\begin{align*}
    dq_t=-\frac{\beta^2}{N} q_t x_t x_t^\top q_tdt
\end{align*}
with unknown $q:[0,T]\to \R^{N\times N}$ and $q_0=I$ has a unique solution since the coefficients are locally Lipschitz; the solution is given by $q_t=(I+\beta^2/N \int_0^t x_sx_s^\top\,ds)^{-1}$. By unique solvability of locally Lipschitz ODEs we have that 
\[\widehat Q_t=\Big(I+\frac{\beta^2}N \int_0^t f_A(\widehat X_s)f_A(\widehat X_s)^\top\,ds\Big)^{-1}.\]
The Frobenius norm of $\widehat Q$ is uniformly bounded by $2\sqrt N$, and thus we can rewrite the SDE \eqref{eq: fully regularized SDE} to have Lipschitz coefficients by applying a smooth truncation to $\widehat Q$ that will not be different from identity with probability $1$.
\begin{lemma}
\label{lm: entries of mollified matrix}
The entries of $\widehat Q_tf_A(\widehat X_t)$ are bounded by $A^3T$. The same result applies to $Q_tX_t$.
\end{lemma}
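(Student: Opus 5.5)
The plan is to avoid bounding $\widehat Q_t$ entrywise directly, which is hopeless since it is an $N\times N$ inverse, and instead use a push-through identity. This moves the inversion onto an operator on $L^2([0,t])$ whose kernel is uniformly bounded independently of $N$. Throughout, write $x_s := f_A(\widehat X_s)$, a continuous path in $\R^N$ with $|x^j_s|\le A$ by property (1) of $f_h$. Define $\mathcal X: L^2([0,t])\to\R^N$ by $\mathcal X g = \int_0^t x_u g(u)\,du$. Then $\int_0^t x_sx_s^\top ds = \mathcal X\mathcal X^*$, so $\widehat Q_t=(I+\tfrac{\beta^2}{N}\mathcal X\mathcal X^*)^{-1}$.

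\emph{Step 1 (push-through).} I will use $(I+\tfrac{\beta^2}{N}\mathcal X\mathcal X^*)^{-1}\mathcal X=\mathcal X(I+\tfrac{\beta^2}{N}\mathcal X^*\mathcal X)^{-1}$. Here $\tfrac1N\mathcal X^*\mathcal X$ is the integral operator on $L^2([0,t])$ with kernel $K_N(u,s)=\tfrac1N x_u\cdot x_s$, i.e.\ $K_{\hat\mu}$ for the empirical measure of the coordinates, consistent with Lemma \ref{lem: N kernels}. Since $(I+\beta^2K)^{-1}=I-\beta^2 H$ with $H=K(I+\beta^2K)^{-1}$ as in \eqref{eq: H_mu^t}, this formally gives
\[\widehat Q_t x_s = x_s-\beta^2\int_0^t x_u\,H^t(u,s)\,du .\]
To make the evaluation at the single time $s=t$ rigorous, I will not apply the identity pointwise. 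Instead I will check the algebraic identity $\widehat Q_t\big(I+\tfrac{\beta^2}{N}\mathcal X\mathcal X^*\big)=I$ against $x_s$ directly: expand $\mathcal X\mathcal X^* x_s=\int_0^t x_u (x_u\cdot x_s)\,du$, which is continuous in $s$, and use the resolvent equation for $H^t$, which also has a continuous kernel.

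\emph{Step 2 (kernel bound).} The kernel $K_N(u,s)=\langle \phi(u),\phi(s)\rangle$ with feature map $\phi(u)=x_u/\sqrt N$ satisfies $|K_N(u,s)|\le A^2$. Moreover $H=\Phi^*(I+\beta^2\Phi\Phi^*)^{-1}\Phi$ with $\Phi=\mathcal X/\sqrt N$. Because the middle factor is a contraction and positive, Cauchy--Schwarz gives
\[|H(u,s)|\le \sqrt{H(u,u)H(s,s)}\le \sqrt{K_N(u,u)K_N(s,s)}\le A^2 .\]

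\emph{Step 3 (conclude).} Taking the $j$-th entry in Step 1 at $s=t$ gives
\[|(\widehat Q_t x_t)^j|\le A+\beta^2 t A\cdot A^2,\]
which is of the form $CA^3T$; the statement's "$A^3T$" is presumably up to such $\beta$-dependent constants. For $Q_tX_t$ the identical argument applies with $x_s=X_s$, using $|X^j_s|<A$.

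The main obstacle is Step 1: justifying the pointwise, time-$t$ evaluation of the operator identity. I will handle it by the direct algebraic verification described there, rather than by an $L^2$ argument, relying on continuity of the paths and of the kernels.
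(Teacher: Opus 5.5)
Your argument is correct, but it takes a different route from the paper's. The paper works entrywise in $\R^N$. It writes $D_t=\frac{\beta^2}{N}\int_0^t x_sx_s^\top\,ds=V^\top V$ with $V$ symmetric and uses the Woodbury identity $\widehat Q_t=I-V^\top(I+VV^\top)^{-1}V$ to get $|\widehat Q_t^{ij}|\le |v_i||v_j|=O(1/N)$ for $i\neq j$. The bound on $(\widehat Q_tx_t)^i$ then follows by summing $N$ off-diagonal terms of size $O(1/N)$ against $|x_t^j|\le A$.

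You never look at individual entries of $\widehat Q_t$. You only use $\|\widehat Q_t\|_{\op}\le 1$ and $|x_u|\le A\sqrt N$, through the representation $\widehat Q_tx_t=x_t-\beta^2\int_0^t x_u\,H^t(u,t)\,du$ together with $|H^t|\le A^2$. The paper's route yields extra information (entrywise control of $\widehat Q_t$). Yours is shorter and ties directly to the kernels of Lemma \ref{lem: N kernels}.

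The step you flag as the main obstacle is not actually one. Since $H^t(u,s)=\frac1N x_u^\top\widehat Q_t x_s$ explicitly, your identity at $s=t$ is just $(I+D_t)\widehat Q_tx_t=x_t$ rearranged:
\[\widehat Q_t x_t = x_t-\frac{\beta^2}{N}\int_0^t x_u\big(x_u^\top \widehat Q_t x_t\big)\,du .\]
So no operator calculus on $L^2([0,t])$, push-through identity, or continuity argument is needed. The bound $|x_u^\top\widehat Q_tx_t|\le NA^2$ then finishes the proof in one line.

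On the constant, your bound $A+\beta^2A^3t$ is the right one, and the literal ``$A^3T$'' cannot hold. At $t=0$, $\widehat Q_0x_0=x_0$ has entries of size up to $A$. The paper's proof reaches $A^3s$ only because it applies the off-diagonal estimate also to $j=i$, whereas $\widehat Q_t^{ii}\in(0,1]$, and because it drops a factor $\beta^2$. Done correctly, the paper's argument also gives $A+\beta^2A^3t$. So the discrepancy with the statement is an additive $A$ as well as a $\beta^2$ factor, not only a $\beta$-dependent multiplicative constant. Either way the bound is independent of $N$, which is all that is used downstream.
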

\begin{proof}
 Let $D_s=\frac{\beta^2}{N}\int_0^s f_A(\widehat X_u)f_A(\widehat X_u)^\top du$. Then $D_s$ is a symmetric PSD matrix with entries bounded by $\beta^2 sA^2/N$; we can thus factorize it as $D_s= V^\top V$ for some symmetric, psd matrix $V$ with rows $v_1,\dots,v_n$. In particular for each $i$ and $s\leq T$, $|v_i|^2=D_s^{i,i}\leq A^2s/N$.
    By the Morrison inverse formula, 
    \[Q_s=(1+V^\top V)^{-1}=I- V^\top(1+VV^\top)^{-1}V.\] Multipliying from the left by the base vector $e_i$ and the right from $e_j$ (with $i\neq j)$, we get
    \[|Q_{s}^{i,j}|= |v_i^\top (I+VV^\top)^{-1}v_j|\leq |v_i||v_j|\leq \frac{sA^2}{N}.\]
    Now, multiplying by $X_s$ we get
    \begin{align*}
        |(\widehat Q_s f_A(\widehat X_s))^i|= |\sum_{j=1}^N Q_s^{ij}f_A(\widehat X_s^j)|\leq\sum_{j=1}^N |\widehat Q_s^{ij}f_A(\widehat X_s^j)|\leq A^3 s,
    \end{align*}
    as desired. The proof for $Q_t X_t$ is the same.
\end{proof}
\begin{proposition}
\label{pr: bound for fully regularized SDE}
Let $\widehat X$ be a strong solution of \eqref{eq: fully regularized SDE}. Then $\widehat X\in \mathbb L^{1,2}$, and in particular for every $i=1,\dots,N$
\begin{align*}
    \int_0^T \int_0^T \mathbb{E}[|D_s^i \widehat X_t|^2]\,ds\,dt\leq C,
\end{align*}
where $C$ does not depend on $N, \frs$ or $h$.
\end{proposition}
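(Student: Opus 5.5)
Since the coefficients of \eqref{eq: fully regularized SDE} are globally Lipschitz once $\widehat Q$ is smoothly truncated, classical theory (\cite[Theorem 2.2.1]{Nualart2005Malliavin}) gives $\widehat X,\widehat Z,\widehat Q\in\bbL^{1,2}$. The Malliavin derivatives $D^i_s$ then solve the linearized system for $t\ge s$, with initial data $D_s^i\widehat X_s^j=\delta_{ij}$ and $D_s^i\widehat Z^{j}_s=\frac{\beta}{\sqrt N}\delta_{ij}\widehat Q_sf_A(\widehat X_s)$, and $D^i_s\cdot_t=0$ for $t<s$. The whole task is to bound these derivatives uniformly in $(N,\frs,h)$. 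Fix $i$ and $s$, and set
\[
\xi_t:=\sum_{j}|D_s^i\widehat X^j_t|^2,\qquad \zeta_t:=\frac1N\sum_{j}|D^i_s\widehat Z^{j}_t|^2 ,\qquad \|D^i_s\widehat Q_t\|_{\op}.
\]
The plan is to show that $\bbE[\sup_t\sqrt{\xi_t}]\le C$, that $\bbE[\xi_t]\le C$, and that the $\widehat Q$-derivative is of order $N^{-1/2}$. The last of these is what will later give $\bbE\|D^i_s Q_t\|_{\op}^2\le C/N$.

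First I would estimate $D\widehat Q$. Writing $\widehat Q_t=(I+\frac{\beta^2}{N}\int_0^t f_A f_A^\top)^{-1}$, we get
\[
D\widehat Q_t=-\widehat Q_t\Big(\tfrac{\beta^2}{N}\int_s^t (Df_A\, f_A^\top+f_A\,Df_A^\top)\,du\Big)\widehat Q_t .
\]
Using $\|\widehat Q\|_{\op}\le1$, $|f_A(\widehat X)|\le A\sqrt N$ and $|\partial f_A|\le C$, this gives
\[
\|D\widehat Q_t\|_{\op}\le \frac{C}{\sqrt N}\int_s^t\sqrt{\xi_u}\,du .
\]
So $D\widehat Q$ is controlled by $\xi$ alone.

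Next I would write the It\^o equation for $\xi_t$. The drift $-\nabla U^\frs$ contributes $-2\sum_j D\widehat X^j\,(U^\frs)''(\widehat X^j)D\widehat X^j\le 2C_{\textup{Lip}}\xi$, because $U_c^\frs$ is convex; this is exactly why the convex/Lipschitz splitting is preserved under the regularization. The interaction term produces two pieces.
\begin{itemize}
\item[(a)] The piece $\frac{\beta}{\sqrt N}\sum_\ell \partial f_A(\widehat X^\ell)D\widehat X^\ell f_h(\widehat Z^{j,\ell})$. Its matrix $\frac{1}{\sqrt N}(f_h(\widehat Z^{j,\ell}))_{j\ell}$ must have operator norm $O(1)$. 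I expect this to hold because $\widehat Z^{j}$ behaves like $\bbE[\bfG_j\mid\cdot]$; more precisely, $\frac1N\sum_{j,\ell}|\widehat Z^{j,\ell}|^2$ is controlled, but an operator-norm bound is the delicate point. I would include $a_t:=C(1+N^{-1}\|\widehat Z_t\|_{\op}^2)$ as the random Gr\"onwall coefficient, so that only exponential moments of $a$ are needed. These follow because $\widehat Z$ is a matrix martingale whose quadratic variation is bounded by Lemma \ref{lm: entries of mollified matrix}. I expect $\bbE\exp(\frac52\int a)$ to require a matrix Bernstein/Gaussian-type concentration, using $\frac{\beta}{\sqrt N}\widehat Q f_A$ with entries of size $A^3T$.
\item[(b)] The piece $\frac{\beta}{\sqrt N}\sum_\ell f_A(\widehat X^\ell)\partial f_h\, D\widehat Z^{j,\ell}$. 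After Cauchy--Schwarz this is bounded by $C\sqrt{\xi}\,\sqrt{\zeta}\le C(\xi+\zeta)$.
\end{itemize}
For $\zeta$, It\^o's formula and the equation for $D\widehat Z$ (its differential is $\frac{\beta}{\sqrt N}(D\widehat Q f_A+\widehat Q\,\partial f_A D\widehat X)dW^j$, plus the $ds$-term from the time-$s$ jump, which is bounded) give
\[
\zeta_t\le C+\int_s^t C\Big(\xi_u+\int_s^u\xi_r\,dr\Big)du+\text{martingale}.
\]
The initial term is bounded by Lemma \ref{lm: entries of mollified matrix}: $\frac1N\cdot\frac{\beta^2}{N}\sum_j|\widehat Q f_A|^2\delta_{ij}$ is $O(1)$.

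Finally, I would add the two inequalities and apply Theorem \ref{th: stochastic gronwall} to $X_t:=\xi_t+\zeta_t+\int_s^t\xi$. This gives $\bbE[\sup_t\sqrt{\xi_t}]\le C$ with $C$ independent of $N,\frs,h$, since every constant used only involves $A,\beta,T,C_{\textup{Lip}}$ and the cutoff derivative bound. To upgrade this to the stated second moments, I would rerun the argument with $\sqrt{\cdot}$ replaced by a higher power; alternatively, Theorem \ref{th: stochastic gronwall} with a different exponent $p$ gives $\bbE[\xi_t]\le C$ directly. Integrating over $s,t\in[0,T]$ then yields the claim.

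The main obstacle I anticipate is the random coefficient $N^{-1}\|\widehat Z_t\|^2_{\op}$ in (a): $\widehat Z$ is unbounded, and I need a uniform-in-$N$ exponential moment for it. The cutoff $f_h$ can help here, since it makes the coefficient at most $Ch^2$. However, the bound must be uniform in $h$, so I would rely on the matrix-martingale concentration and not on $h$.
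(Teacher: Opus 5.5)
Your architecture is the paper's: Nualart's Theorem 2.2.1 for the truncated system, the chain-rule bound $\|D^i_s\widehat Q_t\|_{\op}\le CN^{-1/2}\int_s^t|D^i_s\widehat X_u|\,du$, It\^o on the norms of $D\widehat X$ and $D\widehat Z$, then the stochastic Gr\"onwall inequality (Theorem \ref{th: stochastic gronwall}). The gap is the step you yourself single out as the main obstacle. It is left unproved, and in the form you set it up it cannot be expected to hold uniformly in $N$.

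You take the Gr\"onwall coefficient $a_t=C(1+N^{-1}\|\widehat Z_t\|_{\op}^2)$. This forces you to bound $\bbE\exp\big(\tfrac52 C\int_0^T N^{-1}\|\widehat Z_t\|_{\op}^2\,dt\big)$ with a prefactor that is not small. BDG with its $\sqrt p$ constant only gives the sub-Gaussian bound $\bbE\|N^{-1/2}\widehat Z_t\|_{\textup{Fr}}^p\le C^pp^{p/2}$. That yields exponential moments of the \emph{square} only for small prefactors. The matrix-concentration argument you allude to is not supplied, and it cannot work for small $N$. For $N=1$ the quantity is an exponential-square moment of a scalar martingale, the analogue of $\bbE\exp(\lambda\int_0^TW_t^2\,dt)$, which is infinite for large $\lambda$.

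The squaring is self-inflicted. Term (a) is bounded directly, without Young's inequality, by $\frac{C}{\sqrt N}\|f_h(\widehat Z_t)\|_{\op}\,\xi_t\le \frac{C}{\sqrt N}\|\widehat Z_t\|_{\textup{Fr}}\,\xi_t$, using $|f_h(z)|\le |z|$ and $\|\cdot\|_{\op}\le\|\cdot\|_{\textup{Fr}}$. So the operator norm is not a delicate point at all: the Frobenius quantity you already note is controlled dominates it, and $N^{-1/2}\|\widehat Z_t\|_{\textup{Fr}}$ is of order one. With the linear coefficient $a_t=C(1+N^{-1/2}\|\widehat Z_t\|_{\textup{Fr}})$, Jensen in time and the sub-Gaussian moment bound give $\bbE\exp(\tfrac52\int_0^Ta_t\,dt)\le C$ uniformly in $N,\frs,h$. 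This holds because $\sum_p\lambda^pC^pp^{p/2}/p!<\infty$ for every $\lambda$, and it is exactly how the paper closes the estimate.

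Two smaller corrections:
\begin{enumerate}
\item With your normalization $\zeta_t=\frac1N\|D^i_s\widehat Z_t\|^2_{\textup{Fr}}$, Cauchy--Schwarz only bounds term (b) by $C\sqrt{\xi_t}\sqrt{N\zeta_t}$, so you should drop the factor $\frac1N$. Your inequality for $\zeta$ holds as written for the unnormalized $\|D^i_s\widehat Z_t\|_{\textup{Fr}}^2$, whose initial value is at most $\beta^2A^2$.
\item The stochastic Gr\"onwall inequality only controls $\bbE\sup_t X_t^p$ for $p<1$, so $\bbE[\xi_t]\le C$ cannot come from changing the exponent in the quadratic inequality. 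You must run the argument for $|D^i_s\widehat X_t|^4+\|D^i_s\widehat Z_t\|^4_{\textup{Fr}}$ and apply it with $p=1/2$, as the paper does.
\end{enumerate}
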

In what follows, $C$ will be a constant that will change from line to line that is allowed to depend on everything except $N,h$ and $r$. 
\begin{proof}
Since the coefficients of the SDE are differentiable and Lipschitz we know that its solution is in $\mathbb L^{1.2}$ by \cite[Theorem 2.2.1]{Nualart2005Malliavin}. By differentiating, we obtain the SDE
\begin{align*}
         D_r^j\widehat X_T^{i}&=\mathbf{1}_{i=j}-\int_r^T\nabla^2 U^\frs(\widehat X_t^i)D_r^j\widehat X_t^{i}\,dt+\frac{\beta}{\sqrt N}\sum_{\ell=1}^N \int_r^T D_r^j \widehat X_t^\ell \partial_x f_A(\widehat X^\ell_t) f_h(\widehat Z_t^{i,\ell})\,dt,\\
         &+\frac{\beta}{\sqrt N}\sum_{\ell=1}^N\int_r^T f_A(\widehat X_t^\ell) \partial_x f_h (\widehat Z_t^{i,\ell})D_r^j \widehat Z_t^{i,\ell}\,dt\\
        D_r^j\widehat Z_T^i&=\mathbf 1_{i=j}\frac{\beta}{\sqrt N}\widehat Q_r f_A(\widehat X_r)+\frac{\beta}{\sqrt N}\int_r^T D_r^j(\widehat Q_t f_A(\widehat X_t))dW_t^i.
\end{align*}
For two matrices $A,B$ we denote by $A\odot B=(a_{ij}b_{ij})_{ij}$ the entry wise product, and similarly for vectors $x\odot y:=(x_i y_i)_{i}$. 
Use It\^{o} to get 
\begin{align*}
    d|D^j_r \widehat X_t|^2&=-2D^j_r \widehat X_t^\top\nabla^2U^\frs(\widehat X_t)D^j_r \widehat X_t+\frac{2\beta}{\sqrt N}\sum_{i,\ell=1}^N D^j_r \widehat X_t^\ell D^j_r \widehat X_t^i \partial_x f_A(\widehat X_t^{\ell}) f_h(\widehat Z_t^{i,\ell})\,dt\\
    &+\frac{2\beta}{\sqrt N}\sum_{i,\ell=1}^N f_A(\widehat X_t^\ell)\partial_x f_h (\widehat Z_t^{i,\ell})D_r^j \widehat Z_t^{i,\ell}D_r^j \widehat X_t^i\,dt.
    \end{align*}
    Use Hessian bounds of $U^\frs_c$ (and specifically the fact that uniformly in $\frs,\nabla^2 U^\frs_c\geq 0$ and $\nabla^2 U_\ell\leq C_{\textup{Lip}}$), rewrite the terms in matrix form and use matrix norm bounds to obtain
    \begin{align*}
    d|D^j_r \widehat X_t|^2&\leq C |D_r^j \widehat X_t|^2\,dt+\frac{C}{\sqrt N} (D_r^j \widehat X_t \odot \partial_x f_A(\widehat X_t))^\top D_r^j  f_h(\widehat Z_t)^\top D_r^j\widehat X_t\\
    &\hspace{2 em}+\frac{C}{\sqrt N} f_A(\widehat X_t)^\top (\partial_x f_h(\widehat Z_t)\odot D_r^j\widehat Z_t)^\top D_r^j \widehat X_t\,dt\\
    &\leq C |D^j \widehat X_t|^2+\frac{C}{\sqrt N}\|f_h(\widehat Z_t)\|_{\text{op}}|(D_r^j \widehat X_t \odot \partial_x f_A(\widehat X_t)||D_r^j \widehat X_t|\\
    &\hspace{2 em}+\frac{C}{\sqrt N} |f_A(\widehat X_t)||D_r^j \widehat X_t|\|\partial_x f_h(\widehat Z_t)\odot D_r^j \widehat Z_t\|_{\text{Fr}}.
    \end{align*}
    Notice that $|D_r^j \widehat X_t \odot \partial_x f_A(\widehat X_t)|^2=\sum_{i}|D_r^j \widehat X_t^i \partial_x f_A(\widehat X_t^i)|^2\leq C\sum_{i}|D_r^j \widehat X_t^i|^2$; the same reasoning holds for $\partial_x f_h(\widehat Z_t)\odot D_r^j \widehat Z_t$ and the Frobenius norm so that $\|\partial_x f_h(\widehat Z_t)\odot D_r^j \widehat Z_t \|_{\text{Fr}}\leq C\|D_r^j \widehat  Z_t \|_{\text{Fr}}$, and we obtain
    \begin{equation}
    \label{eq: malliavin DX square dynamics}
    d|D^j_r \widehat X_t|^2\leq C|D^j \widehat X_t|^2+\frac{C}{\sqrt N}\|Z_t\|_{\text{Fr}}|D_r^j \widehat X_t |^2+C|D_r^j \widehat X_t|\|D_r^j \hat Z_t\|_{\text{Fr}} .
\end{equation}
On the other hand,
\begin{align*}
    d|D_r^j \widehat Z_t^i|^2&=\frac{2\beta}{\sqrt N} D_r^j \widehat Z_t^i \cdot D_r^j(\widehat Q_t f_h(\widehat X_t)) dW_t^i+\frac{\beta^2}{N}|D_r^j (\widehat Q_t f_h(\widehat X_t))|^2\,dt,
\end{align*}
and hence
\begin{align*}
    d\|D_r^j \widehat Z_t\|_{\text{Fr}}^2&=\frac{2\beta}{ \sqrt N}\sum_{i=1}^N  D_r^j \widehat Z_t^i \cdot  D_r^j(\widehat Q_t f_h(\widehat X_t)) dW_t^i+\beta^2|D_r^j (\widehat Q_t f_h(\widehat X_t))|^2\,dt.
\end{align*}
Thus by using It\^{o} once again and Young's inequality, we obtain
\begin{align}
\label{eq: malliavin DZ dynamics fourth power}
\begin{split}
    d\|D_r^j \widehat Z_t\|_{\text{Fr}}^4&=\frac{2\beta^2}{N}\sum_{i=1}^N| D_r^j \widehat Z_t^i \cdot D_r^j(\widehat Q_t f_h(\widehat X_t)) |^2 dt+2\beta^2\|D_r^j \widehat Z_t\|^2_{\text{Fr}}|D_r^j (\widehat Q_t f_h(\widehat X_t))|^2\,dt+\,dM_t\\
    &\leq C(\beta^2+1)\big[\|D_r^j \widehat Z_t\|^4_{\text{Fr}}+|D_r^j(\widehat Q_t f_h(\widehat X_t))|^4\big]+dM_t,
\end{split}
\end{align}
where $M_t$ is a martingale term.
By the chain rule for Malliavin derivatives, we have
\begin{align}
\label{eq: bound on malliavin derivative of Q}
\begin{split}
    \|D_r^j \widehat Q_t\|_{\text{op}}&= \big\|-\widehat Q_t\frac{\beta^2}{N}\big(\int_r^t D_r^j f_A(\widehat X_s)f_A(\widehat X_s)^\top+f_A(\widehat X_s)D_r^j f_A(\widehat X_s)^\top \,ds\big)\widehat Q_t\big\|_{\text{op}} \\
    &\leq \frac{\beta^2C}{N}\int_r^t \|D_r^j f_A(\widehat X_s)f_A(\widehat X_s)^\top\|_{\text{op}}\,dt\\
    &\leq \frac{\beta^2C}{N}\int_r^t |D_r^j f_A(\widehat X_s)|\sqrt N A\,ds\leq \frac{\beta^2 CA}{\sqrt N}\int_r^t |D_r^j f_A(\widehat X_s)|\,ds.
\end{split}
\end{align}
On the other hand, $|D_r^j f_A(\widehat X_s)|^2=\sum_{i} |D_r^j\widehat X_t^i|^2 |\partial_x f_A(\widehat X_t^i)|^2\leq C|D_r^j \widehat X_t|^2$, and hence for some constant that does not depend on $A,h$ or $N$, we have
\begin{align}
\label{eq: malliavin operator bound}
 \begin{split}
|D_r^j(\widehat Q_t f_h(\widehat X_t))|^4&=|D_r^j\widehat Q_t f_h(\widehat X_t)+\widehat Q_t D_r^jf_h(\widehat X_t)|^4\\
    &\leq C(\|D_r^j \widehat Q_t\|_{\text{op}}|f_A(\widehat X_t)|)^4+C(\| \widehat Q_t\|_{\text{op}}|D_r^jf_A(\widehat X_t)|)^4\\
    &\leq C\int_r^t |D_r^j\widehat X_s|^4\,ds+C|D_r^j\widehat X_t|^4.
 \end{split}
\end{align}
Combining \eqref{eq: malliavin operator bound},\eqref{eq: malliavin DZ dynamics fourth power}, \eqref{eq: malliavin DX square dynamics} and using Young's inequality we conclude that there exists a constant $C \in (0, \infty)$ which does not depend on $N$ or $h$ such that
\begin{align*}
    |D_r^j \widehat X_t|^4+\|D_r^j \widehat Z_t\|^4_{\text{Fr}}&\leq 1+N^{-2}|Q_r \widehat X_r|^4+C\int_0^t (1+\|N^{-1/2}\widehat Z_s\|_{\text{Fr}})(|D_r^j \widehat X_s|^4+\|D_r^j \widehat Z_s\|_{\text{Fr}}^4)\,ds\\
    &+C\int_0^t \int_0^s |D_r^j \widehat X_u|^4\,du\,ds+M_t\\
    &\leq C+C\int_0^t (1+\|N^{-1/2}\widehat Z_s\|_{\text{Fr}})(|D_r^j \widehat X_s|^4+\|D_r^j \widehat Z_s\|_{\text{Fr}}^4)\,ds+M_t.
\end{align*}
By the stochastic Gr\"{o}nwall inequality (Theorem \ref{th: stochastic gronwall}), we get
\begin{align*}
    \mathbb{E}\Big[\sup_{s\leq t}|D_r^j \widehat  X_t|^2\Big]\leq C\mathbb{E}\bigg[\exp\bigg(\frac{5C}2 \int_0^t \|N^{-1/2}\widehat  Z_s\|_{\text{Fr}}\,ds\bigg)\bigg]^{1/5}.
\end{align*}
Now we estimate the Frobenius norm of $N^{-1/2}Z_s$. We claim that the expectation can be bounded independently of $N$ and $h$. First, notice that the entries of $\widehat Q_t f_A(\widehat X_t)$ need to be bounded by Lemma \ref{lm: entries of mollified matrix}. We use the Burkholder-Davis-Gundy (BDG) inequality and Jensen's inequality to estimate the exponential moments of $\|Z_t  N^{-1/2}\|_{\text{Fr}}$. Use Jensen's inequality and BDG again to find for $p\geq 2$,
\begin{align*}
\mathbb{E}[\|N^{-1/2}\widehat  Z_t\|_{\text{Fr}}^{p}]&=\mathbb{E}\bigg[\bigg(\frac{\beta^2}{N^2}\sum_{i,\ell=1}^N\bigg|\int_0^t (\widehat Q_sf_A(\widehat X_s))^{\ell}\,dW_s^i\bigg|^2\bigg)^{p/2}\bigg]\\
    &\leq \mathbb{E}\bigg[\frac{C^{p}p^{\frac{p}{2}}}{N}\sum_{\ell=1}^N\bigg|\int_0^t |(\widehat Q_sf_A(\widehat X_s))^{\ell}|^{2}\,ds\bigg|^{p/2}\bigg]\\
    &\leq  \Big(\frac{C^{2p}p^{p/2}}{N}\sum_{\ell=1}^N |t^{p/2}A^{2p}|\Big)\leq  p^{p/2} C^p t^p,
\end{align*}
where in the last display, we used the fact that the optimal constant in BDG scales like $\sqrt p$ by \cite[Theorem 1]{CarlenKreeBDG}). Therefore, the following holds for all $\lambda>0$:
\begin{align}
\label{eq: sub-Gaussian estimate on Z}
    \sup_{N\in \bbN}\sup_{t\leq T}\mathbb{E}[\exp(\lambda \|N^{-1/2}\widehat  Z_t\|_{\text{Fr}})]&=\sup_{N\in \bbN}\sup_{t\leq T}\sum_{p=0}^\infty\frac{\mathbb{E}[(\|N^{-1/2}Z_t\|^{p}_{\text{Fr}}]\lambda^p}{p!}\leq \sum_{p=0}^\infty\frac{C^{2p}p^{p/2}\lambda^p}{p!}<\infty,
\end{align}
and taking $\lambda=5C/2$ concludes the proof.
\end{proof}

\subsection{Step 2} We remove the regularization from the confining potential. Consider the SDE with locally Lipschitz coefficients
\begin{align}
\begin{split}
    \label{eq: h-regularized SDE}
         d\widetilde X_t^{i,h}&=- U'(\widetilde X_t^{i,h})\,dt+\frac{\beta}{\sqrt N}\sum_{\ell=1}^N f_A(\widetilde X^{\ell,h}_t) f_h(\widetilde Z_t^{i,\ell,h})\,dt+dW_t^i\\
        d\widetilde Z_t^{i,h}&=\frac{\beta}{\sqrt N}\widetilde Q^h_tf_A(\widetilde X^{h}_t)dW_t^i\\
        \widetilde Q^h_t&=\Big(I+\frac{\beta^2}N \int_0^t f_A(\widetilde X^h_s)f_A(\widetilde X^h_s)^\top\,ds\Big)^{-1}.\\
        \widetilde Q_0^h&=I, \widetilde Z^h_0=0, \widetilde X_0^{i,h}=X_0^i\,\,,\,\, i=1,\dots,N.
\end{split}
\end{align}
We keep track of the dependence on $h$ as it will be the only remaining regularization parameter.
\begin{lemma}
    The SDE \eqref{eq: h-regularized SDE} has a unique strong solution, with $\bbP(\sup_{t\leq T}|\widetilde X_t^{i,h}|\geq A)=0$ for each $i,t$ and $h$.
\end{lemma}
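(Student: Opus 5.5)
We will prove the lemma in two stages: first local existence and pathwise uniqueness up to an explosion time, then non-explosion by showing that no coordinate $\widetilde X^{i,h}$ can reach $\pm A$ before time $T$.

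\emph{Local well-posedness.} Fix $h>0$. On $(-A,A)^N$ the drift $-U'(x^i)$ is locally Lipschitz because $U$ is twice differentiable there. The interaction term $\frac{\beta}{\sqrt N}\sum_\ell f_A(x^\ell)f_h(z^{i,\ell})$ is globally Lipschitz and bounded by $\beta\sqrt N A h$, since $|f_A|\le A$, $|f_h|\le h$ and both have bounded derivatives. As in Remark \ref{rk: markovian}, $\widetilde Q^h$ solves the matrix ODE $dq=-\frac{\beta^2}{N}q f_A(x)f_A(x)^\top q\,dt$. By Lemma \ref{lm: entries of mollified matrix} the entries of $\widetilde Q^h f_A(\widetilde X^h)$ are bounded, so the coefficients of $(\widetilde X,\widetilde Z,\widetilde Q)$ are locally Lipschitz on $(-A,A)^N\times\R^{N\times N}\times\R^{N\times N}$. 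Standard theory (\cite[Theorem 5.2.5]{Karatzas_and_Shreve_BMSC} with localization) then gives a unique strong solution up to the exit time
\[
\tau:=\lim_n \tau_n,\qquad \tau_n:=\inf\{t:\max_i|\widetilde X^{i,h}_t|\ge A-1/n\ \text{or}\ \|\widetilde Z^h_t\|_{\text{Fr}}\ge n\}.
\]
The component $\widetilde Z^h$ cannot blow up, since its integrand is bounded (Lemma \ref{lm: entries of mollified matrix}) and hence $\widetilde Z^h$ is a square-integrable martingale on $[0,T\wedge\tau)$. So it suffices to show that $\widetilde X^h$ does not exit $(-A,A)^N$.

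\emph{Non-explosion (main step).} The key observation is that, because of the regularization, the $i$-th coordinate solves
\[
d\widetilde X^{i,h}_t=-U'(\widetilde X^{i,h}_t)\,dt+b^i_t\,dt+dW^i_t,
\]
where $b^i_t$ is an adapted process with $|b^i_t|\le \beta\sqrt N A h=:K$, a bound that depends on $N$ and $h$ but is deterministic. I would then compare with the one-dimensional diffusion
\[
dY=(-U'(Y)+K)\,dt+dW,
\]
by Girsanov: since $b^i$ is bounded, Novikov holds, and the law of $\widetilde X^{i,h}$ stopped at $\tau_n$ is mutually absolutely continuous on $[0,T]$ with the law of the solution of $dY=-U'(Y)\,dt+dW$ stopped at its exit time. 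By \cite[Lemma 2.2]{Arous1995-zj}, or directly by Feller's test using the condition $\lim_{z\uparrow A}\int_0^z e^{2U(x)}\int_0^x e^{-2U(y)}\,dy\,dx=\infty$ together with the evenness of $U$, the latter diffusion never reaches $\pm A$. Equivalence of measures transfers this null event: $\bbP(\tau_n\le T)\to 0$ for each $i$, so $\tau> T$ almost surely. Hence $\bbP(\sup_{t\le T}|\widetilde X^{i,h}_t|\ge A)=0$.

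The delicate point is making the Girsanov step rigorous up to the explosion time. I would handle it by applying Girsanov on $[0,T\wedge\tau_n]$ with the bounded density and then letting $n\to\infty$. An alternative route is to apply Feller's test to the drift $-U'+K$: because $K$ is constant, the scale function changes only by the factor $e^{\pm 2Kx}$, which is bounded on $[-A,A]$. The integral condition therefore persists, and a comparison theorem between $\widetilde X^{i,h}$ and the diffusions with drift $-U'\pm K$ confines $\widetilde X^{i,h}$ between two non-exploding processes.
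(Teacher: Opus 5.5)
Your proof is correct, but it is organized differently from the paper's. The paper never introduces an explosion time. It starts from the free diffusion $dX^i=-U'(X^i)\,dt+dV^i$, which is confined by \cite[Lemma 2.2]{Arous1995-zj}, and uses Girsanov with the bounded interaction drift to build a global \emph{weak} solution of \eqref{eq: h-regularized SDE} whose law is equivalent to the free one, so confinement is inherited immediately. Strong well-posedness then follows from pathwise uniqueness (local Lipschitz coefficients) and Yamada--Watanabe. You instead build a local strong solution up to $\tau$ and then rule out explosion.

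Your Girsanov variant rests on the same mechanism as the paper, but the transfer step as you state it needs tightening. The event $\{\tau_n\le T\}$ is not null, and the Girsanov measure changes with $n$. Moreover, $\tau_n$ depends on $\widetilde Z^h$ and on the other coordinates, so the stopped law is not literally that of $Y$ stopped at its own exit time. The simplest fix is a single Girsanov density built from the bounded drifts $b^i\mathbf 1_{[0,\tau)}$. Under the new measure, $\widetilde X^h$ coincides on $[0,\tau\wedge T)$ with the free $N$-dimensional diffusion, which stays in a compact subset of $(-A,A)^N$. Given that $\widetilde Z^h$ does not blow up, this yields $\tau>T$ almost surely under both equivalent measures. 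Alternatively, use $|b^i|\le K$ to bound the Radon--Nikodym densities in $L^2$ uniformly in $n$, and transfer $\bbP(\tau_n\le T)\to0$ by Cauchy--Schwarz.

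Your second route is genuinely different and fully rigorous. Feller's test applies to the drifts $-U'\pm K$ because the factors $e^{\pm 2Kx}$ are bounded on $[-A,A]$, with evenness of $U$ handling the boundary $-A$. Since $\widetilde X^{i,h}$ and $Y^{\pm}$ are driven by the same $W^i$ with additive noise, their difference has finite variation. The one-sided Lipschitz bound on $-U'$ from the splitting $U=U_c+U_\ell$ and Gr\"onwall then give $Y^-\le \widetilde X^{i,h}\le Y^+$ pathwise, with no change of measure.

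The paper's route is shorter and avoids stopping-time bookkeeping. Its ``routine'' Girsanov step does implicitly require first solving for $\widetilde Z^h$ under the reference measure, since the noise $W$ driving $\widetilde Z^h$ itself depends on the drift. Your comparison route gives an explicit pathwise confinement and uses only the boundedness of the interaction drift.
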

\begin{proof}
Consider the non-interacting SDE,
\begin{equation}
    \label{eq: driftless SDE}
        dX_t^i=-U'(X_t^i)\,dt+dW_t^i,\quad i=1,\dots,N.
    \end{equation}
    Recall that \eqref{eq: driftless SDE} has a unique solution such that $\bbP(\sup_{t\leq T}|X_t^i|\geq A)=0$ for each $i$ and $t\geq 0$, see \cite[Lemma 2.2]{Arous1995-zj}. Since the coefficients (aside from $U'$) are bounded, a routine Girsanov argument yields weak existence of \eqref{eq: h-regularized SDE} globally in time; in particular, the law of $\widetilde X^h$ is mutually absolutely continuous with respect to the law of the solution of \eqref{eq: driftless SDE}, so that $\bbP(\sup_{s\leq t}|\widetilde X_s^{i,h}|\geq A)=0$ for each $i,t\leq T$ and $h$. The coefficients of \eqref{eq: h-regularized SDE} are also locally Lipschitz, so \cite[Theorem 5.2.5]{Karatzas_and_Shreve_BMSC} and \cite[Corollary 5.3.23]{Karatzas_and_Shreve_BMSC} yield strong well-posedness. 
\end{proof}
\begin{proposition}
\label{pr: bound for h regularized SDE}
Let $\widetilde X^h$ be a strong solution of \eqref{eq:  h-regularized SDE}. Then $\widetilde X^h\in \mathbb L^{1,2}$, and in particular for every $i=1,\dots,n$ and every $t\leq T$
\begin{align*}
    \mathbb{E}[|D_r^i \widetilde X^h_T|^2]+\int_0^T \int_0^T \mathbb{E}[|D_s^i \widetilde X^h_t|^2]\,ds\,dt\leq C,
\end{align*}
where $C$ does not depend on $N$, or $h$. The constant is allowed to depend on $A$.
\end{proposition}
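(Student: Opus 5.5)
The plan is to obtain $\widetilde X^h$ as a limit of the fully regularized processes $\widehat X=\widehat X^{\frs,h}$ of \eqref{eq: fully regularized SDE} as $\frs\to\infty$. Proposition \ref{pr: bound for fully regularized SDE} gives Malliavin bounds uniform in $\frs$, and we transfer them to the limit using the closability of $D$. Fix $h$ and work on one probability space, driven by the same Brownian motion $W$ and the same initial condition $X_0$. For $\frs>1/A$ define the stopping time
\[\tau_\frs:=\inf\{t\ge 0:\ \max_i|\widetilde X^{i,h}_t|\ge A_\frs\}\wedge T.\]
On $[0,\tau_\frs]$ we have $U^\frs=U$ at every coordinate of $\widetilde X^h$, so $(\widetilde X^h,\widetilde Z^h,\widetilde Q^h)$ solves \eqref{eq: fully regularized SDE} up to $\tau_\frs$. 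Pathwise uniqueness for \eqref{eq: fully regularized SDE}, which has (truncated) Lipschitz coefficients, then gives $\widehat X^{\frs,h}=\widetilde X^h$ on $[0,\tau_\frs]$. By the preceding lemma, $\sup_{t\le T}|\widetilde X^{i,h}_t|<A$ almost surely, so by continuity of paths $\bbP(\tau_\frs<T)\to0$ as $\frs\to\infty$. Consequently $\widehat X^{\frs,h}_t\to\widetilde X^h_t$ in probability, uniformly in $t\le T$.

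Next I upgrade this to $L^2$ convergence and identify the derivative. For each fixed $N$, the processes $\widetilde X^h$ and $\widehat X^{\frs,h}$ should be bounded in $L^p$ uniformly in $\frs$. For $\widetilde X^h$ this is immediate since it is bounded by $A$. For $\widehat X$ I would use It\^o on $|\widehat X_t|^2$: convexity of $U_c^\frs$ together with evenness of $U$ gives $-x\,\nabla U_c^\frs(x)\le 0$, and the interaction drift is bounded by $C\sqrt N h$. So $\widehat X\to\widetilde X^h$ in $L^2(\Omega\times[0,T])$. Proposition \ref{pr: bound for fully regularized SDE} bounds $\int_0^T\int_0^T\bbE|D_s^i\widehat X_t|^2\,ds\,dt$ uniformly in $\frs$; for the full $\bbL^{1,2}$ norm I also need the other coordinates $j\neq i$ bounded, possibly by a constant depending on $N$, which the same proof gives. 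By \cite[Lemma 1.2.3]{Nualart2005Malliavin}, an $L^2$-convergent sequence with bounded $\bbD^{1,2}$ norms has its limit in $\bbD^{1,2}$, with $D\widehat X\rightharpoonup D\widetilde X^h$ weakly. Weak lower semicontinuity of the norm then transfers the bound $C$, which is independent of $N,h$ for each coordinate $i$, to $\widetilde X^h$. Applying the same lemma to $\widetilde X^h_T$ at each fixed $T$, with the pointwise-in-time estimate, gives $\bbE|D_r^i\widetilde X^h_T|^2\le C$.

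The main obstacle is that Proposition \ref{pr: bound for fully regularized SDE} only states the double-integrated bound, whereas the target includes the pointwise bound $\bbE[|D_r^i\widetilde X^h_T|^2]\le C$. I would revisit its proof: the stochastic Gr\"onwall step there actually yields $\bbE[\sup_{t}|D_r^j\widehat X_t|^2]$, in square-root form applied to the fourth-power quantity, bounded by $C\,\bbE[\exp(\tfrac{5C}{2}\int_0^T\|N^{-1/2}\widehat Z_s\|_{\text{Fr}}ds)]^{1/5}$. That right-hand side is uniform in $r$, $N$, $h$ and $\frs$ by \eqref{eq: sub-Gaussian estimate on Z}, since Lemma \ref{lm: entries of mollified matrix} bounds the integrand uniformly and the Hessian lower bound $\nabla^2U_c^\frs\ge0$ is uniform in $\frs$. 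I will also have to check that the weak limit identifies $D_r^i\widetilde X^h_T$ for almost every $r$, so that the pointwise bound passes through lower semicontinuity in $L^2(\Omega\times[0,T])$ and Fatou.
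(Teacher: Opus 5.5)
Your proposal is correct and follows essentially the same route as the paper. You localize with an exit time from $[-A_\frs,A_\frs]$, use pathwise uniqueness to identify $\widehat X(\frs)$ with $\widetilde X^h$ before that time, upgrade to $L^2$ convergence via moment bounds uniform in $\frs$, and pass the uniform Malliavin bounds of Proposition \ref{pr: bound for fully regularized SDE} to the limit through closability and weak lower semicontinuity; Nualart's Lemma 1.2.3 is exactly the paper's weak-$\mathbb L^{1,2}$ subsequence argument. Your extra step for the pointwise term $\mathbb E[|D^i_r\widetilde X^h_t|^2]$ fills in a detail the paper leaves implicit, since Proposition \ref{pr: bound for fully regularized SDE} as stated only records the double-integrated bound: you extract the $\sup_t$ estimate, uniform in $r,N,h,\frs$, from its stochastic Gr\"onwall step and identify the limit for a.e.\ $r$.
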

\begin{proof}
Let $\widehat X,\widetilde X^h$ solve the corresponding SDEs driven by the same Brownian motion on the same probability space. Given Proposition \ref{pr: bound for fully regularized SDE}, we want to show that $\widehat X$ converges to $\widetilde X^h$ as $\frs\to \infty$. In what follows, to point out that $\widehat X$ depends on $\frs$ we denote it by $\widehat X(\frs)$.
    By pathwise uniqueness, $\widehat X_t(\frs)= \widehat X_t^{h}$ up to an exit time
    \[\tau_\frs:=\inf\{t\geq 0 \mid |\widehat X_t^i(\frs)|\geq A-\frs^{-1} \text{ for some }i\}\wedge \inf\{t\geq 0 \mid |\widehat X_t^{i,h}|\geq A-\frs^{-1} \text{ for some }i\}.\]
    In particular by uniqueness, $\tau_\frs=\inf\{t\geq 0 \mid |\widehat X_t^{i,h}|\geq A-\frs^{-1} \text{ for some }i\}$, meaning that we only need to look at $\widehat X_t^h$. Notice that the moments of $\widehat X_t(\frs)$ are bounded uniformly in $\frs$ since $U^\frs_c$ remains convex. Then by using Cauchy--Schwarz,
    \begin{align*}
        \mathbb{E}[|\widehat X_t(\frs)-\widetilde X_t^{h}|^2]&=
        \mathbb{E}[|\widehat X_t(\frs)-\widetilde X_t^{h}|^2\mathbf 1_{t>\tau_\frs}]\leq C \bbP\Big(\max_{i=1,\dots,N}\sup_{s\leq t}|\widetilde X_s^{i,h}|>A-\frs^{-1}\Big)^{1/2}\\
        &\leq NC\bbP\big(\sup_{s\leq t}|\widetilde X_s^{i,h}|>A-\frs^{-1}\big)^{1/2}.
    \end{align*}
    which converges to $0$ as $\frs \to \infty$. Hence $\widehat X_t(\frs)$ converges to $\widetilde X_t^h$ in $L^2(\Omega)$; the same holds in $L^2(\Omega\times [0,T])$. Since by Proposition \ref{pr: bound for fully regularized SDE}, $\|\widehat X(\frs)\|_{\mathbb L^{1,2}}$ is bounded uniformly in $\frs$, we can extract a subsequence of $\widehat X(\frs)$ that converges weakly in $\mathbb L^{1,2}$ to some limit $Y\in \mathbb L^{1,2}$, which must evidently be $\widetilde X^h$. Hence $\widehat X(\frs)\rightharpoonup \widetilde X^h$ in $\mathbb L^{1,2}$; in particular for each $i=1,\dots,N$, $D^i \widehat X(\frs)$ converges weakly in $L^{2}(\Omega\times[0,T])$ to $D^i\widetilde X^h$. By weak lower-semicontinuity of norms in Hilbert spaces, we conclude from Proposition \ref{pr: bound for fully regularized SDE}.
    \end{proof}

    \subsection{Step 3} We remove the regularization from the other coefficients and conclude the proof. 
    \begin{proof}[Proof of Proposition \ref{pr: Malliavin calculus estimates}]
        We claim that $\widetilde X_t^h$ converges to $X_t$ in $L^2(\Omega,[0,T])$. Indeed, since $\bbP(|\widetilde X^{i,h}_t|\geq A)=0$ for every $t,i,h$, $f_A(\widetilde X_t^i)= \widetilde X_t^i$ almost surely. Let 
        \begin{equation*}
            \tau_h=\inf_{i,j}\inf\{t\geq 0 \mid |Z_t^{i,j}|\geq h\},
        \end{equation*}
        that is, the first time an entry of $Z_t$ has size larger than $h$. By pathwise uniqueness, $X_{t\wedge \tau_h}= \widetilde X^h_{t\wedge \tau_h}$. By a union bound, Markov's inequality and BDG,
        \begin{align*}
            \mathbb{E}[|X_t-\widetilde X_t^h|^2]&=\mathbb{E}[|X_t-\widetilde X_t^h|^2\mathbf 1_{t>\tau_h}]\leq 2A^2 \bbP\Big(\sup_{i,j} \sup_{s\leq t}|Z_s^{i,j}|\geq h\Big)\\
            &\leq N^2\bbP\Big(\sup_{s\leq t}|Z_s^{1,1}|\geq h\Big)\leq \frac{N^2C}{h^2}\mathbb{E}\Big[\int_0^t |(Q_sX_s)^{1}|^2\,ds\Big]\leq \frac{N^2C}{h^2}.
        \end{align*}
        If we let $h\to \infty$, we get the desired $L^2(\Omega)$ and $L^2(\Omega,[0,T])$ convergence. As before, since the $\mathbb L^{1,2}$ norm of $\widehat X^h$ is bounded uniformly in $h$, we can assume without loss of generality that $\widehat X^h \rightharpoonup X$ in $\mathbb L^{1,2}$, and thus by lower semicontinuity of the norm we conclude the first bound. The bound on $D_s^i Q_t$ follows from the chain rule for Malliavin derivatives and the same reasoning as \eqref{eq: bound on malliavin derivative of Q}.
    \end{proof}
\section{Universality}
\label{sect: universality}
\subsection{Preliminaries} The goal of this section is to prove Theorem \ref{th: universality}. In the setting of Gaussian disorder in Section \ref{sect: averaged propagation of chaos}, we obtained a tractable representation \eqref{eq: averaged dynamics Markovian} for the averaged law $\bbE[P^N(\bfG)]$ by computing a closed form solution of $\bbE[\bfG_i\mid \mF_t^X]$, see  \eqref{eq: conditional expectation under P}. Unfortunately, no such solution will be possible in the case of general disorder $\bfJ$. Nevertheless, in the next subsection we will quantify how far the process with law $\bbE[P^N(\bfJ)]$ is from satisfying \ref{eq: averaged dynamics Markovian} through an argument in the style of Stein's method. This allows us to use yet another coupling argument to establish Theorem \ref{th: universality}.

We now reproduce the setting of Section \ref{sect: averaged propagation of chaos} with the general disorder $\bfJ$. Recall the probability measure $\bbQ$ under which the vector valued process $X = (X^1, \ldots, X^N)$ solves the SDE
\begin{align*}
    dX_t^i&=-U'(X_t^i)\,dt+dV_t^i, \quad i=1,\dots,N,\\
    X_0& \sim P_0^N.
\end{align*}
where $V = (V^1, \ldots, V^N)$ is a standard Brownian motion under $\bbQ$ that is independent of the disorder $\bfJ$. We define by the probability measure $\bbP$ via Girsanov's theorem:
\[\frac{d\bbP}{d\bbQ}[t]:=\exp\bigg(\frac\beta{\sqrt N}\sum_{i=1}^N \int_0^t\bfJ_i \cdot X_s\,dV_s^i-\frac{\beta^2}{2N}\sum_{i=1}^N \bfJ_i^\top \int_0^t X_sX_s^\top\,ds\bfJ_i\bigg).\]
 Thus, under $\bbP$ $(X, V)$ satisfy the following SDEs:
\begin{align}
\begin{split}
    \label{eq: universality equation under P with V}
    dX_t^i&=-U'(X_t^i)\,dt+dV_t^i,\\
    dV_t^i&=\frac\beta{\sqrt N}\bfJ_i \cdot X_t\,dt+dB_t^i,
    \end{split}
\end{align}
where $B = (B^1, \ldots, B^N)$ is a standard Brownian motion under $\bbP$ which is independent of $\bfJ$. Note that $\bfJ$ has the same marginal law under both $\bbQ$ and $\bbP$. Let $\mF_t^X$ be the filtration generated by $X$. The goal is to follow the proof of Lemma \ref{lem: averaged dynamics 0} to obtain the averaged dynamics:
\begin{equation}
\begin{aligned}
    dX_t^i&=- U'(X^i_t)\,dt+dV_t^i,\\
    dV_t^i&=\frac \beta{\sqrt N}\bbE[\bfJ_i \mid \mF_t^X] \cdot X_t\,dt+dW_t^i,
    \end{aligned}
    \label{eq: universality-mimicked eqn}
\end{equation}
where $W$ is a $\mF^X$-Wiener process. The problem is that since $\bfJ$ is not Gaussian, the calculation \eqref{eq: conditional expectation under P} fails and there is no closed form of $\bbE[\bfJ_i \mid \mF_t^X]$. However, we may always define 
\begin{equation}
    Q_t := \Big(I + \frac{\beta^2}{N}\int_0^tX_sX_s^\top ds\Big)^{-1},
    \label{eq: universality Q}
\end{equation}
and set 
\begin{align}
    \label{eq: definition of u error}
    \bbE[\bfJ_i \mid \mF_t^X]-\frac\beta{\sqrt N}Q_t\int_0^t X_s\,dV_s^i=\fru_t^i,
\end{align}
where we refer to $\fru$ in the sequel as the \emph{universality error}. If the universality error $\fru_t$ is small, then we expect the process to \eqref{eq: universality-mimicked eqn} to nearly solve \eqref{eq: averaged dynamics 1}.

\subsection{Universality error} 
For $t \in [0, T]$, define the function $f_t:\R^n \rightarrow \R$
\begin{align*}
     f_t(a):=\exp\bigg( a \cdot \frac{\beta}{\sqrt N}\int_0^t X_s\,dV_s^i-\frac{1}{2}a^\top \bigg(\frac{\beta^2}{N}\int_0^t X_sX_s^\top\,ds\bigg)    a\bigg).
 \end{align*}
Here we are suppressing the pathwise dependence of $f$ on $X$, and in this sense we may interpret $f_t$ as a random function. Using Bayes' theorem, the independence of the rows of $\bfJ$, and the independence of $\bfJ$ and $V$ under $\bbQ$,  we find
\begin{align*}
    \bbE_\bbP\big[\bfJ_i \mid \mF_t^X\big]=\frac{
    \bbE_\bbQ\Big[\bfJ_i \frac{d\bbP}{d\bbQ}[t]\mid \mF_t^X\Big]}{\bbE_\bbQ\Big[ \frac{d\bbP}{d\bbQ}[t]\mid \mF_t^X\Big]}=\frac{
    \bbE_\bbQ\Big[\bfJ_i f_t(\bfJ_i)\mid \mF_t^X\Big]}{\bbE_\bbQ\Big[ f_t(\bfJ_i)\mid \mF_t^X\Big]}.
\end{align*}

We start from a useful perturbation estimate of the density $f_t$. With $r \in [0, 1]$, we will often abuse notation and write $(rJ_{ij},J_{i,-j})$ for the vector which is equal to $\bfJ_i$ except on the entry $i$, which is replaced by $J_{ij}r$.
\begin{lemma}
\label{lm: density bound}
    We have $\bbP$-almost surely that for $r\in [0,1]$,
    \begin{align*}
        \frac{f_t(rJ_{ij},J_{i,-j})}{f_t(\bfJ_i)}=\exp\bigg(\frac{\beta(r-1)}{\sqrt N}J_{ij}\int_0^tX_s^j dB_s^i-\frac{\beta^2(1-r)^2}{2N}|J_{ij}|^2\int_0^t |X_t^j|^2\,ds\bigg).
    \end{align*}
    In particular there exist constants $C_{\fru},C \in (0, \infty)$ that depend only on $(\beta,T,A,\sigma^2)$ such that if  $\sqrt N > C_{\fru}$, for all $r$ we have
    \begin{align*}
        \bbE_\bbP\bigg[\Big| 
        \frac{f_t(rJ_{ij},J_{i,-j})}{f_t(\bfJ_i)}\Big|^8\bigg]\leq C
    \end{align*}
\end{lemma}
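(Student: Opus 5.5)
The plan is to compute the ratio directly from the definition of $f_t$ and then take moments using the SDE for $V$ under $\bbP$. Write $a=\bfJ_i$ and $a'=(rJ_{ij},J_{i,-j})$, so that $a'-a=(r-1)J_{ij}e_j$. Set $M:=\frac{\beta^2}{N}\int_0^t X_sX_s^\top\,ds$ and $m:=\frac{\beta}{\sqrt N}\int_0^tX_s\,dV_s^i$. Since $\log f_t(a)=a\cdot m-\frac12a^\top Ma$, we get
\[\log\frac{f_t(a')}{f_t(a)}=(a'-a)\cdot(m-Ma)-\tfrac12(a'-a)^\top M(a'-a).\]
Under $\bbP$ we have $dV^i_s=\frac{\beta}{\sqrt N}\bfJ_i\cdot X_s\,ds+dB^i_s$ from \eqref{eq: universality equation under P with V}. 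Hence $m-Ma=\frac{\beta}{\sqrt N}\int_0^tX_s\,dB_s^i$, because the drift part of $m$ is exactly $Ma$. The $j$-th coordinate of this vector is $\frac{\beta}{\sqrt N}\int_0^tX^j_s\,dB^i_s$. The quadratic term is $\frac12(1-r)^2J_{ij}^2\frac{\beta^2}{N}\int_0^t|X_s^j|^2ds$. Together these give the displayed identity. The one point needing care is justifying the substitution of $dV$ pathwise $\bbP$-a.s. inside $f_t$, where $a$ is treated as a parameter. This is legitimate because $\bfJ_i$ is $\mF_0$-measurable (independent of $B$), so $\int X\,dV^i=\int X\cdot(\ldots)ds+\int X\,dB^i$ a.s.

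For the moment bound, I will drop the nonpositive quadratic term. Set $\lambda:=8\beta(r-1)J_{ij}/\sqrt N$ and $I_t:=\int_0^tX^j_s\,dB^i_s$. It then suffices to bound $\bbE_\bbP[e^{\lambda I_t}]$. I first condition on $\bfJ$. Since $B$ is a Brownian motion independent of $\bfJ$ and $|X^j|\le A$, the process $\exp(\lambda I_t-\frac{\lambda^2}{2}\int_0^t|X_s^j|^2ds)$ is a true martingale by Novikov (for fixed $\lambda$). This gives
\[\bbE_\bbP[e^{\lambda I_t}\mid\bfJ]\le \bbE_\bbP\Big[e^{\lambda I_t-\frac{\lambda^2}{2}\int_0^t |X^j_s|^2ds}\,e^{\frac{\lambda^2}{2}A^2T}\,\Big|\,\bfJ\Big]\le e^{\lambda^2A^2T/2}.\]
(To be careful about conditioning, I would use Cauchy–Schwarz with $e^{2\lambda I_t-2\lambda^2\int|X^j|^2}$ and $e^{\lambda^2\int|X^j|^2}$ instead. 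This yields $e^{C\lambda^2}$ with a harmless constant.) Since $|r-1|\le1$, we are left with $\bbE_\bbP[e^{c\beta^2A^2T J_{ij}^2/N}]$ for an absolute $c$ such as $c=64$.

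The remaining step is to bound this exponential square moment, and it is the only place the size of $N$ enters. By Assumption \ref{assumption: main}.3, $J_{ij}$ satisfies sub-Gaussian concentration with constant $\sigma^2$. This gives $\bbE[e^{\theta J_{ij}^2}]\le 2$ whenever $\theta\le c'/\sigma^2$, which is item (2) of Theorem \ref{th: bobkov restricted measure} applied with $A=\R$. Recall that the law of $\bfJ$ is the same under $\bbP$ and $\bbQ$. So choosing $C_\fru$ with $c\beta^2A^2T/C_\fru^2\le c'/\sigma^2$ gives the bound $C$ uniformly in $r\in[0,1]$, $t\le T$ and $N$ with $\sqrt N>C_\fru$. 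I expect the main (mild) obstacle to be the rigorous justification of the conditional Novikov/martingale step, given the joint dependence of $X$ on $\bfJ$ and $B$. The Cauchy–Schwarz route handles it cleanly because $X^j$ is bounded.
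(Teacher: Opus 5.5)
Your argument is correct. You derive the identity exactly as the paper does: expand $\log f_t$, substitute $dV^i_s=\frac{\beta}{\sqrt N}\bfJ_i\cdot X_s\,ds+dB^i_s$, and let the drift cancel the cross terms $\sum_{\ell\neq j}J_{i\ell}\int_0^t X^j_sX^\ell_s\,ds$. Your polarization $(a'-a)\cdot(m-Ma)-\tfrac12(a'-a)^\top M(a'-a)$ is simply tidier bookkeeping of that cancellation.

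The moment bound is where you genuinely differ. The paper never conditions. It uses Young's inequality, $\frac{8\beta}{\sqrt N}|J_{ij}||I_t|\le\frac{4\beta}{\sqrt N}(J_{ij}^2+I_t^2)$, and then bounds $\bbE[e^{8\beta J_{ij}^2/\sqrt N}]$ and $\bbE[e^{8\beta I_t^2/\sqrt N}]$ separately. The second of these rests on sub-Gaussian tails of a stochastic integral whose integrand is bounded by $A$, which the paper asserts without detail.

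You instead condition on $\mF_0$ and integrate out the Brownian noise exactly through the exponential supermartingale, so only $\bbE[e^{c\beta^2A^2TJ_{ij}^2/N}]$ remains. This has two advantages. It avoids any tail estimate for $I_t$. And the exponent scales like $J_{ij}^2/N$ rather than $J_{ij}^2/\sqrt N$, giving a threshold of the form $N\gtrsim\beta^2A^2T\sigma^2$.

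The price is the conditional step you flag, and it is in fact immediate. Since $\lambda$ is $\mF_0$-measurable and $B$ is an $\mF$-Brownian motion under $\bbP$, the process $\mathcal E_s=\exp\big(\lambda I_s-\frac{\lambda^2}{2}\int_0^s|X^j_u|^2\,du\big)$ is a nonnegative local martingale with $\mathcal E_0=1$. It is therefore a supermartingale, so $\bbE_\bbP[\mathcal E_t\mid\mF_0]\le 1$ with no appeal to Novikov. Because $\int_0^t|X^j_s|^2\,ds\le A^2T$ deterministically, your first direct bound already closes the argument. The Cauchy--Schwarz variant is unnecessary: it is the device for unbounded quadratic variation and has no bearing on the conditioning.
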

\begin{proof}
Start from the definition of $f_t(\bfJ_i)$, perform the division, and expand the dynamics of $V_t^i$ from \eqref{eq: universality equation under P with V} to find
    \begin{align*}
        \log \frac{f_t(rJ_{ij},J_{i,-j})}{f_t(\bfJ_i)}&=\frac{\beta(r-1)}{\sqrt N}J_{ij}\int_0^tX_s^j dV_s^i-(r^2-1)\frac{\beta^2}{2N}|J_{ij}|^2\int_0^t |X_t^j|^2\,ds\\
        &-(r-1)\frac{\beta^2}{N}J_{ij}\sum_{\ell\neq j} J_{i\ell}\int_0^t X_s^jX_s^\ell\,ds\\
        &=\frac{\beta(r-1)}{\sqrt N}J_{ij}\int_0^tX_s^j dB_s^i-\frac{\beta^2(1-r)^2}{2N}|J_{ij}|^2\int_0^t |X_t^j|^2\,ds,
    \end{align*}
    as required. Now we can use Young's inequality repeatedly:
    \begin{align*}
        \bbE_\bbP\bigg[\Big|\frac{f_t(rJ_{ij},J_{i,-j})}{f_t(\bfJ_i)}\Big|^8\bigg] &\leq \bbE_\bbP\bigg[\exp\Big(\frac{8r\beta}{\sqrt N}J_{ij}\int_0^t X_s^j\,dB_s^i\Big)\bigg] \\ &\leq  \bbE_\bbP\bigg[\exp\Big(\frac{4\beta}{\sqrt N}|J_{ij}|^2+\frac{4\beta}{\sqrt N}\Big|\int_0^t X_s^j\,dB_s^i\Big|^2\Big)\bigg]\\
        &\leq  \bbE_\bbP\bigg[\exp\Big(\frac{8\beta}{\sqrt N}|J_{ij}|^2\Big)\bigg]+\bbE_\bbP\bigg[\exp\Big(\frac{8\beta}{\sqrt N}\Big|\int_0^t X_s^j\,dB_s^i\Big|^2\Big)\bigg].
    \end{align*}
    The first term is smaller than $2$ if $\sqrt N > 32\beta \sigma^2$, where $\sigma^2$ is the T$_2$ constant of $J_{11}$ from Assumption \ref{assumption: main} (notice that we do not need the T$_2$ inequality; we could take $\sigma^2$ to be the sub-Gaussian norm of $J_{11}$). The second term is smaller than $2$ if $\sqrt N>32\beta A^2t$.
\end{proof}
\begin{proposition}
\label{pr: universality error bound}
    If $\sqrt N>C_\fru$ where $C_\fru$ is the constant from Lemma \ref{lm: density bound}, then there exists a constant $C$ that depends only on $(T,\sigma^2,\beta,A)$ such that
    \[\sup_{t\leq T}\bbE_\bbP\Big[|\fru_t^i \cdot X_t|^4\Big]^{1/2}\leq C.\]
\end{proposition}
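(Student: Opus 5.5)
We want to bound $\fru_t^i\cdot X_t$. The route is a Stein-type identity for the tilted density $f_t$, with each coordinate expanded by a Taylor/interpolation argument in a single entry $J_{ij}$.

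\textbf{Step 1: the identity for $\fru$.} Write $m_t:=\frac{\beta}{\sqrt N}\int_0^t X_s\,dV_s^i$ and $\Sigma_t:=\frac{\beta^2}{N}\int_0^t X_sX_s^\top ds$, so that $Q_t=(I+\Sigma_t)^{-1}$ and $\nabla f_t(a)=(m_t-\Sigma_t a)f_t(a)$. Taking $\bbE_\bbQ[\,\cdot\mid\mF^X_t]$ (under which $\bfJ_i\sim\eta^{\otimes N}$ is independent of $X$) gives
\[
\bbE_\bbQ[\nabla f_t(\bfJ_i)\mid\mF^X_t]=m_t\,\bbE_\bbQ[f_t(\bfJ_i)\mid\mF^X_t]-\Sigma_t\,\bbE_\bbQ[\bfJ_if_t(\bfJ_i)\mid\mF^X_t].
\]
Rearranging yields
\[
(I+\Sigma_t)\,\bbE_\bbP[\bfJ_i\mid\mF_t^X]=m_t+\frac{\bbE_\bbQ[\bfJ_if_t(\bfJ_i)-\nabla f_t(\bfJ_i)\mid\mF^X_t]}{\bbE_\bbQ[f_t(\bfJ_i)\mid\mF^X_t]},
\]
and therefore
\[
\fru^i_t=Q_t\,\frac{\bbE_\bbQ[\bfJ_if_t(\bfJ_i)-\nabla f_t(\bfJ_i)\mid\mF^X_t]}{\bbE_\bbQ[f_t(\bfJ_i)\mid\mF^X_t]}.
\]
By Bayes this equals $Q_t\,\bbE_\bbP[\mathbf e_t\mid\mF^X_t]$, where the $j$-th coordinate of the vector $\mathbf e_t$ is
\[
\mathbf e_t^j:=\frac{J_{ij}f_t(\bfJ_i)-\partial_jf_t(\bfJ_i)}{f_t(\bfJ_i)}-\Big(\text{same quantity with the other coordinates averaged}\Big).
\]
To make this concrete, define the Stein discrepancy coordinatewise through one-dimensional integration by parts. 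For a Gaussian entry, $\bbE[J g(J)]=\bbE[g'(J)]$. For general $\eta$ with mean $0$ and variance $1$ one uses the interpolation
\[
J g(J)-g'(\cdot)\;\approx\; J\big(g(J)-g(0)\big)-\int_0^1 J^2 g'(rJ)\,dr+\cdots,
\]
which is where the quantities $f_t(rJ_{ij},J_{i,-j})/f_t(\bfJ_i)$ of Lemma \ref{lm: density bound} enter. Concretely, Taylor-expand $a_j\mapsto f_t(a_j,J_{i,-j})$ to second order around $0$. Independence of $J_{ij}$ from $J_{i,-j}$ under $\bbQ$, together with $\bbE J_{ij}=0$ and $\bbE J_{ij}^2=1$, cancels the zeroth- and first-order terms. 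What remains is a third-order remainder involving $J_{ij}^3\,\partial_j^2 f_t(rJ_{ij},J_{i,-j})$ and $J_{ij}^2\bigl(\partial_jf_t(rJ_{ij},\cdot)-\partial_jf_t(0,\cdot)\bigr)$. Here
\[
\partial_jf_t/f_t=\bigl(m_t-\Sigma_ta\bigr)_j,\qquad \partial_j^2f_t/f_t=\bigl(m_t-\Sigma_ta\bigr)_j^2-(\Sigma_t)_{jj}.
\]

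\textbf{Step 2: size of each coordinate.} The key point is the size of these factors, evaluated at $a=\bfJ_i$ and after the change of measure to $\bbP$. By the computation in Lemma \ref{lm: density bound},
\[
\bigl(m_t-\Sigma_t\bfJ_i\bigr)_j=\frac{\beta}{\sqrt N}\int_0^tX_s^j\,dB_s^i,
\]
which is $O(N^{-1/2})$ in every $L^p(\bbP)$ by BDG and $|X^j|\le A$. Also $(\Sigma_t)_{jj}\le\beta^2A^2t/N$. Moving from $\bfJ_i$ to $(rJ_{ij},J_{i,-j})$ shifts $(m_t-\Sigma_ta)_j$ by $(1-r)J_{ij}(\Sigma_t)_{jj}=O(N^{-1})$. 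Hence every coordinate of the remainder is $O(N^{-1})$ times polynomial moments of $J_{ij}$, which are finite since $\eta$ is sub-Gaussian, times density ratios $f_t(rJ_{ij},J_{i,-j})/f_t(\bfJ_i)$, which are controlled in $L^8(\bbP)$ by Lemma \ref{lm: density bound}. By H\"older (this is why the eighth moment is needed), each coordinate $\mathbf e^j$ satisfies $\bbE_\bbP|\mathbf e^j|^4\le CN^{-4}$.

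\textbf{Step 3: pairing with $X_t$.} Write $\fru_t^i\cdot X_t=(Q_tX_t)\cdot\bbE_\bbP[\mathbf e\mid\mF_t^X]$; this uses that $Q_t$ is symmetric and $\mF^X_t$-measurable. By Lemma \ref{lm: entries of mollified matrix}, the entries of $Q_tX_t$ are bounded by $A^3T$. Therefore
\[
|\fru^i_t\cdot X_t|\le A^3T\sum_{j=1}^N\bbE_\bbP\bigl[|\mathbf e^j|\mid\mF^X_t\bigr].
\]
By Jensen and Minkowski in $L^4$ this gives
\[
\bbE_\bbP\bigl[|\fru^i_t\cdot X_t|^4\bigr]^{1/4}\le C\,N\cdot N^{-1}=C.
\]
Squaring gives the claim, uniformly in $t\le T$.

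\textbf{Main obstacle.} The delicate step is Step 2. One must make the single-coordinate Stein expansion rigorous under the conditional tilted measure, justifying the passage between $\bbQ$-expectations of $f_t$-weighted quantities and $\bbP$-expectations of density ratios. One must also check that the first-order terms cancel exactly, using only the mean, the variance, and the independence of $J_{ij}$ from the rest of the row. Each coordinate must come out of order $N^{-1}$, not $N^{-1/2}$, so that the sum over $N$ coordinates stays bounded. If the cancellation only gave $N^{-1/2}$ per coordinate, I would instead exploit the martingale structure of $\int X^j\,dB^i$ across $j$ to get square-root cancellation in the sum.
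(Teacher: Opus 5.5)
Your proposal is correct and follows the paper's proof. The shared steps are: the Stein-type identity $\fru_t^i=Q_t\,\bbE_\bbQ[\bfJ_if_t(\bfJ_i)-\nabla f_t(\bfJ_i)\mid\mF_t^X]/\bbE_\bbQ[f_t(\bfJ_i)\mid\mF_t^X]$; a second-order Taylor expansion in $J_{ij}$ whose zeroth- and first-order terms vanish by independence, mean zero and unit variance; rewriting your $(m_t-\Sigma_t a)_j$ through $\int_0^t X_s^j\,dB_s^i$ so that $\partial_j^2 f_t/f_t=O(N^{-1})$; and H\"older with the $L^8$ density-ratio bound of Lemma \ref{lm: density bound}.

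The differences are minor. You pair with $X_t$ via the entrywise bound on $Q_tX_t$ and Minkowski over the $N$ coordinates, whereas the paper uses $\|Q_t\|_{\op}\le 1$, $|X_t|\le\sqrt N A$ and Cauchy--Schwarz. Your $\mathbf e_t^j$ must be the Taylor remainder $\int_0^1\big((1-r)J_{ij}^3-J_{ij}\big)\partial_j^2 f_t(rJ_{ij},J_{i,-j})/f_t(\bfJ_i)\,dr$, which also contains the $-J_{ij}\partial_j^2 f_t$ term missing from your list. It cannot be the raw ratio $(J_{ij}f_t-\partial_jf_t)/f_t=J_{ij}-\frac{\beta}{\sqrt N}\int_0^tX_s^j\,dB_s^i$, which is of order one.
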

\begin{proof}
Notice that
\begin{align*}
    \nabla f(a)=f(a) \bigg[\frac{\beta}{\sqrt N}\int_0^t X_s\,dV_s^i- \bigg(\frac{\beta^2}{N}\int_0^t X_sX_s^\top\,ds\bigg)  a\bigg].
\end{align*}
Evaluating the above display at $ a = \bfJ_i$, subtracting $\bfJ_i f(\bfJ_i)$ from both sides, and taking conditional expectations under $\bbQ$ with respect to $\mF_t^X$ yields
\begin{align*}
    &\bbE_\bbQ\Big[\nabla f(\bfJ_i)-\bfJ_i f(\bfJ_i)\mid \mF_t^X\Big] \\= &\bbE_\bbQ\Big[f(\bfJ_i)\mid \mF_t^X\Big]\frac{\beta}{\sqrt N}\int_0^t X_s\,dV_s^i- \Big(I+\frac{\beta^2}N\int_0^t X_sX_s^\top \Big) \bbE_\bbQ\Big[\bfJ_i f(\bfJ_i)\mid \mF_t^X\Big].
\end{align*}
We can take the stochastic integral outside of the expectations because $V_t^i=X_t^i-X^i_0+\int_0^t U'(X_s)\,ds$ is $\mF_t^X$-measurable. Recall the definition of $Q$ from \eqref{eq: universality Q}.  Dividing both sides of the last display by $\bbE_\bbQ[ f(\bfJ_i)\mid \mF_t^X]$ and then multiplying by $Q_t$ from the left yields:
\begin{align}
\label{eq: expression for universality error}
    \fru_t^i&=\frac{\bbE_\bbQ\big[\bfJ_i f(\bfJ_i)\mid \mF_t^X\big]}{\bbE_\bbQ\big[f(\bfJ_i)\mid \mF_t^X\big]}-\frac\beta{\sqrt N} Q_t \int_0^t X_s \,dV_s^i= Q_t \frac{\bbE_\bbQ\big[\bfJ_i f(\bfJ_i)\mid \mF_t^X\big]-\bbE_\bbQ\big[\nabla f(\bfJ_i)\mid \mF_t^X\big]}{\bbE_\bbQ\big[f(\bfJ_i)\mid \mF_t^X\big]} .
\end{align}

To control the universality error $\fru$, we bound $\bbE_\bbQ\big[\bfJ_i f(\bfJ_i)-\nabla f(\bfJ_i)\mid \mF_t^X\big]$. Observe that $a\mapsto f_t(a)$ is smooth. Fix an index $j$ and notice that for a vector $a\in \R^N$ partitioned as $a=(a_j,a_{-j})$ a Taylor expansion around $0$ yields
\begin{align*}
    a_j &f(a_j,a_{-j})-\partial_j f(a_j,a_{-j})\\
    &=a_j f(0,a_{-j})+(|a_j|^2-1) \partial_j f(0,a_{-j})+ \int_0^1 (1-r) \partial_j^2 f(ra_j,a_{-j})a_j^3-\partial_j^2 f(ra_j,a_{-j})a_j\,dr.
\end{align*}
Substitute $a=\bfJ_i$ in the above and integrate with respect to $\bbQ(\cdot \mid \mF_t^X)$. Recall that the $J_{ij}$ are iid with mean $0$ and variance $1$. Then the first two terms disappear since under $\bbQ$, the terms $J_{ij}$ and $\bfJ_{i, -j}$ are independent, and in particular  are conditionally independent given $X_{[t]}$. Then we may conclude from the last display that
\begin{align*}
    \bbE_\bbQ\Big[J_{ij} f(\bfJ_i)-\partial_j f(\bfJ_i)\mid \mF_t^X \Big]=\int_0^1 \bbE_\bbQ\Big[\big((1-r) J^3_{ij}-J_{ij} \big)\partial_j^2 f(rJ_{ij},J_{i,-j})\mid \mF_t^X\Big]\,dr.
\end{align*}
A calculation yields
\begin{align*}
    \partial_j^2 f(a)&=f(a) \Big(\frac{\beta}{\sqrt N}\int_0^t X_s^j\,dV_s^i- \sum_{\ell=1}^N a_\ell\frac{\beta^2}{N}\int_0^t X_s^jX_s^\ell\,ds     \Big)^2-f(a) \frac{\beta^2}{N}\int_0^t |X_s^j|^2\,ds.
\end{align*}
We now exploit a key cancellation: for $r\in[0,1]$, evaluate the above at $(rJ_{ij},J_{i,-j})$ and apply the second equation of \eqref{eq: universality equation under P with V} to obtain
\begin{align*}
    &\partial_j^2 f(rJ_{ij},J_{i,-j}) \\=&f(rJ_{ij},J_{i,-j})\frac{\beta^2}{N}\Big(\int_0^t X_s^j\,dB_s^i-\frac{(r-1)J_{ij}\beta}{\sqrt N}\int_0^t|X_s^j|^2\,ds \Big)^2-\frac{\beta^2}N f(rJ_{ij},J_{i,-j})\int_0^t |X_s^j|^2\,ds.
\end{align*}
This is important because it eliminates the integral against $V^i$ which is difficult to understand under $\bbP$. Plugging the above display into \eqref{eq: expression for universality error} and using Bayes' formula yields
\begin{align*}
     &((Q_t)^{-1}\fru_t^i)^j=\frac{\bbE_\bbQ\Big[J_{ij} f(\bfJ_i)-\partial_j f(\bfJ_i)\mid \mF_t^X \Big]}{\bbE_\bbQ\Big[f(\bfJ_i)\mid \mF_t^X \Big]}\\
      &=\frac{\beta^2}{N}\int_0^1 \bbE_\bbP\bigg[\frac{f(J_{ij}r,J_{i,-j})}{f(\bfJ_i)}\big(J_{ij}^3 (1-r)-J_{ij}\big)\Big(\int_0^t X_s^j\,dB_s^i-\frac{(r-1)J_{ij}\beta}{\sqrt N}\int_0^t|X_s^j|^2\,ds \Big)^2\,\Big\vert\,\mF_t^X\bigg]\,dr \\
      &\quad\quad\quad-\frac{\beta^2}{N}\int_0^1 \bbE_\bbP\bigg[\frac{f(J_{ij}r,J_{i,-j})}{f(\bfJ_i)}\big(J_{ij}^3 (1-r)-J_{ij}\big)\,\Big\vert\,\mF_t^X\bigg]\,dr\int_0^t |X_s^j|^2\,ds\\
     &:=\frac{1}{N} (\fra_t^{ij}+\frb_t^{ij}).
\end{align*}
Now use the last display, Cauchy-Schwarz, the bounds $\|Q_t\|_{\text{op}}\leq 1$ and $|X_t|\leq \sqrt NA$, Jensen's inequality and the exchangeability of $\fra,\frb$ to obtain
\begin{align*}
    \bbE\big[| \fru_t^i \cdot X_t|^4\big]\leq 
    \bbE\big[| Q_t(\fra_t^i+\frb_t^i) N^{-1}|^4  |X_t|^4\big]\leq N^2A^4\bbE\Big[\big|\frac{1}{N^2}\sum_{j=1}^N|\fra_t^{ij}+\frb_t^{ij}|^2\big|^2\Big] \leq C\bbE[|\fra^{11}_t|^4+|\frb^{11}_t|^4].
\end{align*}
In what follows it is important to remember that $B$ is a standard Brownian motion under $\bbP$. Use Young's inequality, the fact that $J_{ij}$ is sub-Gaussian, boundedness of $x$, Jensen's inequality, and Lemma \ref{lm: density bound} to obtain
\begin{align*}
    \bbE\big[|\fra_t^{ij}|^4\big]&\leq C\bbE_\bbP\bigg[\Big|\frac{f(J_{ij}r,J_{i,-j})}{f(\bfJ_i)}\Big|^8\bigg]\\
    &+C\bbE_\bbP\bigg[\Big|\big(J_{ij}^3 (1-r)^2-rJ_{ij}\big)^8\Big(\int_0^t X_s^j\,dB_s^i-\frac{(r-1)J_{ij}\beta}{\sqrt N}\int_0^t|X_s^j|^2\,ds \Big)^{16}|\bigg]\leq C.
\end{align*}
We may repeat the same argument to bound $\frb_t^{ij}$:
\begin{align*}
    \bbE\big[|\frb_t^{ij}|^4\big]\leq C\bbE_\bbP\bigg[\Big|\frac{f(J_{ij}r,J_{i,-j})}{f(\bfJ_i)}\Big|^8\bigg]+C\bbE_\bbP\bigg[\Big|(J_{ij}^3 (1-r)^2-rJ_{ij})|^8\bigg]\leq C.
\end{align*}
The last three displays together then yield the result.
\end{proof}

\subsection{Proof of universality} We are now in position to prove Theorem \ref{th: universality}. The idea is to use the mimicking theorem, couple the resulting equation with the one obtained for Gaussian disorder, and prove a global $\bbW_2$ bound of order $O(1)$. Subadditivity yields the result.

\begin{proof}[Proof of Theorem \ref{th: universality}]
Take a subset $v\subseteq \{1,\dots,N\}$ with $|v|=k$. If $\sqrt N\leq C_\fru$ where $C_\fru$ is the constant from Lemma \ref{lm: density bound}, we give a vacuous proof. Indeed, since $P^i(\bfJ),P^i(\bfG)$ are supported on $\mC_T$, we have
\begin{align*}
    \bbW_{2,L}\big(\bbE[P^v_{[t]}(\bfJ)],\bbE[P^v_{[t]}(\bfG)]\big)\leq A\sqrt {kt}\leq \frac{A\sqrt {tk}C_\fru}{\sqrt N},
\end{align*}
and the same holds for the time marginals. 

We now deal with the case $\sqrt N>C_\fru$. Theorem \ref{t:mimicking} gives
\begin{align*}
    dX_t^i&=- U'(X^i_t)+dV_t^i\\
    dV_t^i&=\frac \beta{\sqrt N}\bbE[\bfJ_i \mid \mF_t^X] \cdot X_t\,dt+dW_t^i.
\end{align*}
For some $\mF_t^X$-Wiener process $W$. Using the definition of the error $\fru_t^i$ in \eqref{eq: definition of u error} we get
\begin{align*}
    dX_t^i&=-U'(X^i_t)+dV_t^i,\\
    dV_t^i&=\frac{\beta^2}{N}X_t^\top Q_t\int_0^tX_s\,dV_s^i\,dt+\frac\beta{\sqrt{N}}\fru_t^i\cdot X_t\,dt+dW_t^i.
\end{align*}
Use It\^{o}'s formula and expand the dynamics of $V_t^i$ to obtain
\begin{align*}
    dQ_t\int_0^t X_sdV_s^i&=-\frac{\beta^2}NQ_t X_tX_t^\top Q_t\int_0^t  X_sdV_s^i\,dt+Q_tX_t\,dV_t^i\\
    &=Q_tX_t\,dW_t^i+\frac{\beta}{\sqrt N}Q_tX_t \fru_t^i \cdot X_t\,dt.
\end{align*}
Define the process
\[Z_t^i:=\frac\beta{\sqrt N}\int_0^t Q_s X_s\,dW_s^i, \quad i = 1, \ldots, N.\]
The last 4 displays together then imply:
\begin{equation}
    dX_t^i=-U'(X^i_t)+\frac{\beta}{\sqrt N} X_t \cdot Z_t^i\,dt+\frac{\beta^3}{N^{3/2}}X_t^\top \Big[\int_0^t Q_s X_s (\fru_s^i\cdot X_s)\,ds\Big]\,dt+\frac\beta{\sqrt N} \fru_t^i \cdot X_t\,dt+dW^i_t.
    \label{eq: universality averaged SDE important}
\end{equation}

To conclude, we compare the process $X$ to the averaged SDE with Gaussian disorder \eqref{eq: averaged dynamics Markovian} via synchronous coupling, assuming that $X$ and $\tlX$ have the same initial conditions. By Lemma \ref{lm: gaussian disorder averaged equation}, there exists a unique strong solution to \eqref{eq: averaged dynamics Markovian} with running Brownian motion $W$, and we denote this solution by $\tlX = (\tlX^1, \ldots, \tlX^N)$.

We define \[\tlQ_t = \bigg(I + \frac{\beta^2}{N}\int_0^t \tlX_s \tlX_s^\top ds\bigg)^{-1}, \quad \tlZ^i_t := \frac{\beta}{\sqrt{N}}\int_0^t \tlQ_s \tlX_s dW_s^i, \quad i = 1, \ldots, N.\] Then, by Lemma \ref{lm: gaussian disorder averaged equation}, we may write the SDE for $\tlX$ as
\begin{equation}
    d\tlX_t^i=-U'(\tlX_t^i)\,dt+\frac{\beta}{\sqrt N}\tlX_t\cdot \tlZ_t^i\,dt+dW_t^i.
    \label{eq: universality tlX}
\end{equation}

Recall from Assumption \ref{assumption: main} that $U = U_c + U_\ell$. By the assumed convexity of $U_c$, the fact that $U'_\ell$ is Lipschitz, the SDEs \eqref{eq: universality averaged SDE important} and \eqref{eq: universality tlX}, and Young's inequality, we have 
\begin{align*}
    \frac12 &\frac d{dt}|\tlX_t-X_t|^2\leq C|\tlX_t-X_t|^2+C\sum_{i=1}^N\frac{\beta^6}{N^{3}}\bigg|X_t^\top \bigg[\int_0^t Q_s X_s (\fru_s^i\cdot X_s)\,ds\bigg]\bigg|^2\\
    &+C\frac{\beta^2}{N} \sum_{i=1}^N|\fru_t^i \cdot X_t|^2+\frac{\beta}{\sqrt N}(\tlX_t-X_t)^\top \big( \tlZ_t\tlX_t-Z_tX_t\big).
    \end{align*}
Use the bound $|X_t|\leq A\sqrt N$ and add and subtract $\tlZ_tX_t$ inside the last difference to find 
\begin{align*}
        \frac d{dt}|\tlX_t-X_t|^2&\leq C\big(1+\beta N^{-1/2}\|\tlZ_t\|_{\text{Fr}})|\tlX_t-X_t|^2+\frac{C}{N}\Big( \sum_{i=1}^N\int_0^t |\fru_s^i \cdot x_s|^2\,ds+|\fru_t^i \cdot X_t|^2\Big)\\
    &+C\|\tlZ_t-Z_t\|_{\text{Fr}}^2.
\end{align*}
Finally, by combining the chain rule, Young's inequality, and Jensen's inequality we obtain
    \begin{align}
    \begin{split}
    \label{eq: universality Gronwall step 1}
        \frac d{dt}|\tlX_t-X_t|^4&\leq C\big(1+\beta N^{-1/2}\|\tlZ_t\|_{\text{Fr}})|\tlX_t-X_t|^4+\frac{C}{N} \sum_{i=1}^N\int_0^t |\fru_s^i \cdot x_s|^4\,ds+|\fru_t^i \cdot X_t|^4\\
    &+C\|\tlZ_t-Z_t\|_{\text{Fr}}^4.
\end{split}
\end{align}
We now focus on the bounding the last term of the previous display. It\^{o}'s formula yields
\begin{align*}
    d|\tlZ_t^i-Z_t^i|^2=\frac{\beta^2}{N}|\tlQ_t \tlX_t-Q_tX_t|^2\,dt+ \frac{2\beta}{\sqrt N}\big(\tlZ_t^i-Z_t^i\big)\cdot(\tlQ_t \tlX_t-Q_tX_t)\,dW_t^i.
\end{align*}
Summing over the coordinates yields:
\begin{align*}
    d\|\tlZ_t-Z_t\|^2_{\text{Fr}}=\beta^2|\tlQ_t \tlX_t-Q_tX_t|^2\,dt+ \frac{2\beta}{\sqrt N}\sum_{i=1}^N\big(\tlZ_t^i-Z_t^i\big)\cdot(\tlQ_t \tlX_t-Q_tX_t)\,dW_t^i.
\end{align*}
By applying It\^{o}'s formula to the last display, we have
\begin{align}
\label{eq: universality gronwall step 2}
\begin{split}
    d\|\tlZ_t-Z_t\|^4_{\text{Fr}}&=2\beta^2|\tlQ_t \tlX_t-Q_tX_t|^2\|\tlZ_t-Z_t\|^2_{\text{Fr}}\,dt\\
    &+ \frac{4\beta^2}{N}\sum_{i=1}^N|\big(\tlZ_t^i-Z_t^i\big)\cdot(\tlQ_t \tlX_t-Q_tX_t)|^2\,dt+dM_t\\
    &\leq C|\tlQ_t\tlX_t-Q_t X_t|^4\,dt+C\|\tlZ_t-Z_t\|^4_{\text{Fr}}\,dt+dM_t,
\end{split}
\end{align}
where $M$ is a martingale with continuous paths.
By the operator bound $\|Q_t\|_{\text{op}}\leq 1$,
\begin{align*}
\int_0^t|Q_s X_s-\tlQ_s\tlX_s|^4\,ds&\leq C \int_0^t|Q_s (X_s-\tlX_s)|^4+|(Q_s-\tlQ_s) \tlX_s|^4ds\\
    &\leq C\int_0^t |X_s-\tlX_s|^4+N^2\|Q_s-\tlQ_s\|_{\text{Fr}}^4\,ds.
\end{align*}
Now, by the definitions of $Q$ and $\tlQ$ above, the operator norm bounds $\|Q\|_\op, \|\tlQ\|_\op \leq 1$, Jensen's inequality and the uniform bounds $|\tlX_t|, |X_t| \leq \sqrt{N}A$, we have
\begin{align*}
N^2\int_0^t\|Q_s-\tlQ_s\|_{\text{Fr}}^4ds &=\frac{\beta^8}{N^2}\int_0^t\bigg\|Q_s\bigg(\int_0^s \tlX_u\tlX_u^\top-X_uX_u^\top\,du\bigg)\tlQ_s\bigg\|^4_{\text{Fr}}\,ds\\
&\leq \frac{C}{N^2}\int_0^t\int_0^s |\tlX_u-X_u|^4(|\tlX_u|^4+|X_u|^4)\big]\,du\,ds \\ &\leq C\int_0^t |\tlX_s-X_s|^4\,ds .
\end{align*}
Combining the last two displays with  \eqref{eq: universality gronwall step 2} we get
\begin{align}
\label{eq: universality gronwall step 3}
    \|\tlZ_t-Z_t\|_{\text{Fr}}^4\leq C\int_0^t|\tlX_s-X_s|^4+
    \|\tlZ_s-Z_s\|_{\text{Fr}}^4\,ds+M_t.
\end{align}
By \eqref{eq: universality Gronwall step 1} and \eqref{eq: universality gronwall step 3}, we have
\begin{align*}
    |\tlX_t-X_t|^4+\|\tlZ_t-Z_t\|_{\text{Fr}}^4&\leq \int_0^t C(1+\beta N^{-1/2}\|\tlZ_s\|_{\text{Fr}})\big(|\tlX_s-X_s|^4+\|\tlZ_s-Z_s\|_{\text{Fr}}^4\big)\,ds\\
    &+\frac{C}{N} \sum_{i=1}^N\int_0^t |\fru_s^i \cdot X_s|^4\,ds+M_t.
\end{align*}
Now, Theorem \ref{th: stochastic gronwall} yields
\begin{align*}
    \bbE\Big[\sup_{t\leq T}|\tlX_t-X_t|^2\Big]\leq C\bbE\bigg[\exp\bigg(\frac{5C}{2} \int_0^T (1+\beta\|\tlZ_sN^{-1/2}\|_{\text{Fr}})\,ds\bigg)\bigg]^{1/5}\bbE\bigg[\bigg|\frac{C}{N} \sum_{i=1}^N\int_0^T |\fru_s^i \cdot X_s|^4\,dt\bigg|^{5/8}\bigg]^{4/5}.
\end{align*}
We can prove that the first expectation is bounded uniformly in $N$ by following the proof of \eqref{eq: sub-Gaussian estimate on Z}. Then by Jensen's inequality and exchangeability of $\fru^i$, we have obtained
\begin{align*}
    \bbE\Big[\sup_{t\leq T}|\tlX_t-X_t|^2\Big]\leq C\bbE\bigg[\int_0^T |\fru_s^i \cdot X_s|^4\,dt\bigg]^{1/2}.
\end{align*}
Now, Proposition \ref{pr: universality error bound} yields for some dimension free constant $C$, the following bound:
\begin{align}
\label{eq: universality coupling final error}
    \sup_{t\leq T}\bbE\Big[|\tlX_t-X_t|^2\Big]\leq C\bbE\Big[\sup_{t\leq T}|\tlX_t-X_t|^2\Big]\leq C.
\end{align}
We can then use exchangeability and the definition of $\bbW_{2,L}^2$ to check
\begin{align*}
    \bbW_{2,L}^2\Big(\bbE[P_{[t]}^v (\bfJ)],\bbE[P_{[t]}^v (\bfG)]\Big)\leq \sum_{i\in v} \int_0^t \bbE\big[|\tlX_s^i-X_s^i|^2\big]\,ds\leq \frac{C|v|}{N} \sup_{t\leq T}\bbE\Big[|\tlX_t-X_t|^2\Big]\leq C \frac{|v|}{N},
\end{align*}
and a similar calculation in the time marginal case yields the result.
\end{proof}

\section{Optimality without confinement}
\label{sec: optimality}
\subsection{Linear dynamics} In this section, we provide some evidence that the rate obtained in Theorem \ref{th: main theorem} is optimal for $k = 1$ by studying \eqref{eq: intro quenched dynamics} when $U = 0$. For convenience, we set $\beta = 1$, $X_0 = 0$, and $\bfJ$ to be the standard Gaussian disorder. In this case, for each realization of $\bfJ$ the process $X = (X^1, \ldots, X^N)$ is an Ornstein-Uhlenbeck process satisfying:
\begin{equation}
 \label{eq: U=0 quenched SDE}
    dX_t=\frac{1}{\sqrt N}\bfJ X_tdt+dW_t, \quad X_0=0.
\end{equation}
In particular, we have (in distribution)
\[X_t=\int_0^t e^{sN^{-1/2}\bfJ }\,dW_s.\]
By It\^{o} isometry, the law of $X_t$ conditioned on $\bfJ$ may be written explicitly as
\begin{equation*}
  P_t(\bfJ)=\mathcal N\Big(0, \int_0^te^{sN^{-1/2}\bfJ}e^{sN^{-1/2}\bfJ^\top}\,ds\Big).
\end{equation*}
In particular, the quenched single-spin law satisfies $P^1_t(\bfJ) \sim \mN(0, q^2_t(\bfJ))$, where 
\begin{equation}
    q^2_t(\bfJ) :=\bigg(\int_0^t e^{sN^{-1/2} \bfJ}e^{s N^{-1/2}\bfJ^\top}\bigg)_{11}.
    \label{eq: q_t^2(bfJ)}
\end{equation}

The confining potential $U$ plays a crucial role in the proof of existing results. Neither our Theorem \ref{th: main theorem}, nor the qualitative results of \cite[Theorem 2.10]{Arous1995-zj} and \cite[Theorem 2.10]{Guionnet-1997-PoC} apply without it. Thus, in the case $U = 0$, the existence of a limiting law is not immediate. For completeness, we establish the existence of a limit in the next proposition via explicit manipulations available. 

In the sequel, for $q \in [0, \infty)$, let $\gamma_q$ be a centered Gaussian measure on $\R$ with variance $q^2$. The proof can be found in Appendix \ref{sec: wick} and involves a calculation similar to the combinatorial proof of Wigner's theorem.

\begin{proposition}\label{prop: conv U=0} For all $t \in (0, T]$, there exists $\brq_t \in (0, \infty)$ such that for all $\eps > 0$, we have
\[\lim_{N \rightarrow \infty} \bbP\Big(\bbW_1\big( P^1_t(\bfJ), \gamma_{\brq_t}) > \eps \Big) = 0. \]    
\end{proposition}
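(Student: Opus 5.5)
Since $P^1_t(\bfJ)=\gamma_{q_t(\bfJ)}$ is itself a centered Gaussian, and $\bbW_1(\gamma_a,\gamma_b)\le |a-b|$ (use the coupling $aZ$, $bZ$ with $\bbE|Z|\le 1$), it suffices to show $q_t^2(\bfJ)\to \brq_t^2$ in probability for some deterministic $\brq_t^2>0$. The plan is to establish this through the first two moments: show $\bbE[q_t^2(\bfJ)]\to \brq_t^2$ and $\mathrm{Var}(q_t^2(\bfJ))\to 0$, and then conclude by Chebyshev. Write $M=N^{-1/2}\bfJ$. Expanding both exponentials in power series gives
\[
q_t^2(\bfJ)=\sum_{a,b\ge 0}\frac{1}{a!\,b!}\int_0^t s^{a+b}\,ds\,\big(M^a (M^\top)^b\big)_{11}.
\]
By rotational invariance in law (or simply exchangeability of coordinates), $\bbE[(M^a(M^\top)^b)_{11}]=N^{-1}\bbE\,\mathrm{Tr}(M^a(M^\top)^b)$.

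For the \textbf{expectation}, I will use the Wick/genus-expansion computation familiar from the combinatorial proof of the circular law and of Wigner's theorem. For a Ginibre matrix, $N^{-1}\bbE\,\mathrm{Tr}(M^a(M^\top)^b)$ converges to the number of non-crossing pairings compatible with the word $M^a(M^\top)^b$. Each $M$ must be paired with an $M^\top$, since $\bbE[J_{ij}J_{ji}]=0$ for $i\neq j$. In this word, the only non-crossing such pairing occurs when $a=b$, and it is the nested one, giving the limit $\delta_{ab}$. This suggests
\[
\brq_t^2=\sum_{a\ge0}\frac{t^{2a+1}}{(a!)^2(2a+1)},
\]
which is finite and at least $t>0$. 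To pass the limit through the double series, I need a uniform bound $N^{-1}\bbE\,\mathrm{Tr}(M^a(M^\top)^b)\le C^{a+b}$, or alternatively a crude bound such as $\bbE\|M\|_{\op}^{a+b}\le C^{a+b}(a+b)^{(a+b)/2}$ via Proposition~\ref{pr: rnd matrix operator bound}. Either one makes the series dominated by $\sum (Ct)^{a+b}/(a!b!)$, so dominated convergence applies.

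For the \textbf{variance}, I could compute $\mathrm{Cov}\big((M^a M^{\top b})_{11},(M^{c}M^{\top d})_{11}\big)$ by Wick's formula and show it is $O(C^{a+b+c+d}/N)$ uniformly. A cleaner route is Gaussian Poincaré: $q_t^2$ is a smooth function of $\bfJ$, and its gradient in $\bfJ$ has Frobenius norm $O(N^{-1/2})$ times $e^{Ct\|M\|_{\op}}$, by differentiating the matrix exponentials through Duhamel's formula. On the event $S$ of Proposition~\ref{pr: rnd matrix operator bound} this yields a variance of order $1/N$, which mirrors the argument of Lemma~\ref{lm: Lipschitzianity of integral map}. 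The complement of $S$ would be handled by its probability $2e^{-N}$ together with moment bounds on $e^{Ct\|M\|_{\op}}$.

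The main obstacle is the \textbf{combinatorial step}: I must identify the limit of $N^{-1}\bbE\,\mathrm{Tr}(M^a(M^\top)^b)$ and prove a bound on it that is uniform in $N$ and of geometric growth in $a+b$. I would first attempt the latter by counting pairings: the number of pairings is at most $(a+b-1)!!$, each contributes at most $1$, and non-planar pairings are suppressed by powers of $N^{-2}$. This count gives only factorial growth, but against $1/(a!\,b!)$ that is still summable, so the domination argument goes through regardless.
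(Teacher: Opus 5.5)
Your proposal is correct. For the mean it is the paper's argument in Appendix~\ref{sec: wick}. It uses the same coefficients $t^{a+b+1}/(a!\,b!\,(a+b+1))$ and the same Wick expansion, with the nested pairing as the unique leading term when $a=b$. It dominates in the same way, by the number of pairings against $1/(a!\,b!)$, and so reaches the same limit $\brq_t^2=\sum_{a\ge 0}t^{2a+1}/((a!)^2(2a+1))\ge t$.

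You genuinely differ on the variance. The paper repeats the Wick/graph count at second order (four paths, with the extra identification $\phi(u_1)=\phi(x_1)$). Your Gaussian Poincaré route replaces all of that with one analytic estimate, and it works. By Duhamel's formula,
$\nabla_{\bfJ}\,q_t^2(\bfJ)=2N^{-1/2}\int_0^t\int_0^s e^{rM^\top}e_1\big(e^{(s-r)M}e^{sM^\top}e_1\big)^{\top}dr\,ds$.
Hence $\|\nabla_{\bfJ}\,q_t^2(\bfJ)\|_{\mathrm{Fr}}\le t^2N^{-1/2}e^{2t\|M\|_{\op}}$ and $\mathrm{Var}[q_t^2(\bfJ)]\le t^4N^{-1}\,\bbE[e^{4t\|M\|_{\op}}]$.

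No splitting on $S$ is needed, since the Poincaré inequality is global. What you need is $\sup_N\bbE[e^{4t\|M\|_{\op}}]<\infty$. Proposition~\ref{pr: rnd matrix operator bound} alone does not give this, since it only controls $\bbP(S^c)$. It follows instead from Gaussian concentration of the $N^{-1/2}$-Lipschitz map $\bfJ\mapsto\|M\|_{\op}$ together with $\bbE\|M\|_{\op}\le 2$. The same tail bound is what actually justifies your alternative domination $\bbE\|M\|_{\op}^{p}\le C^p p^{p/2}$.

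The trade-off is as follows. Your route avoids the second-order combinatorics and would extend to any disorder satisfying a Poincaré inequality, in the spirit of Section~\ref{sect: concentration}. The paper's route instead yields an explicit $\delta_{k,l}\delta_{n,m}+O(1/N)$ expansion of every mixed moment.

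One correction to your combinatorial heuristics: for real Gaussian entries, sub-leading pairings are suppressed by $N^{-1}$, not $N^{-2}$. Two $M$ factors can be paired through $\bbE[J_{xy}^2]=1$ (a twisted gluing), e.g.\ $\bbE[(M^2)_{11}]=N^{-1}\bbE[J_{11}^2]=N^{-1}$. The fact $\bbE[J_{ij}J_{ji}]=0$ only rules out the untwisted $M$--$M$ gluings. This is harmless, since you only use that each pairing contributes at most $1$ and that the non-leading ones vanish as $N\to\infty$.
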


We establish the following:
\begin{theorem}
\label{t: optimal}
Fix $t,q\in (0,\infty)$. There exist constants $N_0(t, q) \in \bbN$ and $C(t, q) \in (0, \infty)$ depending on $(t,q)$ such that for all $N\geq N_0(t, q)$, we have
    \begin{align*}
        \bbE\Big[\bbW_1^2(P_t^1(\bfJ),\gamma_q)\Big]\geq \frac{C(t,q)}{N}.
    \end{align*}
\end{theorem}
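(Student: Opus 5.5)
The plan is to reduce the Wasserstein distance to a comparison of variances and then bound the fluctuations of the quenched variance from below by projecting onto the first Gaussian chaos of a single disorder entry. Since $P^1_t(\bfJ)=\mN(0,q_t^2(\bfJ))$ with $q_t^2(\bfJ)$ as in \eqref{eq: q_t^2(bfJ)}, and both measures are centered Gaussians on $\R$, the monotone coupling is optimal. This gives
\[
\bbW_1\big(P^1_t(\bfJ),\gamma_q\big)=\bbE|Z|\,\big|q_t(\bfJ)-q\big|=\sqrt{2/\pi}\,\big|q_t(\bfJ)-q\big|,
\qquad Z\sim\mN(0,1).
\]
It therefore suffices to show $\bbE[|q_t(\bfJ)-q|^2]\geq C(t,q)/N$.

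To pass from $q_t$ to $q_t^2$, which is the quantity with an explicit series, I would truncate. Let $E_M:=\{q_t(\bfJ)\le M\}$. On $E_M$ we have $|q_t-q|\ge |q_t^2-q^2|/(M+q)$, so
\[
\bbE\big[|q_t-q|^2\big]\;\ge\;(M+q)^{-2}\Big(\bbE\big[(q_t^2-q^2)^2\big]-\bbE\big[(q_t^2-q^2)^2\mathbf 1_{E_M^c}\big]\Big).
\]
For the tail term, $q_t^2\le t\,e^{2t\|\bfJ\|_{\op}/\sqrt N}$, and Proposition \ref{pr: rnd matrix operator bound} gives exponential tails of $N^{-1/2}\|\bfJ\|_{\op}$. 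Choosing $M=M(t)$ large, Cauchy--Schwarz then makes the tail term $O(e^{-cN})$. For the main term, $\bbE[(q_t^2-q^2)^2]\ge \operatorname{Var}(q_t^2(\bfJ))$ regardless of $q$. This is exactly why the bound holds for every $q$, including $q=\brq_t$.

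To lower bound the variance, I would project onto the linear function $J_{11}$. Since $\bbE[J_{11}^2]=1$, Cauchy--Schwarz gives $\operatorname{Var}(q_t^2)\ge \big(\operatorname{Cov}(q_t^2,J_{11})\big)^2$. Gaussian integration by parts then gives $\operatorname{Cov}(q_t^2,J_{11})=\bbE[\partial_{J_{11}}q_t^2]$. Writing $A=N^{-1/2}\bfJ$ and $E_{11}=e_1e_1^\top$, Duhamel's formula yields
\[
\partial_{J_{11}}q_t^2=\frac{2}{\sqrt N}\int_0^t\!\!\int_0^s \big(e^{uA}E_{11}e^{(s-u)A}e^{sA^\top}\big)_{11}\,du\,ds
=\frac{2}{\sqrt N}\int_0^t\!\!\int_0^s (e^{uA})_{11}\big(e^{(s-u)A}e^{sA^\top}\big)_{11}\,du\,ds .
\]
It then remains to show that $\bbE\big[(e^{uA})_{11}(e^{(s-u)A}e^{sA^\top})_{11}\big]\to \kappa(u,s)$ for some deterministic limit with $\int_0^t\int_0^s\kappa>0$. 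This would give $\operatorname{Cov}(q_t^2,J_{11})\ge c(t)/\sqrt N$ for $N\ge N_0$, and hence the claim.

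The main obstacle is this last limit and its positivity. I would expand the exponentials in power series and use Wick/moment computations of the same type as in the proof of Proposition \ref{prop: conv U=0} (Appendix \ref{sec: wick}). The tail of the series is controlled via $\|A\|_{\op}$ on the event $S$. For the non-Hermitian Gaussian matrix $A$, the expected diagonal entries of words in $A$ and $A^\top$ converge, so $\bbE[(e^{uA})_{11}]\to 1$, since only trivially paired words survive for powers of $A$ alone. Moreover, $(e^{uA})_{11}$ concentrates, since it is Lipschitz in $\bfJ$ with constant $O(N^{-1/2})$ on $S$. Consequently the expectation of the product factorizes asymptotically as $\kappa(u,s)=\lim\bbE\big[(e^{(s-u)A}e^{sA^\top})_{11}\big]$. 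The remaining term is a diagonal entry of a Gram-type product. I would show it is bounded below by a positive constant using its series, where the leading term equals $1$ and the limiting corrections come from nonnegative pairing counts, or alternatively by comparison with $\brq$-type positivity from Proposition \ref{prop: conv U=0}.
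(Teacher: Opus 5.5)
Your argument is correct, and up to the projection step it is the paper's proof. The paper also uses $\bbW_1(P^1_t(\bfJ),\gamma_q)=\sqrt{2/\pi}\,|q_t(\bfJ)-q|$ and truncates on the event $S$ of Proposition \ref{pr: rnd matrix operator bound}; your $E_M$ with $M^2=te^{2tC_S}$ amounts to the same thing. It then bounds $\bbE[(q_t^2-q^2)^2]$ below by $\mathrm{Var}[q_t^2(\bfJ)]$ and uses $\sqrt{\mathrm{Var}[q_t^2(\bfJ)]}\geq \bbE[q_t^2(\bfJ)J_{11}]$.

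The routes differ only in how this covariance is bounded below. The paper uses no integration by parts and no asymptotics. It expands $(e^{sN^{-1/2}\bfJ}e^{sN^{-1/2}\bfJ^\top})_{11}=1+\frac{2s}{\sqrt N}J_{11}+\frT_s(\bfJ)$ and notes that $J_{11}\frT_s(\bfJ)$ is a power series in the entries of $\bfJ$ with nonnegative coefficients. Every monomial in i.i.d. centered Gaussians has nonnegative expectation. Hence $\bbE[q_t^2(\bfJ)J_{11}]\geq \int_0^t \frac{2s}{\sqrt N}\,ds=t^2/\sqrt N$ for every $N$.

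Your route is also valid: Gaussian integration by parts, Duhamel, then identifying $\kappa(u,s)=\sum_{k\ge 0}\frac{((s-u)s)^k}{(k!)^2}$ via Lemma \ref{lem: wick calculation}, and factorizing through a Poincar\'e bound $\mathrm{Var}[(e^{uA})_{11}]=O(1/N)$. It even gives the exact limit of $\sqrt N\,\mathrm{Cov}(q_t^2(\bfJ),J_{11})$, which the paper does not. However, it is much heavier than needed.

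The positivity you invoke only in the limit (``nonnegative pairing counts'') already holds at finite $N$. With your $A=N^{-1/2}\bfJ$, the Duhamel integrand $(e^{uA})_{11}(e^{(s-u)A}e^{sA^\top})_{11}$ is itself a nonnegative-coefficient series in the entries of $\bfJ$ with constant term $1$. Its expectation is therefore at least $1$ for every $N$. This gives $\mathrm{Cov}(q_t^2(\bfJ),J_{11})\geq \frac{2}{\sqrt N}\int_0^t\int_0^s du\,ds=t^2/\sqrt N$ directly, the same bound as the paper, and makes the limit identification, concentration and factorization steps unnecessary.
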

\begin{proof}
First we recall a simple moment estimate for $q_t(\bfJ)$. By \eqref{eq: q_t^2(bfJ)} and \cite[Theorem 4.4.3]{Vershynin2018HDP}, for all $p\geq 1$ there exists a constant $M_p \in (0, \infty)$ which does not depend on $N$ such that
    \begin{align*}
        \bbE\Big[|q_t(\bfJ)|^{2p}\Big]\leq C\int_0^t \bbE\Big[e^{2ps\|\bfJ\|_{\text{op}}N^{-1/2}}\Big]\,ds\leq M_{2p}.
    \end{align*}
    Let $S\subseteq \R^{N\times N}$ be the set from Proposition \ref{pr: rnd matrix operator bound}, so that $\bbP(S^c)\leq 2e^{-N}$ and for every $\bfJ\in S, \|\bfJ\|_{\textup{op}}\leq C_S\sqrt N$. We emphasize that $C_S$ does not depend on $N$.  Then by the Cauchy--Schwarz inequality, we have
    \begin{align}
    \label{eq: optimality rough S bound}
        \big|\bbE\big[|q_t(\bfJ)|^{2p}\big]-\bbE\big[|q_t(\bfJ)|^{2p}\mid S\big]\big|
        \leq M_{2p}e^{-N/2}.
    \end{align}
    Since $P_t^1(\bfJ),\gamma_q$ are the laws of Gaussian random variables,  the optimal coupling for the $\bbW_2$ distance is given by $(q_t(\bfJ)Z, qZ)$ where $Z$ is a standard Gaussian random variable, see \cite[Theorem 2.12]{villani2021topics}. Furthermore $P_t^1(\bfJ), \gamma_q$ are 1-dimensional probability distributions, by \cite[Remark 2.19]{villani2021topics} this coupling is also optimal for $\bbW_1$ since we are in dimension $1$. Then for each $\bfJ$, we have
    \begin{align*}
\bbW_1(P_t^1(\bfJ),\gamma_q)=\sfE_\bfJ\big[\big|q_t(\bfJ)-q\big||Z|\big]=\sqrt{\frac{2}{\pi}}|q_t(\bfJ)-q|.
    \end{align*}
    Using this fact, integrating with respect to $\bfJ$, and applying \eqref{eq: optimality rough S bound}, we find 
    \begin{align*}
    \begin{split}
        \bbE\Big[\bbW_1^2(P_t^1(\bfJ),\gamma_q)\Big]&=\frac{2}{\pi} \bbE\Big[\big({q_t(\bfJ)}-{q}\big)^2\Big] \\&\geq \frac
        2   \pi\bbE\Big(\big[\big({q_t(\bfJ)}-{q}\big)^2\mid S\big]-M_2e^{-N/2}\big)\\
        &=\frac{2}{\pi} \bigg(\bbE\bigg[\frac{\big(q^2_t(\bfJ)-q^2\big)^2}{{q_t(\bfJ)}+{q}}\,\bigg|\,S\bigg]-M_2e^{-N/2}\bigg).
    \end{split}
    \end{align*}
    Since for $\bfJ \in S$, we have $q_t(\bfJ)\leq e^{2t C_S}$. This, together with  the variational representation of the variance, implies
\begin{equation*}
\bbE\bigg[\frac{\big(q^2_t(\bfJ)-q^2\big)^2}{{q_t(\bfJ)}+{q}}\,\bigg|\,S\bigg] \geq    \frac{1}{e^{tC_S}+ q}\bbE\Big[\big(q^2_t(\bfJ)-q^2\big)^2\mid S\Big] \geq \frac{1}{e^{tC_S}+ q}\text{Var}\big[q_t^2(\bfJ)\big]-4(M_2+q^2)e^{-N/2}.
\end{equation*}
The last two displays together implies
\begin{equation}
    \begin{aligned}
        \bbE\Big[\bbW_1^2(P_t^1(\bfJ),\gamma_q)\Big]\geq \frac{2}{\pi}\bigg( \frac{1}{e^{tC_S}+ q}\textup{Var}[q^2_t(\bfJ)]-8(M_2+q^2)e^{-N/2}\bigg),
    \end{aligned}    
    \label{eq: w1 opt}
\end{equation}
To conclude, we bound the variance from below. By the Cauchy-Schwarz inequality, we have
    \begin{align*}
    \sqrt{\text{Var}[q_t^2(\bfJ)]}\geq \bbE[q_t^2(\mathbf J) J_{11}]=\int_0^t \bbE\Big[\Big(e^{sN^{-1/2}\bfJ}e^{sN^{-1/2}\bfJ^\top}\Big)_{11} J_{11}\Big]\,ds.
    \end{align*}
On expanding the integrand on the right-hand side, we find
\begin{align*}
\label{eq: big exponential}
\begin{split}
    &\Big(e^{sN^{-1/2}\bfJ}e^{sN^{-1/2}\bfJ^\top}\Big)_{11}
    =1+\frac{2s}{\sqrt{N}} J_{11} + \frT_s(\bfJ), \quad s \in [0, T],
\end{split}
\end{align*}
where
\begin{align*}
    \frT_s(\bfJ) := \frac{s^2}{N}(\mathbf J \mathbf J^\top)_{11}&+\frac{2s}{N^{1/2}}\Big(\mathbf J\sum_{p=2}^\infty\frac{s^p}{N^{p/2}p!}(\mathbf J^\top)^p\Big)_{11} \\ &+ 2 \sum_{p=2}^\infty\frac{s^p}{N^{p/2}p!}(\mathbf J^p)_{11} + \sum_{p=2}^\infty \sum_{q=2}^\infty \frac{1}{N^{(p+q)/2}p!q!}\big(\mathbf J^p(\mathbf J^\top)^q\big)_{11} .
\end{align*}
It is clear from the last display that $J_{11}\frT_s$ is a polynomial in the entries of $\bfJ$ with positive coefficients and minimum degree 2. In particular, terms with odd powers do not contribute and thus we have
\[ \bbE\big[J_{11} \frT_s(\bfJ) \big]   \geq 0.\]
The last 4 displays together imply the existence of a constant $C_0(t, q) \in (0, \infty)$ which does not depend on $N$ such that
\[\frac{1}{e^{tC_S}+ q}\text{Var}[q_t^2(\bfJ)] \geq \frac{1}{N(e^{tC_S}+ q)}\bigg[\int_0^t \bbE\Big[2 sJ^2_{11}\Big]\,ds\bigg]^2 \geq \frac{C_0(t, q)}{N}. \]
By \eqref{eq: w1 opt} and the last inequality we have
\begin{equation*}
    \bbE\Big[\bbW_1^2(P_t^1(\bfJ),\gamma_q)\Big] \geq \frac{C_0(t, q)}{N}- \frac{16(M_2+q^2)}{\pi}e^{-N/2}.
\end{equation*}
Choosing $N_0(t, q) := 8\log (\frac{320M_2}{\pi C_0(t, q)})$ and $C(t, q) = C_0(t,q)/2$ yields the desired result.  
\end{proof}

\appendix

\section{Proof of Proposition \ref{prop: conv U=0}}
\label{sec: wick}
In this section we prove Proposition \ref{prop: conv U=0}. Recall the definition of $q^2_t(\bfJ) from \eqref{eq: q_t^2(bfJ)}$. 
The claim follows on establishing the existence of $(\brq_t)_{t \in [0, T]}$ such that
\begin{equation}
  \lim_{N \rightarrow \infty}\bbE[q_t^2(\bfJ)] = \brq_t^2 ,
  \label{eq: OU-covariance mean}
\end{equation}
and
\begin{equation}
      \label{eq: OU-covariance var}
\lim_{N \rightarrow \infty} \text{Var}\big[q_t^2(\bfJ)\big] = 0.
\end{equation}
The proofs of both \eqref{eq: OU-covariance mean} and \eqref{eq: OU-covariance var} follow closely the combinatorial proof of Wigner's theorem, see for instance \cite[Chapter 2]{speicher2024rmt}. Indeed, we may expand \eqref{eq: q_t^2(bfJ)} to obtain

\begin{equation}
    \bbE[q_t^2(\bfJ)] = \sum_{k = 0}^\infty \sum_{l = 0}^\infty  c_1(k,l) \frac{\bbE\big[\big(J^k(J^\top)^l\big)_{11}\big]}{N^{(k+l)/2}}.
    \label{eq: first order formula}
\end{equation}
and similarly,
\begin{equation}
        \text{Var}[q_t^2(\bfJ)] = \sum_{k = 0}^\infty \sum_{l = 0}^\infty\sum_{m =0}^\infty\sum_{n = 0}^\infty  c_2(k,l,m,n)\frac{\text{Cov}\big[\big(J^k(J^\top)^l\big)_{11},\big(J^n(J^\top)^m\big)_{11}\big]}{N^{(k+l+m+n)/2}},
            \label{eq: second order formula}
\end{equation}
where
\[c_1(k, l) = \frac{t^{k+l+1}}{k!l!(k+l+1)}, \quad c_2(k,l,m,n) = \frac{t^{n+m+k+l+1}}{k!l!n!m!(k+l+1)(n+m+1)}.  \]
Then \eqref{eq: OU-covariance mean}-\eqref{eq: OU-covariance var}, and hence Proposition \ref{prop: conv U=0}, follows from the following lemma:

\begin{lemma}
\label{lem: wick calculation}
    Let $\bfJ$ be a $N \times N$ matrix of i.i.d. standard Gaussian random variables. Then for all $k, l \in \bbN$ we have
    \begin{equation}
        \frac{\bbE\big[\big(J^k(J^\top)^l\big)_{11}\big]}{N^{(k+l)/2}} = \left \{ \begin{aligned}&\delta_{k, l} + \frac{1}{N}\eps_{k, l}  &\quad & k+ l \text{ even,} 
        \\ &0 &\quad & k+l \text{ odd},\end{aligned}\right.
        \label{eq: OU-wick first order}
    \end{equation}
    where $\delta_{k,l}$ denotes the Kronecker delta and $|\eps_{k, l}| \leq (k+l)!!$. Additionally for all $k, l, n, m \in \bbN$,
\begin{equation}\frac{\textup{Cov}\big[\big(J^k(J^\top)^l\big)_{11},\big(J^n(J^\top)^m\big)_{11}\big]}{N^{(k+l+m+n)/2}} =
\left \{ \begin{aligned}& \frac{1}{N}\eps_{k,l,n,m}  &\quad & k+ l+n+m \text{ even,} 
        \\ &0 &\quad & k+l+n+m \text{ odd},\end{aligned}\right.
         \label{eq: OU-wick second order}\end{equation}
         where $|\eps_{k,l,m,n}| \leq (k+l+m+n)!!$.
\end{lemma}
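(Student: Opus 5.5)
We plan to prove Lemma \ref{lem: wick calculation} by expanding the matrix entries as sums over index paths and applying Wick's formula, as in the moment-method proof of Wigner's theorem.

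\textbf{Path expansion and parity.} Write
\[
\big(J^k(J^\top)^l\big)_{11}=\sum J_{1 i_1}J_{i_1 i_2}\cdots J_{i_{k-1} i_k}\,J_{j_{l-1} i_k}\cdots J_{1 j_1},
\]
where the sum is over indices in $\{1,\dots,N\}$. Each term is a closed walk of length $k+l$ starting and ending at $1$. The first $k$ steps traverse directed edges $(a,b)$ forward, using $J_{ab}$. The last $l$ steps traverse edges backward, so a step from $a$ to $b$ uses the entry $J_{ba}$.

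By Wick's theorem, the expectation of a term is the number of pairings of the $k+l$ factors in which paired factors carry the same matrix entry $(a,b)$. The entries are independent standard Gaussians, so only even multiplicities survive. In particular, if $k+l$ is odd there is no pairing and the expectation vanishes. The same parity argument gives the vanishing of the covariance when $k+l+m+n$ is odd.

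\textbf{Counting walks for the first moment.} For even $k+l$, we bound the number of distinct vertices visited. A walk whose $(k+l)/2$ distinct entries each appear exactly twice, with the underlying undirected graph a tree, visits at most $(k+l)/2+1$ vertices. Fixing the start at $1$ leaves $(k+l)/2$ free indices, so such walks contribute $N^{(k+l)/2}(1+O(1/N))$. Every other configuration (a cycle, or a multiplicity $\ge 4$) loses at least one free index and therefore contributes $O(N^{(k+l)/2-1})$.

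Next we identify which tree-like pairings are compatible with the direction constraint. Forward steps use $J_{ab}$ and backward steps use $J_{ba}$. When an edge is traversed out and back in the tree, the two traversals must carry the same entry $J_{ab}$. This requires a forward step $a\to b$ paired with a backward step $b\to a$, which reads $J_{ab}$ again.

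Consider first the case $l=0$. A return along the same edge by a forward step would read $J_{ba}\neq J_{ab}$ as a variable, so no tree walk survives and the leading term is $0$. In general, the walk must consist of $k$ forward steps followed by $l$ backward steps, and the backward steps must exactly retrace the forward path. This forces $k=l$ and the forward path to be a self-avoiding path from $1$, giving the leading term $\delta_{k,l}$.

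For the remainder $\eps_{k,l}$, we crudely bound the contribution of each pairing. The number of pairings is at most $(k+l-1)!!\le (k+l)!!$. For a given pairing, the number of vertex labelings is at most $N^{(k+l)/2}$, and this bound loses a factor of $N$ unless the pairing is of the leading type. Organizing the count pairing by pairing therefore gives $|\eps_{k,l}|\le (k+l)!!$.

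\textbf{Covariance.} For the covariance, expand the product of two walks $w_1$ and $w_2$ of lengths $k+l$ and $n+m$, both rooted at $1$. Pairings that match edges only within each walk cancel exactly against the product of the two expectations. The surviving pairings connect $w_1$ and $w_2$, so the two walks share at least one matrix entry. A pair of walks rooted at the same vertex and glued along a shared edge has one fewer free vertex than two independent trees. Hence the contribution is $O(N^{(k+l+m+n)/2-1})$, and the same counting bounds the constant by $(k+l+m+n)!!$.

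\textbf{Main obstacle.} The main difficulty is the careful bookkeeping in the first-moment leading term: showing that the only $N^{(k+l)/2}$-order contribution is the retracing configuration with $k=l$. One must check that forward-forward pairings of an edge, i.e.\ $J_{ab}$ used twice forward, cannot both appear in a tree walk, since a tree walk must return along the same edge in the opposite direction. We would first try to settle this with the direction argument above.

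The constant bound $(k+l)!!$ in the error also requires grouping the terms by pairing rather than by walk. This grouping is needed so that the factorial count of pairings is not multiplied by an additional combinatorial factor from the number of walks.
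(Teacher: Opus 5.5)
Your argument is correct. It runs on the same engine as the paper: Wick's formula turns the moment into a sum over pairings $\pi$ of $N^{c(\pi)}$, where $c(\pi)$ is the number of free index classes, and one then maximizes $c(\pi)$ and identifies the unique maximizer. The bookkeeping differs, however.

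\textbf{First moment.} The paper passes to the trace by exchangeability and encodes each pairing as the $2$-regular tri-coloured multigraph $G^1_\pi$ on index positions. It bounds the number of components by $(k+l)/2+1$, since such a graph is a union of cycles of length at least $2$. It then isolates $\pi^*$ by propagating any ``horizontal pairing'' back to a boundary edge. You instead fix the root at $1$ and use the classical Wigner walk graph. At most $(k+l)/2$ distinct entries give at most $(k+l)/2+1$ vertices. Equality holds only for a tree each of whose edges is crossed once by a forward and once by a backward step. The asymmetry of $J$ then forces $k=l$ and exact retracing, which is arguably more transparent than the horizontal-pairing argument.

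\textbf{Covariance.} Here your route is genuinely different. Cancelling the non-crossing pairings exactly against $\bbE[\cdot]\bbE[\cdot]$ gives $\textup{Cov}=\sum_{\pi\ \text{crossing}}N^{c(\pi)}$ directly, with at most $(k+l+m+n-1)!!$ terms. The paper instead expands the mixed moment to $\delta_{k,l}\delta_{n,m}+O(1/N)$ and subtracts the product of first moments. That produces cross terms such as $\delta_{k,l}\eps_{n,m}$ and $\eps_{k,l}\eps_{n,m}/N$, so the constant $(k+l+m+n)!!$ is recovered only up to a factor. The paper must also account for the constraint $\phi(u_1)=\phi(x_1)$, which $G^2_{\tilde\pi}$ does not encode. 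Rooting both walks at the fixed vertex $1$ avoids this.

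Two steps should be made explicit.

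\textbf{Walks to pairings.} Transfer the walk analysis to pairings by noting that a generic labelling compatible with $\pi$ visits exactly $c(\pi)+1$ distinct vertices. Then $c(\pi)\le (k+l)/2$, with equality only for $\pi=\pi^*$. The $\pi^*$ term is exactly $N^{k}$ (coinciding labels included), so the leading term is exactly $\delta_{k,l}$ and $0\le \eps_{k,l}\le (k+l-1)!!$.

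\textbf{Crossing pairings.} The ``two trees glued along a shared edge'' heuristic does not by itself justify $c(\pi)\le (k+l+m+n)/2-1$ for crossing $\pi$. In a crossing pairing neither walk need be a tree: for $k=n=2$, $l=m=0$, both walks can be $1\to a\to 1$, reading the distinct entries $J_{1a},J_{a1}$. The clean argument is global:
\begin{enumerate}
\item The union of the two walks is connected, since they share the root, and it carries at most $(k+l+m+n)/2$ distinct entries.
\item Hence it has at most $(k+l+m+n)/2+1$ vertices, with equality only if it is a tree in which every edge is crossed exactly twice.
\item A closed walk crosses each tree edge an even number of times, so both crossings of every edge would lie in the same walk. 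This contradicts that $\pi$ is crossing.
\end{enumerate}
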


\begin{proof}[Proof of Lemma \ref{prop: conv U=0}] It suffices to establish \eqref{eq: OU-covariance mean} and \eqref{eq: OU-covariance var}. By \eqref{eq: first order formula} and \eqref{eq: OU-wick first order}, we have
\begin{equation}
        \bbE[q_t^2(\bfJ)] = \sum_{k = 0}^\infty \frac{t^{2k+1}}{(2k+1)(k!)^2} + \frac{1}{N}\sum_{k = 0}^\infty\sum_{l = 0}^\infty 1_{\{k + l \text{ even}\}} \frac{t^{k+l+1}\eps_{k,l}}{k!l!(k+l+1)}.
\end{equation}
It suffices to show that the second infinite sum converges. We may rewrite the sum in terms of $m = (k+l)/2$ and use the binomial formula to obtain
\[\begin{aligned}\sum_{k = 0}^\infty\sum_{l = 0}^\infty 1_{\{k + l \text{ even}\}} \frac{t^{k+l+1}\eps_{k, l}}{k!l!(k+l+1)} \leq \sum_{m = 0}^\infty \sum_{k = 0}^{2m} \frac{|t|^{2m+1}(2m)!!}{k!(2m-k)!(2m+1)} = \sum_{m = 0}^\infty \frac{t^{2m+1}2^{2m} (m!)}{(2m+1)!}.\end{aligned} \]
The last series is summable, and thus \eqref{eq: OU-covariance mean} holds. The proof of \eqref{eq: OU-covariance var} from \eqref{eq: wick second-order sufficient} and \eqref{eq: OU-wick second order} follows similarly.
\end{proof}

 We now prove Lemma \ref{lem: wick calculation}. The key tool is the \emph{Wick formula}. We recall a version below given in \cite[Theorem 2.8]{speicher2024rmt}.
\begin{theorem}[Wick formula] Fix $p, n \in \bbN$. Let $Y_1, \ldots, Y_p$ be independent standard Gaussian random variables and consider $X_1, \ldots, X_n \in \{Y_1, \ldots, Y_p\}$. Let $\scrP_2(n)$ denote the set of pairings of $n$ elements. Then we have
\[ \bbE[X_1 \cdots X_n] = \sum_{\pi \in \scrP_2(n)} \prod_{i = 1}^n \bbE[x_ix_{\pi_i}].\]
    \label{thm: wick}
\end{theorem}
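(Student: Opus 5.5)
We argue by strong induction on $n$, using Gaussian integration by parts. The one-dimensional identity we need is $\bbE[Y f(Y)] = \bbE[f'(Y)]$ for $Y$ standard Gaussian and $f$ a polynomial; it follows from $\frac{d}{dy}e^{-y^2/2} = -y e^{-y^2/2}$ and integrating by parts, with no boundary terms since polynomials times the Gaussian density vanish at $\pm\infty$. Because $Y_1,\ldots,Y_p$ are independent, Fubini extends this to polynomials $F$ of all the $Y$'s:
\[
\bbE[Y_a F(Y_1,\ldots,Y_p)] = \bbE[\partial_{Y_a} F(Y_1,\ldots,Y_p)].
\]
We integrate in the $a$-th coordinate first, conditionally on the others.

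For the base cases: if $n=0$ both sides equal $1$, since the empty product is $1$ and there is exactly one (empty) pairing. If $n=1$, then $\bbE[X_1]=0$ and $\scrP_2(1)=\emptyset$, so both sides vanish.

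For the inductive step, take $n\ge 2$ and write $X_1 = Y_a$. Apply the identity with $F = X_2\cdots X_n$, viewed as a monomial in the $Y$'s. By the product rule, $\partial_{Y_a}(X_2\cdots X_n) = \sum_{j=2}^n \mathbf 1_{\{X_j = Y_a\}} \prod_{i\neq 1,j} X_i$. Moreover $\bbE[X_1X_j] = \mathbf 1_{\{X_j=Y_a\}}$: if $X_j=Y_a$ the expectation is $\bbE[Y_a^2]=1$, and otherwise it is the product of two independent centered variables, hence $0$. Therefore
\[
\bbE[X_1\cdots X_n] = \sum_{j=2}^n \bbE[X_1X_j]\,\bbE\Big[\prod_{i\neq 1,j} X_i\Big].
\]
Each remaining expectation involves $n-2$ factors, so the induction hypothesis expands it as a sum over pairings of $\{2,\ldots,n\}\setminus\{j\}$.

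To conclude, note that every pairing $\pi\in\scrP_2(n)$ is determined uniquely by the partner $j=\pi(1)$ together with a pairing of the remaining $n-2$ indices, and every such choice yields a pairing. This bijection $\scrP_2(n)\cong \bigsqcup_{j=2}^n \scrP_2(\{2,\ldots,n\}\setminus\{j\})$ turns the last display into $\sum_{\pi\in\scrP_2(n)}\prod_{\{i,\pi(i)\}}\bbE[X_iX_{\pi(i)}]$, where the product runs over the blocks of $\pi$. For odd $n$ the set of pairings is empty, and the recursion indeed bottoms out at the case $n=1$, which gives $0$.

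There is no real obstacle in this argument. The only points needing care are reading the product in the statement as a product over the pairs of $\pi$, not over all $i$ (otherwise each factor would be squared), and checking that $\bbE[X_1X_j]$ is exactly the indicator that $X_j$ and $X_1$ are the same underlying $Y_a$.
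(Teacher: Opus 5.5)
Your proof is correct. The identity $\bbE[Y_aF]=\bbE[\partial_{Y_a}F]$ applied to $F=X_2\cdots X_n$ gives exactly the recursion $\bbE[X_1\cdots X_n]=\sum_{j\ge 2}\bbE[X_1X_j]\,\bbE\big[\prod_{i\ne 1,j}X_i\big]$. Splitting $\scrP_2(n)$ according to the partner of $1$ then closes the strong induction.

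The paper gives no proof of this statement; it quotes it from \cite[Theorem 2.8]{speicher2024rmt}, so there is no argument in the text to compare against. In this special setting there is also a purely combinatorial route. By independence, $\bbE[X_1\cdots X_n]=\prod_a \bbE[Y_a^{n_a}]$ with $n_a=\#\{i: X_i=Y_a\}$. On the other side, a pairing contributes $1$ exactly when it only pairs indices carrying the same $Y_a$, so the right-hand side equals $\prod_a|\scrP_2(n_a)|$. One then only needs the one-variable fact $\bbE[Y^m]=|\scrP_2(m)|$. That route depends essentially on each $X_i$ being one of the independent $Y_a$. Yours does not: replacing $\bbE[Y_aF]=\bbE[\partial_aF]$ by $\bbE[Y_aF]=\sum_b \text{Cov}(Y_a,Y_b)\,\bbE[\partial_bF]$ proves Wick's formula for an arbitrary centered Gaussian vector with the same bookkeeping.

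Your caveat about reading the product over pairs is harmless but slightly overstated. Here every factor $\bbE[X_iX_{\pi_i}]$ is $0$ or $1$, so the literal product over all $i$, which squares each pair's factor, gives the same value. The statement is therefore true as written, apart from the lowercase $x_i$ typo.
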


\begin{proof}[Proof of Lemma \ref{lem: wick calculation}]
We begin with the first claim. By exchangeability and expanding the matrix powers, we have
    \begin{equation}
    \begin{aligned}
            \frac{1}{N^{\frac{k+l}{2}}}\bbE\big[\big(J^k(J^\top)^l\big)_{11}\big]
            = &  \frac{1}{N^{\frac{k+l}{2}+1}}\bbE\big[ \text{Trace}\big(J^k(J^\top)^l\big) \big]\\= &\frac{1}{N^{\frac{k+l}{2}+1}}\sum_{i = 1}^N \sum_{j = 1}^N \bbE\big[ (J^k)_{ij} (J^l)_{ij}\big] \\
            =& \frac{1}{N^{\frac{k+l}{2}+1}}  \sum_{a_1, \ldots, a_{k+1}} \sum_{b_1, \ldots, b_{l+1}} 1_{\{a_1 = b_1,\, a_{k+1} = b_{l+1}\}} \bbE\bigg[\prod_{i = 1}^k J_{a_i a_{i+1}} \prod_{j = 1}^l J_{b_j b_{j+1}}\bigg].
    \end{aligned}
 \end{equation}
 From this formula, it is clear that one only needs to consider the cases where $k+l$ is even.
 
 For the combinatorial argument which follows, it is useful to reinterpret the last line of the previous display as the sum over certain discrete functions. We introduce the formal vertex set
 \[V_{k, l} := \{v_1, \ldots, v_{k+1}\} \cup\{u_1, \ldots, u_{l+1}\}, \]
 and in the sequel we will also use the formal relations $v_{i+1} = v_i +1$ and $u_{j+1} = u_j +1$. Consider the set of functions
 \[\mS_{k,l} = \big\{ \phi : V_{k, l} \rightarrow \{1, \ldots, N\}\mid \phi(v_1) = \phi(u_1), \, \phi(v_{k+1}) = \phi(u_{l+1})\big\}. \]
 By the identification $\phi(v_i) = a_i$ and $\phi(u_j) = b_j$, we have
\[\frac{1}{N^{\frac{k+l}{2}}}\bbE\big[\big(J^k(J^\top)^l\big)_{11}\big] = \frac{1}{N^{\frac{k+l}{2}+1}} \sum_{ \phi \in \mS_{k,l}} \bbE\bigg[\prod_{i = 1}^k J_{\phi(v_i)\phi(v_{i+1})} \prod_{j = 1}^l  J_{\phi(u_j)\phi(u_{j+1})} \bigg]. \]
 
Let $V^\circ_{k, l} := V_{k, l} \backslash\{v_{k+1}, u_{l+1}\}$. By Theorem \ref{thm: wick},  for each $\phi \in \mS_{k, l}$ we have
 \begin{equation*}
 \begin{aligned}
     \bbE\bigg[\prod_{i = 1}^k J_{\phi(v_i)\phi(v_{i+1})} \prod_{j = 1}^l  J_{\phi(u_j)\phi(u_{j+1})} \bigg] &= \sum_{ \pi \in \scrP_2(V_{k, l}^\circ)} \prod_{w \in V^\circ_{k, l}} \bbE\Big[J_{\phi(w)\phi(w+1)} J_{\phi(\pi_w)\phi(\pi_w+1)} \Big] \\
     &= \sum_{ \pi \in \scrP_2(V_{k, l}^\circ)} \prod_{w \in V^\circ_{k, l}} 1_{\{\phi(w) = \phi(\pi_w)\}} 1_{\{ \phi(w+1) = \phi(\pi_w +1) \}},
 \end{aligned}
 \end{equation*}
 where the last equality follows from the fact that $\bfJ$ consists of i.i.d. Gaussian random variables.

 Combining the last two displays and swapping the order of summation, we have
\begin{equation}
    \frac{1}{N^{\frac{k+l}{2}}}\bbE\big[\big(J^k(J^\top)^l\big)_{11}\big] =\frac{1}{N^{\frac{k+l}{2}+1}} \sum_{ \pi \in \scrP_2(V_{k, l}^\circ)}\sum_{ \phi \in \mS_{k,l}}  \prod_{w \in V^\circ_{k, l}} 1_{\{\phi(w) = \phi(\pi_w)\}} 1_{\{ \phi(w+1) = \phi(\pi_w +1) \}}.
    \label{eq: wick final}
\end{equation}
For each pairing $\pi \in \scrP_2(V_{k, l}^\circ)$, we define a tri-colored graph $G^1_\pi$ with vertex set $V(G^1_\pi) = V_{k, l}$. We label the three colors by $\{\frr, \frb, \fro\}$ and each colored edge is written as $(e; \frf_e)$, where $\frf_e$ is the color of the edge $e \in V_{k,l} \times V_{k,l}$. The edge set of $G^1_\pi$ is
\[ 
\begin{aligned}
E(G^1_\pi) &:= {E_{\textup{pair}}(\pi)} \cup    {E_{\textup{shift}}(\pi)} \cup {E^1_{\textup{bdy}}}, 
\end{aligned}
\]
where 
\begin{equation}
    \begin{aligned}
    &{E_{\textup{pair}}(\pi)} := \{(w, \pi_w; \frb): w \in V^\circ_{k, l}\}, \\
&{E_{\textup{shift}}(\pi)} := \{(w+1, \pi_w+1; \frr): w \in V^\circ_{k, l}\}, \\
&{E^1_{\textup{bdy}}} := \{(v_1, u_1; \fro), (v_{k+1}, u_{l+1}; \fro)\}.
\end{aligned}
\label{eq: first-order edge set}
\end{equation}
In particular, we mark the edges in $E_{\textup{pair}}$ blue ($\frb$), the edges in $E_{\textup{shift}}$ red ($\frr$), and the edges in $E_{\textup{bdy}}$ orange ($\fro$). Below, we refer to $E_{\textup{bdy}}$ as the boundary condition. The edge set $E(G^1_\pi)$ encodes the conditions on the left hand side of \eqref{eq: wick final}. In particular, $E_{\textup{pair}}$ encodes the condition $\{\phi(w) = \phi(\pi_w)\}$, $E_{\textup{shift}}$ encodes $\{\phi(w+1) = \phi(\pi_w+1)\}$, and $E^1_{\textup{bdy}}$ encodes $\phi \in S_{k, l}$. Then for $\pi \in \scrP_2(V^\circ_{k,l})$ and $\phi:V_{k, l} \rightarrow \{1, \ldots, N\}$ we have
\begin{equation*}
    1_{\{\phi \in \mS_{k, l}\}}\prod_{w \in V^\circ_{k, l}} 1_{\{\phi(w) = \phi(\pi_w)\}}1_{\{ \phi(w+1) = \phi(\pi_w +1) \}} = \left \{ \begin{aligned}
        &1 &\quad& \phi \text{ is constant on connected components of }G^1_\pi, \\
        &0 &\quad& \text{otherwise.}
    \end{aligned}\right.
\end{equation*}
If $\delta(G^1_\pi)$ denotes the number of connected components of $G^1_\pi$, then for each $\pi$, the set of functions $\phi$ for which the above product is non-zero has cardinality $N^{\delta(G^1_\pi)}$. Therefore we have
\begin{equation}
     \frac{1}{N^{\frac{k+l}{2}}}\bbE\big[\big(J^k(J^\top)^l\big)_{11}\big] =\frac{1}{N^{\frac{k+l}{2}+1}} \sum_{ \pi \in \scrP_2(V_{k, l}^\circ)} N^{\delta(G^1_\pi)}.
\end{equation}
It is easy to see that $G^1_\pi$ is a 2-regular multi-graph, and $|V(G^1_\pi)| = |E(G^1_\pi)| = k+l+2$. Therefore the maximum number of connected components is $(k+l)/2 + 1$. The remainder term follows from the crude bound
\[ \eps_{k, l} :=\frac{1}{N^{\frac{k+l}{2}+1}} \sum_{ \pi \in \scrP_2(V_{k, l}^\circ)} N^{\delta(G^1_\pi)}1_{\{\delta(G^1_\pi) < (k+l)/2 +1 \}} \leq \frac{|\scrP_2(V_{k, l}^\circ)|}{N}, \]
and the fact that $|\scrP(V_{k, l}^\circ)| = (k+l)!!$, see e.g., \cite[Proposition 2.5]{speicher2024rmt}. Thus to establish \eqref{eq: OU-wick first order}, it suffices to show that a pairing $\pi$ achieves $\delta(G^1_\pi) = \frac{k+l}{2}+1$ if and only if $k = l$, and moreover this  pairing $\pi$ is unique. In fact we will show that the optimal pairing is given by
$ \pi^*(v_i) = u_i, \quad i = 1, \ldots, k, $
which clearly satisfies $\delta(G^1_{\pi^*}) = \frac{k+l}{2}+1$.

The above construction of $G^1_\pi$ implies that for each $\pi \in \scrP_2(V_{k, l}^\circ)$, the graph $G^1_\pi$ satisfies the following:
\begin{enumerate}
    \item Since $\pi$ is a pairing, the incident edges of each $w \in V_{k, l}$ must be of different colors.
    \item If $(w_1, w_2; \frb) \in E(G^1_\pi)$ is defined by the pairing $\pi$, then $(w_1+1, w_2+1; \frr) \in E(G^1_\pi)$. Similarly if $(w_1, w_2; \frr) \in G^1_\pi$, then $(w_1-1, w_2-1; \frb) \in E(G^1_\pi)$.
\end{enumerate}

Moreover, any optimizer $G^1_\pi$ achieving $\delta(G^1_\pi) = \frac{k+l}{2}+1$ must be composed of $\frac{k+l}{2}+1$ disjoint cycles of length $2$. This is easily seen from the fact that $G^1_\pi$ is a 2-regular multi-graph with $|V(G^1_\pi)| = |E(G^1_\pi)|$. 

We now show that an optimal graph is only possible when $k = l$. To do this, we rule out \emph{horizontal pairings} of the form $\pi(v_a) = v_b$ for some $a, b \in \{1, \ldots, k\}$, and similarly $\pi(u_a) = u_b$ for some $a, b \in \{1, \ldots, l\}$. Indeed if $k \neq l$, then by the pigeonhole principle there must be at least one horizontal pairing. 

Suppose towards contradiction that $G^1_\pi$ is an optimizer and there exists a horizontal pairing, say $\pi(v_a) = v_b$ for some $a, b \in \{1, \ldots, k\}$ with $a \neq b$. By symmetry it suffices to argue for the $v$'s, and without loss of generality we take $b > a$. Then $(v_a, v_b; \frb) \in E(G_\pi^1)$. If $a = 1$, then the boundary condition implies that $(u_1, v_1, v_b)$ forms a connected component, which contradicts optimality. Next, suppose $a \neq 1$. Since $G^1_\pi$ is optimal and property (1), there are two edges of different colors connecting these vertices. Then the presence of the edge $(v_a, v_b; \frb) \in E(G_\pi^1)$ implies the existence of $(v_a, v_b; \frr) \in E(G^1_\pi)$, and this second edge is known to be red because it is a horizontal pairing and cannot lie in $E_\textup{bdy}$. Then by property (2), $(v_{a-1}, v_{b-1}; \frb)$ is in $E(G^1_\pi)$ as well. Iterating this as well leads to the existence of the edge between $v_1$ and  $v_{b - a+1}$. Since $b > a$, the boundary edge $(v_1, u_1)$ this implies the existence of a connected component of size $3$ given by $\{u_1, v_1, v_{b - a+1}\}$.

From this, we may conclude that the only optimizer is the graph associated to $\pi^*$. Suppose that $\pi \in \scrP_2(V^\circ_{k, l})$ defines an optimizer $G^1_\pi$ and consider the pair of vertices $(v_1, u_1)$. By optimality and property (1), there are two edges of different colors connecting these vertices. By the boundary condition one of the edges must be orange, that is the edge $(v_1, u_1; \fro)$ is in $E(G_\pi^1)$. The other edge must be blue since by definition $E_{\textup{shift}}$ cannot contain an edge between $v_1$ and $u_1$. Therefore $(v_1, u_1; \frb) \in E(G_\pi^1)$ and by property (2) above, this implies then that $(v_2, u_2; \frr) \in E(G_\pi^1)$. Once again, by the optimality condition this implies that $v_2$ and $u_2$ form a disjoint cycle, and thus $(v_2, u_2; \frb) \in E(G_\pi^1)$. This argument repeats until one reaches the other boundary condition, which shows that $\pi = \pi^*$.

The second order estimate \eqref{eq: OU-covariance var} is established in much the same way. By \eqref{eq: OU-covariance mean} and the formula,
\[ \text{Cov}\big[\big(J^k(J^\top)^l\big)_{11},\big(J^n(J^\top)^m\big)_{11}\big] = \bbE\big[\big(J^k(J^\top)^l\big)_{11}\big(J^n(J^\top)^m\big)_{11}\big] - \bbE[\big(J^k(J^\top)^l\big)_{11}]\bbE[\big(J^n(J^\top)^m\big)_{11}],\]
it suffices to establish that
\begin{equation}
    \frac{1}{N^{\frac{k+l+m+n}{2}}}\bbE\big[\big(J^k(J^\top)^l\big)_{11}\big(J^n(J^\top)^m\big)_{11}\big] = \delta_{k, l} \delta_{n,m} + \frac{1}{N} \eps_{k,l,m,n},
    \label{eq: wick second-order sufficient}
\end{equation}
where $|\eps_{k,l,m,n}|\leq (k+l+m+n)!!$.
Define the analogous vertex set
\[V_{k, l, m,n} = \{v_1, \ldots, v_{k+1}\} \cup \{u_1, \ldots, u_{l+1}\} \cup \{x_1, \ldots, x_{m+1}\} \cup \{y_1, \ldots, y_{n+1}\}, \]
with
\[V_{k,l,m,n}^\circ = V_{k, l, m,n} \backslash \{v_{k+1}, u_{l+1}, x_{m+1}, y_{n+1}\}. \]
Once again, we adopt the formal notation $v_i +1 = v_{i+1}$ for vertices $i = 1, \ldots, k$, and similarly for $u, x, y$. Repeating the above Wick formula argument yields
\begin{equation}
\begin{aligned}
     &\frac{1}{N^{\frac{k+l+m+n}{2}}}\bbE\big[\big(J^k(J^\top)^l\big)_{11}\big(J^n(J^\top)^m\big)_{11}\big]\\ = &\frac{1}{N^{\frac{k+l+m+n}{2}+1}}\sum_{ \tilde \pi \in \scrP_2(V_{k,l,m,n}^\circ)} \sum_{\phi \in S_{k,l,m,n}} \prod_{w \in V_{k,l,m,n}^\circ} 1_{\{\phi(w) = \phi(\pi_w), \, \phi(w+1) = \phi(\pi_w +1) \}},
\end{aligned}
\label{eq: second order wick}
\end{equation}
where
\[ S_{k,l,m,n} = \bigg\{\phi: V_{k,l,m,n} \rightarrow \{1, \ldots, N\}\,\bigg|\,\substack{\phi(v_1) = \phi(u_1) = \phi(x_1) = \phi(y_1),\\[0.1 cm]\phi(v_{k+1}) = \phi(u_{l+1}),\, \phi(x_{m+1}) = \phi(y_{n+1})}\bigg\}\]

For $\tilde \pi \in \scrP_2(V^\circ_{k,l,m,n})$, we construct the graph $G^2_{\tilde \pi}$ in the same way we constructed $G^1_\pi$ above. That is, $V(G^2_{\tilde \pi}) = V_{k, l,m, n}$ and
\[ 
\begin{aligned}
E(G^2_{\tilde \pi}) &:= {E_{\textup{pair}}(\tilde \pi)} \cup    {E_{\textup{shift}}(\tilde \pi)} \cup {E^2_{\textup{bdy}}}, 
\end{aligned}
\]
where $E_\textup{pair}$ and $E_\textup{shift}$ are defined as in \eqref{eq: first-order edge set} and\[{E^2_{\textup{bdy}}} := \{(v_1, u_1; \fro), (x_1, y_1; \fro),  (v_{k+1}, u_{l+1}; \fro), (x_{m+1}, y_{n+1}; \fro)\}.\]
As before, the graph encodes the left hand side of \eqref{eq: second order wick}, except for the condition $\phi(u_1) = \phi(x_1)$ in the definition of $S_{k, l, m, n}$.
We may repeat the argument for the first order term with the assertions of the last paragraph to obtain
\[      \frac{1}{N^{\frac{k+l+m+n}{2}}}\bbE\big[\big(J^k(J^\top)^l\big)_{11}\big(J^n(J^\top)^m\big)_{11}\big] = \frac{1}{N^{\frac{k+l+m+n}{2}+1}}\sum_{\tilde \pi \in \scrP_2(V_{k,l,m,n}^\circ)}  N^{\delta(G^2_{\tilde \pi}) -1},\]
where the exponent is $\delta(G^2_{\tilde \pi}) -1$ due to the additional constraint $\phi(x_1) = \phi(u_1)$ imposed in the definition of $S_{k,l,m,n}$ which is not encoded in the graph $G^2_{\tilde \pi}$. As before, $G^2_{\tilde \pi}$ is a 2-regular multi-graph with $|V(G^2_{\tilde \pi})| = |E(G^2_{\tilde \pi})|$, and thus any optimizer of $\delta$ must consist only of disjoint cycles of length 2. Moreover, the graph $G^2_{\tilde \pi}$ clearly satisfies properties (1) and (2) above. Thus there is a unique $\pi$ satisfying ${\delta(G^2_{\tilde \pi})} = \frac{k+l+m+n}{2} +2 $ if and only if $k = l$ and $n = m$, and such a $\pi$ is unique. Indeed by taking $\eps_{k,l,m,n}$ once again to be the lower order terms in the last display, the argument above may be repeated verbatim to establish this. This together with the fact that $|\scrP(V_{k, l,m,n}^\circ)| = (k+l+m+n)!!$ implies \eqref{eq: wick second-order sufficient}, and hence \eqref{eq: OU-wick second order}. \end{proof}

\section{Volterra kernels}
\label{sec: Volterra}
\begin{lemma}
    Let $\nu \in \mP(\mC_T^1)$ and $H_\nu^t$ be the operator defined in \eqref{eq: H_mu^t}. Then we have
    \begin{align}
\begin{split}
    \label{eq: Fredholm resolvent}
    K_\nu(s, u)- H^t_\nu(s,u)= \beta^2 \int_0^t K_\nu(s,r)H^t_\nu(r,u)\,dr
    =\beta^2 \int_0^t H^t_\nu(s,r) K_\nu(r,u)\,dr,\end{split}
\end{align}
for almost every $u, s, t \in [0, T]$. Moreover, the kernel $H_\nu^t$ extends to a continuous function on $[0, t]^2$, and we have
\begin{equation}
\label{eq: H bound}
        \sup_{t \in [0, T]} \sup_{u, s \in [0, t]} H_\nu^t(u, s)  \leq A^2.
\end{equation}
\end{lemma}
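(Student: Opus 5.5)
The plan is to work with the operator identity and its spectral form. By definition $H_\nu^t = K_\nu^t(\beta^2K_\nu^t+I)^{-1}$. Since $K_\nu^t$ is self-adjoint, positive and compact on $L^2([0,t])$, it commutes with $(\beta^2K_\nu^t+I)^{-1}$. Hence
\[H_\nu^t(\beta^2K_\nu^t+I)=(\beta^2K_\nu^t+I)H_\nu^t=K_\nu^t,\]
which rearranges to $K_\nu^t-H_\nu^t=\beta^2K_\nu^tH_\nu^t=\beta^2H_\nu^tK_\nu^t$. Both $K_\nu^tH_\nu^t$ and $H_\nu^tK_\nu^t$ are compositions of Hilbert--Schmidt integral operators, so their kernels are $\int_0^t K_\nu(s,r)H_\nu^t(r,u)\,dr$ and $\int_0^t H_\nu^t(s,r)K_\nu(r,u)\,dr$ respectively. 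Equality of Hilbert--Schmidt operators gives equality of their kernels in $L^2([0,t]^2)$, so \eqref{eq: Fredholm resolvent} holds for a.e.\ $(s,u)$, for each $t$. The statement for a.e.\ $(u,s,t)$ then follows by Fubini, once joint measurability in $t$ is checked (for instance via the continuity below).

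Next I would establish continuity and a representative. Since $\nu$ is supported on continuous paths bounded by $A$, dominated convergence shows that $K_\nu$ is continuous on $[0,T]^2$ with $|K_\nu|\le A^2$. I would then use the identity itself to \emph{define} the continuous version:
\[H_\nu^t(s,u):=K_\nu(s,u)-\beta^2\int_0^t K_\nu(s,r)H_\nu^t(r,u)\,dr.\]
The right-hand side should be shown continuous in $s$, uniformly in $u$, by uniform continuity of $K_\nu$ together with Cauchy--Schwarz, since $\|H_\nu^t(\cdot,u)\|_{L^2}$ is controlled. Continuity in $u$ should come from the symmetric identity. Better still, apply the identity twice to write
\[H=K-\beta^2K^2+\beta^4KHK,\]
where every term is manifestly jointly continuous, because $K$ is a continuous kernel and $H$ is a bounded operator.

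For the bound I would use the diagonalization \eqref{eq: diaogonalize K}, \eqref{eq: diaogonalize H}. By Mercer's theorem, the continuous positive kernel $K_\nu$ has eigenfunctions $g_i$ that are continuous, and $\sum_i\lambda_ig_i(s)^2=K_\nu(s,s)$ with uniform convergence. Since $0\le\lambda_i/(1+\beta^2\lambda_i)\le\lambda_i$, the series for $H_\nu^t$ converges absolutely and uniformly. Cauchy--Schwarz then gives
\[|H_\nu^t(u,s)|\le\Big(\sum_i\lambda_ig_i(u)^2\Big)^{1/2}\Big(\sum_i\lambda_ig_i(s)^2\Big)^{1/2}=\sqrt{K_\nu(u,u)K_\nu(s,s)}\le A^2,\]
uniformly in $t$. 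This yields \eqref{eq: H bound}, and it also gives continuity directly, as a uniform limit of continuous functions.

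The main obstacle is reconciling the a.e.-defined $L^2$ kernel with a genuinely pointwise continuous one, and handling the $t$-dependence jointly. I would resolve this by taking the Mercer series as the canonical representative for each $t$. Joint measurability in $t$ I would handle by noting that $t\mapsto H_\nu^t$ is continuous in Hilbert--Schmidt norm, since $K_\nu^t$ depends continuously on $t$ after extending by zero to $[0,T]$ and the resolvent map is Lipschitz on positive operators.
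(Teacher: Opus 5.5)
Your proposal is correct. The derivation of \eqref{eq: Fredholm resolvent} matches the paper's. Both use that $H^t_\nu$ commutes with $K^t_\nu$, so that $H^t_\nu+\beta^2K^t_\nu H^t_\nu=H^t_\nu+\beta^2H^t_\nu K^t_\nu=K^t_\nu$, and then read off kernels; the paper does this by testing against smooth $\varphi$, you via the kernel of a product of Hilbert--Schmidt operators.

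The other two claims are handled differently. For continuity, the paper argues in one line that the identity plus continuity of $K_\nu$ forces $H^t_\nu$ to agree a.e.\ with a continuous function. Your first version of this, using that $\|H^t_\nu(\cdot,u)\|_{L^2}$ is controlled, has the same weakness: that norm is only known to be finite for a.e.\ $u$. Your iterated identity $H=K-\beta^2K^2+\beta^4KHK$, or the uniformly convergent Mercer series, does give a genuinely jointly continuous representative.

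For \eqref{eq: H bound}, the paper simply cites \cite[Lemma A.5]{Arous1995-zj}, whereas you prove it directly. Mercer's theorem gives continuous eigenfunctions with $\sum_i\lambda_i g_i(s)^2=K_\nu(s,s)$ uniformly. Since $0\le \lambda_i/(1+\beta^2\lambda_i)\le\lambda_i$, Cauchy--Schwarz then yields $|H^t_\nu(u,s)|\le\sqrt{K_\nu(u,u)K_\nu(s,s)}\le A^2$. This bound is two-sided and uniform in $t$.

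Your remark on joint measurability in $t$ also covers a point the paper's ``a.e.\ $u,s,t$'' leaves implicit. It works because $A\mapsto A(\beta^2A+I)^{-1}$ is $1$-Lipschitz in Hilbert--Schmidt norm on positive operators, applied to $K_\nu$ truncated to $[0,t]^2$.

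In short, the paper's route is shorter because it outsources the bound, while yours makes the lemma self-contained and slightly stronger.
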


\begin{proof}
    Note that by \eqref{eq: H_mu^t} that $H^t_\mu$ commutes with $K^t_\nu$, and in particular we have \[H_\nu^t + \beta^2 K_\nu^t H_\nu^t = H_\nu^t + \beta^2 H_\nu^t K_\nu^t = K_\nu^t.\] Thus for every smooth function $\varphi:[0, t] \rightarrow \R$ we have
    \[ \int_0^t H_\nu^t(s,r)\varphi(r)\,dr+\beta^2 \int_0^t \int_0^t K_\nu(s,r)H_\nu^t(r,x)\varphi(x)\,dr\,dx=\int_0^t K_\nu(s,r)\varphi(r)\,dr,\]
    and thus \eqref{eq: Fredholm resolvent} holds. Since $K_\nu$ is continuous, by \eqref{eq: Fredholm resolvent} $ H^t_\nu(s,u)$ is equal to a continuous function almost everywhere and therefore admits a continuous extension. The bound \eqref{eq: H bound} is in \cite[Lemma A.5]{Arous1995-zj}.

\end{proof}
\begin{proof}[Proof of Lemma \ref{lem: resolvent}] By \eqref{eq: H bound}, there exists a decomposition of $[0, T]$ into a finite number of intervals $(I_i)_{i = 1}^\ell$ such that we have
\[\sup_{i \in\{ 1, \ldots, \ell\}}\int_{I_i} \int_{I_i} |H_\mu^t(t, s)1_{[0, t]}(s)|^2 ds dt < 1.\]
Thus by \cite[Corollary 3.14]{VolterraEquations} that $H^t(t, s)1_{[0, t]}(s)$ admits a Volterra resolvent kernel $R_\mu \in L^2([0, T]^2)$ which satisfies \eqref{eq: H resolvent}.
Next, we prove \eqref{eq: R contiuity}. Applying Gr\"{o}nwall's inequality to \eqref{eq: H resolvent} and \eqref{eq: H bound}, we have \eqref{eq: R bound}. 

We now prove the stability estimate \ref{eq: R contiuity}. By \eqref{eq: H resolvent}, \eqref{eq: H bound}, and \eqref{eq: R bound}, there exists $C$ depending only on $(A, \beta, T)$ such that
\begin{equation*}
\begin{aligned}
&|R_\mu(t, s) - R_\nu(t, s)|^2 \\
 = &\bigg|H_\mu(t, s) - H_\nu(t, s) +  \beta^2 \int_s^t H_\mu(t, u)R_\mu(u, s) du - \beta^2 \int_s^t H_\nu(t, u)R_\nu(u, s) du    \bigg|^2 \\
 \leq &C \bigg(|H_\mu(t, s) - H_\nu(t, s)|^2 + \int_s^t|H_\mu(t, u) - H_\nu(t, u)|^2 du + \int_s^t |R_\mu(u, s) -R_\nu(u, s)|^2du \bigg).
\end{aligned}    
\end{equation*}
Integrate the last display and apply Fubini's theorem to get
\[\begin{aligned}
\int_0^t|R_\mu(t, s) - R_\nu(t, s)|^2 ds \leq C \bigg(\int_0^t|H_\mu(t, u) - H_\nu(t, u)|^2 du + \int_0^t \bigg[\int_0^u|R_\mu(u, s) - R_\nu(u, s)|^2 ds\bigg] du \bigg)
\end{aligned}\]
By Gr\"{o}nwall's inequality, we have
\begin{align*}
    \int_0^t|R_\mu(t, s) - R_\nu(t, s)|^2 ds &\leq C \int_0^t|H_\mu(t, u) - H_\nu(t, u)|^2 du+C\int_0^t\int_0^s |H_\mu(s, u) - H_\nu(s, u)|^2 du\,ds.\\
\end{align*}
Integrating again in $dt$ yields
\begin{align*}
    \int_0^T\int_0^t|R_\mu(t, s) - R_\nu(t, s)|^2 ds &\leq C \int_0^T\int_0^t|H_\mu(t, s) - H_\nu(t, s)|^2 ds\,dt\\
\end{align*}
To conclude, by \eqref{eq: Fredholm resolvent} we have
    \begin{align*}
        H^t_\mu(t,s)-H^t_\nu(t,s) +&\beta^2 \int_0^t K_\mu(u,s)\cdot(H^t_\mu(t,u)-H^t_\nu(t,u))\,du\\
        &= K_\mu(t,s)-K_\nu(t,s)+\beta^2 \int_0^t H_\nu(t,u)\cdot(K_\mu(u,s)-K_\nu(u,s))\,du.
    \end{align*}
    Let $f_t(s)$ be the right hand side as a function of $s$. By symmetry of $K^t_\mu$, the left hand side is equal to
    \begin{align*}
        (I+\beta^2K_\mu^t) (H^t_\mu(t,\cdot)-H^t_\nu(t,\cdot))(s),
    \end{align*}
    and we have $(H^t_\mu(t,\cdot)-H^t_\nu(t,\cdot))(s)=(I+\beta^2\overline K_\mu^t)^{-1}f_t(s)$. Since $K^t_\mu$ is positive semi-definite, we get
    \begin{align*}
        \|H^t_\mu(t,\cdot)-&H^t_\nu(t,\cdot)\|_{L^2(0,t)}^2\leq \|(I+\beta^2\overline K_\mu^t)^{-1}f_t(\cdot)\|_{L^2(0,t)}^2\leq \|(I+\beta^2\overline K^t_\mu)^{-1}\|^2_{\text{op}}\|f_t(\cdot)\|^2_{L^2(0,t)}\\
        &\leq \int_0^t \Big|K_\mu(t,s)-K_\nu(t,s)+\beta^2 \int_0^t H_\nu(t,u)\cdot(K_\mu(u,s)-K_\nu(u,s))\,du \Big|^2\,ds\\
        &\leq2 \int_0^t |K_\mu(t,s)-K_\nu(t,s)|^2\,ds+2\beta^4tA^4 \int_0^t\int_0^t |K_\mu(u,s)-K_\nu(u,s)|^2\,du\,ds
    \end{align*}
    Integrating once again yields
\[{\int_0^T}\int_0^t|H_\mu(t, u) - H_\nu(t, u)|^2 \,du\,dt \leq C {\int_0^T}\int_0^T|K_\mu(t, u) - K_\nu(t, u)|^2 \,dt \,du. \]
\end{proof}

\begin{proof}[Proof of Lemma \ref{lem: N kernels}]
    We check that $H^t_{\hat\mu^N}$ and $R_{\hat\mu^N}$ satisfy the resolvent equations. For $H_{\hat\mu^N}$ we get
    \begin{align*}
        \beta^2 \int_0^t H_{\hat\mu^N}(r,s) K_{\hat\mu^N}(s,u)\,ds&=\frac{\beta^2}{N^2} \int_0^t X_r^{\top}Q_t X_sX_s^\top X_u\,ds\\
        &= \frac{1}{N} X_r^\top \Big(I+\frac{\beta^2}{N}\int_0^t X_s X_s^\top \Big)^{-1} \frac{\beta^2}{N}\int_0^t X_s^\top X_s\,ds X_u\\
        &= \frac{1}{N} X_r^\top X_u-\frac{1}{N}X_r^\top Q_t X_s= K_{\hat\mu^N}(r,s)-H_{\hat\mu^N}(r,u).
    \end{align*}
    On the other hand, 
    \begin{align*}
        -\beta^2 \int_s^t R_{\hat\mu^N}(t,u) H_{\hat\mu^N}(u,s)\,du&=\frac{\beta^2}{n^2}\int_s^t X_t^{\top} Q_u X_u X_u^{\top} Q_u X_s\,du\\
        &=-\frac{1}{N} X_t^\top \frac{\beta^2}{n}\int_s^t Q_u X_u X_u^\top Q_u\,du X_s\\
        &=\frac{1}{N} X_t^\top \int_s^t \frac{d}{du}Q_u\,du X_s\\
        &=\frac{1}{N}X_t^\top Q_t X_s-\frac{1}{N}X_tQ_sX_s\\
        &= H_{\hat\mu^N}(t,s)-R_{\hat\mu^N}(t,s).
    \end{align*}
\end{proof}

\bibliographystyle{abbrv}
\bibliography{Biblio}

\end{document}